\documentclass[a4paper, 12pt, dvipsnames, reqno]{amsart}

\usepackage[normalem]{ulem} 
\usepackage{lmodern}
\usepackage[english]{babel}
\usepackage[latin1]{inputenc}
\usepackage{aligned-overset} 
\usepackage{microtype}
\usepackage{a4wide}
\usepackage{keytheorems}
\usepackage{bbold}
\usepackage{xcolor, graphicx}
\usepackage[inline]{enumitem}
\usepackage[all]{xy}
\SelectTips{eu}{12}
\usepackage{quiver}
\usepackage{tikz}
\usepackage{tikz-cd}
\usepackage{scalerel}
\usepackage{versions}
\usepackage{zref-clever}
\zcsetup{capfirst,nameinlink=false}
\usepackage[bookmarks=false,colorlinks,urlcolor=cyan,citecolor=Plum,linkcolor=Blue,hypertexnames=false]{hyperref} 
\usepackage{orcidlink}

\makeatletter
\patchcmd{\@setaddresses}{\indent}{\noindent}{}{}
\patchcmd{\@setaddresses}{\indent}{\noindent}{}{}
\patchcmd{\@setaddresses}{\indent}{\noindent}{}{}
\patchcmd{\@setaddresses}{\indent}{\noindent}{}{}
\makeatother

\makeatletter
\DeclareMathSizes{12}{12}{9}{6}
\makeatother
\makeatletter
\def\namedlabel#1#2{\begingroup
    #2%
    \def\@currentlabel{#2}%
    \phantomsection\label{#1}\endgroup
}
\makeatother

\theoremstyle{plain}

\newtheorem{theorem}{Theorem}[section]
\newkeytheorem{proposition}[
    name=Proposition,
    sibling=theorem]
\newkeytheorem{lemma}[
    name=Lemma,
    sibling=theorem]
\newkeytheorem{corollary}[
    name=Corollary,
    sibling=theorem]
\newtheorem*{theorem*}{Theorem}

\theoremstyle{definition}

\newkeytheorem{definition}[
    name=Definition,
    sibling=theorem,
    refname={definition,definitions},
    Refname={Definition,Definitions},
    qed={\(\lozenge\)}]
\newkeytheorem{example}[
    name=Example,
    sibling=theorem,
    refname={example,examples},
    Refname={Example,Examples},
    qed={\(\circ\)}]
\newtheorem*{notation*}{Notation}

\theoremstyle{remark}

\newkeytheorem{remark}[
    name=Remark,
    sibling=theorem,
    refname={remark,remarks},
    Refname={Remark,Remarks},
    qed={\(\triangle\)}]

\newenvironment{invisible}[1][\unskip]{
	\noindent
	\color{gray}
	[{\textbf{\color{blue}TBH}: \textit{#1}}
}{]}

\newenvironment{rmv}[1][\unskip]{
	\noindent
	\color{gray}
	[{\textbf{\color{blue}Remove}: \textit{#1}}
}{]}

\DeclareMathOperator{\ot}{\mathbin{\mathpalette\myothelper\relax}}

\newcommand{\myothelper}[2]{%
\mathop{\raisebox{1pt}{\scaleobj{0.7}{#1\otimes}}}%
}

\newcommand{\rH}{\mathrm{rH}} 

\newcommand{\Hom}{{\sf Hom}}
\newcommand{\Comon}{\mathsf{Comon}}
\newcommand{\Mon}{\mathsf{Mon}}
\newcommand{\Bimon}{\mathsf{Bimon}}
\newcommand{\Cc}{\mathcal{C}}
\newcommand{\Vect}{\mathsf{Vec}}

\newcommand{\ev}{{\sf ev}}
\newcommand{\db}{{\sf coev}}

\newcommand{\id}{\mathsf{id}}
\newcommand{\Id}{\mathsf{id}}

\def\rHopf{{\sf rHopf}}

\newcommand{\N}{\mathbb{N}}
\newcommand{\Z}{\mathbb{Z}}

\newcommand{\C}{\mathbb{C}}
\newcommand{\K}{\Bbbk}

\newcommand{\cB}{{\mathcal B}}
\newcommand{\cC}{{\mathcal C}}
\newcommand{\cD}{{\mathcal D}}

\newcommand{\cF}{{\mathcal F}}
\newcommand{\cG}{{\mathcal G}}

\newcommand{\cL}{{\mathcal L}}
\newcommand{\cM}{{\mathcal M}}
\newcommand{\cN}{{\mathcal N}}

\newcommand{\cR}{{\mathcal R}}

\newcommand{\mf}[1]{\mathfrak{#1}}
\newcommand{\I}{\mathbb{1}} 
\newcommand{\calpha}{\mf{a}}
\newcommand{\clambda}{\mf{l}}
\newcommand{\crho}{\mf{r}}
\newcommand{\can}{\mathsf{can}}
\newcommand{\rmod}[1]{{#1}_{\raisebox{+2pt}{\(\scriptscriptstyle\bullet\)}}^{}} 

\newcommand{\hopfmod}[1]{{#1}_{\raisebox{+2pt}{\(\scriptscriptstyle\bullet\)}}^{\raisebox{+1pt}{\(\scriptscriptstyle\bullet\)}}} 
\newcommand{\coinv}[2]{{#1}{}^{\mathrm{co}{#2}}} 
\newcommand{\inv}[2]{{\overline{#1}{}^{#2}}}
\newcommand{\brd}{\mf{c}}

\excludeversion{invisible}

\allowdisplaybreaks

\title[Frobenius functors and one-sided Hopf monoids]{Frobenius functors and one-sided Hopf algebras in braided monoidal categories}

\author[L.\@ Bottegoni]{Lucrezia Bottegoni\,\orcidlink{0009-0005-8172-5605}}
\email{lucrezia.bottegoni@edu.unito.it}

\author[D.\@ Ferri]{Davide Ferri\,\orcidlink{0000-0001-8421-496X}}
\address{%
		Dipartimento di Matematica `G.~Peano', Universit\`a degli Studi di Torino, via
			Carlo Alberto 10, 10123 Torino, Italy.\newline
			Vrije Universiteit Brussel, Department of Mathematics and Data Science, Pleinlaan 2, 1050 Brussels, Belgium.}
\email{d.ferri@unito.it, Davide.Ferri@vub.be}
\urladdr{\url{https://sites.google.com/view/davide-ferri}}

\author[P.\@ Saracco]{Paolo Saracco\,\orcidlink{0000-0001-5693-7722}}
\address{Dipartimento di Matematica `G.~Peano', Universit\`a degli Studi di Torino, via Carlo Alberto 10, 10123 Torino, Italy.}
\email{p.saracco@unito.it}
\urladdr{\url{https://sites.google.com/view/paolo-saracco}}

\date{\today}

\keywords{One-sided Hopf algebra; Adjoint triple; Frobenius functor; Hopf module; Braided monoidal category; Free right Hopf monoid; Super-vector space.}
\subjclass[2020]{Primary 16T05; Secondary 18M15, 18A40}

\begin{document}

\begin{abstract}
  Evidence suggests a tight connection between the existence of antipode-like maps on bialgebra-like structures, and the Frobenius property for the associated free Hopf module functor. In this paper, we prove that in any braided monoidal category satisfying few mild assumptions, a bimonoid is a one-sided Hopf monoid and the antipode is a bimonoid anti-homomorphism, if and only if the free Hopf module functor is Frobenius.

  Our interest in these structures stems from having found genuine examples of one-sided Hopf monoids in contexts where the braiding is non-trivial.  In fact, we provide a construction of the free one-sided Hopf monoid over a comonoid in any symmetric monoidal category with some non-restrictive additional assumptions. We present several examples of this construction, and describe the resulting one-sided (sometimes two-sided) Hopf monoids.
\end{abstract}

\maketitle

\tableofcontents

\section*{Introduction}

One-sided Hopf algebras showed up naturally in the algebraic landscape in Green, Nichols, and Taft \cite{GreenNicholsTaft}, springing from Nichols's effort \cite{Nichols-Quotients} to compute a basis for the free Hopf algebra \(\mathrm{H}\left(\mathrm{M}_n(\Bbbk)^*\right)\) generated by the matrix coalgebra in the sense of Takeuchi \cite{Takeuchi-Free}. By relying on Bergman's Diamond Lemma \cite{Bergman-Diamond}, in \cite[\S2]{Nichols-Quotients} Nichols computed a set of reduction formulas that suffices to resolve all the ambiguities arising in the construction of \(\mathrm{H}\left(\mathrm{M}_n(\Bbbk)^*\right)\) as a quotient of \(\Bbbk\langle X_{i,j}^{r} \mid 1 \leq i,j \leq n, r \geq 0 \rangle\), thus being able to express a basis for \(\mathrm{H}\left(\mathrm{M}_n(\Bbbk)^*\right)\) as the collection of the irreducible words. The relations imposed on \(\Bbbk\langle X_{i,j}^{r} \mid 1 \leq i,j \leq n, r \geq 0 \rangle\) are those that guarantee that \(X_{i,j}^{r} \mapsto X_{j,i}^{r+1}\) is an antipode. Dropping a few of these relations, namely \(\sum_{k=1}^rX_{i,k}^{0}X_{j,k}^{1} - \delta_{i,j}1\) for \(1 \leq i,j \leq n\), still produces a meaningful bialgebra, enhanced with an anti-morphism of bialgebras which is just a left and not a right convolution inverse of the identity (see \cite{GreenNicholsTaft}). 

One-sided Hopf algebras have immediately attracted the attention of the community. In particular, Taft and his collaborators have constructed several new examples: their work led to new insights \cite{BeggsTaft,IyerSmithTaft,NicholsTaft}, novel Hopf algebra structures \cite{IyerTaft,RodriguezTaft-quantumlike,RodriguezTaft-onesided}, and genuine examples of one-sided Hopf algebras \cite{GreenNicholsTaft,LauveTaft,RodriguezTaft-leftquantumgrp}.
Recently, Saracco \cite{Saracco-Frobenius} showed that, for a bialgebra \(B\), the existence of right antipodes which are bialgebra anti-homomorphisms corresponds to the Frobenius property for the free Hopf module functor \(- \ot B \colon \Vect \to \mathrm{HopfMod}^B_B\). 
Even more unexpectedly, Berger, Saracco and Vercruysse \cite{BergerSaraccoVercruysse} observed that one-sided Hopf algebras are natural examples of gabi-algebras: algebras whose category of modules is closed with respect to the closed structure of vector spaces (i.e., the internal hom is given by the linear hom). An extension of this notion, namely one-sided \(n\)-Hopf algebras, plays a central role in simplifying the construction of the Hopf envelope of a bialgebra \cite{ArMeSa}, and one-sided Hopf algebras whose one-sided antipode is a bialgebra anti-homomorphism admit a simpler description of their space of integrals \cite{ArMeSa-integrals}.

Our interest in the topic stems from the following observation, emerged while seeking new interesting and natural examples of one-sided Hopf algebras: 
\(\mathrm{M}_2(\K)\) is a super-coalgebra (a comonoid in the symmetric monoidal category of \(\Z_2\)-graded vector spaces) and the construction of Green, Nichols and Taft of free one-sided Hopf algebras in \(\Vect\) can be adapted to the super-setting to construct a one-sided super-Hopf algebra \(\rH\left(\mathrm{M}_2(\K)\right)\) (see \zcref{ex:MatcomodZ2}). 

Motivated by this example, we extend the construction of the free one-sided Hopf algebra to any symmetric monoidal category satisfying mild assumptions (\zcref{sect:GNTsym-monoidal}). In \zcref{sec:examples} we apply our construction to symmetric monoidal categories \(\cM\) which are not \(\Vect\), and whose braiding does not forget onto the flip braiding on \(\Vect\). As demonstrated in \cite{GreenNicholsTaft}, the free construction in \(\Vect\) may give rise to genuine one-sided Hopf algebras, as well as to two-sided Hopf algebras. In our more general setting, we observe a similar behaviour: the construction may yield a two-sided Hopf monoid in \(\cM\) (\zcref{ex:Z2inZ2}) as well as a genuine one-sided Hopf monoid (\zcref{ex:MatcomodZ2}).

Realising that one-sided Hopf monoids arise just as naturally in the braided monoidal setting, led us to wonder whether they can be characterised as in the linear case. Namely, we wonder whether the Hopf-Frobenius connection, advocated in \cite{SaraccoAntipodes} and \cite{Saracco-Frobenius}, carries over into the braided setting, reinforcing the close relationship between Hopf and Frobenius properties. In the final section of the paper, we answer this question in the positive. 
Even when \((\cM,\ot,\I,\brd)\) is a braided monoidal category, the free right Hopf module functor \(- \ot B \colon \cM \to \cM^B_B\) fits into an adjoint triple 
\[- \ot_B \I \;\dashv\; - \ot B \;\dashv\; \coinv{(-)}{B}.\]
We devote \zcref{sec:HopfFrobenius} to prove the following characterisation of one-sided Hopf monoids:

\begin{theorem*}
   Let \((\cM,\ot,\I,\brd)\) be a braided monoidal category which admits the coequalisers of reflexive pairs and the equalisers of coreflexive pairs. Let \((B,m, u, \Delta,\varepsilon)\) be a bialgebra in \(\cM\) for which the endofunctor \(B \ot -\) preserves reflexive coequalisers.
   Then, the following assertions are equivalent:
   \begin{itemize}[label=\(\diamond\),leftmargin=0.5cm]
       \item \(B\) is a right Hopf algebra in \(\cM\) whose right antipode \(S\) is anti-multiplicative and anti-comultiplicative; 
       \item the free right Hopf module functor \(- \ot B \colon \cM \to \cM^B_B\) is Frobenius;
       \item the canonical morphism \(\sigma_M \colon \coinv{M}{B} \to M \ot_B \I\) is a split-mono for every Hopf module \(M\);
       \item the canonical morphism 
       \[i_B \coloneqq  \left(B \xrightarrow{\eta_B} \coinv{(\rmod{B} \ot \hopfmod{B})}{B} \xrightarrow{\sigma_{B \ot B}} \left(\rmod{B} \ot \hopfmod{B}\right)\ot_B \I\right)\] 
       is an isomorphism.
   \end{itemize}
\end{theorem*}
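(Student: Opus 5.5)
I will prove the cycle (1)\(\Rightarrow\)(2)\(\Rightarrow\)(3)\(\Rightarrow\)(4)\(\Rightarrow\)(1). It rests on the adjoint triple \(- \ot_B \I \dashv - \ot B \dashv \coinv{(-)}{B}\) and on a canonical natural transformation \(\sigma \colon \coinv{(-)}{B} \to (-)\ot_B \I\) comparing the two adjoints of \(F = -\ot B\). I define \(\sigma_M\) as the composite \(\coinv{M}{B} \to M \to M \ot_B \I\) of the equaliser inclusion with the coequaliser projection. The existence hypotheses, namely reflexive coequalisers, coreflexive equalisers, and \(B\ot -\) preserving reflexive coequalisers, are exactly what make both adjoints exist on \(\cM^B_B\). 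I take the construction of these adjunctions, with units and counits, as given from the earlier sections.

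\textbf{(1)\(\Rightarrow\)(2).} Given a right antipode \(S\) that is an anti-(co)multiplicative map, I construct for every Hopf module \(M\) an explicit inverse of \(\sigma_M\). The natural candidate is induced by the map \(M \to M\), \(m \mapsto m_0 S(m_1)\), written with the braiding inserted in the appropriate places. I check three things: it coequalises the two actions, so it descends to \(M\ot_B\I\); it lands in the coinvariants, which uses anti-comultiplicativity of \(S\) and the braiding to commute \(\Delta\) past \(S\); and it is two-sided inverse to \(\sigma_M\). Composing with \(\sigma_M\) one way uses \(S*\id = u\varepsilon\) on coinvariants. Composing the other way is where right-sidedness is delicate. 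I expect to need anti-multiplicativity to show that the factor \(m_0 S(m_1)\) behaves well modulo the relation defining \(\ot_B \I\). Once \(\sigma\) is a natural isomorphism, the left and right adjoints of \(-\ot B\) agree, so the functor is Frobenius.

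\textbf{(2)\(\Rightarrow\)(3)\(\Rightarrow\)(4).} For (2)\(\Rightarrow\)(3), I use that Frobenius makes \(- \ot_B \I \cong \coinv{(-)}{B}\). The standard argument of \cite{Saracco-Frobenius} (a comparison of adjoints via Yoneda, together with the fact that \(F\) is faithful since \(\I\) is a retract of \(B\)) then shows that \(\sigma\) itself is invertible, or at least split mono. I will mimic that argument categorically. The step (3)\(\Rightarrow\)(4) is easier than it looks: \(\eta_B\) is an isomorphism by the structure theorem for the cofree-type Hopf module \(\rmod{B}\ot\hopfmod{B}\), which holds in any braided category with these equalisers. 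So \(i_B\) is split mono. Surjectivity comes from the fact that \((\rmod{B}\ot\hopfmod{B})\ot_B\I\) is generated by \(B\), since the composite \(B\to B\ot B\to (B\ot B)\ot_B \I\) is a projection-like epi. I will verify directly that \(i_B\) is an epimorphism, which together with split mono gives an isomorphism.

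\textbf{(4)\(\Rightarrow\)(1).} This is the main obstacle. I define \(S\) by transporting along \(i_B^{-1}\): set
\[ S \coloneqq (\varepsilon\ot B)\circ \eta_B^{-1}\circ\sigma_{B\ot B}^{-1}\circ \pi \circ \varphi \]
for a suitable twist \(\varphi \colon B \to B\ot B\) built from \(u\) and the braiding, where \(\pi\) is the coequaliser projection. Anti-multiplicativity and anti-comultiplicativity should then follow by naturality of \(\sigma\) applied to the Hopf-module morphisms given by multiplication and comultiplication. The right-antipode identity should follow from the equaliser property of coinvariants. The braiding makes the bookkeeping heavy: in the braided setting one must choose the twisted Hopf module structure on \(B\ot B\) carefully so that \(i_B\) is an isomorphism of Hopf modules. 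I would first try to reduce everything to identities on \(B\ot B\) checked via string diagrams.
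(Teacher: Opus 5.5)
Your step (3)\(\Rightarrow\)(4) has a genuine gap. Write \(\hat B=\rmod{B}\ot\hopfmod{B}\). Since \(\sigma_M=q_Me_M\) and \(e_{\hat B}\eta_B=(B\ot u)\crho_B^{-1}\), the map \(i_B\) is just \(q_{\hat B}(B\ot u)\crho_B^{-1}\). You claim it is an epimorphism for general reasons (``\((B\ot B)\ot_B\I\) is generated by \(B\)''), and this is false. Take \(B=\K[x]\) with \(x\) grouplike in \(\Vect\). Then \(x\) acts diagonally on \(B\ot B\cong\K[x,y]\) as multiplication by \(xy\), so \((B\ot B)\ot_B\K\cong\K[x,y]/(xy-1)\cong\K[x^{\pm1}]\). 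Hence \(i_B\) is the inclusion \(\K[x]\hookrightarrow\K[x^{\pm1}]\), which is split mono in \(\Vect\) but not onto. So knowing that the single component \(\sigma_{\hat B}\) is split mono gives nothing; surjectivity of \(i_B\) is exactly what (3) must be used for. The missing ingredient is that a split-mono \(\sigma\) is automatically invertible, which the paper takes from \cite{AB22} (\zcref{rmk:sigma-splitmono-iso}). You can prove it directly. Let \(\tau\) be a natural retraction of \(\sigma\). Then \(\sigma\tau\) is a natural endomorphism of \(\inv{(-)}{B}\), and it is the identity on free Hopf modules \(X\ot B\), because there \(\sigma\) is invertible and so \(\tau=\sigma^{-1}\). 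Now use naturality along the unit \(\gamma_M\colon M\to\inv{M}{B}\ot B\). Since \(\inv{\gamma_M}{B}\) is invertible, with inverse \(\theta_{\inv{M}{B}}\), this forces \((\sigma\tau)_M=\id\).

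Your (4)\(\Rightarrow\)(1) is too vague, and it misplaces the ingredients.
\begin{itemize}
\item \emph{The formula for \(S\).} It does not typecheck, since \(\eta_B^{-1}\) already lands in \(B\). The right one is \(S=\eta_B^{-1}\sigma_{\hat B}^{-1}q_{\hat B}(u\ot B)\clambda_B^{-1}\), with no braiding.
\item \emph{Where the preservation hypothesis enters.} That \(B\ot-\) preserves reflexive coequalisers is not needed for the adjoint triple; \zcref{prop:ff} does without it. It is needed here, to descend left multiplication on the first factor of \(B\ot B\) to an action on \(\inv{\hat B}{B}\). This makes \(\sigma_{\hat B}=q_{\hat B}e_{\hat B}\), and hence \(\sigma_{\hat B}^{-1}\), left \(B\)-linear, which gives \(\sigma_{\hat B}^{-1}q_{\hat B}=\eta_B m(B\ot S)\).
\item \emph{The right antipode identity.} It then comes from the coequaliser relation \(q_{\hat B}\mu_{\hat B}=q_{\hat B}(B\ot\crho_B)(B\ot B\ot\varepsilon)\) together with \(Su=u\), not from the equaliser defining the coinvariants.
\item \emph{Anti-multiplicativity.} This needs a second, braided left action \((B\ot m)(\brd_{B,B}\ot B)\) descended to \(\inv{\hat B}{B}\).
\item \emph{Anti-comultiplicativity.} This requires a real computation showing that \((m\ot m)(B\ot S\ot B\ot S)(B\ot\brd_{B,B}\ot B)(\Delta\ot\brd_{B,B}\Delta)\) factors through \(q_{\hat B}\). ``Naturality of \(\sigma\)'' alone does not produce these identities.
\end{itemize}

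Your (1)\(\Rightarrow\)(2), by contrast, is sound, and slightly more direct than the paper's, which proves only \(\rho\sigma=\id\) and then invokes \cite{AB22}. However, you have the roles swapped. Anti-multiplicativity is what lets \(m_0S(m_1)\) descend to \(\inv{M}{B}\). The composite \(\rho_M\sigma_M=\id\) only uses \(Su=u\), and \(\sigma_M\rho_M=\id\) is immediate from \(\varepsilon S=\varepsilon\).
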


\begin{notation*} 
    We will denote the identity morphism of an object \(X\) in a category \(\cC\) either by \(\id_X\) or by simply \(X\). For categories \(\mathcal{C}\) and \(\mathcal{D}\), a functor \(\mathcal{F}:\mathcal{C}\to \mathcal{D}\) just means a covariant functor. In notations such as \({}_B\mathfrak{M}\), we use \(\mathfrak{M}\) as a synonym of the category \(\Vect\) of vector spaces. 
\end{notation*}

\section{Preliminaries and first results}\label{sec:prelim}

This section starts presenting preliminary notions and setting the notation. In \zcref{subsect:onesided}, we introduce one-sided Hopf monoids in a braided monoidal category  (\zcref{def:one-sided}). With the exception of \zcref{prop:one_sided_ant_can}, the content of this section is mostly known.

\subsection{Adjoint triples}\label{sec:adjtriples}

Let us recall quickly some facts about adjoint triples and Frobenius functors that we are going to use in the paper. For further details on these objects in connection with our setting, see for example \cite[\S1]{Saracco-Frobenius}. Given categories \(\cC\) and \(\cD\), we say that functors \(\cL,\cR\colon \cC \rightarrow \cD \), \(\cF\colon \cD \rightarrow \cC\) form an \emph{adjoint triple} if \(\cL\) is left adjoint to \(\cF\) which is left adjoint to \(\cR\), i.e.\@ \(\cL\dashv \cF\dashv \cR\). They form an \emph{ambidextrous adjunction} if there is a natural isomorphism \(\cL\cong \cR\). As a matter of notation, we set \(\eta \colon \id \rightarrow \cF\cL,\epsilon \colon \cL\cF \rightarrow \id \) for the unit and counit of the left-most adjunction and \(\gamma \colon \id \rightarrow \cR\cF\), \(\theta \colon \cF\cR \rightarrow \id \) for the right-most one. If in addition \(\cF\) is fully faithful, that is, if \(\epsilon\) and \(\gamma\) are natural isomorphisms, then we have a distinguished natural transformation
\begin{equation}\label{def:sigma}
    \sigma \coloneqq \left( \xymatrix{\cR \ar[r]^-{\left( \epsilon \cR\right) ^{-1}} & \cL\cF\cR \ar[r]^-{\cL\theta} & \cL}\right).
\end{equation}
Recall also that a \emph{Frobenius pair} for the categories \(\cC\) and \(\cD\) is a couple of functors \(\cF\colon \cC \rightarrow \cD\) and \(\cG\colon \cD \rightarrow \cC\) such that \(\cG \) is left and right adjoint to \(\cF\). A functor \(\cF\colon \cC \rightarrow \cD\) is said to be \emph{Frobenius} if there exists a functor \(\cG \colon \cD \rightarrow \cC\) which is at the same time left and right adjoint to \(\cF\).

\begin{lemma}\cite[Lemma 2.3]{Saracco-Frobenius}\label{lemma:frobenius}
    If \(\cF\) is fully faithful, then \(\cF\) is Frobenius if and only if there exist \(\cL,\cR\colon \cD \rightarrow \cC\) such that  \(\cL\dashv \cF\dashv \cR\) is an adjoint triple and \(\sigma\colon \cR\to\cL\) is a natural isomorphism.
\end{lemma}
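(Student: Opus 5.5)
Both implications will be proved directly. The forward one is essentially formal; the reverse one rests on the uniqueness of adjoints.

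\emph{(\(\Leftarrow\))} Assume there is an adjoint triple \(\cL\dashv \cF\dashv \cR\) in which \(\sigma\colon\cR\to\cL\) is a natural isomorphism. Transport the adjunction \(\cF\dashv\cR\) along \(\sigma\). Concretely, the composite bijection
\[
\cD(\cF C,D)\cong \cC(C,\cR D)\xrightarrow{\ \cC(C,\sigma_D)\ }\cC(C,\cL D)
\]
is natural in \(C\) and \(D\), because \(\sigma\) is natural. Hence \(\cF\dashv\cL\). Together with \(\cL\dashv\cF\), this shows that \(\cL\) is both left and right adjoint to \(\cF\), so \(\cF\) is Frobenius. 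This direction does not use full faithfulness, except that it is needed for \(\sigma\) to be defined at all.

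\emph{(\(\Rightarrow\))} Assume \(\cF\) is Frobenius with two-sided adjoint \(\cG\), and take \(\cL=\cR=\cG\). This gives an adjoint triple \(\cL\dashv\cF\dashv\cR\). Since \(\cF\) is fully faithful, \(\epsilon\) and \(\gamma\) are invertible, so \(\sigma=\cL\theta\circ(\epsilon\cR)^{-1}\) is defined. It remains to show that \(\sigma\) is invertible. Invertibility of \(\sigma\) does not depend on which representatives of the adjoints we choose: replacing \(\cL\) or \(\cR\) by isomorphic functors changes the units and counits by conjugation with the isomorphisms, and therefore changes \(\sigma\) only by pre- and post-composition with isomorphisms. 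So we may work with \(\cL=\cR=\cG\), but we must still prove that the specific transformation \(\sigma\) built from the two different adjunction structures on \(\cG\) is an isomorphism.

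This is the main obstacle. The plan is as follows.
\begin{itemize}
\item Since \(\cF\) is fully faithful and \(\cF\dashv\cG\) with unit \(\gamma\) invertible, \(\cG\) is a reflection-type adjoint. By the triangle identities, \(\cF\theta\) is then invertible, because \(\cF\theta\circ\gamma\cF=\id\) and \(\gamma\cF\) is invertible. So \(\theta\) is an isomorphism on the essential image of \(\cF\), but not necessarily everywhere.
\item Use instead the second adjunction. By uniqueness of adjoints, \(\cL\) is naturally isomorphic to \(\cR\) through the isomorphism \(\varphi\) induced from \(\cF\dashv\cL\) (which we have because \(\cL=\cG=\cR\)). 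Express \(\sigma\) through hom-sets: for an object \(C\), the map \(\cC(\cR D,C)\to\cC(\cR D,C)\) obtained by precomposing with \(\sigma_D\) should be identified, via Yoneda, with a composite of the adjunction bijections \(\cC(\cL D,C)\cong\cD(D,\cF C)\) and \(\cD(D,\cF C)\cong \cC(\cR D, C)\), where the latter comes from \(\cF\dashv\cR\) via the counit \(\theta\) together with \(\cF\) being fully faithful.
\item Check that this composite is a bijection. Then Yoneda gives that \(\sigma_D\) is an isomorphism.
\end{itemize}
This identification is the computation to carry out carefully with the triangle identities. If it becomes unwieldy, I would instead cite the general fact from \cite[\S1]{Saracco-Frobenius}: for a fully faithful middle functor in an adjoint triple, \(\sigma\) is the canonical comparison between \(\cR\) and \(\cL\), and it is invertible as soon as \(\cL\cong\cR\) by some natural isomorphism.
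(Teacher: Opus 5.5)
Your \((\Leftarrow)\) direction is correct, and so is the set-up of \((\Rightarrow)\) with \(\cL=\cR=\cG\). The gap is that you correctly identify the invertibility of \(\sigma\) as the whole content of \((\Rightarrow)\), but you never prove it, and the plan you sketch is circular. The map \(\cD(D,\cF C)\to\cC(\cR D,C)\) you propose to build from \(\theta\) and full faithfulness is \(f\mapsto \cF^{-1}(f\circ\theta_D)\). This is not an adjunction bijection: the adjunction \(\cF\dashv\cR\) gives \(\cD(\cF C,D)\cong\cC(C,\cR D)\), with \(\cF C\) as the source. If you precompose it with the bijection \(h\mapsto \cF h\circ\eta_D\) coming from \(\cL\dashv\cF\), it sends \(h\) to \(\cF^{-1}(\cF h\circ\eta_D\circ\theta_D)=h\circ\sigma_D\), because \(\cF\sigma=\eta\theta\). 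So ``check that this composite is a bijection'' is, via Yoneda, literally the claim that \(\sigma_D\) is invertible. Your fallback of citing \cite[\S1]{Saracco-Frobenius} amounts to citing the lemma itself. A minor point: in your first bullet \(\cF\theta\) and \(\gamma\cF\) do not typecheck; the triangle identity is \(\theta\cF\circ\cF\gamma=\id_\cF\). That bullet is not needed anyway.

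The missing step is a one-line computation. Because \(\cL\) and \(\cR\) are literally the same functor \(\cG\), you can compose the unit \(\gamma\colon\id_\cC\to\cG\cF\) of \(\cF\dashv\cG\) with the counit \(\epsilon\colon\cG\cF\to\id_\cC\) of \(\cG\dashv\cF\). The triangle identity \(\cR\theta\circ\gamma\cR=\id_\cR\) then gives
\[
\sigma\circ\epsilon\cG\circ\gamma\cG=\cG\theta\circ(\epsilon\cG)^{-1}\circ\epsilon\cG\circ\gamma\cG=\cG\theta\circ\gamma\cG=\id_\cG .
\]
Since \(\epsilon\) and \(\gamma\) are invertible, so is \(\epsilon\cG\circ\gamma\cG\). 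Hence \(\sigma\) is invertible, with inverse \(\epsilon\cG\circ\gamma\cG\); no uniqueness-of-adjoints or Yoneda argument is needed. You also allude to a stronger fact: for arbitrary \(\cL,\cR\) with some natural isomorphism \(\phi\colon\cR\to\cL\), the map \(\sigma\) is invertible. This follows from the same triangle identity together with naturality of \(\phi\) along \(\theta\). These give \(\phi=\sigma\circ z\cR\), where \(z=\epsilon\circ\phi\cF\circ\gamma\) is an automorphism of \(\id_\cC\). Hence \(\sigma=\phi\circ(z\cR)^{-1}\) is invertible.
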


Since we are interested in adjoint triples whose middle functor is fully faithful, Lemma \ref{lemma:frobenius} allows us to study the Frobenius property by simply looking at the invertibility of the canonical map \(\sigma\). Observe that
\begin{equation}\label{eq:sigmaF}
    \sigma\cF = \epsilon^{-1}   \gamma^{-1} \qquad \text{and} \qquad \cF\sigma = \eta \theta,
\end{equation}
whence, in particular, \(\sigma\cF\) is always a natural isomorphism.

\begin{invisible}
\subsection{Monoidal categories}
    
    Recall that a \emph{monoidal category} \(\left( \cM ,\ot ,\I ,\calpha ,\clambda ,\crho \right) \) is a category \(\cM \) endowed with a functor \(\ot \colon \cM \times \cM \to \cM \) (the \emph{tensor product}), with a distinguished object \(\I \) (the \emph{unit}) and with three natural isomorphisms 
    \begin{align*}
    	\calpha & \colon \ot   (\ot \times \id_{\cM})\to \ot   (\id_{\cM }\times \ot) & &\text{(\emph{associativity constraint})} \\
    	\clambda & \colon \ot    (\I \times \id _{\cM })\to \id _{\cM } & & \text{(\emph{left unit constraint})} \\
    	\crho & \colon \ot    (\id _{\cM }\times \I )\to \id _{\cM } & & \text{(\emph{right unit constraint})} 
    \end{align*}
    that satisfy the \emph{Pentagon} and the \emph{Triangle Axioms}, that is,
    \begin{gather*}
    \calpha_{X,Y,Z\ot W}   \calpha_{X\ot Y,Z,W} = \left(X\ot \calpha_{Y,Z,W}\right)   \calpha_{X,Y\ot Z,W}   \left(\calpha_{X,Y,Z}\ot W\right), \\
    \left(X\ot \clambda_Y\right)   \calpha_{X,\I ,Y} = \crho_X\ot Y,
    \end{gather*}
    for all \(X,Y,Z,W\) objects in \(\cM\).
    
    If the endofunctor \(X\ot-\colon Y\mapsto X\ot Y\) (resp. \(-\ot X\colon Y\mapsto Y\ot X\)) has a right adjoint for every \(X\) in \(\cM\), then \(\cM\) is called a \emph{left-closed} (resp. \emph{right-closed}) monoidal category.
    
    Given two monoidal categories \(\left( \cM ,\ot ,\I ,\calpha,\clambda ,\crho \right) \) and \(\left( \cM ^{\prime },\ot^{\prime} ,\I^{\prime},\calpha ^{\prime },\clambda ^{\prime },\crho ^{\prime }\right) \), a \emph{quasi-monoidal functor} \((\cF,\varphi_0,\varphi)\) between \(\cM\) and \(\cM'\) is a functor \(\cF\colon \cM \to \cM ^{\prime }\) together with an isomorphism \(\varphi _{0}\colon \I^{\prime}\to \cF\left( \I \right) \) and a family of isomorphisms \(\varphi_{X,Y}\colon\cF \left( X\right) \ot^{\prime} \cF\left( Y\right) \to \cF\left( X\ot Y\right) \) for \(X,Y\) objects in \(\cM \), which are natural in both entrances. A quasi-monoidal functor \(\cF\) is said to be \emph{neutral} if 
    \begin{gather}\label{eq:neutral}
    \begin{gathered}
    \cF\left( \clambda_{ {X}} \right)   \varphi_{ {\I,X}}   \left(\varphi _{0}\ot^{\prime}\cF\left( X\right)\right) = \clambda ^{\prime }_{ {\cF(X)}}, \\
    \cF\left(\crho_{ {X}} \right)   \varphi_{ {X,\I}}   \left(\cF\left(X\right) \ot^{\prime}\varphi _{0} \right) = \crho ^{\prime }_{ {\cF(X)}},
    \end{gathered}
    \end{gather}
    and it is said to be \emph{strong monoidal} if, in addition,
    \begin{equation}\label{eq:monoidal}
    \varphi_{X,Y\ot Z}   \left(\cF(X)\ot^{\prime} \varphi_{Y,Z}\right)    \calpha^{\prime }_{\cF(X),\cF(Y),\cF(Z)} = \cF \left( \calpha_{X,Y,Z}\right)   \varphi_{X\ot Y,Z}   \left(\varphi_{X,Y} \ot^{\prime} \cF(Z)\right)
    \end{equation}
    for all \(X,Y,Z\) in \(\cM\). Furthermore, it is said to be \emph{strict} if \(\varphi_0\) and \(\varphi\) are the identities. A strong monoidal functor \((\cF,\varphi_0,\varphi)\) such that \(\cF\) is an equivalence of categories is called a \emph{monoidal equivalence}.
    
    If \(\cF\) comes together with a morphism \(\varphi_0\colon \I'\to\cF(\I)\) and a natural transformation \(\varphi_{X,Y}\colon \cF \left( X\right) \ot^{\prime} \cF\left( Y\right) \to \cF\left( X\ot Y\right) \) that are not necessarily invertible but that satisfy \eqref{eq:neutral} and \eqref{eq:monoidal} then it is called a \emph{monoidal functor} (also termed \emph{lax monoidal functor} in \cite{Aguiar}). If instead \(\cF\) comes together with a morphism \(\psi_0\colon \cF(\I)\to\I'\) and a natural transformation \(\psi_{X,Y}\colon \cF\left( X\ot Y\right) \to \cF \left( X\right) \ot^{\prime} \cF\left( Y\right)\) (not necessarily invertible) satisfying the analogues of \eqref{eq:neutral} and \eqref{eq:monoidal} then it is called an \emph{opmonoidal functor} (also termed \emph{colax monoidal functor} in \cite{Aguiar} and \emph{comonoidal functor} in \cite{BruguieresLackVirelizier, BruguieresVirelizier}).
    
    A natural transformation \(\gamma\) between monoidal functors \((\cF,\varphi_0,\varphi)\) and \((\cG,\phi_0,\phi)\) from a monoidal category \((\cM,\ot,\I,\calpha,\clambda,\crho)\) to \((\cM',\ot',\I',\calpha',\clambda',\crho')\) is said to be \emph{monoidal} if
    \begin{equation}
    \left(\gamma_X\ot'\gamma_Y\right)   \varphi_{X,Y} = \phi_{X,Y}   \gamma_{X\ot Y} \quad \text{and} \quad \gamma_\I  \varphi_0 = \phi_0.
    \end{equation} 
    Similarly, one defines \emph{opmonoidal natural transformations} between opmonoidal functors. An adjoint pair of monoidal functors is called a \emph{monoidal adjunction} (also termed a \emph{lax-lax adjunction} in \cite{Aguiar}) if the unit and the counit are monoidal natural transformations. Analogously, one defines \emph{opmonoidal adjunctions} (also termed \emph{colax-colax adjunctions} in \cite{Aguiar} and \emph{comonoidal adjunctions} in \cite{BruguieresLackVirelizier, BruguieresVirelizier}).
    
    Henceforth, we will often omit the constraints when referring to a monoidal category. 
\end{invisible}


\subsection{Bimonoids in braided monoidal categories}

Recall that a \emph{monoidal category} \(\left( \cM ,\ot ,\I ,\calpha ,\clambda ,\crho \right) \) is a category \(\cM \) endowed with a functor \(\ot \colon \cM \times \cM \to \cM \) (the \emph{tensor product}), with a distinguished object \(\I \) (the \emph{unit}) and with three natural isomorphisms 
\[\calpha \colon \ot   (\ot \times \id_{\cM})\to \ot   (\id_{\cM }\times \ot), \qquad \clambda \colon \ot    (\I \times \id _{\cM })\to \id _{\cM }, \qquad \crho \colon \ot    (\id _{\cM }\times \I )\to \id _{\cM },\]
that satisfy the \emph{pentagon} and the \emph{triangle axioms}, that is,
\begin{gather*}
\calpha_{X,Y,Z\ot W}   \calpha_{X\ot Y,Z,W} = \left(X\ot \calpha_{Y,Z,W}\right)   \calpha_{X,Y\ot Z,W}   \left(\calpha_{X,Y,Z}\ot W\right), \\
\left(X\ot \clambda_Y\right)   \calpha_{X,\I ,Y} = \crho_X\ot Y,
\end{gather*}
for all \(X,Y,Z,W\) objects in \(\cM\). In a monoidal category one can consider the categories \(\Mon(\cM)\) of monoids in \(\cM\) and \(\Comon(\cM)\) of comonoids in \(\cM\). Henceforth, we will often omit the constraints when referring to a monoidal category and we will almost always drop the associativity constraint from the computations. For the sake of consistence, instead, we will usually keep track of the unit constraints.

\medskip

Consider a monoidal category \((\cM, \ot, \I, \calpha, \clambda, \crho)\) and let \(\tau\colon\cM \times \cM \to \cM \times \cM\), \(\tau(X,Y)=(Y,X)\), be the flip functor. Recall from \cite{ JS} that a \emph{braiding} for \(\cM\) is a natural isomorphism \(\brd \colon \ot\to\ot\tau\), i.e. \(\brd =(\brd_{X,Y})_{X,Y\in\cM}\) where \(\brd_{X,Y} \colon X\ot Y\to Y\ot X\) are isomorphisms natural in \(X\) and \(Y\), satisfying the \emph{hexagon axiom}:
\begin{equation}\label{eq:hexagon}
\begin{gathered}
    \xymatrix @!0 @R=50pt @C=29pt {
        &X\ot (Y\ot Z)\ar[rrr]^-{ \brd_{X,Y\ot Z}}&&&(Y\ot Z)\ot X\ar[rd]^-{ \calpha_{Y,Z,X}} & \\ 
        (X\ot Y)\ot Z\ar[ru]^-{ \calpha_{X,Y,Z}} \ar[rd]_-{\brd_{X,Y}\ot \id_Z}&&&&& Y\ot (Z\ot X) \\
		&(Y\ot X)\ot Z\ar[rrr]_-{ \calpha_{Y,X,Z}}&&&Y\ot (X\ot Z)\ar[ru]_-{ \id_Y\ot\brd_{X,Z}} &
	}
\end{gathered} \quad \begin{gathered}
    \xymatrix @!0 @R=50pt @C=29pt {
        &(X\ot Y)\ot Z\ar[rrr]^-{ \brd_{X\ot Y, Z}}&&&Z\ot(X\ot Y)\ar[rd]^-{\calpha^{-1}_{Z,X,Y}}&
		\\ X\ot (Y\ot Z)\ar[rd]_-{\id_X\ot \brd_{Y,Z}} \ar[ru]^-{ \calpha^{-1}_{X,Y,Z}}&&&&& (Z\ot X)\ot Y\\
		&X\ot (Z\ot Y)\ar[rrr]_-{ \calpha^{-1}_{X,Z,Y}}&&& (X\ot Z)\ot Y\ar[ru]_-{ \brd_{X,Z}\ot \id_Y} &}
\end{gathered}
\end{equation}
A \emph{braided monoidal} category \((\cM,\ot ,\I ,\calpha,\clambda, \crho, \brd )\) is a monoidal category with a braiding \(\brd\). Let us recall that in a braided monoidal category we always have that \(\brd_{\I,X} = \brd_{X,\I}^{-1}\) and that
\begin{equation}\label{eq:braidI}
\begin{gathered}
    \xymatrix{
    X \ot \I \ar[dr]_-{\crho_X} \ar[rr]^-{\brd_{X,\I}} & & \I \ot X \ar[dl]^-{\clambda_X} \\
     & X & 
    }
\end{gathered}
\end{equation}
commutes for all \(X\) in \(\cM\); see \cite[\S8.1]{EGNO}. A braiding $\brd$ is called a \emph{symmetry} if $\brd^2 = \brd$. In such a case, $(\cM,\ot,\I,\brd)$ is called a \emph{symmetric monoidal category}.

\medskip

Let \((\cM,\ot ,\I ,\calpha,  \clambda, \crho, \brd )\) be a braided monoidal category. A \emph{bimonoid in \(\cM\)} \cite[pages 192-193]{Ma93} is a datum \((B,m,u,\Delta,\varepsilon)\) where \((B,m,u)\in \Mon(\cM)\) and \((B,\Delta,\varepsilon)\in\Comon(\cM )\), and the two structures are compatible in the sense that the diagrams
\begin{equation}\label{eq:bialgebra-compatibility}
    \begin{gathered}
        \xymatrixcolsep{1.85cm}\xymatrix{B\ot B\ar[d]_-{m}\ar[r]^-{\Delta\ot \Delta}& B\ot B\ot B\ot B\ar[r]^-{\id\ot\brd_{B,B}\ot\id}&B\ot B\ot B\ot B\ar[d]^-{m\ot m}\\ B\ar[rr]_-{\Delta}&& B\ot B } \\
        \xymatrixcolsep{1cm}\xymatrix{\I\ar[r]^-{u}\ar[d]_-{\clambda_\I^{-1}=\crho_{\I}^{-1}}& B\ar[d]^-{\Delta}& B\ot B\ar[d]_-{m}\ar[r]^-{\varepsilon\ot\varepsilon }&\I\ot \I\ar[d]^-{\clambda_\I=\crho_\I}&\I\ar[rd]_-{\id}\ar[r]^-{u}&B\ar[d]^-{\varepsilon} \\ \I\ot \I\ar[r]_-{u\ot u}& B\ot B &B\ar[r]_-{\varepsilon}&\I &&\I}
    \end{gathered}
\end{equation}
commute. A \emph{morphism of bimonoids in \(\cM\)} is a morphism of monoids and comonoids in \(\cM\). Bimonoids in \(\cM\) and their morphisms form a category denoted by \(\Bimon(\cM )\). 

When we need to specify that \(m,u,\Delta,\varepsilon\) are the structure morphisms of \(B\), then we add a subscript \(m_B,u_B,\Delta_B,\varepsilon_B\).

\medskip

If we are given a bimonoid \((B, m, u, \Delta, \varepsilon)\) in a braided monoidal category \((\cM, \ot, \I, \calpha, \clambda, \crho)\), then we can consider modules, comodules and Hopf modules over \(B\). To be consistent with \cite{Saracco-Frobenius}, we will consider them on the \emph{right}, but the reader can easily adapt our arguments to treat the left-hand side case. A \emph{module} is an object \(V\) of \(\cM\) together with an action \(\mu_V \colon V \ot B \to V\) which is associative and unital. A \emph{comodule} is an object \(N\) of \(\cM\) together with a coaction \(\delta_N \colon N \to N \ot B\) which is coassociative and counital. A \emph{Hopf module} is an object \(M\) of \(\cM\) which admits a module structure \((M,\mu_M)\) and a comodule structure \((M,\delta_M)\) that are compatible, in the sense that
\begin{equation}\label{eq:HopfMod}
    (\mu_M \ot B)(M \ot B \ot m)(M \ot \brd_{B,B} \ot B)(\delta_M \ot \Delta) = \delta_M\mu_M.
\end{equation}

As a matter of notation, if the context requires to stress explicitly the module or the comodule structures on a particular object \(M\), we will use a full bullet, such as \(V_\bullet\) or \(V^\bullet\), to denote a given right action or coaction respectively. For example, if \((V,\mu_V)\) is a right \(B\)-module, then \(V_\bullet^{\phantom{\bullet}} \ot B_\bullet^\bullet\) denotes the right Hopf module with regular right \(B\)-comodule structure \(V \ot \Delta\) and diagonal right \(B\)-module structure
\begin{equation}\label{eq:diagonal}
    (\mu_V \ot m)(V \ot \brd_{B,B} \ot B)(V \ot B \ot \Delta).
\end{equation}

\subsection{One-sided Hopf monoids in braided monoidal categories}\label{subsect:onesided}
     
Let \((A,m,u)\in \Mon(\cM)\) and \((C,\Delta, \varepsilon)\in\Comon(\cM)\). The convolution product of two morphisms \(f,g\colon C\to A\) in \(\cM\) is defined by \(f*g\coloneqq m (f\ot g)  \Delta\). The following definition is a straightforward extension of \cite[\S1]{GreenNicholsTaft}.

\begin{definition}\label{def:one-sided}
   A \emph{right Hopf monoid} in \(\cM\) is a bimonoid in \(\cM\) equipped with a \emph{right antipode}, i.e., a morphism \(S \colon B \to B\) such that
    \begin{equation}\label{eq:right-antipode}
        \id_B * S = u\varepsilon.
    \end{equation}
    A morphism \(f \colon  B \to B'\) of right Hopf monoids in \(\cM\) is a morphism of bimonoids in \(\cM\) such that 
    \begin{equation}\label{eq:morph-onesided}
        fS_B=S_{B'} f.
    \end{equation}
    Right Hopf monoids in \(\cM\) and their morphisms form a category that we denote by \(\rHopf(\cM)\). Similarly, one defines \emph{left Hopf monoids} in \(\cM\) and their morphisms.  A \emph{Hopf monoid} in \(\cM\) is a bimonoid which is at the same time a left and a right Hopf monoid. 

    A right antipode is said to be anti-multiplicative and anti-comultiplicative if 
    \begin{equation}\label{eq:antibimon}
        Sm_B = m_B\brd_{B,B}(S \ot S) \qquad \text{and} \qquad (S \ot S)\brd_{B,B}\Delta_B = \Delta_B S
    \end{equation}
    respectively. The full subcategory of right Hopf monoids whose right antipode is anti-multiplicative and anti-comultiplicative will be denoted by \(\underline{\rHopf}(\cM)\).
\end{definition}

\begin{remark}
    Already in \(\cM = \Vect\), a bialgebra morphism between one-sided Hopf algebras does not necessarily preserve the one-sided antipodes, unlike what happens with ordinary Hopf algebras (see, for instance, \cite[Remark 2.9]{ArMeSa}). Therefore, if one is interested in discussing morphisms of one-sided Hopf monoids, the compatibility with the antipodes \eqref{eq:morph-onesided} shall be explicitly required (see, e.g., \cite[p.~400]{GreenNicholsTaft}).
\end{remark}

As the accustomed reader already expects, considering one-sided convolution inverses of the identity is as natural as considering split monos or split epis. The next result relates the existence of one-sided antipodes with one-sided inverses of the canonical Hopf--Galois map.

\begin{proposition}\label{prop:one_sided_ant_can}
     Let \((\cM,\ot,\I,\brd)\) be a braided monoidal category. 
     Let \((B,m, u, \Delta,\varepsilon)\) be a bimonoid in \(\cM\).
     Then, there is an isomorphism of monoids
     \[
    \xymatrix @R=0pt{
    (\Hom(B,B),*,u\varepsilon) \ar@{<->}[r] & \left(\prescript{B}{}{\Hom}_B(\prescript{\bullet}{}{B} \ot B_\bullet, \prescript{\bullet}{}{B} \ot B_\bullet), \circ ,\id \right) \\
    f \ar@{|->}[r] & (B \ot m)(B \ot f \ot B)(\Delta \ot B) \\
    \clambda_B(\varepsilon \ot B)g(B \ot u)\crho_B^{-1} & g \ar@{|->}[l]
    }
    \]
    under which \(\id_B\) corresponds to \(\mathsf{can} = (B \ot m)(\Delta \ot B) \colon B \ot B \to B \ot B\). Therefore, there is a bijective correspondence between left/right/two-sided antipodes on \(B\) and retractions/sections/inverses of \(\can\).
 \begin{invisible}   
    In particular, the following are equivalent:
        \begin{enumerate}[label=\((\roman*)\)]
            \item\label{item:B_right_hopf} \(B\) is a right Hopf monoid in \(\cM\) with anti-multiplicative and anti-comultiplicative right antipode \(S\); 
        
            \item\label{item:can_split}  the map \(\can\colon B\ot B\to B\ot B\) is a split epimorphism in \({}^B\!\mathcal{M}_B\) with respect to the regular action \(B\ot m\) and the regular coaction \(\Delta\ot B\), and with splitting \(\can^\dagger\) such that \(\clambda_B(\varepsilon\ot B)\can^\dagger(B\ot u)\crho_B^{-1}\) is anti-multiplicative and anti-comultiplicative.  
        \end{enumerate}
        \end{invisible}
\end{proposition}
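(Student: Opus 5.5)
Write \(\Phi(f) \coloneqq (B \ot m)(B \ot f \ot B)(\Delta \ot B)\) and \(\Psi(g) \coloneqq \clambda_B(\varepsilon \ot B)g(B \ot u)\crho_B^{-1}\). Here \(\prescript{\bullet}{}{B} \ot B_\bullet\) carries the left coaction \(\Delta \ot B\) and the right action \(B \ot m\). No braiding enters these structures, so the proof only uses the monoid and comonoid axioms, naturality and the unit constraints. The plan is to check, in order:
\begin{enumerate*}[label=(\alph*)]
\item \(\Phi(f)\) is left \(B\)-colinear and right \(B\)-linear;
\item \(\Psi\Phi = \id\);
\item \(\Phi\Psi = \id\);
\item \(\Phi\) converts convolution into composition and sends \(u\varepsilon\) to \(\id\) and \(\id_B\) to \(\can\).
\end{enumerate*}
The statement on antipodes is then formal. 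In a monoid isomorphism, elements \(S\) with \(\id_B * S = u\varepsilon\) correspond to \(g\) with \(\can \circ g = \id\), that is, sections of \(\can\). Likewise left antipodes correspond to retractions, and two-sided antipodes to inverses.

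For (a): linearity is immediate, since \(\Phi(f)\) multiplies the right leg by \(m\) after everything else, and \(m\) is associative. Colinearity follows from coassociativity of \(\Delta\), because the coaction \(\Delta \ot B\) only touches the first leg, on which \(\Phi(f)\) applies \(\Delta\) and keeps the first factor. For (b): precomposing \(\Phi(f)\) with \(B \ot u\) and using the unit axiom gives \((B \ot f)\Delta\). Applying \(\varepsilon \ot B\) and counitality then gives \(f\), up to the unit constraints \(\clambda,\crho\); these are tracked via naturality and the triangle axiom.

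Step (c) is the main point. Given a bicomodule–module map \(g\), right linearity gives \(g = (B \ot m)(g(B \ot u)\crho_B^{-1} \ot B)\), so \(g\) is determined by \(h \coloneqq g(B \ot u)\crho_B^{-1} \colon B \to B \ot B\). Left colinearity of \(g\) passes to \(h\) and reads \((\Delta \ot B)h = (B \ot h)\Delta\). Counitality then gives \(h = (B \ot \clambda_B(\varepsilon \ot B)h)\Delta\), up to constraints. Substituting back yields \(g = \Phi(\Psi(g))\). This is the fundamental-theorem-type argument, and it is where I expect the most care with unitors. The safest route is to work as if \(\cM\) were strict, justified by coherence.

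For (d): the equality \(\Phi(f)\Phi(f') = \Phi(f * f')\) follows by expanding the left side, moving \(\Delta\) past \(B \ot m\) using only that the coaction is on the first leg, applying coassociativity to obtain \((\Delta \ot B)\Delta\)-type terms, and finally using associativity of \(m\). Note the order: \(\Phi(f)\) is applied after \(\Phi(f')\), so with the convention \(f*g = m(f\ot g)\Delta\) one obtains \(\Phi(f')\circ\Phi(f) = \Phi(f * f')\) or its mirror. I would verify which order holds and adjust by taking the opposite monoid if needed. This affects which side (left or right antipode) matches sections or retractions, but not the bijection itself. Finally, \(\Phi(u\varepsilon) = \id\) by the unit and counit axioms, and \(\Phi(\id_B) = \can\) by definition.
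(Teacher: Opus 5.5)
Your proposal is correct and is essentially the paper's proof. The paper obtains the bijection by composing two adjunction bijections: the cofree-comodule one, \(f \mapsto (B \ot f)\Delta\), and the free-module one, \(h \mapsto (B \ot m)(h \ot B)\). Your steps (b)--(c) verify by hand that these are inverse, and the paper's multiplicativity check is the same expansion as your (d).

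The only thing to fix is your hesitation about the order in (d). Moving \(\Delta \ot B\) past \(B \ot m\) and then using coassociativity and associativity gives \(\Phi(f)\circ\Phi(f') = \Phi(f * f')\), exactly as you first wrote. So \(\Phi\) is a monoid isomorphism and no opposite monoid is needed. Consequently right antipodes correspond to sections of \(\can\) and left antipodes to retractions, as stated.
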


\begin{proof}
Since the forgetful functor \(U \colon {}^B\!\mathcal{M} \to \mathcal{M}\) admits the right adjoint \(B \ot -\), we have a bijection
\[\Hom(B,B) = \Hom(U({}^\bullet\!B),B) \cong {}^B\Hom({}^{\bullet}\!B,{}^{\bullet}\!B \ot B), \qquad f \mapsto (B \ot f)\Delta,\]
with explicit inverse given by the assignment
\(g \mapsto \clambda_B (\varepsilon \ot B)g.\)
\begin{invisible}
    Indeed,
    \[(\Delta \ot B)(B \ot f)\Delta = (B \ot B \ot f)(B \ot \Delta) \Delta\]
    so that \((B \ot f)\Delta\) is colinear. Moreover,
    \[\clambda_B(\varepsilon \ot B)(B \ot f)\Delta = f\]
    and
    \[(B \ot \clambda_B)(B \ot \varepsilon \ot B)(B \ot g) \Delta = (B \ot \clambda_B)(B \ot \varepsilon \ot B)(\Delta \ot B) = (\crho_B \ot B)(B \ot \varepsilon \ot B)(\Delta \ot B)g = g.\]
\end{invisible}%
Furthermore, since \(B\) is a monoid in \({}^B\! \mathcal{M}\), the forgetful functor \(U'\colon {}^B\! \mathcal{M}_B \to {}^B\!\mathcal{M}\) admits the left adjoint \(- \ot B\) and so we have the bijection
\[{}^B\Hom({}^{\bullet}\!B,{}^{\bullet}\!B \ot B) = {}^B\Hom({}^{\bullet}\!B,U'({}^{\bullet}\!B \ot B_\bullet)) \cong {}^B\Hom_B({}^{\bullet}\!B \ot B_\bullet,{}^{\bullet}\!B \ot B_\bullet)\]
mapping
\[f \mapsto (B \ot m)(f \ot B)\]
and with explicit inverse given by the assignment
\[g \mapsto g(B \ot u)\crho^{-1}_B.\]
Now, a direct computation shows that
\begin{align*}
    & (B \ot m)(B \ot g \ot B)(\Delta \ot B)(B \ot m)(B \ot f \ot B)(\Delta \ot B) \\
    & = (B \ot m)(B \ot g \ot B)(B \ot B \ot m)(\Delta \ot B \ot B)(B \ot f \ot B)(\Delta \ot B) \\
    & = (B \ot m)(B \ot B \ot m)(B \ot g \ot B \ot B)(B \ot B \ot f \ot B)(\Delta \ot B \ot B)(\Delta \ot B) \\
    & = (B \ot m)(B \ot m \ot B)(B \ot g \ot f \ot B)(B \ot \Delta \ot B)(\Delta \ot B) \\
    & = (B \ot m)(B \ot g * f \ot B)(\Delta \ot B).
\end{align*}
Clearly, the image of \(\id\) is \(\can\).
\begin{invisible}
The equivalence at the end of the statement follows from the fact that
\[S = \clambda_B(\varepsilon\ot B)\can^\dagger(B\ot u)\crho_B^{-1}\]
under the given bijective correspondence.
\end{invisible}%
\end{proof}

One of the interesting features of one-sided Hopf monoids is that they naturally lift monoidal and closed structures to their categories of modules along the forgetful functors, even if these do not give rise to closed monoidal structures in the sense of \cite{Eilenbrg-Kelly}.

\begin{remark}
    Suppose that \((H,m,u,\Delta,\varepsilon,S)\) is a right Hopf monoid in the symmetric closed monoidal category \((\cM,\ot,\I,\brd,[-])\) with anti-multiplicative and anti-comultiplicative right antipode \(S\). There exists a unique morphism
    \(s_{X,Y} \colon H \ot \left[H \ot X,Y\right] \to \left[X,H \ot Y\right]\) such that
    \[
        \xymatrix @C=55pt @L=7pt {
            H \ot \left[H \ot X,Y\right] \ot X \ar[r]^-{(H \ot S)\Delta \ot [H \! \ot \! X,Y] \ot X} \ar[d]_-{s_{X,Y} \ot X} & H \ot H \ot \left[H \ot X,Y\right] \ot X \ar[r]^-{H \ot \brd_{H,[H \! \ot \! X,Y]} \ot X} & H \ot \left[H \ot X,Y\right] \ot H \ot X  \ar[d]^-{H \ot \ev^{H \ot X}_Y} \\
            \left[X,H \ot Y\right] \ot X \ar[rr]_-{\ev^X_{H \ot Y}} & & H \ot Y 
        }
    \]
    commutes which moreover is natural in $X$ and $Y$ in $\cM$. 
    \begin{invisible}
        The following diagram shows that $s$ is natural in both entries
        {\tiny
        \[\hspace{-2cm}\begin{tikzcd}[ampersand replacement=\&,cramped]
        	{\left[X',H \ot Y'\right] \ot X} \&\&\& {H \ot \left[H \ot X',Y'\right] \ot X} \& {H \ot \left[H \ot X,Y\right] \ot X} \& {\left[X,H \ot Y\right] \ot X} \\
        	\&\& {H \ot \left[H \ot X',Y'\right] \ot X'} \& {H \ot H \ot \left[H \ot X',Y'\right] \ot X} \& {H \ot H \ot \left[H \ot X,Y\right] \ot X} \\
        	\&\&\& {H \ot \left[H \ot X',Y'\right] \ot H \ot X} \& {H \ot \left[H \ot X,Y\right] \ot H \ot X} \\
        	\&\& {H \ot H \ot \left[H \ot X',Y'\right] \ot X'} \& {H \ot \left[H \ot X',Y'\right] \ot H \ot X} \& {H \ot \left[H \ot X,Y\right] \ot H \ot X} \& {H \ot Y} \\
        	\&\& {H \ot \left[H \ot X',Y'\right] \ot H \ot X'} \& {H \ot \left[H \ot X',Y'\right] \ot H \ot X'} \& {H \ot Y'} \\
        	\& {\left[X',H \ot Y'\right] \ot X'} \\
        	{\left[X,H \ot Y'\right] \ot X} \&\&\&\&\& {\left[X,H \ot Y\right] \ot X}
        	\arrow[""{name=0, anchor=center, inner sep=0}, "{\left[X',H \ot Y'\right] \ot f}"{description}, from=1-1, to=6-2]
        	\arrow["{\left[f,H \ot Y'\right] \ot X}"{description}, from=1-1, to=7-1]
        	\arrow[""{name=1, anchor=center, inner sep=0}, "{s_{X',Y'} \ot X}"', from=1-4, to=1-1]
        	\arrow["{H \ot \left[H \ot f,g\right] \ot X}", from=1-4, to=1-5]
        	\arrow["{H \ot \left[H \ot X',Y'\right] \ot f}"{description}, from=1-4, to=2-3]
        	\arrow["{\Delta \ot \left[H \ot X',Y'\right] \ot X}"{description}, from=1-4, to=2-4]
        	\arrow["{{\color{gray}\equiv}}"{description}, draw=none, from=1-4, to=4-5]
        	\arrow["{s_{X, Y} \ot X}", from=1-5, to=1-6]
        	\arrow["{\Delta \ot \left[H \ot X,Y\right] \ot X}"{description}, from=1-5, to=2-5]
        	\arrow["{{\color{gray}\text{def }s}}"{description}, draw=none, from=1-5, to=4-6]
        	\arrow["{\ev^X_{H \ot Y}}"{description}, from=1-6, to=4-6]
        	\arrow["{\Delta \ot \left[H \ot X',Y'\right] \ot X'}"{description}, from=2-3, to=4-3]
        	\arrow["{{\color{gray}\equiv}}"{description}, draw=none, from=2-3, to=5-4]
        	\arrow["{s_{X',Y'} \ot  X'}"{description}, curve={height=24pt}, from=2-3, to=6-2]
        	\arrow["{H \ot \brd_{H,\left[H \ot X',Y'\right]} \ot X}"{description}, from=2-4, to=3-4]
        	\arrow["{H \ot \brd_{H,\left[H \ot X,Y\right]} \ot X}"{description}, from=2-5, to=3-5]
        	\arrow["{H \ot \left[H \ot X',Y'\right] \ot S \ot X}"{description}, from=3-4, to=4-4]
        	\arrow["{H \ot \left[H \ot X,Y\right] \ot S \ot X}"{description}, from=3-5, to=4-5]
        	\arrow["{H \ot \brd_{H,\left[H \ot X',Y'\right]} \ot X'}"{description}, from=4-3, to=5-3]
        	\arrow["{H \ot \left[H \ot f,g\right] \ot H \ot X}", from=4-4, to=4-5]
        	\arrow["{H \ot \left[H \ot X',Y'\right] \ot H \ot f}"{description}, from=4-4, to=5-4]
        	\arrow["{H \ot \ev^{H \ot X}_{Y}}", from=4-5, to=4-6]
        	\arrow["{H \ot \left[H \ot X',Y'\right] \ot S \ot X'}", from=5-3, to=5-4]
        	\arrow["{{\color{gray}\text{naturality and dinaturality}}}"{description}, draw=none, from=5-4, to=4-6]
        	\arrow["{H \ot \ev^{H \ot X'}_{Y'}}"{description}, from=5-4, to=5-5]
        	\arrow["{H \ot g}"', from=5-5, to=4-6]
        	\arrow["{{\color{gray}\text{naturality}}}"{description}, draw=none, from=5-5, to=7-6]
        	\arrow["{{\color{gray}\text{def }s}}"{description}, draw=none, from=6-2, to=5-3]
        	\arrow["{\ev^{X'}_{H \ot Y'}}"{description}, curve={height=12pt}, from=6-2, to=5-5]
        	\arrow["{\ev^{X}_{H \ot Y'}}"{description}, curve={height=12pt}, from=7-1, to=5-5]
        	\arrow["{{\color{gray}\text{dinaturality}}}"{description}, draw=none, from=7-1, to=6-2]
        	\arrow["{\left[X,H \ot g\right] \ot X}"{description}, from=7-1, to=7-6]
        	\arrow["{\ev^{X}_{H \ot Y}}"{description}, from=7-6, to=4-6]
        	\arrow["{{\color{gray}\equiv}}"{description}, draw=none, from=1, to=0]
        \end{tikzcd}\]
        }%
    \end{invisible}%
    The natural transformation $s$ plus the counit $\varepsilon$ equip $H \ot -$ with a structure of gabi-monad \cite[Definition 3.3]{halbig2026gabimonads}, so that $\prescript{}{H}{\cM}$ becomes a skew-closed category in such a way that the forgetful functor to $\cM$ is strictly closed (see \cite[Theorem 3.6]{halbig2026gabimonads} and \cite[Theorem 3.4]{BergerSaraccoVercruysse}). 
    In fact, the mate of $s$ under the tensor-hom adjunction as in \cite[\S3.2]{BergerSaraccoVercruysse} is the natural transformation $t_{X,Y} \colon H \ot X \ot Y \to H \ot X \ot H \ot Y$ given by the composition
    \[H \ot X \ot Y \xrightarrow{\Delta \ot X \ot Y} H \ot H \ot X \ot Y \xrightarrow{H \ot \brd_{H,X} \ot Y} H \ot X \ot H \ot Y \xrightarrow{H \ot X \ot S \ot Y} H \ot X \ot H \ot Y,\]
    \begin{invisible}
        The following diagram shows that $t$ is indeed the mate of $s$:
        \[\begin{tikzcd}[ampersand replacement=\&,column sep=large]
        	{H \ot X \ot Y } \& {H \ot \left[H \ot Y,X \ot H \ot Y\right] \ot Y} \& {\left[Y,H \ot X \ot H \ot Y\right] \ot Y} \\
        	{H \ot H \ot X \ot Y } \& {H \ot H \ot \left[H \ot Y,X \ot H \ot Y\right] \ot Y} \\
        	{H \ot X \ot H \ot Y } \& {H \ot \left[H \ot Y,X \ot H \ot Y\right] \ot H \ot Y} \\
        	{H \ot X \ot H \ot Y } \& {H \ot \left[H \ot Y,X \ot H \ot Y\right] \ot H \ot Y} \& {H \ot X \ot H \ot Y}
        	\arrow["{H \ot \db^{H \ot Y}_X \ot Y }", from=1-1, to=1-2]
        	\arrow["{\Delta \ot X \ot Y }"{description}, from=1-1, to=2-1]
        	\arrow["{s_{Y,X \ot H \ot Y} \ot Y}", from=1-2, to=1-3]
        	\arrow["{\Delta \ot \left[H \ot Y,X \ot H \ot Y\right] \ot Y}"{description}, from=1-2, to=2-2]
        	\arrow["{\ev^Y_{H \ot X \ot H \ot Y}}"{description}, from=1-3, to=4-3]
        	\arrow["{H \ot H \ot \db^{H \ot Y}_X \ot Y }", from=2-1, to=2-2]
        	\arrow["{H \ot \brd_{H,X} \ot Y}"{description}, from=2-1, to=3-1]
        	\arrow["{H \ot \brd_{H,\left[H \ot Y,X \ot H \ot Y\right]} \ot Y}"{description}, from=2-2, to=3-2]
        	\arrow["{H \ot \db^{H \ot Y}_X \ot H \ot Y }", from=3-1, to=3-2]
        	\arrow["{H \ot X \ot S \ot Y }"{description}, from=3-1, to=4-1]
        	\arrow["{H \ot \left[H \ot Y,X \ot H \ot Y\right] \ot S \ot Y}"{description}, from=3-2, to=4-2]
        	\arrow["{H \ot \db^{H \ot Y}_X \ot H \ot Y }", from=4-1, to=4-2]
        	\arrow["{=}", curve={height=30pt}, from=4-1, to=4-3]
        	\arrow["{H \ot \ev^{H \ot Y}_{X \ot H \ot Y}}", from=4-2, to=4-3]
        \end{tikzcd}\]
    \end{invisible}%
    \begin{invisible}
        The next diagrams show that $t$ satisfies the conditions from \cite[Remark 3.8]{BergerSaraccoVercruysse} to have a gabi-monad:
        \[\begin{tikzcd}[ampersand replacement=\&,cramped]
        	\&\&\& {H \ot X \ot Y} \\
        	{X \ot Y} \&\&\& {H \ot H \ot X \ot Y} \\
        	\&\&\& {H \ot X \ot H \ot Y} \\
        	\&\&\& {H \ot X \ot H \ot Y}
        	\arrow["{\Delta \ot X \ot Y}", from=1-4, to=2-4]
        	\arrow["{u \ot X \ot Y}"{description}, curve={height=-12pt}, from=2-1, to=1-4]
        	\arrow["{u \ot u \ot X \ot Y}"{description}, from=2-1, to=2-4]
        	\arrow["{u \ot X \ot u \ot Y}"{description}, curve={height=6pt}, from=2-1, to=3-4]
        	\arrow["{u \ot X \ot u \ot Y}"{description}, curve={height=18pt}, from=2-1, to=4-4]
        	\arrow["{H \ot \brd_{H,X} \ot Y}", from=2-4, to=3-4]
        	\arrow["{H \ot X \ot S \ot Y}", from=3-4, to=4-4]
        \end{tikzcd}\]
        is \cite[(3.19)]{BergerSaraccoVercruysse};
        {\tiny
        \[\begin{tikzcd}[ampersand replacement=\&,cramped]
        	{H \ot H \ot H \ot X \ot Y} \&\&\& {H \ot H \ot X \ot Y} \& {H \ot X \ot Y} \\
        	\& {H \ot H \ot H \ot H \ot X \ot Y} \&\& {H \ot H \ot H \ot H \ot X \ot Y} \\
        	\&\& {H \ot H \ot H \ot H \ot X \ot Y} \& {H \ot H \ot H \ot H \ot X \ot Y} \& {H \ot H \ot X \ot Y} \\
        	\&\&\& {H \ot H \ot X \ot H \ot H \ot Y} \& {H \ot X \ot H \ot Y} \\
        	\& {H \ot H \ot H \ot X \ot Y} \& {H \ot H \ot H \ot H \ot X \ot Y} \& {H \ot H \ot X \ot H \ot H \ot Y} \\
        	{H \ot H \ot X \ot H \ot Y} \&\& {H \ot H \ot H \ot X \ot H \ot Y} \& {H \ot H \ot X \ot H \ot H \ot Y} \& {H \ot X \ot H \ot Y}
        	\arrow["{H \ot H \ot (H \ot S)\Delta \ot X \ot Y}"{description}, from=1-1, to=2-2]
        	\arrow["{H \ot \brd_{H,H} \ot X \ot Y}"{description}, from=1-1, to=5-2]
        	\arrow[""{name=0, anchor=center, inner sep=0}, "{H \ot \brd_{H,H \ot X} \ot Y}"', from=1-1, to=6-1]
        	\arrow[""{name=1, anchor=center, inner sep=0}, "{(H \ot S)\Delta \ot H \ot X \ot Y}"', from=1-4, to=1-1]
        	\arrow["{m \ot X \ot Y}", from=1-4, to=1-5]
        	\arrow["{\Delta \ot \Delta \ot X \ot Y}"{description}, from=1-4, to=2-4]
        	\arrow["{{\color{gray}\text{bimonoid}}}"{description}, draw=none, from=1-4, to=3-5]
        	\arrow["{\Delta \ot X \ot Y}", from=1-5, to=3-5]
        	\arrow["{H \ot \brd_{H,H} \ot H \ot X \ot Y}"{description}, from=2-2, to=3-3]
        	\arrow["{{\color{gray}\text{naturality}}}"{description}, draw=none, from=2-2, to=5-2]
        	\arrow[""{name=2, anchor=center, inner sep=0}, "{H \ot \brd_{H,H \ot H} \ot X \ot Y}"{description}, from=2-2, to=5-3]
        	\arrow[""{name=3, anchor=center, inner sep=0}, "{H \ot S \ot H \ot S \ot X \ot Y}"', from=2-4, to=2-2]
        	\arrow["{H \ot \brd_{H,H} \ot H \ot X \ot Y}"{description}, from=2-4, to=3-4]
        	\arrow["{H \ot H \ot \brd_{H,H} \ot X \ot Y}"{description}, from=3-3, to=5-3]
        	\arrow["{{\color{gray}\text{naturality + hexagon}}}"{description}, draw=none, from=3-3, to=6-4]
        	\arrow[""{name=4, anchor=center, inner sep=0}, "{H \ot H \ot S \ot S \ot X \ot Y}"', from=3-4, to=3-3]
        	\arrow["{m \ot m \ot X \ot Y}"', from=3-4, to=3-5]
        	\arrow["{H \ot H \ot \brd_{H \ot H,X} \ot Y}"{description}, from=3-4, to=4-4]
        	\arrow["{{\color{gray}\text{naturality}}}"{description}, draw=none, from=3-4, to=4-5]
        	\arrow["{H \ot \brd_{H,X} \ot Y}", from=3-5, to=4-5]
        	\arrow["{m \ot X \ot m \ot Y}"', from=4-4, to=4-5]
        	\arrow["{H \ot H \ot X \ot S \ot S \ot Y}"{description}, from=4-4, to=5-4]
        	\arrow["{{\color{gray}\text{anti-multiplicativity}}}"{description}, draw=none, from=4-4, to=6-5]
        	\arrow["{H \ot X \ot S \ot Y}", from=4-5, to=6-5]
        	\arrow["{H \ot (H \ot S)\Delta \ot H \ot X \ot Y}"', from=5-2, to=5-3]
        	\arrow["{H \ot H \ot \brd_{H,X} \ot Y}"{description}, from=5-2, to=6-1]
        	\arrow["{H \ot H \ot H \ot \brd_{H,X} \ot Y}"{description}, from=5-3, to=6-3]
        	\arrow["{H \ot H \ot X \ot \brd_{H,H} \ot Y}"{description}, from=5-4, to=6-4]
        	\arrow[""{name=5, anchor=center, inner sep=0}, "{H \ot (H \ot S)\Delta \ot X \ot H \ot Y}"', from=6-1, to=6-3]
        	\arrow["{H \ot H \ot \brd_{H,X} \ot H \ot Y}"', from=6-3, to=6-4]
        	\arrow["{m \ot X \ot m \ot Y}"', from=6-4, to=6-5]
        	\arrow["{{\color{gray}\text{hexagon}}}"{description}, draw=none, from=0, to=5-2]
        	\arrow["{{\color{gray}\equiv}}"{description}, draw=none, from=1, to=3]
        	\arrow["{{\color{gray}\text{naturality}}}"{description}, draw=none, from=3, to=4]
        	\arrow["{{\color{gray}\text{hexagom}}}"{description}, draw=none, from=3-3, to=2]
        	\arrow["{{\color{gray}\text{naturality}}}"{description}, draw=none, from=5-2, to=5]
        \end{tikzcd}\]
        }%
        is \cite[(3.20)]{BergerSaraccoVercruysse};
        \[\begin{tikzcd}[ampersand replacement=\&,cramped,sep=3em]
        	{H \ot X} \&\&\&\& {H \ot X \ot \I} \\
        	\\
        	\&\&\& {H \ot X \ot \I \ot \I} \\
        	{H \ot X \ot \I} \&\& {H \ot \I \ot X \ot \I} \\
        	{H \ot H \ot X \ot \I} \&\& {H \ot H \ot X \ot \I} \&\& {H \ot X \ot H \ot \I}
        	\arrow[""{name=0, anchor=center, inner sep=0}, draw=none, from=5-1, to=4-3]
        	\arrow["{{\color{gray}\text{counitality}}}"{description}, draw=none, from=4-1, to=0]
        	\arrow["{{\color{gray}S\text{ counital}}}"{description}, draw=none, from=5-3, to=0]
        	\arrow["{H \ot \varepsilon \ot X \ot \I}"{description}, from=5-1, to=4-3]
        	\arrow["{H \ot \crho_X^{-1}}", from=1-1, to=1-5]
        	\arrow["{\crho_{H \ot X}^{-1} = H \ot \crho_X^{-1}}"{description}, from=1-1, to=4-1]
        	\arrow["{H \ot X \ot m_\I = H \ot \crho_X \ot \I}"{description}, shift right, from=3-4, to=1-5]
        	\arrow["{=}"{description}, curve={height=-24pt}, from=4-1, to=1-5]
        	\arrow["{{\color{gray}\text{triangle identity}}}"{description, pos=0.3}, draw=none, from=4-1, to=1-5]
        	\arrow["{\crho_H^{-1} \ot X \ot \I}", from=4-1, to=4-3]
        	\arrow["{\Delta \ot X \ot \I}"', from=4-1, to=5-1]
        	\arrow["{H \ot \clambda_X \ot \I}"{description, pos=0.3}, curve={height=-24pt}, from=4-3, to=1-5]
        	\arrow["{{\color{gray}\eqref{eq:braidI}}}"{description}, draw=none, from=4-3, to=1-5]
        	\arrow["{H \ot \brd_{\I,X} \ot \I}"{description}, from=4-3, to=3-4]
        	\arrow["{H \ot S \ot X \ot \I}"', from=5-1, to=5-3]
        	\arrow["{H \ot \varepsilon \ot X \ot \I}"{description}, from=5-3, to=4-3]
        	\arrow["{H \ot \brd_{H,X} \ot \I}"', from=5-3, to=5-5]
        	\arrow[""{name=1, anchor=center, inner sep=0}, "{H \ot X \ot m_\I(\varepsilon \ot \I)}"', from=5-5, to=1-5]
        	\arrow["{{\color{gray}\text{naturality}}}"{description}, draw=none, from=5-3, to=1]
        \end{tikzcd}\]
        is \cite[(3.21)]{BergerSaraccoVercruysse};
        \[\begin{tikzcd}[ampersand replacement=\&,cramped,sep=3.5em]
        	{H \ot \I \ot M} \&\&\& \\
        	\& {H \ot M} \& {\I \ot M} \\
        	{H \ot H\ot \I \ot M} \& {H \ot H\ot M} \\
        	{H \ot H\ot \I \ot M} \\
        	{H\ot \I \ot H \ot M} \& {H\ot H \ot M} \& {H \ot M} \& M
        	\arrow["{H \ot \crho_{H} \ot M}"{description}, from=1-1, to=2-2]
        	\arrow[""{name=0, anchor=center, inner sep=0}, "{\mu_\I \ot M}"{description}, curve={height=-12pt}, from=1-1, to=2-3]
        	\arrow["{\Delta \ot \I \ot M}"{description}, from=1-1, to=3-1]
        	\arrow["{{\color{gray}\text{naturality}}}"{description}, draw=none, from=1-1, to=5-2]
        	\arrow["{\varepsilon \ot M}", from=2-2, to=2-3]
        	\arrow["{\Delta \ot M}"{description}, from=2-2, to=3-2]
        	\arrow["{{\color{gray}\text{right antipode}}}"{description}, draw=none, from=2-2, to=5-3]
        	\arrow["{u \ot M}"{description}, from=2-3, to=5-3]
        	\arrow[""{name=1, anchor=center, inner sep=0}, "{\clambda_M}", from=2-3, to=5-4]
        	\arrow["{H \ot S \ot \I \ot M}"{description}, from=3-1, to=4-1]
        	\arrow["{H \ot S \ot M}"{description}, from=3-2, to=5-2]
        	\arrow["{H \ot \brd_{H,\I} \ot M}"', from=4-1, to=5-1]
        	\arrow[""{name=2, anchor=center, inner sep=0}, "{H \ot \crho_{H} \ot M}"{description}, from=4-1, to=5-2]
        	\arrow["{H \ot \clambda_{H \ot M}}"', from=5-1, to=5-2]
        	\arrow["{m \ot M}"', from=5-2, to=5-3]
        	\arrow["{\mu_M}"', from=5-3, to=5-4]
        	\arrow["{{\color{gray}\text{def } \mu_\I}}"{description}, draw=none, from=2-2, to=0]
        	\arrow["{{\color{gray}\eqref{eq:braidI}}}"{description}, draw=none, from=5-1, to=2]
        	\arrow["{{\color{gray}\text{unitality}}}"{description}, draw=none, from=5-3, to=1]
        \end{tikzcd}\]
        is \cite[(3.22)]{BergerSaraccoVercruysse};
        \[\rotatebox{90}{\tiny
        \begin{tikzcd}[ampersand replacement=\&,cramped]
        	{H \, X \, Y \, M} \&\& {H \, H \, X \, Y \, M} \&\&\&\& {H \, X \, H \, Y \, M} \&\& {H \, X \, H \, Y \, M} \\
        	\&\&\& {H \, \I \, H \, X \, Y \, M} \&\& {H \, X \, \I \, H \, Y \, M} \&\& {H \, X \, H \, Y \, \I \, M} \\
        	\&\& {H \, H \, H \, X \, Y \, M} \& {H \, X \, H \, H \, Y \, M} \&\&\& {H \, X \, H \, Y \, H \, M} \& {H \, X \, H \, Y \, H \, M} \& {H \, X \, H \, Y \, H \, M} \\
        	{H \, H \, X \, Y \, M} \&\& {H \, H \, H \, H \, X \, Y \, M} \&\& {H \, X \, H \, H \, H \, Y \, M} \&\& {H \, X \, H \, Y \, H \, H \, M} \& {H \, X \, H \, Y \, H \, H \, M} \\
        	\& {H \, H \, H \, X \, Y \, M} \&\& {H \, H \, X \, H \, H \, Y \, M} \\
        	\& {H \, H \, H \, X \, Y \, M} \& {H \, X \, H \, H \, Y \, M} \&\& {H \, X \, H \, H \, Y \, H \, M} \&\& {H \, X \, H \, H \, Y \, H \, M} \& {H \, X \, H \, Y \, H \, H \, M} \& {H \, X \, H \, Y \, H \, H \, M} \\
        	\&\&\& {H \, H \, X \, H \, H \, Y \, M} \&\& {H \, X \, H \, H \, Y \, H \, M} \&\&\& {H \, X \, H \, H \, Y \, H \, M} \\
        	{H \, H \, X \, Y \, M} \&\& {H \, H \, H \, X \, Y \, M} \& {H \, X \, H \, H \, Y \, M} \& {H \, H \, X \, H \, H \, Y \, M} \&\& {H \, H \, X \, H \, H \, Y \, M} \&\& {H \, X \, H \, H \, Y \, H \, M} \\
        	{H \, X \, H \, Y \, M} \&\& {H \, H \, X \, H \, Y \, M} \&\& {H \, H \, X \, H \, Y \, M} \&\&\&\& {H \, X \, H \, Y \, H \, M}
        	\arrow["{\Delta \, X \, Y \, M}", from=1-1, to=1-3]
        	\arrow["{\Delta \, X \, Y \, M}"{description}, from=1-1, to=4-1]
        	\arrow["{H \, \brd_{H,X} \, Y \, M}"{description}, from=1-3, to=1-7]
        	\arrow["{H \, \clambda_H^{-1} \, X \, Y \, M}"{description}, from=1-3, to=2-4]
        	\arrow["{H \, \Delta \, X \, Y \, M}"{description}, from=1-3, to=3-3]
        	\arrow["{H \, X \, S \, Y \, M}"{description}, from=1-7, to=1-9]
        	\arrow["{H \, \brd_{\I \, H,X} \, Y \, M}"{description}, from=2-4, to=2-6]
        	\arrow["{H \, X \, \clambda_H \, Y \, M}"{description}, from=2-6, to=1-7]
        	\arrow["{H \, X \, \brd_{\I,H \, Y} \, M}"{description}, from=2-6, to=2-8]
        	\arrow["{H \, X \, S \, Y \, \crho_M}"{description}, from=2-8, to=1-9]
        	\arrow["{H \, X \, S \, Y \, u \, M}"{description, pos=0.4}, from=2-8, to=3-8]
        	\arrow["{H \, X \, S \, Y \, u \, M}", curve={height=-12pt}, from=2-8, to=3-9]
        	\arrow["{H \, \varepsilon \, H \, X \, Y \, M}"{description}, from=3-3, to=2-4]
        	\arrow["{H \, \brd_{H \, H,X} \, Y \, M}"', from=3-3, to=3-4]
        	\arrow["{H \, \Delta \, H \, X \, Y \, M}"{description}, from=3-3, to=4-3]
        	\arrow["{H \, X \, \brd_{H,H \, Y} \, M}"{description}, from=3-4, to=3-7]
        	\arrow["{H \, X \, \Delta \, H \, Y \, M}"{description}, from=3-4, to=4-5]
        	\arrow["{H \, X \, S \, Y \, \varepsilon \, M}"{description, pos=0.4}, from=3-7, to=2-8]
        	\arrow["{H \, X \, H \, Y \, \Delta \, M}"{description}, from=3-7, to=4-7]
        	\arrow["{H \, X \, H \, Y \, S \, M}"', from=3-8, to=3-9]
        	\arrow["{H \, X \, H \, Y \, \mu_M}"{description}, from=3-9, to=1-9]
        	\arrow["{H \, \Delta \, X \, Y \, M}"{description}, from=4-1, to=5-2]
        	\arrow["{H \, S \, X \, Y \, M}"{description}, from=4-1, to=8-1]
        	\arrow["{H \, \brd_{H \, H \, H,X} \, Y \, M}"{description}, from=4-3, to=4-5]
        	\arrow["{H \, H \, \brd_{H \, H,X} \, Y \, M}"{description}, from=4-3, to=5-4]
        	\arrow["{H \, X \, \brd_{H \, H,H \, Y} \, M}"{description}, from=4-5, to=4-7]
        	\arrow["{H \, X \, \brd_{H,H \, H \, Y} \, M}"{description}, from=4-5, to=6-5]
        	\arrow["{H \, X \, S \, Y \, H \, S \, M}", from=4-7, to=4-8]
        	\arrow["{H \, X \, H \, Y \, m \, M}"{description}, from=4-8, to=3-8]
        	\arrow["{\Delta \, H \, H \, X \, Y \, M}"{description}, from=5-2, to=4-3]
        	\arrow["{H \, \brd_{H,H} \, X \, Y \, M}"{description}, from=5-2, to=6-2]
        	\arrow["{H \, \brd_{H,X} \, H \, H \, Y \, M}"{description}, from=5-4, to=4-5]
        	\arrow["{H \, \brd_{H,X \, H \, H \, Y} \, M}"{description}, from=5-4, to=6-5]
        	\arrow["{H \, H \, X \, \brd_{H,H} \, Y \, M}"{description}, from=5-4, to=7-4]
        	\arrow["{H \, \brd_{H \, H,X} \, Y \, M}"', from=6-2, to=6-3]
        	\arrow["{H \, S \, S \, X \, Y \, M}"{description}, from=6-2, to=8-3]
        	\arrow["{\Delta \, X \, H \, H \, Y \, M}"{description}, from=6-3, to=7-4]
        	\arrow[""{name=0, anchor=center, inner sep=0}, "{H \, X \, \brd_{H,H \, Y \, H} \, M}"{description, pos=0.6}, from=6-5, to=4-7]
        	\arrow["{H \, X \, \brd_{H,H} \, Y \, H \, M}"{description}, from=6-5, to=7-6]
        	\arrow["{H \, X \, H \, \brd_{H,Y \, H} \, M}"{description, pos=0.3}, from=6-7, to=4-8]
        	\arrow["{H \, X \, H \, \brd_{H,Y} \, H \, M}"', from=6-7, to=6-8]
        	\arrow["{H \, X \, H \, Y \, \brd_{H,H} \, M}"{description}, from=6-8, to=4-8]
        	\arrow["{H \, X \, H \, Y \, S \, S \, M}", from=6-8, to=6-9]
        	\arrow["{H \, X \, H \, Y \, m \, M}"{description}, from=6-9, to=3-9]
        	\arrow["{H \, \brd_{H,X \, H \, H \, Y} \, M}"{description}, from=7-4, to=7-6]
        	\arrow["{H \, X \, H \, \brd_{H,Y \, H} \, M}"{description}, from=7-6, to=4-7]
        	\arrow["{H \, X \, S \, S \, Y \, H \, M}"{description}, from=7-6, to=6-7]
        	\arrow["{H \, X \, S \, S \, Y \, S \, M}"{description}, from=7-6, to=8-9]
        	\arrow["{H \, X \, H \, \brd_{H,Y} \, H \, M}"{description}, from=7-9, to=6-9]
        	\arrow["{H \, \Delta \, X \, Y \, M}"{description}, from=8-1, to=8-3]
        	\arrow["{H \, \brd_{H,X} \, Y \, M}"{description}, from=8-1, to=9-1]
        	\arrow["{H \, \brd_{H \, H,X} \, Y \, M}"', from=8-3, to=8-4]
        	\arrow["{\Delta \, X \, H \, H \, Y \, M}"', from=8-4, to=8-5]
        	\arrow["{H \, S \, X \, H \, H \, Y \, M}"{description}, from=8-5, to=8-7]
        	\arrow["{H \, \brd_{H,X \, H \, H \, Y} \, M}"{description}, from=8-7, to=8-9]
        	\arrow["{H \, X \, H \, S \, Y \, H \, M}"{description}, from=8-9, to=7-9]
        	\arrow["{\Delta \, X \, H \, Y \, M}"{description}, from=9-1, to=9-3]
        	\arrow["{H \, S \, X \, H \, Y \, M}"{description}, from=9-3, to=9-5]
        	\arrow["{H \, \brd_{H,X \, H \, Y} \, M}", from=9-5, to=9-9]
        	\arrow["{H \, X \, \Delta \, Y \, H \, M}"{description}, from=9-9, to=8-9]
        	\arrow["{{\color{gray}\text{braided diagram check + symmetry}}}"{description}, draw=none, from=4-5, to=0]
        \end{tikzcd}
        } \]
        where we omitted the $\ot$ for the sake of compactness, is \cite[(3.23)]{BergerSaraccoVercruysse}.
    \end{invisible}%
    and one can verify that $t$ satisfies the conditions \cite[(3.19)-(3.23)]{BergerSaraccoVercruysse}. Hence, $H \ot -$ is a gabi-monad and $\prescript{}{H}{\cM}$ is skew-closed with internal homs lifted from $\cM$ along the forgetful functor. For the convenience of the interested reader, (3.19) follows from unitality of $S$, (3.20) from its anti-multiplicativity, (3.21) from its counitality, (3.22) from the fact that it is a right antipode, and (3.23) from anti-multiplicativity, anti-comultiplicativity, unitality, and the fact that it is a right antipode. 
    \begin{invisible}
        The action of $H$ on the internal hom $[M,N]$ is given by the unique \(\mu_{\left[M,N\right]} \colon H \ot \left[M,N\right] \to \left[M,N\right]\) such that
        \[
            \xymatrix @C=40pt {
                H \ot \left[M,N\right] \ot M \ar[d]_-{\Delta \ot \left[M,N\right] \ot M} \ar[r]^-{\mu_{\left[M,N\right]} \ot M} & \left[M,N\right] \ot M \ar[dddd]^-{\ev^M_N} \\
                H \ot H \ot \left[M,N\right] \ot M \ar[d]_-{H \ot \brd_{H,\left[M,N\right]} \ot N} & \\ 
                H \ot \left[M,N\right] \ot H \ot M \ar[d]_-{H \ot \left[M,N\right] \ot \mu_{M}(S \ot M)} & \\
                H \ot \left[M,N\right] \ot M \ar[d]_-{H \ot \ev^M_N} & \\
                H \ot N \ar[r]_-{\mu_N} & N
            }
        \]
        commutes.
    \end{invisible}%
\end{remark}

\section{Symmetric monoidal categories: the free one-sided Hopf monoid}\label{sect:GNTsym-monoidal} 

Given a comonoid \(C\) in a symmetric monoidal category \(\cM\) satisfying some mild assumptions, we now construct a one-sided Hopf monoid \(\rH(C)\). This will be a free construction from the category of comonoids \(\Comon(\cM)\) to the category \(\underline{\rHopf}(\cM)\) of right Hopf monoids whose one-sided antipode is anti-multiplicative and anti-comultiplicative. The reader not interested in this level of generality may refer to \zcref{sec:examples}, where the same construction is unwrapped in the case where the Heyneman--Sweedler notation can be employed.

\begin{invisible}
    {\paolo
    \begin{remark}\label{rem:brd_cop_is_bialgebra}
        Let \((A,m,u)\) be a monoid, \((C,\Delta,\varepsilon)\) be a comonoid, and \((B,m,u,\Delta,\varepsilon)\) be a bimonoid in a braided monoidal category \((\cM,\ot,\I,\brd)\). The coopposites \(C^\mathrm{cop} = (C,\brd_{C,C}^{-1}\Delta,\varepsilon)\) and \(B^\mathrm{cop} = (B,m,u,\brd_{B,B}^{-1}\Delta,\varepsilon)\) are a comonoid and a bimonoid, respectively, in the braided monoidal category \(\cM^{\otimes\mathrm{op}} = (\cM,\ot,\I,\brd^{-1})\). Analogously, the opposites \(A^\mathrm{op} = (A,m\brd_{A,A}^{-1},u)\) and \(B^\mathrm{op} = (B,m\brd_{B,B}^{-1},u,\Delta,\varepsilon)\) are a monoid and a bimonoid, respectively, in \(\cM^{\otimes\mathrm{op}}\). In addition, \(B^{\mathrm{op},\mathrm{cop}}=(B,m\brd_{B,B}^{-1},u,\brd_{B,B}\Delta,\varepsilon)\) and \(B^{\mathrm{cop},\mathrm{op}}=(B,m\brd_{B,B},u,\brd_{B,B}^{-1}\Delta,\varepsilon)\) are again bimonoids in \((\cM,\ot,\I,\brd)\), and \(B^{\mathrm{op},\mathrm{op}} = B = B^{\mathrm{cop},\mathrm{cop}}\). In view of these observations, one may adapt the construction presented in the present section to the more general context of braided, but not necessarily symmetric, monoidal categories at the cost of caring about the braiding and its inverse. We decided not to pursue this line.
        
        However, if one pursues this line, the construction of the candidate one-sided antipode $S$ shall start from $\id \colon V_n \to V_{n+1}$, which is \emph{not} of comonoids, and hence the final $S$ will be a right antipode which is an anti-morphism of monoids but not necessarily of comonoids.

        The next commutative diagram shows why anti-comultiplicativity may be an issue when starting from the even degree:
        \[\begin{tikzcd}[ampersand replacement=\&,cramped]
        	\& {V_{2n+1} \otimes V_{2n+1}} \&\& {V \otimes V} \& {T(V) \otimes T(V)} \\
        	\& {V_{2n+1} \otimes V_{2n+1}} \&\& {V \otimes V} \& {T(V) \otimes T(V)} \\
        	\& {V_{2n+1}} \&\& V \& {T(V)} \\
        	\& {V_{2n}} \&\& V \& {T(V)} \\
        	{V_{2n+1}} \&\& {V_{2n} \otimes V_{2n}} \& {V\otimes V} \& {T(V) \otimes T(V)} \\
        	\& {V_{2n+1} \otimes V_{2n+1}} \&\& {V \otimes V} \& {T(V) \otimes T(V)}
        	\arrow["{\iota_{2n+1} \otimes \iota_{2n+1}}"{description}, from=1-2, to=1-4]
        	\arrow["{\iota_V \otimes \iota_V}", from=1-4, to=1-5]
        	\arrow["{\brd^{-1}_{V_{2n+1},V_{2n+1}} = \brd^{-1}_{C,C}}"{description}, from=2-2, to=1-2]
        	\arrow["{\iota_{2n+1} \otimes \iota_{2n+1}}"{description}, from=2-2, to=2-4]
        	\arrow["{\brd^{-1}_{V,V}}"', from=2-4, to=1-4]
        	\arrow["{\iota_V \otimes \iota_V}"', from=2-4, to=2-5]
        	\arrow["{\brd^{-1}}"', from=2-5, to=1-5]
        	\arrow["{\Delta_{2n+1}=\brd_{C,C}^{-1}\Delta_C}"{description}, from=3-2, to=2-2]
        	\arrow["{\iota_{2n+1}}"', from=3-2, to=3-4]
        	\arrow["{\Delta_V}"', from=3-4, to=2-4]
        	\arrow["{\iota_V}"', from=3-4, to=3-5]
        	\arrow["\Delta"', from=3-5, to=2-5]
        	\arrow["id"', from=4-2, to=3-2]
        	\arrow["{\iota_{2n}}", from=4-2, to=4-4]
        	\arrow["id"', from=4-2, to=5-1]
        	\arrow["{\Delta_{2n} = \Delta_C}"{description}, from=4-2, to=5-3]
        	\arrow["{S_V}"', from=4-4, to=3-4]
        	\arrow["{\iota_V}", from=4-4, to=4-5]
        	\arrow["{\Delta_V}", from=4-4, to=5-4]
        	\arrow["S"', from=4-5, to=3-5]
        	\arrow["\Delta", from=4-5, to=5-5]
        	\arrow["{\brd_{C,C}\Delta_{2n+1} = \Delta_C}"{description}, from=5-1, to=6-2]
        	\arrow["{\iota_{2n} \otimes \iota_{2n}}"', from=5-3, to=5-4]
        	\arrow["{id \otimes id}"{description}, from=5-3, to=6-2]
        	\arrow["{\iota_V \otimes \iota_V}", from=5-4, to=5-5]
        	\arrow["{S_V \otimes S_V}"{description}, from=5-4, to=6-4]
        	\arrow["{S \otimes S}", from=5-5, to=6-5]
        	\arrow["{\iota_{2n+1} \otimes \iota_{2n+1}}"', from=6-2, to=6-4]
        	\arrow["{\iota_V \otimes \iota_V}"', from=6-4, to=6-5]
        \end{tikzcd}\]
    \end{remark}
    }
\end{invisible}

%
\medskip 

Let \((\cM,\ot,\I,\brd)\) be a symmetric monoidal category such that \(\cM\) admits countable colimits and these are preserved by \(X \ot -\) (and hence by \(- \ot X\)) for every object \(X\) in \(\cM\).
\begin{invisible}
    We claim that the category of comonoids admits countable coproducts. Let \(\{C_n\}_{n \in \N}\) be a family of comonoids in \(\cM\). Consider their coproduct \(C \coloneqq \bigsqcup_{n \in \N}C_n\) in \(\cM\), with structure maps \(\iota_n \colon C_n \to C\). By the universal property of the coproduct, there exists a unique morphism \(\varepsilon \colon C \to \I\) in \(\cM\) such that \(\varepsilon \iota_n = \varepsilon_n\) for every \(n \in \N\). For every \(n \in \N\), consider also the composition
    \[C_n \xrightarrow{\Delta_n} C_n \ot C_n \xrightarrow{\iota_n \ot \iota_n} C \ot C\]
    in \(\cM\). By the universal property of the coproduct again, there exists a unique \(\Delta \colon C \to C \ot C\) in \(\cM\) such that \(\Delta \iota_n = (\iota_n \ot \iota_n)\Delta_n\), for every \(n \in \N\). Let us show first that \((\Delta,\varepsilon)\) is a structure of comonoid on \(C\). Since 
    \begin{align*}
        (\Delta \ot C)\Delta \iota_n & = (\Delta \ot C)(\iota_n \ot \iota_n)\Delta_n = (\iota_n \ot \iota_n \ot \iota_n)(\Delta_n \ot C_n) \Delta_n \\
        & = (\iota_n \ot \iota_n \ot \iota_n)(C_n \ot \Delta_n) \Delta_n = (C \ot \Delta)\Delta \iota_n
    \end{align*}
    for every \(n \in \N\), \(\Delta\) is coassociative. Since
    \begin{align*}
        \clambda_C(\varepsilon \ot C)\Delta \iota_n & = \clambda_C(\varepsilon \ot C)(\iota_n \ot \iota_n)\Delta_n = \clambda_C(\I \ot \iota_n)(\varepsilon_n \ot C_n)\Delta_n  \\
        & = \iota_n\clambda_{C_n}(\varepsilon_n \ot C_n)\Delta_n = \iota_n
    \end{align*}
    for every \(n \in \N\), \(\Delta\) is left counital. Similarly, \(\Delta\) is right counital. Now, let us show that the comonoid \((C,\Delta,\varepsilon)\) satisfies the universal property of the coproduct. Suppose that we have a family \(f_n \colon C_n \to D\) of comonoid morphisms. Since they are, in particular, morphisms in \(\cM\), there exists a unique morphism \(F \colon C \to D\) in \(\cM\) such that \(F\iota_n = f_n\). One can check directly that
    \begin{align*}
        \Delta_D F \iota_n & = \Delta_D f_n = (f_n \ot f_n) \Delta_{n} = (F \ot F)(\iota_n \ot \iota_n)\Delta_{n} = (F \ot F)\Delta \iota_n, \\
        \varepsilon_D F \iota_n & = \varepsilon_D f_n = \varepsilon_n = \varepsilon \iota_n,
    \end{align*}
    for every \(n \in \N\) and so \(F\) is a morphism of comonoids.
\end{invisible}

%
%
Let \(C\) be a comonoid in \(\mathcal{M}\). For \(n\geq 0\), let 
\[
    V_{2n}\coloneqq C\quad \text{and} \quad V_{2n+1}\coloneqq C^{\mathrm{cop}},\quad\text{where}\quad \Delta^\mathrm{cop}=\brd_{C,C} \Delta.
\] 

First, we consider the coproduct \(V\coloneqq \bigsqcup _{n}V_n\) in \(\cM\) with structure maps \(\iota_n \colon V_n \to V\), which is a comonoid again when equipped with the unique morphisms \(\varepsilon_V \colon V \to \I\) and \(\Delta_V \colon V \to V \ot V\) in \(\cM\) such that 
\begin{equation}\label{eq:coprod_coalg}
    \varepsilon_V \iota_n = \varepsilon_{V_n} \qquad \text{and} \qquad \Delta_V \iota_n = (\iota_n \ot \iota_n)\Delta_{V_n}
\end{equation}
for every \(n \in \N\).
%
%
\begin{invisible}
    {\color{purple}
    In fact, \(V\) is even the coproduct in \(\Comon(\cM)\), as shown in the invisible above. There are a number of way of deducing this fact from the literature without having to prove it formally. One possibility is to cite Abdulwahid and Iovanov's \cite[Proposition 2.3]{AbdulwahidIovanov}, where they simply claim that \(\Comon(\cM)\) is cocomplete and the forgetful functor preserves colimits, if \(\cM\) is cocomplete. However, they refer to the coring case in \cite{BrWi}. Another possibility is to cite Porst's \cite[2.7]{PorstMonoids}, where he claims that if \(\cM\) is admissible monoidal, then \(\Comon(\cM)\) is comonadic over \(\cM\), and hence it admits all the colimits of \(\cM\) lifted along the forgetful functor (in \cite[\S2.2, pp 2895 onwards]{Vasilakopoulou}, Porst's argument is resumed by Christina in a more comprehensible way). However, this requires a bit more than we need, because we do not care about accessibility here. Apparently, the argument is based on the idea that a cocontinuous functor from a locally presentable category automatically has a right adjoint, and by the dual of \cite[3.1 Fact]{Porst}, the forgetful functor \(\Comon(\cM) \to \cM\) is comonadic provided it has a right adjoint (a similar argument appears also in \cite[\S8.2.3]{NiuSpivak}). As far as we are concerned, our argument is sufficient.
    }%
\end{invisible}%

Then, we consider the free monoid \(T(V)=\bigsqcup_n V^{\ot n}\) in \(\mathcal{M}\); see \cite[ \S VII.3, Theorem 2]{Maclane}. 
\begin{invisible}
    It satisfies the universal property: for every morphism \(f\colon V\to A\) into a monoid \(A\) in \(\mathcal{M}\), there is a unique monoid morphism \(\hat{f}\colon T(V)\to A\) such that \(\hat{f}  \iota_V=f\), where \(\iota_V\colon V\to T(V)\) is the canonical injection.
\end{invisible}
By its universal property, there exist unique monoid morphisms \(\Delta_{T(V)} \colon T(V) \to T(V) \ot T(V)\) and \(\varepsilon_{T(V)} \colon T(V) \to \I\) in \(\mathcal{M}\) such that the following diagrams commute:
\begin{equation}\label{eq:bialgTV}
    \begin{gathered}
        \xymatrix @C=70pt {
            V \ar[r]^{\iota_V} \ar[d]_-{\Delta_V} & T(V) \ar@{.>}[d]^-{\Delta_{T(V)}} \\
            V \ot V \ar[r]_-{\iota_V \ot \iota_V} & T(V) \ot T(V)
            }
            \qquad 
            \xymatrix{
            V \ar[rr]^-{\iota_V} \ar[dr]_-{\varepsilon_V} & & T(V) \ar@{.>}[dl]^-{\varepsilon_{T(V)}} \\
            & \I & 
        }
    \end{gathered}
\end{equation}
Then, \(T(V)\) becomes a bimonoid in \(\mathcal{M}\).

Now, observe that \(V^\mathrm{cop}\cong \bigsqcup_n V_n^{\mathrm{cop}}\): indeed, in the diagram
\[
    \xymatrix @C=45pt {
        V_n \ar@{}[dr]|-{\color{gray}\eqref{eq:coprod_coalg}} \ar[d]_-{\iota_n} \ar[r]^-{\Delta_n} & V_n \ot V_n \ar@{}[dr]|-{{\color{gray}\brd \text{ natural}}} \ar[d]|-{\iota_n \ot \iota_n} \ar[r]^-{\brd_{V_n,V_n}} & V_n \ot V_n \ar[d]^-{\iota_n \ot \iota_n} \\
        V \ar[r]_-{\Delta_V} & V \ot V \ar[r]_-{\brd_{V,V}} & V\ot V
    }
\]
the left-hand side square commutes by definition of \(\Delta_V\), and the right-hand side square commutes by naturality of \(\brd\). By definition, \(\Delta_{V^\mathrm{cop}}\) is the composition of the lower row, thus it makes the large rectangle commute. By the universal property of the coproduct, there exists a unique morphism \(V\to V\ot V\) making the large rectangle commute: this morphism is by definition \(\Delta_{\bigsqcup_n V_n^{\mathrm{cop}}}\), and hence, by uniqueness, one has \(\Delta_{\bigsqcup_n V_n^{\mathrm{cop}}}=\Delta_{V^\mathrm{cop}}\).

Since \(\brd\) is a symmetry, we can consider the unique morphism of comonoids \(S_V \colon V^{\mathrm{cop}}\rightarrow V\) such that 
\begin{equation}\label{eq:defs}
    \begin{gathered}
        \begin{tikzcd}
        	{V_n^{\mathrm{cop}}} & {V^{\mathrm{cop}}=\bigsqcup_n V_n^{\mathrm{cop}}} \\
        	{V_{n+1}} & V
        	\arrow[hook, from=1-1, to=1-2]
        	\arrow[equals, from=1-1, to=2-1]
        	\arrow["S_V", dotted, from=1-2, to=2-2]
        	\arrow[hook, from=2-1, to=2-2]
        \end{tikzcd}
    \end{gathered}
\end{equation}
commutes for every \(n \geq 0\). This is morally an index-shift operator: if we were working in \(\Vect\), then \(S_V\) would be
\begin{equation*}
S_V ( 0,\ldots ,0,\underset{n}{\underbrace{v}},0,0,\ldots ) = (
0,\ldots ,0,0,\underset{n+1}{\underbrace{v}},0,\ldots )
\end{equation*}%
where on the left-hand side \(v\) is in the entry \(n\), while in the right-hand side it is in the entry \(n+1\); see \zcref{ex:Z2inZ2,ex:MatcomodZ2} in the forthcoming \zcref{sec:examples}. 

We can now lift \(S_V\) to a morphism of monoids \(S\colon T\left( V^{\mathrm{cop}}\right) \rightarrow T\left( V\right) ^{\mathrm{op}}\) by considering the composition
\begin{equation*}
    V^{\mathrm{cop}} \overset{S_V}{\longrightarrow }V\overset{\iota }{\longrightarrow }T\left( V\right)^{\mathrm{op}}.
\end{equation*}%

The following classical observation will be useful here.

\begin{remark}\label{rem:cop_is_bialgebra}
    Let \((B,m,u,\Delta,\varepsilon)\) be a bimonoid in a symmetric monoidal category \((\cM,\ot,\I,\brd)\). Then, its coopposite 
    \(B^\mathrm{cop} = (B,m,u,\brd_{B,B}\Delta,\varepsilon)\) is again a bimonoid in \((\cM,\ot,\I,\brd)\). 
\end{remark}

In symmetric monoidal categories, the free monoid \(T(D)\) over the object underlying a comonoid \(D\) is the free bimonoid over the comonoid \(D\). Therefore, by the universal property of the free bimonoid functor \(T(-)\), there exists a unique morphism of bimonoids \(S \colon T(V^{\mathrm{cop}}) \to T(V)^{\mathrm{op}}\).
Since both \(\brd_{T(V),T(V)} \Delta_{T(V)}\) and \(\Delta_{T(V^{\mathrm{cop}})}\) are monoid morphisms (by \zcref{rem:cop_is_bialgebra}) making the large rectangle 
\[
    \xymatrix @C=50pt{
        V \ar@{}[dr]|-{\color{gray}\eqref{eq:bialgTV}} \ar[d]_-{\iota_V} \ar[r]^-{\Delta_V} & V \ot V \ar@{}[dr]|-{{\color{gray}\brd \text{ natural}}} \ar[d]_-{\iota_V \ot \iota_V} \ar[r]^-{\brd_{V,V}} & V \ot V \ar[d]^-{\iota_V \ot \iota_V} \\
        T(V) \ar[r]_-{\Delta_{T(V)}} & T(V) \ot T(V) \ar[r]_-{\brd_{T(V),T(V)}} & T(V)\ot T(V)
    }
\]
commute, they are equal, and hence \(T(V)^{\mathrm{cop}} = T(V^{\mathrm{cop}})\), so that \(S\) becomes actually a morphism of bimonoids \(S \colon T(V)^{\mathrm{cop}} \to T(V)^{\mathrm{op}}\).

By following \cite{GreenNicholsTaft}, we should now perform the analogue of quotienting \(T(V)\) by the relations that turn \(S\) into a one-sided antipode. If we were to work in \(\Vect\), then our aim would be performing \(T(V)/K\), where
\[K = \stretchleftright[600]{\langle}
{\left.\begin{gathered}x_{(1)}S(x_{(2)}) - \varepsilon(x)1_{T(V)}, \\[3pt] S(y_{(1)})y_{(2)} - \varepsilon(y)1_{T(V)} \end{gathered} ~\right|~ \begin{gathered} x \in V_n, n\geq 0, \\[3pt] y \in V_m, m \geq 1 \end{gathered}}
{\rangle}.\]
This will be done in several steps.

Call \(B \coloneqq T(V)\). Consider \(W \coloneqq \bigsqcup_{n\geq 1} V_n\) and denote by \(\jmath_n \colon V_n \to W\) the associated structure morphisms:
there exists a unique map \(\gamma \colon W \to V\) such that 
\begin{equation}\label{eq:defgamma}
    \gamma \jmath_{n} = \iota_{n} \qquad \text{for any } n\geq 1
\end{equation}
induced by \(\id_{V_n}\).

\subsection*{Step 1}
Define \(Q\) in \(\cM\) to be the coequaliser
    \[
        \xymatrix @C=35pt {
            B \ot V \ot B \ar@<+0.6ex>[rrr]^-{m^2(B \ot (\id  * S ) \iota_V \ot B)} \ar@<-0.6ex>[rrr]_-{m^2(B \ot u \varepsilon  \iota_V \ot B)} &&& B \ar[r]^-{q} & Q
        }
    \]
in \(\cM\). Hereafter, we often omit the canonical map \(\iota_V\), when not explicitly needed.

\begin{proposition}\label{prop:Qbimonoid}
    The object \(Q\) inherits a bimonoid structure from \(B\).
\end{proposition}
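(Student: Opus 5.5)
The plan is to show that the coequaliser \(q\colon B\to Q\) is a quotient by a \emph{bi-ideal}: the two parallel maps \(f_1 = m^2(B\ot(\id*S)\ot B)\) and \(f_2 = m^2(B\ot u\varepsilon\ot B)\), both \(B\ot V\ot B\to B\), are compatible with the multiplication, the unit, the comultiplication and the counit. Here \(m^2\) denotes the iterated product \(B\ot B\ot B\to B\).

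\textbf{Monoid structure.} Since \(X\ot-\) and \(-\ot X\) preserve colimits, \(q\ot q\colon B\ot B\to Q\ot Q\) is again a coequaliser. More precisely, it is the joint coequaliser of the pairs \((f_1\ot B,\, f_2\ot B)\) and \((B\ot f_1,\, B\ot f_2)\), obtained by composing the two coequalisers \(q\ot B\) and \(Q\ot q\). It therefore suffices to check that \(qm\) coequalises both pairs. This is immediate from associativity of \(m\): for instance,
\[m(f_i\ot B) = m^2(B\ot g_i\ot m),\]
where \(g_1=\id*S\) and \(g_2=u\varepsilon\), and this expression factors through \(f_i\) precomposed with \(B\ot V\ot m\). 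Hence \(m\) descends to \(m_Q\colon Q\ot Q\to Q\) with \(m_Q(q\ot q)=qm\), and we set \(u_Q=qu\). Associativity and unitality follow because \(q\ot q\ot q\) is an epimorphism.

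\textbf{Counit.} We need \(\varepsilon f_1=\varepsilon f_2\). Since \(\varepsilon\) is a monoid morphism, this reduces to \(\varepsilon(\id*S)\iota_V = \varepsilon\iota_V\). Now \(\varepsilon m=\varepsilon\ot\varepsilon\) (up to \(\clambda_\I\)), \(S\) preserves the counit because it is a morphism of bimonoids \(T(V)^{\mathrm{cop}}\to T(V)^{\mathrm{op}}\), and counitality of \(\Delta\) does the rest.

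\textbf{Comultiplication.} This is the main step and the expected obstacle. We must show that \((q\ot q)\Delta\) coequalises \(f_1,f_2\). Since \(\Delta\) is multiplicative and \(q\ot q\) is multiplicative into the monoid \(Q\ot Q\), we have
\[(q\ot q)\Delta f_i = m_{Q\ot Q}^2\bigl((q\ot q)\Delta\ot(q\ot q)\Delta g_i\ot(q\ot q)\Delta\bigr),\]
so it suffices to prove \((q\ot q)\Delta(\id*S)\iota_V=(q\ot q)\Delta u\varepsilon\iota_V\). Compute \(\Delta(\id*S)\iota_V\) using multiplicativity of \(\Delta\), the equality \(\Delta\iota_V=(\iota_V\ot\iota_V)\Delta_V\), and anti-comultiplicativity \(\Delta S=(S\ot S)\brd\Delta\) (which holds since \(T(V)^{\mathrm{cop}}=T(V^{\mathrm{cop}})\) and \(S\) is a bimonoid map into \(T(V)^{\mathrm{op}}\)). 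Using coassociativity and the symmetry, we expect to arrive at
\[
x_{(1)}S(x_{(4)})\ot x_{(2)}S(x_{(3)})
\]
in Sweedler-type shorthand, written categorically with \(\brd\). Applying \(q\) on the right factor to the inner pair \(x_{(2)}S(x_{(3)})\), which is a relation because \(\Delta_V\) lands in \(V\ot V\), replaces it by \(u\varepsilon\). Counitality then collapses the expression to \(x_{(1)}S(x_{(2)})\ot 1\), and applying \(q\) on the left factor gives \(u\varepsilon\ot u\). This is exactly \((q\ot q)\Delta u\varepsilon\iota_V\).

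The delicate points in this step are careful bookkeeping of the braidings, which requires the symmetry \(\brd^2=\id\), and justifying that one may apply \(q\) to one tensor factor inside a product. For the latter, we use that \((q\ot Q)\) and \((B\ot q)\) are coequalisers, since tensoring preserves colimits.

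\textbf{Conclusion.} The descended maps give \(\Delta_Q\) with \(\Delta_Q q=(q\ot q)\Delta\) and \(\varepsilon_Q\) with \(\varepsilon_Q q=\varepsilon\). Coassociativity, counitality and the bimonoid compatibilities \eqref{eq:bialgebra-compatibility} for \(Q\) follow from those of \(B\) because \(q\) and \(q\ot q\) are epimorphisms. As a consequence, \(q\) is a morphism of bimonoids.
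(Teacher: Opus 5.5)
Your proposal is correct and follows the paper's proof essentially step by step. The monoid structure comes from the coequaliser $q\ot q$ (the paper's Lemma~\ref{lem:coeq_monoid}), and the counit and comultiplication are reduced to generators using multiplicativity of $\varepsilon$ and of $(q\ot q)\Delta$ (Remark~\ref{rem:coeq_algs}). The comultiplication then follows from the same direct computation, which produces $x_{(1)}S(x_{(4)})\ot x_{(2)}S(x_{(3)})$ and kills first the inner relation, then the outer one. Two minor refinements: once anti-comultiplicativity of $S$ is available, the braiding bookkeeping only uses naturality and the hexagon axiom, so the symmetry is not needed there. Also, replacing $q(\id*S)\iota_V$ by $qu\varepsilon\iota_V$ inside one tensor factor is just bifunctoriality of $\ot$, not a coequaliser property of $q\ot Q$.
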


\begin{invisible}
    \begin{proof} 
    Let us begin by showing that \(Q\) inherits a monoid structure. We use the following diagram as a reference:
    \[\begin{tikzcd}[ampersand replacement=\&,sep=3em]
    	{Q \ot B \ot V \ot B} \&\& {Q \ot B} \&\& {Q \ot Q} \\
    	\&\&\&\& Q \\
    	{B \ot B \ot V \ot B} \&\& {B\ot B} \&\& B \\
    	\\
    	\&\& {B \ot V \ot B \ot B} \&\& {B \ot V \ot B}
    	\arrow["{Q \ot m^2(B \ot \id*S \ot B)}"', shift right, from=1-1, to=1-3]
    	\arrow["{Q \ot m^2(B \ot u\varepsilon \ot B)}", shift left, from=1-1, to=1-3]
    	\arrow["{Q \ot q}", from=1-3, to=1-5]
    	\arrow[""{name=0, anchor=center, inner sep=0}, "{\exists! m}", dashed, from=1-5, to=2-5]
    	\arrow["{q \ot B \ot V \ot B}"{description}, from=3-1, to=1-1]
    	\arrow["\equiv"{description}, draw=none, from=3-1, to=1-3]
    	\arrow["{B \ot m^2(B \ot u\varepsilon \ot B)}", shift left, from=3-1, to=3-3]
    	\arrow["{B \ot m^2(B \ot \id*S \ot B)}"', shift right, from=3-1, to=3-3]
    	\arrow[""{name=2, anchor=center, inner sep=0}, "{q \ot B}", from=3-3, to=1-3]
    	\arrow[""{name=3, anchor=center, inner sep=0}, "qm", from=3-3, to=2-5]
    	\arrow["m", from=3-3, to=3-5]
    	\arrow["q"', from=3-5, to=2-5]
    	\arrow["{m^2(B \ot u\varepsilon \ot B) \ot B}"'{pos=0.7}, shift right, from=5-3, to=3-3]
    	\arrow["{m^2(B \ot \id*S \ot B) \ot B}"{pos=0.6}, shift left, from=5-3, to=3-3]
    	\arrow["{ \color{gray}\mathsf{associativity}}"{description}, draw=none, from=5-3, to=3-5]
    	\arrow["{B \ot V \ot m}"', from=5-3, to=5-5]
    	\arrow["{m^2(B \ot \id*S \ot B)}"{pos=0.3}, shift left, from=5-5, to=3-5]
    	\arrow["{m^2(B \ot u\varepsilon \ot B)}"'{pos=0.4}, shift right, from=5-5, to=3-5]
    	\arrow["{\color{gray}\begin{array}{c} \mathsf{universal} \\[-2pt]  \mathsf{property} \end{array}}"{description, pos=0.7}, draw=none, from=1-3, to=0]
    	\arrow["{\color{gray}\begin{array}{c}  \mathsf{universal} \\[-2pt]  \mathsf{property} \end{array}}"{description, pos=0.6}, draw=none, from=2-5, to=2]
    	\arrow["{\exists!\rho}"{description}, dashed, from=1-3, to=2-5]
    	\arrow["\color{gray}\equiv"{description}, draw=none, from=3-5, to=3]
        \arrow[""{name=1, anchor=center, inner sep=0}, "{m \ot V \ot B}"', from=3-1, to=5-5, rounded corners,
            to path={let \p1=([yshift=-0.6cm]\tikztostart.center), \p2=([yshift=-0.75cm]\tikztotarget.center) in
                     -- (\p1) -- (\x1, \y2) -- (\p2) \tikztonodes -- (\tikztotarget.south)}]
    	\arrow["{ \color{gray}\mathsf{(void)}}"{description}, draw=none, from=3-1, to=1]
    \end{tikzcd}\]
    where the notation \(m^2\) denotes indifferently the compositions \(m  (m  \ot B) = m  (B \ot m )\).
    
    By the associativity of the multiplication of \(B\), the bottom right square of the diagram commutes sequentially and hence, by the universal property of the coequaliser \((Q \ot B, q \ot B)\), there exists a unique morphism \(\rho \colon Q\ot B\to Q\) such that \(\rho (q\ot B)= qm \).
    
    The morphism \(qm \) coequalises \(B\ot m ^2(B\ot u \varepsilon \ot B)\) and \(B\ot m ^2 (B\ot (\id * S)\ot B)\) by associativity. Moreover, the top left square commutes sequentially. These two facts together imply that \(\rho\) coequalises the top left pair of parallel arrows, pre-composed with \(q\ot B\ot V\ot B\). Since \(q\ot B\ot V\ot B\) is an epimorphism, \(\rho\) actually coequalises the top left pair directly. By the universal property of the coequaliser \((Q\ot Q, Q\ot q)\), this induces a unique map \(m_Q \colon Q\ot Q\to Q\) such that \(m_Q(Q\ot q) = \rho\). By the commutativity of the top right square, \(q\colon B\to Q\) is multiplicative, and hence the associativity of \(m_Q\) follows. Clearly, the unit of \(Q\) is \(qu \).
    
    We proceed to show that \(Q\) inherits a comonoid structure. To this aim, remark first of all that since \(q\) coequalises \(m ^2(B\ot (\id * S)\iota_V \ot B)\) and \(m ^2(B\ot u \varepsilon \iota_V \ot B)\) by definition, \(q\) also coequalises \((\id * S)\iota_V\) and \(u  \varepsilon \iota_V\), as shown by the commutativity of
    \begin{equation}\label{eq:qonV}
    \begin{gathered}
        \begin{tikzcd}[ampersand replacement=\&,row sep=3.5em,column sep=5em]
        	{B \ot V \ot B} \& {B \ot B \ot B} \& B \\
        	V \& B \& Q.
        	\arrow["{B \ot u\varepsilon \ot B}"', shift right, from=1-1, to=1-2]
        	\arrow["{B \ot \id*S \ot B}", shift left, from=1-1, to=1-2]
        	\arrow["{m^2}", from=1-2, to=1-3]
        	\arrow["q", from=1-3, to=2-3]
        	\arrow["{u \ot V \ot u}", from=2-1, to=1-1]
        	\arrow["\color{gray}\equiv"{description}, draw=none, from=2-1, to=1-2]
        	\arrow["{u\varepsilon}"', shift right, from=2-1, to=2-2]
        	\arrow["{\id*S}", shift left, from=2-1, to=2-2]
        	\arrow["{u \ot B \ot u}", from=2-2, to=1-2]
        	\arrow["{\color{gray}\mathsf{unitality}}"{description}, draw=none, from=2-2, to=1-3]
        	\arrow["q"', from=2-2, to=2-3]
        \end{tikzcd}
    \end{gathered}
    \end{equation}
    Now, if we proved that \((q\ot q)\Delta \) coequalises \(m ^2(B\ot \id * S\ot B)\) and \(m ^2(B\ot u \varepsilon \ot B)\), then a unique morphism \(\Delta_Q\colon Q\to Q\ot Q\) would be induced, such that \(\Delta_Q q = (q\ot q)\Delta \), whence the coassociativity would easily follow. Consider the following diagram:
    
    \[\begin{tikzcd}[ampersand replacement=\&]
    	{B \ot V \ot B} \& {B \ot B \ot B} \& B \& Q \\
    	\& {B \ot B \ot B \ot B \ot B \ot B} \& {B \ot B} \\
    	\& {Q \ot Q \ot Q \ot Q \ot Q \ot Q} \&\& {Q \ot Q}
    	\arrow["{B \ot \id*S \ot B}", shift left, from=1-1, to=1-2]
    	\arrow["{B \ot u\varepsilon \ot B}"', shift right, from=1-1, to=1-2]
    	\arrow["{m^2}", from=1-2, to=1-3]
    	\arrow["{\Delta^{\ot 3}}", from=1-2, to=2-2]
    	\arrow["q", from=1-3, to=1-4]
    	\arrow["\Delta", dashed, from=1-4, to=3-4]
    	\arrow["{\color{gray} \mathsf{bimonoid}}"{description}, draw=none, from=2-2, to=1-3]
    	\arrow["{m_{\ot 2}^2}", from=2-2, to=2-3]
    	\arrow["{q^{\ot 6}}", from=2-2, to=3-2]
    	\arrow["{\color{gray} q \mathsf{\,of\,monoids}}"{description}, draw=none, from=2-2, to=3-4]
    	\arrow["{\begin{array}{c} \begin{array}{c} \color{gray}\mathsf{universal} \\[-6pt] \color{gray}\mathsf{property} \end{array} \end{array}}"{description}, draw=none, from=2-3, to=1-4]
    	\arrow["\Delta", from=1-3, to=2-3]
    	\arrow["{q \ot q}"{description}, from=2-3, to=3-4]
    	\arrow["{m_{\ot 2}^2}"', from=3-2, to=3-4]
    \end{tikzcd}\]
    where by \(m_{\ot 2}^2\) we mean
    \[m_{\ot 2}^2 = \left({m }^2 \ot {m }^2\right)\left(B \ot B \ot \brd_{B,B} \ot B \ot B\right)\left(B \ot \brd_{B,B} \ot \brd_{B,B} \ot B\right).\]
    This shows that \((q\ot q)\Delta \) coequalises \(m ^2(B\ot \id * S\ot B)\) and \(m ^2(B\ot u \varepsilon \ot B)\) if \((q\ot q)\Delta \) coequalises \(\id * S\) and \(u \varepsilon \). The latter follows from the direct computation:
    \begin{align*}
        & (q \ot q)\Delta m (B \ot S)\Delta   \\
        \overset{\eqref{eq:bialgebra-compatibility}}&{=}  (q \ot q)(m  \ot m )(B \ot \brd_{B,B} \ot B)(\Delta  \ot \Delta )(B \ot S)\Delta  \\
        \overset{(\dagger)}&{=} (q \ot q)(m  \ot m )(B \ot \brd_{B,B} \ot B)(B \ot B \ot S \ot S)(B \ot B \ot \brd_{B,B})(\Delta  \ot \Delta )\Delta  \\
        \overset{(*)}&{=} (q \ot q)(m  \ot m )(B \ot S \ot B \ot S)(B \ot \brd_{B,B} \ot B)(B \ot B \ot \brd_{B,B})(\Delta  \ot \Delta )\Delta  \\
        \overset{\eqref{eq:hexagon}}&{=} (q \ot q)(m  \ot m )(B \ot S \ot B \ot S)(B \ot \brd_{B \ot B,B})(B \ot \Delta  \ot B)(B \ot \Delta )\Delta  \\
        \overset{(*)}&{=} (q \ot q)(m  \ot m )(B \ot S \ot B \ot S)(B \ot B \ot \Delta )(B \ot \brd_{B,B})(B \ot \Delta )\Delta  \\
        \overset{\eqref{eq:qonV}}&{=} (q \ot q)(m  \ot u )(B \ot S \ot \varepsilon )(B \ot \brd_{B,B})(B \ot \Delta )\Delta  \\
        \overset{(\ddagger)}&{=} (q \ot q)(m  \ot u )(B \ot S \ot \I)(B \ot \brd_{\I,B})(B \ot \clambda ^{-1})\Delta  \\
        \overset{\eqref{eq:braidI}}&{=} (q \ot q)(m  \ot u )(B \ot S \ot \I)(B \ot \crho ^{-1})\Delta  \\
        & = (Q \ot q)(Q \ot u )\crho_Q^{-1}qm (B \ot S)\Delta  \\
        \overset{\eqref{eq:qonV}}&{=} (q \ot q)(u  \ot u )\crho_\I^{-1}\varepsilon  \\
    \overset{\eqref{eq:bialgebra-compatibility}}&{=} (q \ot q)\Delta  u \varepsilon  
    \end{align*}
    where \((\dagger)\) follows by anti-comultiplicativity of \(S\), \((*)\) by naturality of \(\brd\), and \((\ddagger)\) by counitality of \(\Delta \).
    Since \(\varepsilon \) coequalises \(\id  * S\) and \(u  \varepsilon \), the diagram
    \[\begin{tikzcd}[ampersand replacement=\&,column sep=4em]
    	{B \ot V \ot B} \& {B \ot B \ot B} \& B \& Q \\
    	\& {\I \ot \I \ot \I} \&\& \I
    	\arrow["{B \ot \id*S \ot B}", shift left, from=1-1, to=1-2]
    	\arrow["{B \ot u\varepsilon \ot B}"', shift right, from=1-1, to=1-2]
    	\arrow["{m^2}", from=1-2, to=1-3]
    	\arrow["{\varepsilon \ot \varepsilon \ot \varepsilon}", from=1-2, to=2-2]
    	\arrow["q", from=1-3, to=1-4]
    	\arrow["\varepsilon"{description}, from=1-3, to=2-4]
    	\arrow["\varepsilon", dashed, from=1-4, to=2-4]
    	\arrow[""{name=0, anchor=center, inner sep=0}, "\cong"{description}, from=2-2, to=2-4]
    	\arrow["{\color{gray}\mathsf{bimonoid}}"{description}, draw=none, from=1-3, to=0]
    \end{tikzcd}\]
    entails that there exists a unique \(\varepsilon_Q\colon Q \to \I\) such that \(\varepsilon_Qq = \varepsilon \), which is a counit for \(\Delta_Q\).
    
    Since the monoid and comonoid structure on \(Q\) are induced by those on \(B\), the bimonoid compatibility follows.
    \end{proof}
\end{invisible}

We break the proof down into smaller results that we can reuse later on.

\begin{lemma}[{\cite[Lemma 2.1.1]{ArdiLaiachiMen}, later published as \cite[2.1 Lemma]{Porst}}]\label{lem:coeq_monoid}
    Let \((A,m,u)\) be a monoid in a monoidal category \((\cN,\ot,\I)\) with coequalisers. Let \(f,g \colon X \to A\) be two morphisms in \(\cN\) and consider the coequaliser
    \[
        \xymatrix @C=35pt {
            A \ot X \ot A \ar@<+0.6ex>[rr]^-{m ^2(A \ot f \ot A)} \ar@<-0.6ex>[rr]_-{m ^2(A \ot g \ot A)} && A \ar[r]^-{q} & Q
        }
    \]
    in \(\cN\). Assume that \(\ot\) preserves coequalisers. Then \(Q\) carries a unique monoid structure such that \(q\) is a morphism of monoids.
\end{lemma}

\begin{invisible}
    \begin{lemma}
        Let \(X\) be an object, \((A,m,u)\) be a monoid, and \(f,g \colon X \to A\) be two morphisms in \(\cM\). Define \(Q\) in \(\cM\) via the coequaliser
        \[
            \xymatrix @C=35pt {
                A \ot X \ot A \ar@<+0.6ex>[rr]^-{m ^2(A \ot f \ot A)} \ar@<-0.6ex>[rr]_-{m ^2(A \ot g \ot A)} && A \ar[r]^-{q} & Q
            }
        \]
        in \(\cM\). Then, the object \(Q\) inherits a monoid structure from \(A\), i.e., such that \(q\) is a morphism of monoids.
    \end{lemma}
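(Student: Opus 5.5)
The plan is to build the multiplication on \(Q\) in two stages, using the universal property of coequalisers and the hypothesis that \(\ot\) preserves coequalisers. Write \(m^2\) for \(m(m\ot A)=m(A\ot m)\), and write \(F=m^2(A\ot f\ot A)\), \(G=m^2(A\ot g\ot A)\). Since \(-\ot A\) preserves coequalisers, \(q\ot A\colon A\ot A\to Q\ot A\) is the coequaliser of \(F\ot A\) and \(G\ot A\). By associativity of \(m\),
\[
m(F\ot A)=m^2(A\ot f\ot A)(A\ot X\ot m)=F(A\ot X\ot m),
\]
and similarly for \(G\). Hence \(qm(F\ot A)=qF(A\ot X\ot m)=qG(A\ot X\ot m)=qm(G\ot A)\), so there is a unique \(\rho\colon Q\ot A\to Q\) with \(\rho(q\ot A)=qm\).

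Next, \(Q\ot -\) also preserves coequalisers, so \(Q\ot q\) is the coequaliser of \(Q\ot F\) and \(Q\ot G\). To get a map out of it, I will show that \(\rho(Q\ot F)=\rho(Q\ot G)\). The morphism \(q\ot A\ot X\ot A\) is an epimorphism, being a coequaliser tensored with an object. So it suffices to check the equality after precomposing with it. That gives
\[
\rho(q\ot A)(A\ot F)=qm(A\ot F)=qF(m\ot X\ot A),
\]
again by associativity, and this equals \(qG(m\ot X\ot A)\), which reverses back to \(\rho(Q\ot G)(q\ot A\ot X\ot A)\). Hence there is a unique \(m_Q\colon Q\ot Q\to Q\) with \(m_Q(Q\ot q)=\rho\). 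Combining the two steps,
\[
m_Q(q\ot q)=m_Q(Q\ot q)(q\ot A)=qm,
\]
so \(q\) is multiplicative. We set \(u_Q=qu\).

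For the monoid axioms, observe that \(q\ot q\ot q=(q\ot Q\ot Q)(A\ot q\ot Q)(A\ot A\ot q)\) is a composite of epimorphisms (each a tensored coequaliser), and likewise \(q\ot q\) is an epimorphism. Associativity of \(m_Q\) and the unit laws then follow by precomposing with these epimorphisms and using \(m_Q(q\ot q)=qm\) together with the corresponding axioms for \(A\); naturality of the unitors handles the unit law.

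Uniqueness is immediate: if \(q\) is a monoid morphism, then \(m_Q(q\ot q)=qm\) and \(u_Q=qu\), and \(q\ot q\) is epi. The only delicate point is the well-definedness step, that is, correctly using the fact that tensoring with an object preserves coequalisers so that the maps \(q\ot A\), \(Q\ot q\) and their composites are coequalisers and epimorphisms. The associativity bookkeeping itself is routine.
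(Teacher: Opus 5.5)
Your proof is correct and follows essentially the same route as the paper. Both arguments first induce \(\rho\colon Q\ot A\to Q\) with \(\rho(q\ot A)=qm\) from the coequaliser \(q\ot A\), using associativity. Both then show \(\rho\) coequalises \(Q\ot F\) and \(Q\ot G\) by precomposing with the epimorphism \(q\ot A\ot X\ot A\), which yields \(m_Q\) with \(m_Q(q\ot q)=qm\) and unit \(qu\). The only difference is that you write out the associativity, unit and uniqueness checks via the epimorphisms \(q\ot q\ot q\), \(\I\ot q\) and \(q\ot q\), which the paper leaves implicit.
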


    \begin{proof}
        We use the following diagram as a reference:
        \[\begin{tikzcd}[ampersand replacement=\&,sep=3em]
        	{Q \ot A \ot X \ot A} \&\& {Q \ot A} \&\& {Q \ot Q} \\
        	\&\&\&\& Q \\
        	{A \ot A \ot X \ot A} \&\& {A\ot A} \&\& A \\
        	\\
        	\&\& {A \ot X \ot A \ot A} \&\& {A \ot X \ot A}
        	\arrow["{Q \ot m^2(A \ot f \ot A)}"', shift right, from=1-1, to=1-3]
        	\arrow["{Q \ot m^2(A \ot g \ot A)}", shift left, from=1-1, to=1-3]
        	\arrow["{Q \ot q}", from=1-3, to=1-5]
        	\arrow[""{name=0, anchor=center, inner sep=0}, "{\exists! m}", dashed, from=1-5, to=2-5]
        	\arrow["{q \ot A \ot X \ot A}"{description}, from=3-1, to=1-1]
        	\arrow["\color{gray}\equiv"{description}, draw=none, from=3-1, to=1-3]
        	\arrow["{A \ot m^2(A \ot g \ot A)}", shift left, from=3-1, to=3-3]
        	\arrow["{A \ot m^2(A \ot f \ot A)}"', shift right, from=3-1, to=3-3]
        	\arrow[""{name=2, anchor=center, inner sep=0}, "{q \ot A}", from=3-3, to=1-3]
        	\arrow[""{name=3, anchor=center, inner sep=0}, "qm", from=3-3, to=2-5]
        	\arrow["m", from=3-3, to=3-5]
        	\arrow["q"', from=3-5, to=2-5]
        	\arrow["{m^2(A \ot g \ot A) \ot A}"'{pos=0.7}, shift right, from=5-3, to=3-3]
        	\arrow["{m^2(A \ot f \ot A) \ot A}"{pos=0.6}, shift left, from=5-3, to=3-3]
        	\arrow["{ \color{gray}\mathsf{associativity}}"{description}, draw=none, from=5-3, to=3-5]
        	\arrow["{A \ot X \ot m}"', from=5-3, to=5-5]
        	\arrow["{m^2(A \ot f \ot A)}"{pos=0.3}, shift left, from=5-5, to=3-5]
        	\arrow["{m^2(A \ot g \ot A)}"'{pos=0.4}, shift right, from=5-5, to=3-5]
        	\arrow["{\color{gray}\begin{array}{c} \mathsf{universal} \\[-2pt]  \mathsf{property} \end{array}}"{description, pos=0.7}, draw=none, from=1-3, to=0]
        	\arrow["{\color{gray}\begin{array}{c}  \mathsf{universal} \\[-2pt]  \mathsf{property} \end{array}}"{description, pos=0.6}, draw=none, from=2-5, to=2]
        	\arrow["{\exists!\rho}"{description}, dashed, from=1-3, to=2-5]
        	\arrow["\color{gray}\equiv"{description}, draw=none, from=3-5, to=3]
            \arrow[""{name=1, anchor=center, inner sep=0}, "{m \ot X \ot A}"', from=3-1, to=5-5, rounded corners,
                to path={let \p1=([yshift=-0.6cm]\tikztostart.center), \p2=([yshift=-0.75cm]\tikztotarget.center) in
                         -- (\p1) -- (\x1, \y2) -- (\p2) \tikztonodes -- (\tikztotarget.south)}]
        	\arrow["{ \color{gray}\mathsf{(void)}}"{description}, draw=none, from=3-1, to=1]
        \end{tikzcd}\]
        where \(m^2\) denotes indifferently the compositions \(m  (m  \ot A) = m  (A \ot m )\).
        
        By the associativity of the multiplication of \(A\), the bottom right square of the diagram commutes sequentially and hence, by the universal property of the coequaliser \((Q \ot A, q \ot A)\), there exists a unique morphism \(\rho \colon Q\ot A\to Q\) such that \(\rho (q\ot A)= qm \).
        
        The morphism \(qm \) coequalises \(A\ot m ^2(A\ot f \ot A) \otimes A\) and \(A\ot m ^2 (A\ot g \ot A) \otimes A\) by associativity. Moreover, the top left square commutes sequentially. These two facts together imply that \(\rho\) coequalises the top left pair of parallel arrows, pre-composed with \(q\ot A\ot X\ot A\). Since \(q\ot A\ot X\ot A\) is an epimorphism, \(\rho\) actually coequalises the top left pair directly. By the universal property of the coequaliser \((Q\ot Q, Q\ot q)\), this induces a unique map \(m_Q \colon Q\ot Q\to Q\) such that \(m_Q(Q\ot q) = \rho\). By the commutativity of the top right square, \(q\colon A\to Q\) is multiplicative, and hence the associativity of \(m_Q\) follows. Clearly, the unit of \(Q\) is \(qu \).
    \end{proof}
\end{invisible}

\begin{remark}[{\cite[Lemma 2.1.2]{ArdiLaiachiMen}}]\label{rem:coeq_algs}
    In the setting of \zcref{lem:coeq_monoid}, \(em^2(A \ot f \ot A) \!=\! em^2(A \ot g \ot A)\) implies \(ef=eg\) for any morphism \(e \colon A \to Y\) in \(\cN\). The converse is true whenever \(e\) is a morphism of monoids.
    In particular, \(q \colon A \to Q\) satisfies \(qf=qg\).
\end{remark}

\begin{invisible}
    \begin{remark}
        In the setting of \zcref{lem:coeq_monoid}, an monoid map \(e \colon A \to A'\) coequalises 
        \[
            \xymatrix @C=75pt {
                A \ot X \ot A \ar@<+0.7ex>[r]^-{m^2(A \ot f \ot A)} \ar@<-0.7ex>[r]_-{m^2(A \ot g \ot A)} & A
            }
        \]
        if and only if it coequalises \(f\) and \(g\).
        In fact, one direction is trivial, while the other is shown by the commutativity of
        \begin{equation}\label{eq:qonV}
            \begin{gathered}
                \begin{tikzcd}[ampersand replacement=\&,row sep=3.5em,column sep=5em]
                    {A \ot X \ot A} \& {A \ot A \ot A} \& A \\
                    X \& A \& A'.
                    \arrow["{A \ot g \ot A}"', shift right, from=1-1, to=1-2]
                    \arrow["{A \ot f \ot A}", shift left, from=1-1, to=1-2]
                    \arrow["{m^2}", from=1-2, to=1-3]
                    \arrow["e", from=1-3, to=2-3]
                    \arrow["{u \ot X \ot u}", from=2-1, to=1-1]
                    \arrow["\color{gray}\equiv"{description}, draw=none, from=2-1, to=1-2]
                    \arrow["{g}"', shift right, from=2-1, to=2-2]
                    \arrow["{f}", shift left, from=2-1, to=2-2]
                    \arrow["{u \ot A \ot u}", from=2-2, to=1-2]
                    \arrow["{\color{gray}\mathsf{unitality}}"{description}, draw=none, from=2-2, to=1-3]
                    \arrow["e"', from=2-2, to=2-3]
                \end{tikzcd}
            \end{gathered}
        \end{equation}
        In particular, \(q \colon A \to Q\) coequalises \(g\) and \(f\).
    \end{remark}
\end{invisible}

\begin{proof}[Proof of \zcref{prop:Qbimonoid}]
    \zcref{lem:coeq_monoid} ensures that \(Q\) inherits a monoid structure such that \(q \colon B \to Q\) is a morphism of monoids. 
    Hence, we proceed to show that \(Q\) inherits also a comonoid structure such that \(q\) is a morphism of comonoids, too. 
    
    If we proved that \((q\ot q)\Delta \) coequalises \(m ^2(B\ot (\id  * S )\ot B)\) and \(m ^2(B\ot u \varepsilon \ot B)\), then a unique morphism \(\Delta_Q\colon Q\to Q\ot Q\) would be induced, such that \(\Delta_Q q = (q\ot q)\Delta \), whence the coassociativity would easily follow.
    
    By \zcref{rem:coeq_algs}, \((q\ot q)\Delta \) coequalises \(m ^2(B\ot (\id  * S )\ot B)\) and \(m ^2(B\ot u \varepsilon \ot B)\) if and only if it coequalises \(\id  * S \) and \(u \varepsilon \). The fact that \((q\ot q)\Delta \) does coequalise \(\id  * S \) and \(u \varepsilon \) follows from the fact that
    \begin{equation}\label{eq:qonV}
        q(\id * S) \overset{\text{Rem } \ref{rem:coeq_algs}}{=} qu\varepsilon
    \end{equation}
    and the direct computation:
    \begin{align*}
        & (q \ot q)\Delta m (B \ot S )\Delta \\ 
        \overset{\eqref{eq:bialgebra-compatibility}}&{=}  (q \ot q)(m  \ot m )(B \ot \brd_{B,B} \ot B)(\Delta  \ot \Delta )(B \ot S )\Delta  \\
        \overset{(\dagger)}&{=} (q \ot q)(m  \ot m )(B \ot \brd_{B,B} \ot B)(B \ot B \ot S  \ot S )(B \ot B \ot \brd_{B,B})(\Delta  \ot \Delta )\Delta  \\
        \overset{(*)}&{=} (q \ot q)(m  \ot m )(B \ot S  \ot B \ot S )(B \ot \brd_{B,B} \ot B)(B \ot B \ot \brd_{B,B})(\Delta  \ot \Delta )\Delta  \\
        \overset{\eqref{eq:hexagon}}&{=} (q \ot q)(m  \ot m )(B \ot S  \ot B \ot S )(B \ot \brd_{B \ot B,B})(B \ot \Delta  \ot B)(B \ot \Delta )\Delta  \\
        \overset{(*)}&{=} (q \ot q)(m  \ot m )(B \ot S  \ot B \ot S )(B \ot B \ot \Delta )(B \ot \brd_{B,B})(B \ot \Delta )\Delta  \\
        \overset{\eqref{eq:qonV}}&{=} (q \ot q)(m  \ot u )(B \ot S  \ot \varepsilon )(B \ot \brd_{B,B})(B \ot \Delta )\Delta  \\
        \overset{(\ddagger)}&{=} (q \ot q)(m  \ot u )(B \ot S  \ot \I)(B \ot \brd_{\I,B})(B \ot \clambda_B ^{-1})\Delta  \\
        \overset{\eqref{eq:braidI}}&{=} (q \ot q)(m  \ot u )(B \ot S  \ot \I)(B \ot \crho_B^{-1})\Delta  \\
        & = (Q \ot q)(Q \ot u)\crho_Q^{-1}qm (B \ot S )\Delta  \\
        \overset{\eqref{eq:qonV}}&{=} (q \ot q)(u  \ot u )\crho_\I^{-1}\varepsilon \overset{\eqref{eq:bialgebra-compatibility}}{=} (q \ot q)\Delta  u \varepsilon  
    \end{align*}
    where \((\dagger)\) follows by anti-comultiplicativity of \(S \), \((*)\) by naturality of \(\brd\), and \((\ddagger)\) by counitality of \(\Delta \).
    Similarly, since \(\varepsilon \) coequalises \(\id  * S \) and \(u  \varepsilon \), the diagram
    \[\begin{tikzcd}[ampersand replacement=\&,column sep=4em]
    	{B \ot V \ot B} \& {B \ot B \ot B} \& B \& Q \\
    	\& {\I \ot \I \ot \I} \&\& \I
    	\arrow["{B \ot \id*S  \ot B}", shift left, from=1-1, to=1-2]
    	\arrow["{B \ot u\varepsilon \ot B}"', shift right, from=1-1, to=1-2]
    	\arrow["{m^2}", from=1-2, to=1-3]
    	\arrow["{\varepsilon \ot \varepsilon \ot \varepsilon}", from=1-2, to=2-2]
    	\arrow["q", from=1-3, to=1-4]
    	\arrow["\varepsilon"{description}, from=1-3, to=2-4]
    	\arrow["\varepsilon", dashed, from=1-4, to=2-4]
    	\arrow[""{name=0, anchor=center, inner sep=0}, "\cong"{description}, from=2-2, to=2-4]
    	\arrow["{\color{gray}\mathsf{bimonoid}}"{description}, draw=none, from=1-3, to=0]
    \end{tikzcd}\]
    entails that there exists a unique \(\varepsilon_Q\colon Q \to \I\) such that \(\varepsilon_Qq = \varepsilon \), which is a counit for \(\Delta_Q\).
    
    Since the monoid and comonoid structure on \(Q\) are induced by those on \(B\), the bimonoid compatibility follows.
\end{proof}

\subsection*{Step 2}
Define \(P\) in \(\cM\) to be the coequaliser
    \[
        \xymatrix @C=35pt {
            Q \ot W \ot Q \ar@<+0.6ex>[rrr]^-{m^2(Q \ot q(S  * \id ) \iota_V \gamma \ot Q)} \ar@<-0.6ex>[rrr]_-{m^2(Q \ot qu \varepsilon  \iota_V \gamma \ot Q)} &&& Q \ar[r]^-{p} & P
        }
    \]
in \(\cM\). By an argument similar to the one above, \(P\) is a bimonoid in \(\cM\). In fact, \(P\) is a monoid by \zcref{lem:coeq_monoid} and it is a bimonoid if and only if \((p \ot p)\Delta_Q\) and \(\varepsilon_Q\) coequalise \(q(S  * \id )\) and \(qu \varepsilon\). As before, the last two facts can be checked directly. For instance, an easy string diagram verification, or mimicking the argument above, should convince the reader that
\begin{align*}
    & (p \ot p)\Delta_Qq(S  * \id ) = (pq \ot pq)\Delta(S  * \id ) \\
    & = (pq \ot pq)(S*\id \ot B) (B \ot m) (B \ot S \ot B) (\brd_{B,B} \ot B)(\Delta \ot B)\Delta \\
    & = (pq \ot pq)(u\varepsilon \ot B) (B \ot m) (B \ot S \ot B) (\brd_{B,B} \ot B)(\Delta \ot B)\Delta \\
    & = (pq \ot pq)(u \ot B)(\I \ot m) (\I \ot S \ot B) (\brd_{B,\I} \ot B)(\crho_B^{-1} \ot B)\Delta \\
    & = (pq \ot pq)(u \ot B)\clambda_B^{-1}(S*\id) = (pq \ot pq)(u \ot B)\clambda_B^{-1}u\varepsilon \\
    & = (p \ot p)\Delta_Qqu\varepsilon.
\end{align*}

Note that \(P\) will be our candidate right Hopf monoid in \(\cM\). However, in order to show that it inherits a right antipode, we need a few more steps to show that \(S \colon B \to B\) induces \(S_P \colon P \to P\).

\subsection*{Step 3}
Define \(Q'\) in \(\cM\) to be the coequaliser
    \[
        \xymatrix @C=35pt {
            B \ot W \ot B \ar@<+0.6ex>[rrr]^-{m^2(B \ot (S * \id) \iota_V\gamma \ot B)} \ar@<-0.6ex>[rrr]_-{m^2(B \ot u\varepsilon \iota_V\gamma \ot B)} &&& B \ar[r]^-{q'} & Q'
        }
    \]
in \(\cM\) and \(P'\) to be the coequaliser
    \[
        \xymatrix @C=35pt {
            Q' \ot V \ot Q' \ar@<+0.6ex>[rrr]^-{m^2(Q' \ot q'(\id*S) \iota_V \ot Q')} \ar@<-0.6ex>[rrr]_-{m^2(Q' \ot q'u\varepsilon \iota_V \ot Q')} &&& Q' \ar[r]^-{p'} & P'
        }
    \]
in \(\cM\).
We show that \(pq = p'q'\), by taking advantage of the fact that  colimits commute with colimits; see \cite[IX.2]{Maclane}. Remark that, by construction, \(V\) is the coproduct of the \(V_k\)'s in \(\cM\).
If we consider the morphisms \(V_0 \xrightarrow{=} V_1\) and \(V_i \xrightarrow{=} V_i\) for every \(i \geq 1\) in \(\cM\), then there exists a unique morphism \(\pi \colon V\to W\) in \(\cM\) such that
\[
    \xymatrix{
        V_0 \ar[r]^-{\iota_0} \ar[d]_-{=} & V \ar@{.>}[d]^-{\pi} & W \ar[d]^-{=} \ar[l]_-{\gamma} \\
        V_1 \ar[r]_-{\jmath_1} & W & W \ar[l]^-{=}
    }
\]
commutes. In particular, \(\pi \gamma = \id_W\) and \(\pi\) is an epimorphism. Since in the following diagram all the rows and all the columns are coequaliser diagrams 
{\small
\[\begin{tikzcd}[ampersand replacement=\&,cramped,column sep=small]
	{(B \!\ot V \!\ot B) \!\ot V \!\ot (B \!\ot V \!\ot B)} \&\&\& {(B \!\ot V \!\ot B) \!\ot W \!\ot (B \!\ot V \!\ot B)} \& {B \!\ot W \!\ot B} \& {Q \!\ot W \!\ot Q} \\
	{(B \!\ot W \!\ot B) \!\ot V \!\ot (B \!\ot W \!\ot B)} \\
	{B \!\ot V \!\ot B} \&\&\&\& B \& Q \\
	{Q'\!\ot V \!\ot Q'} \&\&\&\& {Q'} \& {P'\cong P}
	\arrow["{(B \!\ot V \!\ot B) \!\ot \pi \!\ot (B \!\ot V \!\ot B)}"{yshift=5pt}, from=1-1, to=1-4]
	\arrow["{(B \!\ot \pi \!\ot B) \!\ot V \!\ot (B \!\ot \pi \!\ot B)}"{description}, from=1-1, to=2-1]
	\arrow[shift right, from=1-4, to=1-5]
	\arrow[shift left, from=1-4, to=1-5]
	\arrow["{q \!\ot W \!\ot q}"{yshift=5pt}, dashed, from=1-5, to=1-6]
	\arrow[shift right, from=1-5, to=3-5]
	\arrow[shift left, from=1-5, to=3-5]
	\arrow[shift left, from=1-6, to=3-6]
	\arrow[shift right, from=1-6, to=3-6]
	\arrow[shift right, from=2-1, to=3-1]
	\arrow[shift left, from=2-1, to=3-1]
	\arrow[shift right, from=3-1, to=3-5]
	\arrow[shift left, from=3-1, to=3-5]
	\arrow["{q'\!\ot V \!\ot q'}"{description}, dashed, from=3-1, to=4-1]
	\arrow["q", dashed, from=3-5, to=3-6]
	\arrow["{q'}"', dashed, from=3-5, to=4-5]
	\arrow["p", dashed, from=3-6, to=4-6]
	\arrow[shift left, from=4-1, to=4-5]
	\arrow[shift right, from=4-1, to=4-5]
	\arrow["{p'}"', dashed, from=4-5, to=4-6]
\end{tikzcd}\]
}
we conclude that 
\begin{equation}\label{eq:pq}
    pq= p'q'.
\end{equation}

\subsection*{Step 4}

We now show that \(S\colon B\to B\) induces \(S'\colon Q \to Q'\), which in turn induces \(S_P \colon P\to P\). To this aim, one needs the following lemma. 
\begin{lemma}
Let \(B\) be a bimonoid in \(\cM\) and let \(S\colon B\to B\) be an anti-homomorphism of bimonoids. We have 
\begin{equation}\label{eq:SidS}
    S(\id * S) = (S * \id)S \qquad \text{and} \qquad S(S * \id) = (\id * S)S.
\end{equation}
\end{lemma}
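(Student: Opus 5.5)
We argue by a direct computation in \(\cM\), using only the defining properties of an anti-homomorphism of bimonoids from \eqref{eq:antibimon}: \(Sm = m\brd_{B,B}(S\ot S)\), \(Su = u\) (unitality of \(S\)), \((S\ot S)\brd_{B,B}\Delta = \Delta S\), and \(\varepsilon S = \varepsilon\). The two identities in \eqref{eq:SidS} are symmetric. We treat the first in detail and obtain the second by the same steps with the roles of the two tensor factors exchanged.

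For the first identity, write \(\id * S = m(B\ot S)\Delta\) and post-compose with \(S\). Anti-multiplicativity gives
\[S(\id*S) = m\brd_{B,B}(S\ot S)(B\ot S)\Delta = m\brd_{B,B}(S\ot S^2)\Delta.\]
By naturality of \(\brd\) this equals \(m(S^2\ot S)\brd_{B,B}\Delta\). On the other side,
\[(S*\id)S = m(S\ot B)\Delta S = m(S\ot B)(S\ot S)\brd_{B,B}\Delta = m(S^2\ot S)\brd_{B,B}\Delta,\]
where the middle step is anti-comultiplicativity. The two expressions coincide.

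The second identity is handled the same way. First, \(S(S*\id) = m\brd_{B,B}(S^2\ot S)\Delta = m(S\ot S^2)\brd_{B,B}\Delta\). Second, \((\id*S)S = m(B\ot S)(S\ot S)\brd_{B,B}\Delta = m(S\ot S^2)\brd_{B,B}\Delta\). Again the two sides agree.

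The only point requiring care is the bookkeeping of the braiding. The proof needs \(\brd\) itself, not \(\brd^{-1}\), to appear in both anti-multiplicativity and anti-comultiplicativity; this holds by \eqref{eq:antibimon}, and in any case \(\brd\) is a symmetry in this section. Naturality of \(\brd\) then lets us slide \(S\ot S^2\) past it to reach \(S^2\ot S\) on the correct side. No unitality or counitality is actually needed.
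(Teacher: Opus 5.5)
Your proof is correct and uses the same ingredients as the paper's diagram: anti-multiplicativity, anti-comultiplicativity and naturality of the braiding, applied to both $S(\id*S)$ and $(S*\id)S$.

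The one difference is in how the braiding is handled. The paper's diagram passes through $\brd_{B,B}\brd_{B,B}=\id$: it uses anti-comultiplicativity in the form $\brd_{B,B}\Delta S=(S\ot S)\Delta$. You instead apply \eqref{eq:antibimon} as stated and then slide $S\ot S^2$ past $\brd_{B,B}$ by naturality. As a result, your argument never uses that $\brd$ is a symmetry. It therefore works verbatim in any braided monoidal category, provided the same braiding appears in both conditions of \eqref{eq:antibimon}.
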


\begin{proof}
The first equality follows from the commutativity of the diagram
%
\[\begin{tikzcd}
	{B \ot B} &&& {B \ot B} &&& {B \ot B} \\
	B & {B \ot B} && {B \ot B} && {B \ot B} & {B \ot B} \\
	V &&{}&{}& B && B
    \arrow["{\color{gray}\quad\qquad\text{\sf anti-comultiplicativity}}"{description}, draw=none, from=1-1, to=2-2]
	\arrow["{\brd_{B,B}}", from=1-1, to=1-4]
	\arrow["{\brd_{B,B}}", from=1-4, to=1-7]
	\arrow[sloped,"{B \ot S}", from=1-4, to=2-6]
	\arrow["{S \ot B}", from=1-7, to=2-7]
	\arrow["\Delta", from=2-1, to=1-1]
	\arrow[sloped,"{S \ot S}", from=2-2, to=1-4]
	\arrow[sloped,"{B \ot S}", from=2-2, to=2-4]
	\arrow["{S \ot S}", from=2-4, to=2-6]
	\arrow["m", from=2-4, to=3-5]
	\arrow["{\brd_{B,B}}", from=2-6, to=2-7]
	\arrow["m", from=2-7, to=3-7]
	\arrow["S", from=3-1, to=2-1]
	\arrow[sloped, "\Delta", from=3-1, to=2-2]
	\arrow["{\id*S}"', from=3-1, to=3-5]
	\arrow["S"', from=3-5, to=3-7]
    \arrow["\color{gray}{\equiv}"{description}, draw=none, from=1-4, to=2-4]
    \arrow["{\color{gray}\brd\text{ \sf natural}}"{xshift=-10pt,description}, draw=none, from=2-6, to=1-7]
    \arrow["{\color{gray}\text{\sf anti-multiplicativity}}"{description}, draw=none, from=2-4, to=3-7]
    \arrow["{{\color{gray}\mathsf{definition}}}"{description, pos=0.4}, draw=none,  from=2-2, to=3-4]
    \arrow["{=}", from=1-1, to=1-7, rounded corners,
        to path={-- ([yshift=0.7cm]\tikztostart.center) -- ([yshift=0.7cm]\tikztotarget.center) \tikztonodes -- (\tikztotarget.north)}]
\end{tikzcd}\]
Similarly, one can check the other equality. 
\end{proof}

Let us consider the diagram
%
\[
\begin{tikzcd}[ampersand replacement=\&,row sep=3.15em]
	{B \ot V \ot B} \&\& {B \ot B \ot B} \&\& B \& Q \\
	{B \ot B \ot B} \&\& {B \ot B \ot B} \&\& B \\
	\&\& {Q'\ot Q'\ot Q'} \&\&\& {Q'}
	\arrow["{\color{gray}\begin{array}{c}\mathsf{anti\mbox{-}}\\ \mathsf{multiplicativity}\end{array}}"{description,xshift=5pt}, draw=none, from=1-3, to=2-5]
	\arrow["{B \ot (\id*S)\iota_V \ot B}", shift left, from=1-1, to=1-3]
	\arrow["{B \ot u\varepsilon\iota_V \ot B}"', shift right, from=1-1, to=1-3]
	\arrow["{S \ot S_V \ot S}"{description}, from=1-1, to=2-1]
	\arrow["\color{gray}{\eqref{eq:SidS}}"{description}, draw=none, from=1-1, to=2-3]
	\arrow["{m^2}", from=1-3, to=1-5]
	\arrow["{S \ot S \ot S}"{description}, from=1-3, to=2-3]
	\arrow["q", from=1-5, to=1-6]
	\arrow["S", from=1-5, to=2-5]
	\arrow["{\exists!S'}", dashed, from=1-6, to=3-6]
	\arrow["{B \ot (S*\id)\iota_V \ot B}", shift left, from=2-1, to=2-3]
	\arrow["{B \ot u\varepsilon\iota_V \ot B}"', shift right, from=2-1, to=2-3]
	\arrow["{m'}"', from=2-3, to=2-5]
	\arrow["{q'\ot q'\ot q'}"{description}, from=2-3, to=3-3]
	\arrow["\color{gray}{q' \, \mathsf{multiplicative}}"{description}, draw=none, from=2-3, to=3-6]
	\arrow["{q'}", from=2-5, to=3-6]
	\arrow["{m'}"'{pos=0.4}, from=3-3, to=3-6]
\end{tikzcd}
\]
where \(m'= m^{\mathrm{op}}(m^{\mathrm{op}} \ot \id)\). By \zcref{rem:coeq_algs}, \(q'(S * \id) \iota_V = q'u \varepsilon \iota_V\) and therefore \(q'S\) coequalises \(m^2(B \ot (\id*S)\iota_V \ot B)\) and \(m^2(B \ot u\varepsilon\iota_V \ot B)\) and so there exists a unique \(S'\colon Q \to Q'\) such that
\begin{equation}\label{eq:S'q'}
    S'q = q'S
\end{equation}
as in the diagram. Note that \(S'\) becomes anti-multiplicative and anti-comultiplicative. 

Then, consider the diagram
\[\begin{tikzcd}[ampersand replacement=\&,row sep=3.15em,column sep=3.4em]
	{Q \ot V \ot Q} \&\& {Q \ot B \ot Q} \& {Q \ot Q \ot Q} \& Q \& P \\
	{Q' \ot V \ot Q'} \&\& {Q' \ot B \ot Q'} \& {Q'\ot Q'\ot Q'} \& {Q'} \\
	\&\& {P \ot Q \ot P} \& {P \ot P \ot P} \&\& P
	\arrow["{\color{gray}\begin{array}{c}\mathsf{anti\mbox{-}}\\ \mathsf{multiplicativity}\end{array}}"{description}, draw=none, from=1-4, to=2-5]
	\arrow["{Q \ot (S*\id)\iota_V\gamma \ot Q}", shift left, from=1-1, to=1-3]
	\arrow["{Q \ot u\varepsilon \iota_V\gamma \ot Q}"', shift right, from=1-1, to=1-3]
	\arrow["{S' \ot s\gamma \ot S'}"{description}, from=1-1, to=2-1]
	\arrow["{\eqref{eq:SidS}}"{description}, draw=none, from=1-1, to=2-3]
	\arrow["{Q \ot q \ot Q}", from=1-3, to=1-4]
	\arrow["{S' \ot S \ot S'}"{description}, from=1-3, to=2-3]
	\arrow["{\eqref{eq:S'q'}}"{description}, draw=none, from=1-3, to=2-4]
	\arrow["{m^2}", from=1-4, to=1-5]
	\arrow["{S'\ot S'\ot S'}"{description}, from=1-4, to=2-4]
	\arrow["p", from=1-5, to=1-6]
	\arrow["{S'}"', from=1-5, to=2-5]
	\arrow["S_P", dashed, from=1-6, to=3-6]
	\arrow["{Q' \ot u\varepsilon \iota_V \ot Q'}"', shift right, from=2-1, to=2-3]
	\arrow["{Q' \ot (\id*S)\iota_V \ot Q'}", shift left, from=2-1, to=2-3]
	\arrow["{Q'\ot q'\ot Q'}"', from=2-3, to=2-4]
	\arrow["{p'\ot q \ot p'}"{description}, from=2-3, to=3-3]
	\arrow["{m''}", from=2-4, to=2-5]
	\arrow["{p'\ot p'\ot p'}"{description}, from=2-4, to=3-4]
	\arrow["{\color{gray}p'\,\mathsf{multiplicative}}"{description, pos=0.4}, draw=none, from=2-4, to=3-6]
	\arrow["{p'}"{description}, from=2-5, to=3-6]
	\arrow["{P \ot p \ot P}"', from=3-3, to=3-4]
	\arrow["{m''}"', from=3-4, to=3-6]
\end{tikzcd}\]
Since \(q\) coequalises \((\id * S)\iota_V\) and \(u_B \varepsilon_B\iota_V\), as we remarked in \zcref{rem:coeq_algs}, \(p'S'\) coequalises \(m^2(Q \ot q(S*\id)\iota_V\gamma \ot Q)\) and \(m^2(Q \ot qu\varepsilon \iota_V\gamma \ot Q)\) and therefore there exists a unique \(S_P \colon P \to P\) such that
\begin{equation}\label{eq:Spp'S'}
    S_Pp = p'S'
\end{equation}
as in the diagram.

\subsection*{Step 5}

To conclude, we need to verify that \(S_P\) is a right antipode. Since \(p\) and \(q\) are epimorphisms and since the external paths in the following diagram coincide because both \(q\) and \(p\) are morphisms of bimonoids,
\[\begin{tikzcd}[ampersand replacement=\&,sep=3.15em]
	\&\&\& \I \&\& \\
	P \& {P \ot P} \&\& {P \ot P} \&\& P \\
	Q \& {Q \ot Q} \& {Q \ot Q'} \&\& {Q \ot Q} \& Q \\
	B \& {B \ot B} \&\& {B \ot B} \&\& B \\
	\&\&\& \I
	\arrow["u", from=1-4, to=2-6]
	\arrow["\varepsilon", from=2-1, to=1-4]
	\arrow["\Delta"', from=2-1, to=2-2]
	\arrow["{P \ot S_P}"', from=2-2, to=2-4]
	\arrow["{\color{gray}\eqref{eq:Spp'S'}}"{description}, draw=none, from=2-2, to=3-3]
	\arrow["{\color{gray}(\star)}"{description}, draw=none, from=2-4, to=1-4]
	\arrow["m"', from=2-4, to=2-6]
	\arrow["\color{gray}{\eqref{eq:pq}}"{description}, draw=none, from=2-4, to=4-4]
	\arrow["p", from=3-1, to=2-1]
	\arrow["{\color{gray}\begin{array}{c}p\,\mathsf{comulti\mbox{-}}\\ \mathsf{plicative}\end{array}}"{description}, draw=none, from=3-1, to=2-2]
	\arrow["\Delta"', from=3-1, to=3-2]
	\arrow["{\color{gray}\begin{array}{c}q\,\mathsf{comulti\mbox{-}}\\ \mathsf{plicative}\end{array}}"{description}, draw=none, from=3-1, to=4-2]
	\arrow["{p \ot p}"{description}, from=3-2, to=2-2]
	\arrow["{Q \ot S'}"', from=3-2, to=3-3]
	\arrow["{p \ot p'}"{description}, from=3-3, to=2-4]
	\arrow["{p \ot p}"{description}, from=3-5, to=2-4]
	\arrow["{\color{gray}p\,\mathsf{multiplicative}}\quad"{description}, draw=none, from=3-5, to=2-6]
	\arrow["m"', from=3-5, to=3-6]
	\arrow["{\color{gray}q\,\mathsf{multiplicative}\quad}"{description}, draw=none, from=3-5, to=4-6]
	\arrow["p"', from=3-6, to=2-6]
	\arrow["q", from=4-1, to=3-1]
	\arrow["\Delta", from=4-1, to=4-2]
	\arrow["\varepsilon"', from=4-1, to=5-4]
	\arrow["{q \ot q}"{description}, from=4-2, to=3-2]
	\arrow["{\color{gray}\eqref{eq:S'q'}}"{description}, draw=none, from=4-2, to=3-3]
	\arrow["{B \ot S}", from=4-2, to=4-4]
	\arrow["{q \ot q'}"{description}, from=4-4, to=3-3]
	\arrow["{q \ot q}"{description}, from=4-4, to=3-5]
	\arrow["m", from=4-4, to=4-6]
	\arrow["{\color{gray}\eqref{eq:qonV}}"{description}, draw=none, from=4-4, to=5-4]
	\arrow["q"', from=4-6, to=3-6]
	\arrow["u"', from=5-4, to=4-6]
\end{tikzcd}\]
also the rooftop pentagon \((\star)\) commutes, so that \(S_P \colon P \to P\) is a right antipode.

Given a comonoid \(C\) in \(\cM\), we denote by \(\rH(C)\) the right Hopf monoid \(P\) constructed above.

\begin{theorem}
There is an adjunction
\[\begin{tikzcd}
	{\Comon(\cM)} & {\underline{\rHopf}(\cM)}
	\arrow[""{name=0, anchor=center, inner sep=0}, "\rH", shift left=2, from=1-1, to=1-2]
	\arrow[""{name=1, anchor=center, inner sep=0}, "{\mathrm{U}}", shift left=2, from=1-2, to=1-1]
	\arrow["\dashv"{anchor=center, rotate=-90}, draw=none, from=0, to=1]
\end{tikzcd}\]
between the category \(\Comon(\cM)\) of comonoids in \(\cM\), and the category \(\underline{\rHopf}(\cM)\) of right Hopf monoids in \(\cM\) whose right antipode is anti-multiplicative and anti-comultiplicative, where \(\mathrm{U}\) denotes the forgetful functor.
\end{theorem}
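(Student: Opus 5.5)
The plan is to exhibit the universal arrow directly: for a comonoid \(C\), the unit will be the comonoid morphism \(\eta_C \coloneqq pq\,\iota_V\iota_0 \colon C = V_0 \to \rH(C)\). We then check that for every \(H\in\underline{\rHopf}(\cM)\) and every comonoid morphism \(f\colon C\to H\) there is a unique morphism \(\hat f\colon \rH(C)\to H\) in \(\underline{\rHopf}(\cM)\) with \(\hat f\eta_C = f\). Functoriality of \(\rH\) and the adjunction then follow formally. Along the way we must also confirm that \(\rH(C)=P\) lies in \(\underline{\rHopf}(\cM)\). That \(S_P\) is anti-multiplicative and anti-comultiplicative follows from the construction: \(S\colon T(V)^{\mathrm{cop}}\to T(V)^{\mathrm{op}}\) is a bimonoid map, so \(S'\) and hence \(S_P\) inherit these properties, using that \(p,q,p',q'\) are epimorphic bimonoid maps together with \eqref{eq:pq}. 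Similarly, \(\eta_C\) is a comonoid map because \(\iota_0\), \(\iota_V\), \(q\) and \(p\) all are.

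\emph{Existence.} Define \(f_n\colon V_n\to H\) by \(f_n\coloneqq S_H^n f\). For \(n\) even this is a comonoid map \(C\to H\); for \(n\) odd it is a comonoid map \(C^{\mathrm{cop}}\to H\). Both claims use anti-comultiplicativity of \(S_H\) and \(\brd^2=\id\). By the coproduct property in \(\Comon(\cM)\) these \(f_n\) induce a comonoid map \(F\colon V\to H\). Freeness of \(T(V)\) as a bimonoid then gives a bimonoid map \(\tilde F\colon B\to H\). By construction \(F S_V = S_H F\) on each summand, so \(\tilde F S\) and \(S_H\tilde F\) are both monoid maps \(T(V)\to H^{\mathrm{op}}\) agreeing on \(V\). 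Here we use anti-multiplicativity of \(S_H\). Hence \(\tilde F S = S_H\tilde F\).

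Next we descend \(\tilde F\) through the coequalisers. By \zcref{rem:coeq_algs} it suffices to show \(\tilde F(\id*S)\iota_V = u\varepsilon\iota_V\), which holds because \(\tilde F(\id*S)=(\id*S_H)\tilde F = u\varepsilon\tilde F\). This gives \(\bar F\colon Q\to H\). For Step 2 we need \(\tilde F(S*\id)\iota_V\gamma = u\varepsilon\iota_V\gamma\), i.e.\ \(S_H * \id\) applied to \(f_n\) for \(n\ge 1\). This is the key point. On \(V_n\) with \(n\geq 1\) we have \(f_n = S_H f_{n-1}\), so
\[
(S_H*\id)S_H f_{n-1} = S_H(\id*S_H)f_{n-1} = u\varepsilon f_{n-1},
\]
by the analogue of \eqref{eq:SidS} for \(H\) (the \(S_H\) being an anti-bimonoid map). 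The index shift is exactly what makes left-antipode relations hold only in degrees \(\ge 1\). Hence \(\bar F\) factors as \(\hat F\colon P\to H\), a bimonoid map with \(\hat F\eta_C=f\). Finally, \(\hat F S_P = S_H\hat F\) follows by precomposing with the epimorphism \(pq=p'q'\), using \eqref{eq:Spp'S'} and \eqref{eq:S'q'}, and the identity \(\tilde F S = S_H\tilde F\). The last step needs the analogous factorisation of \(\tilde F\) through \(p'q'\), which is obtained the same way.

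\emph{Uniqueness.} Any morphism \(g\colon P\to H\) in \(\underline{\rHopf}(\cM)\) with \(g\eta_C=f\) satisfies \(gpq\iota_V\iota_n = S_H^n f\). This follows by induction, since \(S_P pq\iota_V\iota_n = pq\iota_V\iota_{n+1}\) by \eqref{eq:defs}, \eqref{eq:S'q'} and \eqref{eq:Spp'S'}. Thus \(gpq\) is the monoid map determined by \(F\), and \(pq\) is epic, so \(g=\hat F\).

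The main obstacle I expect is bookkeeping rather than ideas. The delicate points are checking that \(S_P\) is genuinely anti-(co)multiplicative, which requires transporting these properties through the two different coequaliser towers \(pq\) and \(p'q'\), and producing the descent of \(\tilde F\) along \(p'q'\) needed for compatibility with the antipodes. I would handle both by always testing equalities after precomposing with the epimorphisms \(pq=p'q'\) (or \((pq)\ot(pq)\), epic since \(\ot\) preserves colimits) and reducing to statements on \(T(V)\), where they hold by freeness.
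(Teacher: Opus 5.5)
Your proposal is correct and follows essentially the same route as the paper: the unit $\eta_C = pq\,\iota_V\iota_0$, the maps $f_n = S_H^n f$ assembled via the coproduct $V$ and the free bimonoid $T(V)$, descent through $q$ and then $p$ (with \eqref{eq:SidS} handling the left-antipode relations in degrees $\geq 1$), and compatibility with the antipodes checked after precomposing with the epimorphism $pq = p'q'$. Your explicit uniqueness argument and your check that $S_P$ is anti-multiplicative and anti-comultiplicative spell out points the paper leaves implicit. The separate ``factorisation of $\tilde F$ through $p'q'$'' you mention is not needed, since $\hat F p'q' = \hat F pq = \tilde F$ already holds by \eqref{eq:pq}.
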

\begin{proof}
For any comonoid \(C\), we consider the composition
\begin{equation}\label{eq:etaC}
    \eta_C \coloneqq \left(C \xrightarrow{\iota_0} V \xrightarrow{\iota_V} T(V) = B \xrightarrow{q} Q \xrightarrow{p} P = \mathrm{U} \left(\rH(C)\right)\right),
\end{equation}
which is a comonoid map as composition of comonoid maps.

Let \(C\) be a comonoid and \(H\) be an object in \(\underline{\rHopf}(\cM)\). With every morphism of comonoids \(f\colon C\to \mathrm{U}(H)\), we naturally associate a unique morphism of right Hopf monoids \(F_P\colon \rH(C)\to H\) such that \(\mathrm{U}(F_P)  \eta_C = f\).

Mimicking \cite[Proposition 14]{GreenNicholsTaft}, we define \(f_0 \coloneqq f\) and 
\begin{equation}\label{eq:fn}
    f_n \coloneqq S_H^n f.
\end{equation}
Each \(f_n\colon V_n\to H\) is a comonoid morphism, which can be seen by induction starting from \(f_0\), since \(S_H\) is a comonoid anti-homomorphism and the \(V_n\)'s are alternatively equal to \(C\) or \(C^{\mathrm{cop}}\). Let \(\iota_n\colon V_n\to V\) be the canonical morphism of the coproduct. We set 
\begin{equation}\label{eq:tildef}
    \tilde f \coloneqq \bigsqcup_{n\ge 0}f_n\colon V\to H \quad \text{uniquely determined by} \quad \tilde f \iota_n = f_n \quad \text{ for all } n\geq 0.
\end{equation}
This is a comonoid morphism, and so it induces a bimonoid morphism \(F_B \colon B= T(V)\to H\) such that 
\begin{equation}\label{eq:FB}
    F_B \iota_V = \tilde f.
\end{equation}
For all \(n \geq 0\), one has 
\[\tilde f S_V \iota_n \overset{\eqref{eq:defs}}{=} \tilde f \iota_{n+1} \overset{\eqref{eq:tildef}}{=} f_{n+1} \overset{\eqref{eq:fn}}{=} S_H^{n+1} f = S_H S_H^n f \overset{\eqref{eq:fn}}{=} S_H f_n \overset{\eqref{eq:tildef}}{=} S_H \tilde f \iota_n,\] 
and so, by uniqueness in the universal property of the coproduct, one gets \(\tilde f S_V = S_H \tilde f\), whence also 
\begin{equation}\label{eq:FS=SF}
    F_BS_B = S_H F_B
\end{equation} 
because
\[F_B S_B \iota_V \overset{\text{def }S_B}{=} F_B \iota_V S_V \overset{\eqref{eq:FB}}{=} \tilde f S_V = S_H \tilde f \overset{\eqref{eq:FB}}{=} S_H F_B \iota_V.\]

We now prove that \(F_B\colon B\to H\) coequalises \(m_B^2(B\ot \id_B * S_B \ot B)\) and \(m_B^2(B\ot u_B\varepsilon_B \ot B)\), and hence induces a unique morphism \(F_Q \colon Q\to H\) such that
\begin{equation}\label{eq:FQq}
    F_Q q = F_B. 
\end{equation}
This is an easy computation, using that \(F_B\) is a bimonoid morphism:
\begin{align*}
    F_B m_B^2(B\ot \id * S_B \ot B)&= m_H^2 (F_B\ot F_B\ot F_B) (B\ot m_B(B \ot S_B)\Delta_B \ot B)\\
    &= m^2_H(F_B\ot F_Bm_B(B\ot S_B)\Delta_B \ot F_B)\\
    &= m^2_H(F_B\ot m_H(F_B\ot F_B)(B\ot S_B)\Delta_B \ot F_B)\\
    \overset{\eqref{eq:FS=SF}}&{=} m^2_H(F_B\ot m_H(H \otimes S_H)(F_B\ot F_B)\Delta_B \ot F_B)\\
    &= m^2_H(H\ot m(H\ot S_H)\Delta_H \ot H)(F_B\ot F_B\ot F_B)\\
    \overset{\eqref{eq:right-antipode}}&{=} m_H^2(H\ot u_H\varepsilon_H\ot H)(F_B\ot F_B\ot F_B)\\
    &= F_B m_B^2(B\ot u_B\varepsilon_B\ot B).
\end{align*}

Then, we prove that the resulting \(F_Q\) coequalises \(m_Q^2(Q \ot q(S_B * \id_B) \iota_V \gamma \ot Q)\) and \(m_Q^2(Q \ot qu_B\varepsilon_B\iota_V \gamma \ot Q)\), inducing a unique morphism \(F_P \colon P \to H\) such that
\begin{equation}\label{eq:FP}
    F_P p = F_Q.
\end{equation}
In view of \eqref{eq:SidS}, \(S_H\) satisfies 
\begin{equation}\label{eq:SHleft}
    m_H(S_H \ot \id)\Delta_HS_H = u_H \varepsilon_HS_H.
\end{equation}
Hence, by using again that \(F_B\) and \(F_Q\) are bimonoid morphisms, one has 
\begin{align*}
    & F_Qm_Q^2(Q \ot q(S * \id) \iota_V \gamma\jmath_{n+1} \ot Q) \\
    & = m_H^2(F_Q \ot F_Q \ot F_Q)(Q \ot q(S * \id) \iota_V \gamma\jmath_{n+1} \ot Q) \\
    \overset{\eqref{eq:FQq}}&{=} m_H^2(F_Q \ot F_B \ot F_Q)(Q \ot m_B(S_B \ot B)\Delta_B \ot Q)(Q \ot \iota_V \gamma\jmath_{n+1} \ot Q) \\
    & = m_H^2(H \ot m_H(S_H \ot H)\Delta_H \ot H)(F_Q \ot F_B \ot F_Q)(Q \ot \iota_V \gamma\jmath_{n+1} \ot Q) \\
    \overset{\eqref{eq:FB}}&{=} m_H^2(H \ot m_H(S_H \ot H)\Delta_H \ot H)(F_Q \ot \tilde f \gamma\jmath_{n+1} \ot F_Q) \\
    \overset{\eqref{eq:defgamma}}&{=} m_H^2(H \ot m_H(S_H \ot H)\Delta_H \ot H)(F_Q \ot \tilde f \iota_{n+1} \ot F_Q) \\
    \overset{\eqref{eq:tildef}}&{=} m_H^2(H \ot m_H(S_H \ot H)\Delta_H \ot H)(F_Q \ot f_{n+1} \ot F_Q) \\
    \overset{\eqref{eq:fn}}&{=} m_H^2(H \ot m_H(S_H \ot H)\Delta_H \ot H)(F_Q \ot  S_H f_{n} \ot F_Q) \\
    \overset{\eqref{eq:tildef}}&{=} m_H^2(H \ot m_H(S_H \ot H)\Delta_H \ot H)(F_Q \ot  S_H \tilde f \iota_{n} \ot F_Q) \\
    \overset{\eqref{eq:SHleft}}&{=} m_H^2(H \ot u_H\varepsilon_H \ot H)(F_Q \ot  S_H \tilde f \iota_{n} \ot F_Q) \\
    & = m_H^2(H \ot u_H\varepsilon_H \ot H)(F_Q \ot  \tilde f \gamma \jmath_{n+1} \ot F_Q) \\
    \overset{\eqref{eq:FB}}&{=} m_H^2(H \ot u_H\varepsilon_H \ot H)(F_Q \ot  F_B \iota_V \gamma \jmath_{n+1} \ot F_Q) \\
    & = F_Qm_Q^2(Q \ot qu_B\varepsilon_B\iota_V \gamma \jmath_{n+1} \ot Q),
\end{align*}
for all \(n \geq 0\), whence 
\[ F_Qm_Q^2(Q \ot q(S * \id) \iota_V \gamma \ot Q) =  F_Qm_Q^2(Q \ot qu_B\varepsilon_B\iota_V \gamma \ot Q) \]
because the tensor product preserves colimits. Therefore \(F_Q\colon Q\to H\) induces a unique morphism \(F_P\colon P\to H\) such that \(F_P p = F_Q\). 
In addition, since
\begin{align*}
    S_HF_Ppq \overset{\eqref{eq:FP}}&{=} S_HF_Qq \overset{\eqref{eq:FQq}}{=} S_HF_B \overset{\eqref{eq:FS=SF}}{=} F_BS_B \overset{\eqref{eq:FQq}}{=} F_QqS_B \overset{\eqref{eq:FP}}{=} F_PpqS_B \\
    \overset{\eqref{eq:pq}}&{=} F_Pp'q'S_B     \overset{\eqref{eq:S'q'}}{=} F_Pp'S'q \overset{\eqref{eq:Spp'S'}}{=} F_PS_Ppq
\end{align*}
and since both \(p\) and \(q\) are epimorphisms, we also have that \(F_P\) is compatible with the right antipodes: \(S_HF_P = F_PS_P\). To conclude, observe that \(F_P \eta_C = f\) as desired: in fact,
\[F_P\eta_C \overset{\eqref{eq:etaC}}{=} F_Ppq\iota_V\iota_0 \overset{\eqref{eq:FP}}{=} F_Qq\iota_V\iota_0 \overset{\eqref{eq:FQq}}{=} F_B\iota_V\iota_0 \overset{\eqref{eq:FB}}{=} \tilde{f}\iota_0 \overset{\eqref{eq:tildef}}{=} f.\]

This proves the universal property of \(\rH(C)\). By \cite[\S IV, Theorem 2 (\textit{ii})]{Maclane}, the universal property suffices to have the functoriality of \(\rH\) and the adjunction.
\end{proof}

\section{Examples}\label{sec:examples}

\begin{example}[The cartesian monoidal case]
    In a braided cartesian monoidal category \((\cM,\times,\I,\brd)\), any one-sided Hopf monoid object is automatically two-sided, i.e., a Hopf monoid object. Indeed, recall that in this case any object in \(\cM\) admits a unique comonoid structure where \(\Delta\) is the diagonal map and \(\varepsilon\) is the unique morphism to the terminal object \(\I\). Suppose that an object \(M\) in this setting admits also a monoid structure \((M,m,u)\) and an endomorphism \(S\) which is right (or left) convolution inverse of the identity, that is,
    \begin{equation}\label{eq:cartesian}
        m   (\id_M \times S)   \Delta = u   \varepsilon.
    \end{equation}
    Since \(S\) is a morphism in \(\cM\), we have that \(\Delta   S = (S \times S)   \Delta\) and that \(\varepsilon   S = \varepsilon\). In particular, from \eqref{eq:cartesian} we deduce that
    \begin{equation}\label{eq:cartesian2}
        u   \varepsilon = u   \varepsilon   S = m   (\id_M \times S)   \Delta   S = m   (S \times S^2)   \Delta,
    \end{equation}
    which allows us to conclude that
    \begin{align*}
        \id_M & = m  \, (\id_M \times u)  \, (\id_M \times \varepsilon)  \, \Delta \\
        \overset{\eqref{eq:cartesian2}}&{=} m  \, (\id_M \times m)  \, (\id_M \times S \times S^2)  \, (\id_M \times \Delta)  \, \Delta \\
        & = m  \, (m \times \id_M)  \, (\id_M \times S \times S^2)  \, (\Delta \times \id_M)  \, \Delta \\
        \overset{\eqref{eq:cartesian}}&{=} m  \, (u \times \id_M)  \, (\id_M \times S^2)  \, (\varepsilon \times \id_M)  \, \Delta = S^2.
    \end{align*}
    Since we have proven that \(S^2 = \id_M\), now \eqref{eq:cartesian2} becomes
    \(m   (S \times \id_M)   \Delta = u   \varepsilon,\)
    i.e.\@ \(S\) is also a left convolution inverse of \(\id_M\). 
\end{example}

\begin{example}\label{ex:trivial}
    Any classical right Hopf algebra in the sense of \cite{GreenNicholsTaft} 
    naturally gives rise to many right Hopf monoids in braided categories of modules over cocommutative bialgebras.
    
    Indeed, let \(H\) be a right Hopf \(\Bbbk\)-algebra. 
    For instance, \(H\) can be the free right Hopf algebra over the matrix coalgebra \(M_n(\Bbbk)\), obtained via a right-hand side counterpart of \cite[\S3]{GreenNicholsTaft}. This \(H\) becomes a right Hopf monoid in the symmetric monoidal category \(\mathcal{M} = {}_B\mathfrak{M}\) of left modules over any cocommutative \(\K\)-bialgebra \(B\), when endowed with the trivial action induced by the algebra morphism \(u_H   \varepsilon_B\). For instance, \(B\) can be \(\Bbbk[X]\) with \(X\) grouplike (i.e., \(\Bbbk \mathbb{N}\)), \(\Bbbk[X]\) with \(X\) primitive (i.e., \(U(\Bbbk X) \cong \mathcal{O}(\Bbbk,+,0)\)), \(\Bbbk[X^{\pm 1}]\) with \(X\) grouplike (i.e., \(\mathcal{O}(\Bbbk^\times,\cdot,1)\)), or the trigonometric bialgebra \(\Bbbk[c,s]\) with \(\Delta(c) = c \ot c - s \ot s\) and \(\Delta(s) = c \ot s + s \ot c\). In all these cases, the monoidal structure is given by the tensor product of \(\Bbbk\)-vector spaces and the braiding in \(\mathcal{M}\) is given by the usual flip. 
\end{example}

\begin{example}\label{ex:Z2inZ2} 
    Denote by \(e,g\) the elements of \(\Z_2\), where \(e\) is the neutral element. Consider the \(2\)-dimensional group Hopf algebra \(\Bbbk \Z_2\) with unit \(e\). When \(\mathrm{char}(\Bbbk)\neq 2\), this is a quasitriangular Hopf algebra with universal \(\mathcal{R}\)-matrix
    \[
        \mathcal{R} = \mathcal{R}^i \ot \mathcal{R}_i =\frac{1}{2}\left(e \ot e + e \ot g + g \ot e - g \ot g \right).
    \]
    The category \({}_{\K \Z_2}\mathfrak{M}\) of left \(\K\Z_2\)-modules is a symmetric monoidal category with non-trivial braiding \(\brd\) induced by the universal \(\mathcal{R}\)-matrix,
    \begin{invisible}
        i.e., \(\brd_{M,N} \colon M \ot N \to N \ot M\) given by
        \[
        m\otimes n \mapsto \mathcal{R}^{2}n\otimes \mathcal{R}^{1}m = \frac{1}{2}\left( n\otimes m+n\otimes g\cdot m+g\cdot n\otimes m-g\cdot n\otimes g\cdot m\right),
        \]
    \end{invisible}%
    see \cite[Example 2.1.6]{MajidBook}. 
    \begin{invisible}
        This is a symmetry because if we do it twice
        \begin{eqnarray*}
        x\otimes y &\mapsto &\frac{1}{2}\left( y\otimes x+y\otimes g\cdot x+g\cdot y\otimes x-g\cdot y\otimes g\cdot x\right) \\
        &\mapsto &\frac{1}{4}\left(
        \begin{array}{c}
            x\otimes y+x\otimes g\cdot y+g\cdot x\otimes y-g\cdot x\otimes g\cdot y+ \\ 
            +g\cdot x\otimes y+g\cdot x\otimes g\cdot y+x\otimes y-x\otimes g\cdot y+ \\ 
            +x\otimes g\cdot y+x\otimes y+g\cdot x\otimes g\cdot y-g\cdot x\otimes y+ \\ 
            -g\cdot x\otimes g\cdot y-g\cdot x\otimes y-x\otimes g\cdot y+x\otimes y
        \end{array}
        \right) \\
        &=&\frac{1}{4}\left( x\otimes y+x\otimes y+x\otimes y+x\otimes y\right) = x\otimes y
        \end{eqnarray*}
    \end{invisible}%
    Take \(C=\K\Z_{2}\) with the regular left action. It is a left \(\K\Z_2\)-module coalgebra with its own grouplike comultiplication. Then \(C^{\mathrm{cop}}\) in \(\Bbbk \mathbb{Z}_{2}\)-modules has comultiplication uniquely determined by
    \begin{align*}
        \brd_{C,C} \Delta_C \left( e\right) & = \cR \left( e\otimes e\right) = 
        \frac{1}{2}\left( e\otimes e+e\otimes g+g\otimes e-g\otimes g\right) , \\
        \brd_{C,C} \Delta_C \left( g\right) & 
        = \frac{1}{2}\left( g\otimes g+g\otimes e+e\otimes g-e\otimes e\right) .
    \end{align*}
    
    We are in a condition to perform the construction of \zcref{sect:GNTsym-monoidal}. Set
    \[
        V_{2i} \coloneqq C, \qquad V_{2i+1} \coloneqq C^{\mathrm{cop}} \qquad \text{and} \qquad V \coloneqq \bigoplus_{i\ge 0}V_{i}.
    \]
    Then \(V\) is still a \(\K\Z_{2}\)-module coalgebra and 
    \begin{invisible}
        hence we can perform 
        \[
            T(V) \coloneqq \bigoplus_{n} V^{\ot n}
        \]
        By the universal property of the tensor algebra, there exist unique algebra morphisms \(\Delta_{T(V)} \colon T(V) \to T(V) \ot T(V)\) and \(\varepsilon_{T(V)} \colon T(V) \to \I\) in \({}_{\K\Z_2}\mathfrak{M}\) such that the following diagrams commute
        \[
        \xymatrix @C=70pt {
        V \ar[r]^{\iota_V} \ar[d]_-{\Delta_V} & T(V) \ar@{.>}[d]^-{\Delta_{T(V)}} \\
        V \ot V \ar[r]_-{\iota_V \ot \iota_V} & T(V) \ot T(V)
        }
        \qquad 
        \xymatrix{
        V \ar[rr]^-{\iota_V} \ar[dr]_-{\varepsilon_V} & & T(V) \ar@{.>}[dl]^-{\varepsilon_{T(V)}} \\
        & \I & 
        },
        \]
        where \(\iota_V \colon V \to T(V)\) is the canonical morphism associated with the tensor algebra. Then, 
    \end{invisible}%
    \(T(V)\) becomes a bimonoid in \({}_{\K\Z_2}\mathfrak{M}\).
    \begin{invisible}
        Observe that \(V^\mathrm{cop}\cong \bigoplus_n V_n^{\mathrm{cop}}\): indeed, in the diagram    
        \[
            \xymatrix{
                V_n \ar[d]_-{\iota_n} \ar[r]^-{\Delta_n} & V_n \ot V_n \ar[d]^-{\iota_n \ot \iota_n} \ar[r]^-{\brd_{V_n,V_n}} & V_n \ot V_n \ar[d]^-{\iota_n \ot \iota_n} \\
                V \ar[r]_-{\Delta_V} & V \ot V \ar[r]_-{\brd_{V,V}} & V\ot V
            }
        \]
        the left-hand side square commutes by definition of \(\Delta_V\), and the right-hand side square commutes by naturality of \(\brd\). By definition, \(\Delta_{V^\mathrm{cop}}\) is the composition of the lower row, thus it makes the large rectangle commute. By the universal property of the coproduct, there exists a unique morphism \(V\to V\ot V\) making the large rectangle commute: this morphism is by definition \(\Delta_{\bigoplus_n V_n^{\mathrm{cop}}}\), and hence, by uniqueness, one has \(\Delta_{\bigoplus_n V_n^{\mathrm{cop}}}=\Delta_{V^\mathrm{cop}}\).
    \end{invisible}%
    The unique morphism of \(\Bbbk \mathbb{Z}_{2}\)-module coalgebras \(S_V \colon V^{\mathrm{cop}}\rightarrow V\) such that 
    \[\begin{tikzcd}
    	{V_n^{\mathrm{cop}}} & {V^{\mathrm{cop}}=\bigoplus_n V_n^{\mathrm{cop}}} \\
    	{V_{n+1}} & V
    	\arrow[hook, from=1-1, to=1-2]
    	\arrow[equals, from=1-1, to=2-1]
    	\arrow["S_V", dotted, from=1-2, to=2-2]
    	\arrow[hook, from=2-1, to=2-2]
    \end{tikzcd}\]
    commutes is explicitly given by
    \begin{equation*}
        S_V ( 0,\ldots ,0,\underset{n}{\underbrace{v}},0,0,\ldots ) = (0,\ldots ,0,0,\underset{n+1}{\underbrace{v}},0,\ldots ).
    \end{equation*}
    If we denote by \(g_{n}\) the element \(g\in \Bbbk \mathbb{Z}_{2}\) when considered in \(V_{n}\), then \(S_V\left(g_{n}\right) =g_{n+1}\). 
    \begin{invisible}
        We can now lift \(S_V\) to a morphism of monoids \(S\colon T\left( V^{\mathrm{cop}}\right) \rightarrow T\left( V\right) ^{\mathrm{op}}\) by considering the left \(\Bbbk \mathbb{Z}_{2}\)-linear composition
        \begin{equation*}
            V^{\mathrm{cop}} \overset{s}{\longrightarrow }V\overset{\iota }{\longrightarrow }T\left(V\right)^{\mathrm{op}}.
        \end{equation*}
        By the universal property of the free bialgebra functor \(T(-)\), there exists a unique morphism of bimonoids \(S \colon T(V^{\mathrm{cop}}) \to T(V)^{\mathrm{op}}\).
        Since both \(\brd_{TV,TV} \Delta_{TV}\) and \(\Delta_{T(V^{\mathrm{cop}})}\) are algebra morphisms satisfying
        \[
            \brd_{TV,TV} \Delta_{TV} \iota_V = \brd_{TV,TV} (\iota_V \ot \iota_V) \Delta_V = (\iota_V \ot \iota_V) \brd_{V,V} \Delta_V = (\iota_V \ot \iota_V)\Delta_{V^{\mathrm{cop}}} = \Delta_{T(V^{\mathrm{cop}})}\iota_V,
        \]
        they are equal and hence \(T(V)^{\mathrm{cop}} = T(V^{\mathrm{cop}})\), so that we have a morphism of bimonoids \(S \colon T(V)^{\mathrm{cop}} \to T(V)^{\mathrm{op}}\). 
    \end{invisible}%
    
    Now, we lift \(S_V\) to a morphism of bimonoids \(S \colon T(V)^{\mathrm{cop}} \to T(V)^{\mathrm{op}}\) and we perform the construction of \(\rH(C)\).
    \begin{invisible}
        We claim that, in fact, \(S\colon T\left( V\right) ^{\mathrm{cop}}\rightarrow T\left(V\right) ^{\mathrm{op}}\) is a morphism of bimonoids. To this aim, let us observe first of all that \(T\left( V\right) ^{\mathrm{cop}}\) is, indeed, a bialgebra: since the braiding is symmetric, \(\mathfrak{c}\colon T\left( V\right) \otimes T\left(V\right) \rightarrow T\left( V\right) \otimes T\left( V\right)\) is an algebra morphism and hence \(\mathfrak{c}\Delta\) is an algebra morphism. Similarly, \(T\left( V\right)^{\mathrm{op}}\) is still a bialgebra because \(m  \mathfrak{c}\) is still a coalgebra morphism (again, because \(\mathfrak{c}\) is symmetric).
        Secondly, showing that \(S\colon T\left( V\right) ^{\mathrm{cop}}\rightarrow T\left(V\right) ^{\mathrm{op}}\) is a morphism of bimonoids is the same showing that \(S\colon T\left( V\right) \rightarrow T\left( V\right) ^{\mathrm{op,cop}}\) is a morphism of bimonoids, and since \(S\), \(\Delta\), and \(\Delta ^{\mathrm{cop}}\) are all algebra morphisms, it is enough to show that \(\Delta ^{\mathrm{cop}}\circ S=\left( S\otimes S\right) \circ \Delta\) on elements of \(V\), and by the universal property of the coproduct it is enough to check it on elements of \(V_{n}\). If \(V_{n}=\Bbbk \mathbb{Z}_{2}\), then \(V_{n+1}=\Bbbk \mathbb{Z}_{2}^{\mathrm{cop}}\) and we have
        \begin{equation*}
            \left( S\otimes S\right) \Delta (u_{n})=\left( S\otimes S\right) \left(u_{n}\otimes u_{n}\right) =u_{n+1}\otimes u_{n+1}
        \end{equation*}
        while
        \begin{equation*}
            \Delta ^{\mathrm{cop}}S\left( u_{n}\right) =\mathfrak{c}\Delta \left( u_{n+1}\right) = \mathfrak{c}\Delta _{\Bbbk \mathbb{Z}_{2}}^{\mathrm{cop}}\left( u_{n+1}\right) =\Delta _{\Bbbk \mathbb{Z}_{2}}\left( u_{n+1}\right) =u_{n+1}\otimes u_{n+1}.
        \end{equation*}
        
        If \(V_{n}=\Bbbk \mathbb{Z}_{2}^{\mathrm{cop}}\), then \(V_{n+1}=\Bbbk \mathbb{Z}_{2}\) and we have
        \begin{equation*}
            \left( S\otimes S\right) \Delta (u_{n})=\left( S\otimes S\right) \mathfrak{c}\left( u_{n}\otimes u_{n}\right) =\mathfrak{c}\left( u_{n+1}\otimes u_{n+1}\right)
        \end{equation*}
        while
        \begin{equation*}
            \Delta ^{\mathrm{cop}}S\left( u_{n}\right) =\mathfrak{c}\Delta \left( u_{n+1}\right) = \mathfrak{c}\Delta _{\Bbbk \mathbb{Z}_{2}}\left( u_{n+1}\right) =\mathfrak{c} \left( u_{n+1}\otimes u_{n+1}\right) .
        \end{equation*}
        Similarly for \(g_{n}\). Therefore, \(S:T\left( V\right) ^{\mathrm{cop}}\rightarrow T\left( V\right) ^{\mathrm{op}}\) is indeed a morphism of bimonoids.
    \end{invisible}%
    In the present setting, this amounts to considering \(X=\left\{ x_{\left( 1\right) }S\left( x_{\left(2\right) }\right) -\varepsilon \left( x\right) 1\mid x\in V_{n},n\geq 0\right\}\) and 
    \[K \coloneqq \left\langle X\right\rangle +\left\langle S\left( X\right) \right\rangle = 
    \stretchleftright[600]{\langle}
    {\left. \begin{gathered} x_{(1)}S\left(x_{(2)}\right) - \varepsilon(x)1, \\[3pt] S\left(y_{(1)}\right)y_{(2)} - \varepsilon(y)1 \end{gathered} ~\right|~ \begin{gathered} x \in V_n, n \geq 0, \\[5pt] y \in V_m, m \geq 1\end{gathered}}
    {\rangle}.\]
    \begin{invisible}
        In the present setting, we can consider the ideal \(I\) in \(T\left( V\right)\) generated by \(X=\left\{ x_{\left( 1\right) }S\left( x_{\left(2\right) }\right) -\varepsilon \left( x\right) 1\mid x\in V_{n},n\geq 0\right\}\). 
        Note that \(I\) is not necessarily \(S\)-stable, but we can observe the following: if \(r=x_{\left( 1\right) }S\left( x_{\left( 2\right) }\right) -\varepsilon \left( x\right) 1\) and \(z\) is a generic element of \(\mathbb{Z}_{2}\), then
        \begin{eqnarray*}
            z\cdot \left( x_{\left( 1\right) }S\left( x_{\left( 2\right) }\right) -\varepsilon \left( x\right) 1\right) &=&z\cdot \left( x_{\left( 1\right) }S\left( x_{\left( 2\right) }\right) \right) -\varepsilon \left( x\right) \left( z\cdot 1\right) \\
            &=&\left( z\cdot x_{\left( 1\right) }\right) \left( z\cdot S\left( x_{\left( 2\right) }\right) \right) -\varepsilon \left( x\right) \varepsilon \left( z\right) 1 \\
            &=&\left( z\cdot x_{\left( 1\right) }\right) S\left( z\cdot x_{\left( 2\right) }\right) -\varepsilon \left( z\cdot x\right) 1 \\
            &=&\left( z\cdot x\right) _{\left( 1\right) }S\left( \left( z\cdot x\right) _{\left( 2\right) }\right) -\varepsilon \left( z\cdot x\right) 1\in X
        \end{eqnarray*}
        because \(T\left( V\right)\) is a \(\Bbbk \mathbb{Z}_{2}\)-module algebra, \(S\) is \(\Bbbk \mathbb{Z}_{2}\)-linear, and \(V_{n}\) is a \(\Bbbk \mathbb{Z}_{2}\)-module coalgebra, and if moreover \(a,b\) are generic elements of \(T\left(
        V\right)\), then
        \begin{align*}
        S\left( arb\right) &=S\mu \left( a\otimes rb\right) =\mu ^{op}\left( S\otimes S\right) \left( a\otimes rb\right) =\mu \mathfrak{c}\left( S\left( a\right) \otimes S\left( rb\right) \right) \\
         & =\left( \mathcal{R}^{2}\cdot S\left(rb\right) \right) \left( \mathcal{R}^{1}\cdot S\left( a\right) \right) =\left( \mathcal{R}^{2}\cdot S\mu \left( r\otimes b\right) \right) \left( \mathcal{R}^{1}\cdot S\left( a\right) \right) \\
        & =\left( \mathcal{R}^{2}\cdot \mu ^{op}\left( S\otimes S\right) \left( r\otimes b\right) \right) \left( \mathcal{R}^{1}\cdot S\left( a\right) \right) \\
        & = \left( \mathcal{R}^{2}\cdot \mu \mathfrak{c}\left(  \left( r\right) \otimes S\left( b\right) \right) \right) \left( \mathcal{R}^{1}\cdot S\left( a\right) \right)  \\
        & = \left( \mathcal{R}^{2}\cdot \left( \mathcal{P}^{2}\cdot S\left(b\right) \right) \left( \mathcal{P}^{1}\cdot S\left( r\right) \right) \right) \left( \mathcal{R}^{1}\cdot S\left( a\right) \right)   \\
        & = \left( \left( \mathcal{R}^{2}_{(1)}\cdot \mathcal{P}^{2}\cdot S\left(b\right) \right) \left( \mathcal{R}^{2}_{(2)}\cdot \mathcal{P}^{1}\cdot S\left( r\right) \right) \right) \left( \mathcal{R}^{1}\cdot S\left( a\right) \right)    \\
        & = \left( \mathcal{R}^{2}_{(1)} \mathcal{P}^{2}\cdot S\left(b\right) \right) S\left( \mathcal{R}^{2}_{(2)} \mathcal{P}^{1}\cdot r\right)\left( \mathcal{R}^{1}\cdot S\left( a\right) \right) \in T\left( V\right) S\left( X\right) T\left( V\right) .
        \end{align*}
        
        That is to say, \(S\left( I\right) =S\left( T\left( V\right) XT\left(
        V\right) \right) \subseteq T\left( V\right) S\left( X\right) T\left(
        V\right)\). Now, an element in \(S\left( X\right)\) has the form%
        \begin{align*}
        S\left( x_{\left( 1\right) }S\left( x_{\left( 2\right) }\right) -\varepsilon
        \left( x\right) 1\right) &=S\left( x_{\left( 1\right) }S\left( x_{\left(
        2\right) }\right) \right) -\varepsilon \left( x\right) 1 \\
        &=\left( \mathcal{R}^{2}\cdot S\left( S\left( x_{\left( 2\right) }\right)
        \right) \right) \left( \mathcal{R}^{1}\cdot S\left( x_{\left( 1\right)
        }\right) \right) -\varepsilon \left( x\right) 1 \\
        &=\left( \mathcal{R}^{2}\cdot S\left( \mathcal{P}^{1}\cdot S\left( x\right)
        _{\left( 1\right) }\right) \right) \left( \mathcal{R}^{1}\cdot \mathcal{P}%
        ^{2}\cdot S\left( x\right) _{\left( 2\right) }\right) -\varepsilon \left(
        x\right) 1 \\
        &=\left( \mathcal{R}^{2}\cdot S\left( \mathcal{P}^{1}\cdot S\left( x\right)
        _{\left( 1\right) }\right) \right) \left( \mathcal{R}^{1}\mathcal{P}%
        ^{2}\cdot S\left( x\right) _{\left( 2\right) }\right) -\varepsilon \left(
        x\right) 1 \\
        &=S\left( \mathcal{R}^{2}\cdot \mathcal{P}^{1}\cdot S\left( x\right)
        _{\left( 1\right) }\right) \left( \mathcal{R}^{1}\mathcal{P}^{2}\cdot
        S\left( x\right) _{\left( 2\right) }\right) -\varepsilon \left( x\right) 1 \\
        &=S\left( \mathcal{R}^{2}\mathcal{P}^{1}\cdot S\left( x\right) _{\left(
        1\right) }\right) \left( \mathcal{R}^{1}\mathcal{P}^{2}\cdot S\left(
        x\right) _{\left( 2\right) }\right) -\varepsilon \left( x\right) 1 \\
        &=S\left( S\left( x\right) _{\left( 1\right) }\right) S\left( x\right)
        _{\left( 2\right) }-\varepsilon \left( x\right) 1,
        \end{align*}%
        because \(S\) is left \(\Bbbk \mathbb{Z}_{2}\)-linear and \(\mathfrak{c}\) is a
        symmetry (\(\mathfrak{c}^{2}=\id\)).  A string diagram verification
        reveals that%
        \begin{equation*}
        SS\left( x_{\left( 1\right) }S\left( x_{\left( 2\right) }\right) \right)
        =SS\left( x\right) _{\left( 1\right) }S\left( SS\left( x\right) _{\left(
        2\right) }\right)
        \end{equation*}%
        therefore \(S\left( T\left( V\right) S\left( X\right) T\left( V\right)
        \right) \subseteq T\left( V\right) SS\left( X\right) T\left( V\right)\) and%
        \begin{equation*}
        SS\left( x_{\left( 1\right) }S\left( x_{\left( 2\right) }\right)
        -\varepsilon \left( x\right) 1\right) =SS\left( x\right) _{\left( 1\right)
        }S\left( SS\left( x\right) _{\left( 2\right) }\right) -\varepsilon \left(
        SS\left( x\right) \right) 1\in X
        \end{equation*}%
        entail that%
        \begin{equation*}
        S\left( T\left( V\right) S\left( X\right) T\left( V\right) \right) \subseteq
        I.
        \end{equation*}%
        Thus,%
        \begin{equation*}
        K:=\left\langle X\right\rangle +\left\langle S\left( X\right) \right\rangle
        \end{equation*}%
        is an \(S\)-stable ideal. It is, in fact, an \(S\)-stable biideal. Indeed, on
        the one hand%
        \begin{equation*}
        \varepsilon \left( x_{\left( 1\right) }S\left( x_{\left( 2\right) }\right)
        -\varepsilon \left( x\right) 1\right) =\varepsilon \left( x\right)
        -\varepsilon \left( x\right) =0
        \end{equation*}%
        entails that%
        \begin{equation*}
        \varepsilon \left( K\right) =\varepsilon \left( \left\langle X\right\rangle
        \right) +\varepsilon \left( \left\langle S\left( X\right) \right\rangle
        \right) =0
        \end{equation*}%
        while on the other hand, working modulo \(I\ot  T\left( V\right) +T\left(
        V\right) \ot  I\),%
        \begin{align*}
        &\Delta \left( x_{\left( 1\right) }S\left( x_{\left( 2\right) }\right)
        -\varepsilon \left( x\right) 1\right) \\&=\left( x_{\left( 1\right) \left(
        1\right) }\ot  x_{\left( 1\right) \left( 2\right) }\right) \left( S\left(
        x_{\left( 2\right) }\right) _{\left( 1\right) }\ot  S\left( x_{\left(
        2\right) }\right) _{\left( 2\right) }\right) -\varepsilon \left( x\right)
        \left( 1\ot  1\right) \\
        &=\left( x_{\left( 1\right) }\left( \mathcal{R}^{2}\cdot S\left( x_{\left(
        3\right) }\right) _{\left( 1\right) }\right) \ot  \left( \mathcal{R}%
        ^{1}\cdot x_{\left( 2\right) }\right) S\left( x_{\left( 3\right) }\right)
        _{\left( 2\right) }\right) -\varepsilon \left( x\right) \left( 1\ot 
        1\right) \\
        &=\left( x_{\left( 1\right) }\left( \mathcal{R}^{2}\mathcal{P}^{2}\cdot
        S\left( x_{\left( 4\right) }\right) \right) \ot  \left( \mathcal{R}%
        ^{1}\cdot x_{\left( 2\right) }\right) \left( \mathcal{P}^{1}\cdot S\left(
        x_{\left( 3\right) }\right) \right) \right) -\varepsilon \left( x\right)
        \left( 1\ot  1\right) \\
        &=\left( x_{\left( 1\right) }\left( \mathcal{R}^{2}\cdot S\left( x_{\left(
        4\right) }\right) \right) \ot  \left( \mathcal{R}_{\left( 1\right)
        }^{1}\cdot x_{\left( 2\right) }\right) \left( \mathcal{R}_{\left( 2\right)
        }^{1}\cdot S\left( x_{\left( 3\right) }\right) \right) \right) -\varepsilon
        \left( x\right) \left( 1\ot  1\right) \\
        &=\left( x_{\left( 1\right) }\left( \mathcal{R}^{2}\cdot S\left( x_{\left(
        3\right) }\right) \right) \ot  \left( \mathcal{R}^{1}\cdot x_{\left(
        2\right) }\right) _{\left( 1\right) }S\left( \left( \mathcal{R}^{1}\cdot
        x_{\left( 2\right) }\right) _{\left( 2\right) }\right) \right) -\varepsilon
        \left( x\right) \left( 1\ot  1\right) \\
        &=\left( x_{\left( 1\right) }\left( \mathcal{R}^{2}\cdot S\left( x_{\left(
        3\right) }\right) \right) \ot  \varepsilon \left( \mathcal{R}^{1}\cdot
        x_{\left( 2\right) }\right) \right) -\varepsilon \left( x\right) \left(
        1\ot  1\right) \\
        &=\left( x_{\left( 1\right) }\left( \mathcal{R}^{2}\cdot S\left( x_{\left(
        3\right) }\right) \right) \ot  \varepsilon \left( \mathcal{R}^{1}\right)
        \varepsilon \left( x_{\left( 2\right) }\right) \right) -\varepsilon \left(
        x\right) \left( 1\ot  1\right) \\
        &=\left( x_{\left( 1\right) }S\left( x_{\left( 2\right) }\right) \ot 
        1\right) -\varepsilon \left( x\right) \left( 1\ot  1\right) =0,
        \end{align*}%
        so that we can conclude that \(\Delta \left( X\right) \in I\otimes T\left(
        V\right) +T\left( V\right) \otimes I\) and 
        \begin{equation*}
        \Delta \left( S\left( X\right) \right) \subseteq \mathfrak{c}\left( S\left(
        I\right) \otimes T\left( V\right) +T\left( V\right) \otimes S\left( I\right)
        \right) \subseteq S\left( I\right) \otimes T\left( V\right) +T\left(
        V\right) \otimes S\left( I\right) .
        \end{equation*}%
        Thus, \(\Delta \left( K\right) \subseteq K\otimes T\left( V\right) +T\left(
        V\right) \otimes K\).
        %
        %
        In conclusion, the quotient \(H\coloneqq T\left( V\right) /K\) in \(_{\Bbbk \mathbb{Z}_{2}}\mathfrak{M}\) is a bimonoid with a bimonoid morphism \(\mathcal{S}\colon H\rightarrow H^\mathrm{op,cop}\) such that
        \begin{eqnarray*}
            x_{\left( 1\right) }\mathcal{S}\left( x_{\left( 2\right) }\right) &=& \varepsilon \left( x\right) 1,\qquad \forall x\in V_{n},\forall n\geq 0, \\
            \mathcal{S}\left( x_{\left( 1\right) }\right) x_{\left( 2\right) } &=& \varepsilon \left( x\right) 1,\qquad \forall x\in V_{n},\forall n\geq 1.
        \end{eqnarray*}%
    \end{invisible}%
    The quotient \(H\coloneqq T\left( V\right) /K\) in \(_{\Bbbk \mathbb{Z}_{2}}\mathfrak{M}\) is, in principle, a right Hopf monoid, in view of \zcref{sect:GNTsym-monoidal}. However, in this case we are going to prove that \(H\) is also a Hopf monoid.
    
    In order to make the expression of \(H\) more explicit, let us isolate a more convenient set of generators for the ideal \(K\).
    In view of the \(\Bbbk\)-linearity of the maps involved, for the relations
    \begin{equation*}
        x_{\left( 1\right) }S\left( x_{\left( 2\right) }\right) -\varepsilon \left(x\right) 1\qquad \text{for}\qquad x\in V_{n},n\geq 0,
    \end{equation*}
    it is enough to consider
    \begin{align*}
        \left( g_{2n}\right) _{\left( 1\right) }S\left( \left( g_{2n}\right) _{\left( 2\right) }\right) -\varepsilon \left( g_{2n}\right) 1 & = g_{2n}S\left( g_{2n}\right) -1=g_{2n}g_{2n+1}-1, \\
        \left( e_{2n}\right) _{\left( 1\right) }S\left( \left( e_{2n}\right) _{\left( 2\right) }\right) -\varepsilon \left( e_{2n}\right) 1 & =e_{2n}S\left( e_{2n}\right) -1=e_{2n}e_{2n+1}-1,
    \end{align*}
    for all \(n\geq 0\) for the even degrees, and
    \begin{align*}
        & \left( g_{2n+1}\right) _{\left( 1\right) }S\left( \left( g_{2n+1}\right)_{\left( 2\right) }\right) -\varepsilon \left( g_{2n+1}\right) 1 \\
        & \hspace{2em} = \frac{1}{2}\left( g_{2n+1}S\left( g_{2n+1}\right) +g_{2n+1}S\left( e_{2n+1}\right) +e_{2n+1}S\left( g_{2n+1}\right) -e_{2n+1}S\left( e_{2n+1}\right) \right) -1 \\
        & \hspace{2em}=\frac{1}{2}\left( g_{2n+1}g_{2n+2}+g_{2n+1}e_{2n+2}+e_{2n+1}g_{2n+2}-e_{2n+1}e_{2n+2}\right) -1, \\
        & \left( e_{2n+1}\right) _{\left( 1\right) }S\left( \left( e_{2n+1}\right) _{\left( 2\right) }\right) -\varepsilon \left( e_{2n+1}\right) 1 \\
        & \hspace{2em}=\frac{1}{ 2}\left( e_{2n+1}S\left( e_{2n+1}\right) +e_{2n+1}S\left( g_{2n+1}\right) +g_{2n+1}S\left( e_{2n+1}\right) -g_{2n+1}S\left( g_{2n+1}\right) \right) -1 \\
        & \hspace{2em}=\frac{1}{2}\left( e_{2n+1}e_{2n+2}+e_{2n+1}g_{2n+2}+g_{2n+1}e_{2n+2}-g_{2n+1}g_{2n+2}\right) -1,
    \end{align*}
    for all \(n\geq 0\) for the odd degrees. For the relations
    \begin{equation*}
        S\left( y_{\left( 1\right) }\right) y_{\left( 2\right) }=\varepsilon \left( y\right) 1\qquad \text{for}\qquad y\in V_{n},\forall n\geq 1,
    \end{equation*}
    it is enough to consider
    \begin{align*}
        S\left( \left( g_{2n}\right) _{\left( 1\right) }\right) \left( g_{2n}\right)_{\left( 2\right) }-\varepsilon \left( g_{2n}\right) 1 & = S\left(g_{2n}\right) g_{2n}-1=g_{2n+1}g_{2n}-1, \\
        S\left( \left( e_{2n}\right) _{\left( 1\right) }\right) \left( e_{2n}\right)_{\left( 2\right) }-\varepsilon \left( e_{2n}\right) 1 & = S\left(e_{2n}\right) e_{2n}-1=e_{2n+1}e_{2n}-1,
    \end{align*}
    for all \(n\geq 1\) for the even degrees, and
    \begin{align*}
    & S\left( \left( g_{2n+1}\right) _{\left( 1\right) }\right) \left(g_{2n+1}\right) _{\left( 2\right) }-\varepsilon \left( g_{2n+1}\right) 1 \\
    & \hspace{2em} = \frac{1}{2}\left( S\left( g_{2n+1}\right) g_{2n+1}+S\left( g_{2n+1}\right) e_{2n+1}+S\left( e_{2n+1}\right) g_{2n+1}-S\left( e_{2n+1}\right) e_{2n+1}\right) -1 \\
    & \hspace{2em} = \frac{1}{2}\left(g_{2n+2}g_{2n+1}+g_{2n+2}e_{2n+1}+e_{2n+2}g_{2n+1}-e_{2n+2}e_{2n+1}\right) -1, \\
    & S\left( \left( e_{2n+1}\right) _{\left( 1\right) }\right) \left(e_{2n+1}\right) _{\left( 2\right) }-\varepsilon \left( e_{2n+1}\right) 1 \\
    & \hspace{2em} = \frac{1}{2}\left( S\left( e_{2n+1}\right) e_{2n+1}+S\left( e_{2n+1}\right) g_{2n+1}+S\left( g_{2n+1}\right) e_{2n+1}-S\left( g_{2n+1}\right) g_{2n+1}\right) -1 \\
    & \hspace{2em} = \frac{1}{2}\left( e_{2n+2}e_{2n+1}+e_{2n+2}g_{2n+1}+g_{2n+2}e_{2n+1}-g_{2n+2}g_{2n+1}\right) -1,
    \end{align*}
    for all \(n\geq 0\) for the odd degrees. Summing up, \(K\) is the ideal generated by the elements
    \begin{align*}
        g_{2n}g_{2n+1}-1,\qquad \forall n &\geq 0, \\
        e_{2n}e_{2n+1}-1,\qquad \forall n &\geq 0, \\
        g_{2n+1}g_{2n+2}+g_{2n+1}e_{2n+2}+e_{2n+1}g_{2n+2}-e_{2n+1}e_{2n+2}-2,\qquad
        \forall n &\geq 0, \\
        e_{2n+1}e_{2n+2}+e_{2n+1}g_{2n+2}+g_{2n+1}e_{2n+2}-g_{2n+1}g_{2n+2}-2,\qquad
        \forall n &\geq 0, \\
        g_{2n+2}g_{2n+1}+g_{2n+2}e_{2n+1}+e_{2n+2}g_{2n+1}-e_{2n+2}e_{2n+1}-2,\qquad
        \forall n &\geq 0, \\
        e_{2n+2}e_{2n+1}+e_{2n+2}g_{2n+1}+g_{2n+2}e_{2n+1}-g_{2n+2}g_{2n+1}-2,\qquad
        \forall n &\geq 0, \\
        g_{2n+1}g_{2n}-1,\qquad \forall n &\geq 1, \\
        e_{2n+1}e_{2n}-1,\qquad \forall n &\geq 1,
    \end{align*}
    or, equivalently, by the elements
    \begin{subequations}
        \begin{align}
            g_{2n}g_{2n+1}-1,\qquad \forall n &\geq 0, \label{relation1}\\
            e_{2n}e_{2n+1}-1,\qquad \forall n &\geq 0, \label{relation2}\\
            g_{2n+1}g_{2n+2}-e_{2n+1}e_{2n+2},\qquad \forall n &\geq 0, \label{relation3}\\
            e_{2n+1}g_{2n+2}+g_{2n+1}e_{2n+2}-2,\qquad \forall n &\geq 0, \label{relation4}\\
            g_{2n+2}g_{2n+1}-e_{2n+2}e_{2n+1},\qquad \forall n &\geq 0, \label{relation5}\\
            e_{2n+2}g_{2n+1}+g_{2n+2}e_{2n+1}-2,\qquad \forall n &\geq 0, \label{relation6}\\
            g_{2n+1}g_{2n}-1,\qquad \forall n &\geq 1, \label{relation7}\\
            e_{2n+1}e_{2n}-1,\qquad \forall n &\geq 1. \label{relation8}
        \end{align}
    \end{subequations}
    \begin{invisible}
        Because, for instance, the relations
        \begin{eqnarray*}
            g_{2n+1}g_{2n+2}+g_{2n+1}u_{2n+2}+u_{2n+1}g_{2n+2}-u_{2n+1}u_{2n+2}-2\qquad
            \forall n &\geq &0, \\
            u_{2n+1}u_{2n+2}+u_{2n+1}g_{2n+2}+g_{2n+1}u_{2n+2}-g_{2n+1}g_{2n+2}-2\qquad
            \forall n &\geq &0,
        \end{eqnarray*}
        are equivalent to the relations
        \begin{eqnarray*}
            R1+R2 &:&\qquad g_{2n+1}u_{2n+2}+u_{2n+1}g_{2n+2}-2\qquad \forall n\geq 0, \\
            R1-R2 &:&\qquad g_{2n+1}g_{2n+2}-u_{2n+1}u_{2n+2}\qquad \forall n\geq 0.
        \end{eqnarray*}
    \end{invisible}%
    
    Remark, however, that from \eqref{relation1} for \(n = 1\), \eqref{relation2} for \(n = 0\), and \eqref{relation3} for \(n = 0\), we find that
    \begin{align*}
        K & \ni \left( e_{1}e_{0}-1\right) \left( g_{1}\left( g_{2}g_{3}-1\right)
        -\left( g_{1}g_{2}-e_{1}e_{2}\right) g_{3}\right) -e_{1}\left(
        e_{0}e_{1}-1\right) e_{2}g_{3} \\
        & = \left( e_{1}e_{0}-1\right) \left( g_{1}\left( g_{2}g_{3}-1\right) -\left(
        g_{1}g_{2}-e_{1}e_{2}\right) g_{3}-e_{1}e_{2}g_{3}\right)  \\
        & = \left( e_{1}e_{0}-1\right) \left(
        g_{1}g_{2}g_{3}-g_{1}-g_{1}g_{2}g_{3}+e_{1}e_{2}g_{3}-e_{1}e_{2}g_{3}\right) = \left( 1 - e_{1}e_{0}\right) g_{1}.
    \end{align*}
    Therefore, from this and from \eqref{relation2} for \(n = 0\) and \eqref{relation4} for \(n = 0\), we conclude that
    \begin{align*}
        K & \ni \left( e_{1}e_{0}-1\right) g_{1}e_{2}+\left(1 - e_1e_0\right)\left(e_{1}g_{2}+g_{1}e_{2}-2\right) + e_{1}\left( e_{0}e_{1}-1\right)g_{2} \\
        & = \left(1 - e_{1}e_{0}\right)\left(-g_{1}e_{2} + e_{1}g_{2} + g_{1}e_{2} - 2 - e_{1}g_{2}\right) = 2(e_{1}e_{0}-1),
    \end{align*}
    i.e., \(e_1e_0 - 1 \in K\).
    Similarly, we have
    \begin{align*}
        K & \ni \left( g_{1}g_{0}-1\right) \left( e_{1}\left( e_{2}e_{3}-1\right)
        -\left( g_{1}g_{2}-e_{1}e_{2}\right) e_{3}\right) -g_{1}\left(
        g_{0}g_{1}-1\right) g_{2}e_{3} \\
        & = \left( g_{1}g_{0}-1\right) \left( e_{1}\left( e_{2}e_{3}-1\right) -\left(
        g_{1}g_{2}-e_{1}e_{2}\right) e_{3}-g_{1}g_{2}e_{3}\right)  \\
        & = \left( g_{1}g_{0}-1\right) \left(
        e_{1}e_{2}e_{3}-e_{1}-g_{1}g_{2}e_{3}+e_{1}e_{2}e_{3}-g_{1}g_{2}e_{3}\right) = \left( 1-g_{1}g_{0}\right) e_{1},
    \end{align*}
    and hence
    \begin{align*}
        K & \ni \left( g_{1}g_{0}-1\right) e_{1}g_{2}+\left(1-g_{1}g_{0}\right)\left(e_{1}g_{2}+g_{1}e_{2}-2\right) +g_{1}\left( g_{0}g_{1}-1\right) e_{2} \\
         & =\left( 1-g_{1}g_{0}\right) \left(-e_{1}g_{2}+g_{1}e_{2}+e_{1}g_{2}-2-g_{1}e_{2}\right) = 2\left( g_{1}g_{0}-1\right),
    \end{align*}
    i.e., \(g_{1}g_{0}-1 \in K\). The relations \(e_1e_0 = 1\) and \(g_{1}g_{0} = 1\) in \(T(V)/K\) tell us that \(S(y_{(1)})y_{(2)} = \varepsilon(y)1\) also for \(y \in V_0\), and hence the right antipode induced on \(T(V)/K\) is, in fact, a two-sided inverse of the identity.
\end{example}

\begin{invisible}
    \paolo{
    \textbf{Questions:} Is \(T(V)/K\), for \(K = I + T(V)S(I)T(V)\), pointed as a coalgebra? Which is its coradical? Is it cocommutative as a coalgebra? It can be that this case falls under one of the conditions in GNT, which can still be applied to the braided setting.
    }
\end{invisible}

\begin{invisible}[Original matrix example in dim 2x2]
    \begin{example}\label{ex:comodZ2}
        Consider again the Hopf algebra \(H = \K \Z_2 = \K[g \mid g^2 = 1]\) as in \zcref{ex:Z2inZ2}. The category of left \(H\)-comodules  \({}^H\!\cM\) is the category of \(\Z_2\)-graded vector spaces and it is a symmetric monoidal category with respect to the braiding uniquely determined by 
        \[
            \brd\left( u\ot v\right) =(-1)^{\overline{u} \cdot \overline{v}}v\ot u,
        \]
        for \(u\) and \(v\) homogeneous elements of degree \(\overline{u}\) and \(\overline{v}\), respectively. The matrix coalgebra \(C \coloneqq M_{2}(\K)\) is an \(H\)-comodule coalgebra with respect to the coaction
        \[
            \delta (E_{ij}) = g^{i+j}\ot E_{ij}, \qquad \left(i,j \in \{1,2\}\right),
        \]
        where \(E_{ij}\) is the matrix with \(1\) in position \((i,j)\) and \(0\) elsewhere. In \({}^H\!\cM\) with the braiding above, the coopposite comultiplication \(\Delta^\mathrm{cop} = \brd \Delta\) takes the explicit form
        \begin{align*}
            \Delta ^{\mathrm{cop}}\left( E_{ij}\right)
            & = (-1)^{g^{i+1}g^{1+j}}E_{1j}\ot
            E_{i1}+(-1)^{g^{i+2}g^{2+j}}E_{2j}\ot E_{i2} \\
            & = (-1)^{g^{i+j+2}}E_{1j}\ot E_{i1}+(-1)^{g^{i+j+4}}E_{2j}\ot E_{i2}
            \\
            & = (-1)^{g^{i+j}}E_{1j}\ot E_{i1}+(-1)^{g^{i+j}}E_{2j}\ot E_{i2} \\
            & = (-1)^{g^{i+j}}\left( E_{1j}\ot E_{i1}+E_{2j}\ot E_{i2}\right).
        \end{align*}
        We apply the construction described in  \zcref{sect:GNTsym-monoidal}. For \(i\geq 0\), let 
        \[
            V_{2k}\coloneqq C, \qquad  V_{2k+1}\coloneqq C^{\mathrm{cop}},\qquad V\coloneqq \bigoplus_{k} V_k,
        \] 
        and consider the tensor algebra \(T(V) = \bigoplus_n V^{\ot n}\). 
        Denote by \(E_{ij}^{k}\) the matrix \(E_{ij}\) in \(V_k\).
        Then, \(\rH(C) = T(V)/K\) where \(K\) is the ideal generated by the relations
        {\paolo 
        \begin{align*}
            &E_{i1}^{2n} E_{1j}^{2n+1} + E_{i2}^{2n} E_{2j}^{2n+1} - \delta_{ij}1, & n \geq 0, \\
            & E_{1j}^{2n+1} E_{i1}^{2n+2} + E_{2j}^{2n+1} E_{i2}^{2n+2}  -(-1)^{g^{i+j}}\delta_{ij}1, & n \geq 0, \\
            & E_{i1}^{2n+1}E_{1j}^{2n}+E_{i2}^{2n+1}E_{2j}^{2n}-\delta _{ij}1, & n \geq 1, \\
            & E_{1j}^{2n+2}E_{i1}^{2n+1}+E_{2j}^{2n+2}E_{i2}^{2n+1}-(-1)^{g^{i+j}}\delta_{ij}1, & n \geq 0,
        \end{align*}
        for all \(i,j \in \{1,2\}\).}
        \begin{rmv}
            \color{gray}
            By the universal property of the tensor algebra 
            there exist unique algebra morphisms \(\Delta_{T(V)} \colon T(V) \to T(V) \ot T(V)\) and \(\varepsilon_{T(V)} \colon T(V) \to \I\) in \({}^H\!\mathcal{M}\) such that the following diagrams commute
            \[
            \xymatrix @C=70pt {
            V \ar[r]^{\iota_V} \ar[d]_-{\Delta_V} & T(V) \ar@{.>}[d]^-{\Delta_{T(V)}} \\
            V \ot V \ar[r]_-{\iota_V \ot \iota_V} & T(V) \ot T(V)
            }
            \qquad 
            \xymatrix{
            V \ar[rr]^-{\iota_V} \ar[dr]_-{\varepsilon_V} & & T(V) \ar@{.>}[dl]^-{\varepsilon_{T(V)}} \\
            & \I & 
            },
            \]
            where \(\iota_V \colon V \to T(V)\) is the canonical morphism associated with the tensor algebra. Then, \(T(V)\) becomes a bialgebra in \({}^H\!\cM\).
            
            One can further define an anti-morphism of coalgebras \(s \colon V^{\mathrm{cop}} \to V\) extending the identity via
            \[
                \xymatrix{
                    V_n^{\mathrm{cop}} \ar@{=}[d] \ar[r]^-{\imath_n} & V^{\mathrm{cop}} \ar@{.>}[d]^{\exists ! \, s \text{ of coalgebras}} \\
                    V_{n+1} \ar[r]_-{\imath_{n+1}} & V
                }
            \]
            which in turn induces a unique morphism of bialgebras \(S\colon  T(V^{\mathrm{cop}})\to T(V)^{\mathrm{op}}\) in \({}^H\!\cM\) such that
            \[
                \xymatrix{
                    V^{\mathrm{cop}} \ar[r]^-{\iota_V} \ar[d]_-{s} & T(V^{\mathrm{cop}}) \ar@{.>}[d]^-{S} \\
                    V \ar[r]_-{\iota_V} & T(V)^\mathrm{op}.
                }
            \]
            Let \(I\) be the ideal of \(T(V)\) generated by 
            \[
                \left\{m_{T(V)}(T(V) \ot S)\Delta_{T(V)}(v) - u_{T(V)}\varepsilon_{T(V)}(v)\mid v \in V\right\}
            \]
            and let \(K = I + T(V)S(I)T(V)\). Then, \(K\) is an \(S\)-stable bi-ideal of \(T(V)\). In fact, by 
            \begin{align*}
                S^2m_{T(V)}(T(V) \ot S)\Delta_{T(V)} = m_{T(V)}(S^2 \ot S^2)(T(V) \ot S)\Delta_{T(V)} = m_{T(V)}(T(V) \ot S)\Delta_{T(V)} S^2,
            \end{align*}
            we have \(S(T(V)S(I)T(V)) \subseteq I\) and so \(K\) is \(S\)-stable.
        \end{rmv}
        {\paolo This is a right Hopf monoid with anti-multiplicative and anti-comultiplicative right antipode \(S\) uniquely determined by \(S(E_{ij}^k) = E_{ij}^{k+1}\) for \(i,j \in \{1,2\}, k \geq 0\).
        We claim that this is not a left antipode because the relations leading to the left antipode condition, i.e. 
        \begin{align*}
        	& E_{i1}^{1}E_{1j}^0+E_{i2}^1E_{2j}^0-\delta_{ij}1=0, 
        \end{align*}
        for all \(i,j\in\{1,2\}\), does not hold in \(\rH(C)\). In order to do so, we determine a basis of irreducible words for \(T(V)/K\) via Bergman's Diamond Lemma. To this aim, we declare}
        \(E_{ij}^p<E_{hk}^q\) if \(p<q\); if \(p=q\), then we set \(E_{11}^p<E_{12}^p<E_{21}^p<E_{22}^p\). Then, we say that a word \(w\) is bigger than or
        equal to a word \(v\) if and only if:
        \begin{enumerate}[label = \roman*),leftmargin=*]
            \item the word \(w\) is longer than the word \(v\), or
            
            \item the first letter in \(w\) which is different from the corresponding
            letter in \(v\) is bigger than the corresponding letter in \(v\), i.e., in the
            first spot in which they differ we have \(w_{j}\geq v_{j}\).
        \end{enumerate}
        Adopting this order, the reduction formulas are given by 
        \begin{align}
            E_{i2}^{2n}E_{2j}^{2n+1} & ~\longrightarrow~ -E_{i1}^{2n}E_{1j}^{2n+1}+\delta_{ij}1, & n\geq 0,  \label{rel1_old} \\
            E_{2j}^{2n+1}E_{i2}^{2n+2} & ~\longrightarrow~ -E_{1j}^{2n+1}E_{i1}^{2n+2}+(-1)^{g^{i+j}}\delta _{ij}1, & n\geq 0, \label{rel2_old} \\
            E_{i2}^{2n+1}E_{2j}^{2n} & ~\longrightarrow~ -E_{i1}^{2n+1}E_{1j}^{2n}+\delta_{ij}1, & n\geq 1,  \label{rel3_old} \\
            E_{2j}^{2n+2}E_{i2}^{2n+1} & ~\longrightarrow~ -E_{1j}^{2n+2}E_{i1}^{2n+1}+(-1)^{g^{i+j}}\delta _{ij}1, & {\paolo n\geq 0}. \label{rel4_old}
        \end{align}
        To apply Bergman's Diamond Lemma, we analyse the ambiguities arising from \eqref{rel1_old}-\eqref{rel2_old}, \eqref{rel2_old}-\eqref{rel1_old}, {\paolo\sout{for \(n\geq 0\), and from}}
        \eqref{rel1_old}-\eqref{rel3_old}, \eqref{rel2_old}-\eqref{rel4_old}, \eqref{rel4_old}-\eqref{rel2_old}, \eqref{rel3_old}-\eqref{rel1_old}, \eqref{rel3_old}-\eqref{rel4_old}, \eqref{rel4_old}-\eqref{rel3_old}. {\paolo\sout{, for \(n\geq 1\).}}
        One can check that we need to add the following new reduction formulas:
        for \(n\geq 0\)
        \begin{align}
            E_{i2}^{2n} E_{1j}^{2n+1} E_{k1}^{2n+2} & ~\longrightarrow~ E_{i1}^{2n} E_{1j}^{2n+1} E_{k2}^{2n+2} - \delta _{ij}E_{k2}^{2n+2} + (-1)^{g^{j+k}}\delta _{jk}E_{i2}^{2n} \label{rel5_old} \\
            E_{2j}^{2n+1} E_{i1}^{2n+2} E_{1k}^{2n+3} & ~\longrightarrow~ E_{1j}^{2n+1} E_{i1}^{2n+2} E_{2k}^{2n+3} - (-1)^{g^{i+j}}\delta _{ij}E_{2k}^{2n+3} + \delta _{ik}E_{2j}^{2n+1} \label{rel6_old} \\
            {\paolo E_{i2}^{2n+3} E_{1j}^{2n+2} E_{k1}^{2n+1} } & {\paolo  ~\longrightarrow~  E_{i1}^{2n+3} E_{1j}^{2n+2} E_{k2}^{2n+1} + (-1)^{g^{j+k}}\delta_{jk}E_{i2}^{2n+3} - \delta _{ij}E_{k2}^{2n+1} } \label{rel7_old}
        \end{align}
        and for \(n\geq 1\)
        \begin{equation}
        E_{2j}^{2n+2}E_{i1}^{2n+1}E_{1k}^{2n}~\longrightarrow~ E_{1j}^{2n+2}E_{i1}^{2n+1}E_{2k}^{2n}+\delta_{ik}E_{2j}^{2n+2}-(-1)^{g^{i+j}}\delta_{ij}E_{2k}^{2n}\label{rel8_old}
        \end{equation}
        
        For instance, \eqref{rel5_old} is obtained, for \(n\geq 0\), by reducing \(E_{i2}^{2n}E_{2j}^{2n+1}E_{k2}^{2n+2}\) through \eqref{rel1_old} and \eqref{rel2_old}. Using \eqref{rel1_old} we get
        \(-E_{i1}^{2n}E_{1j}^{2n+1}E_{k2}^{2n+2}+\delta _{ij}E_{k2}^{2n+2}\), 
        while by \eqref{rel2_old} we get 
        \(-E_{i2}^{2n}E_{1j}^{2n+1}E_{k1}^{2n+2}+(-1)^{g^{j+k}}\delta _{jk}E_{i2}^{2n}\). Hence \eqref{rel5_old} follows.

        We now analyse the ambiguities arising from \eqref{rel2_old}-\eqref{rel5_old}, 
        \eqref{rel1_old}-\eqref{rel6_old}, \eqref{rel6_old}-\eqref{rel3_old}, for \(n\geq 0\), and from 
        \eqref{rel5_old}-\eqref{rel8_old}, \eqref{rel8_old}-\eqref{rel5_old}, \eqref{rel3_old}-\eqref{rel5_old}, \eqref{rel5_old}-\eqref{rel4_old}, \eqref{rel6_old}-\eqref{rel7_old}, \eqref{rel7_old}-\eqref{rel6_old}, \eqref{rel4_old}-\eqref{rel6_old}, \eqref{rel1_old}-\eqref{rel7_old}, \eqref{rel7_old}-\eqref{rel2_old}, \eqref{rel4_old}-\eqref{rel7_old}, 
        \eqref{rel8_old}-\eqref{rel1_old}, 
        \eqref{rel2_old}-\eqref{rel8_old}, for \(n\geq 1\).
        
        One can check that no new reduction formula needs to be added. For example, we consider the ambiguity arising from \eqref{rel5_old}-\eqref{rel8_old} for \(n\geq 1\). 
        Indeed, looking at \(E_{i2}^{2n}E_{1j}^{2n+1}E_{21}^{2n+2}E_{k1}^{2n+1}E_{1h}^{2n}\) for \(n\geq 1\), it reduces to 
        	\begin{eqnarray*}
        		E_{i1}^{2n}E_{1j}^{2n+1}E_{22}^{2n+2}E_{k1}^{2n+1}E_{1h}^{2n}-\delta
        		_{ij}E_{22}^{2n+2}E_{k1}^{2n+1}E_{1h}^{2n}+(-1)^{g^{2+j}}\delta _{2j}E_{i2}^{2n}E_{k1}^{2n+1}E_{1h}^{2n}
        \end{eqnarray*}
        using \eqref{rel5_old}, and then to 
        	\begin{equation}\label{eq:5-8_old}
        		\begin{split}
        &E_{i1}^{2n}E_{1j}^{2n+1}E_{12}^{2n+2}E_{k1}^{2n+1}E_{2h}^{2n}+\delta_{kh}E_{i1}^{2n}E_{1j}^{2n+1}E_{22}^{2n+2}\\&-(-1)^{g^{k+2}}\delta_{k2}E_{i1}^{2n}E_{1j}^{2n+1}E_{2h}^{2n}-\delta_{ij}E_{12}^{2n+2}E_{k1}^{2n+1}E_{2h}^{2n}-\delta_{ij}\delta_{kh}E_{22}^{2n+2}\\&+\delta_{ij}(-1)^{g^{k+2}}\delta_{k2}E_{2h}^{2n} +(-1)^{g^{2+j}}\delta _{2j}E_{i2}^{2n}E_{k1}^{2n+1}E_{1h}^{2n}
        \end{split}\end{equation}
        using \eqref{rel8_old}. On the other hand, using \eqref{rel8_old}, \(E_{i2}^{2n}E_{1j}^{2n+1}E_{21}^{2n+2}E_{k1}^{2n+1}E_{1h}^{2n}\) reduces to
        \[
        E_{i2}^{2n}E_{1j}^{2n+1}E_{11}^{2n+2}E_{k1}^{2n+1}E_{2h}^{2n}+\delta_{kh}E_{i2}^{2n}E_{1j}^{2n+1}E_{21}^{2n+2}-(-1)^{g^{k+1}}\delta_{k1}E_{i2}^{2n}E_{1j}^{2n+1}E_{2h}^{2n}
        \]
        and then to
        \begin{equation}\label{eq:8-5_old}
        	\begin{split}
        		&E_{i1}^{2n}E_{1j}^{2n+1}E_{12}^{2n+2}E_{k1}^{2n+1}E_{2h}^{2n}-\delta
        _{ij}E_{12}^{2n+2}E_{k1}^{2n+1}E_{2h}^{2n}\\&+(-1)^{g^{j+1}}\delta _{j1}E_{i2}^{2n}E_{k1}^{2n+1}E_{2h}^{2n}+\delta_{kh}E_{i1}^{2n}E_{1j}^{2n+1}E_{22}^{2n+2}-\delta_{kh}\delta
        _{ij}E_{22}^{2n+2}\\&+(-1)^{g^{j+2}}\delta_{kh}\delta _{j2}E_{i2}^{2n}-(-1)^{g^{k+1}}\delta_{k1}E_{i2}^{2n}E_{1j}^{2n+1}E_{2h}^{2n}
        \end{split}
        \end{equation}
        using \eqref{rel5_old}. From \eqref{eq:5-8_old} and \eqref{eq:8-5_old}, we need to compare 
        	\begin{align*}
        		&-(-1)^{g^{k+2}}\delta_{k2}E_{i1}^{2n}E_{1j}^{2n+1}E_{2h}^{2n}+\delta_{ij}(-1)^{g^{k+2}}\delta_{k2}E_{2h}^{2n} +(-1)^{g^{2+j}}\delta _{2j}E_{i2}^{2n}E_{k1}^{2n+1}E_{1h}^{2n}\quad \mathrm{with} \\& (-1)^{g^{j+1}}\delta _{j1}E_{i2}^{2n}E_{k1}^{2n+1}E_{2h}^{2n}+(-1)^{g^{j+2}}\delta_{kh}\delta _{j2}E_{i2}^{2n}-(-1)^{g^{k+1}}\delta_{k1}E_{i2}^{2n}E_{1j}^{2n+1}E_{2h}^{2n},
        	\end{align*}
        for \(i,j,h,k\) in \(\{1,2\}\).
        If \(k=1\), we have 
        	\begin{align*}
        		& (-1)^{g^{2+j}}\delta _{2j}E_{i2}^{2n}E_{11}^{2n+1}E_{1h}^{2n}\qquad 
        		\mathrm{and} \\
        		& (-1)^{g^{j+1}}\delta
        		_{j1}E_{i2}^{2n}E_{11}^{2n+1}E_{2h}^{2n}+(-1)^{g^{j+2}}\delta _{1h}\delta
        		_{j2}E_{i2}^{2n}-E_{i2}^{2n}E_{1j}^{2n+1}E_{2h}^{2n}:
        	\end{align*}
        if \(j=1\) we get \(0\) and \(E_{i2}^{2n}E_{11}^{2n+1}E_{2h}^{2n}-E_{i2}^{2n}E_{11}^{2n+1}E_{2h}^{2n}=0\); if \(j=2\) we get \(E_{i2}^{2n}E_{11}^{2n+1}E_{1h}^{2n}\) and 
        \[
            \delta _{1h}E_{i2}^{2n}-E_{i2}^{2n}E_{12}^{2n+1}E_{2h}^{2n}\overset{\eqref{rel3_old}}{=}\delta _{1h}E_{i2}^{2n}+E_{i2}^{2n}E_{11}^{2n+1}E_{1h}^{2n}-\delta _{1h}E_{i2}^{2n}=E_{i2}^{2n}E_{11}^{2n+1}E_{1h}^{2n}.
        \]
        So, independently from \(i\) and \(h\), we do not obtain new relations.
        
        If \(k=2\), we have 
        	\begin{align*}
        		& -E_{i1}^{2n}E_{1j}^{2n+1}E_{2h}^{2n}+\delta
        		_{ij}E_{2h}^{2n}+(-1)^{g^{2+j}}\delta
        		_{2j}E_{i2}^{2n}E_{21}^{2n+1}E_{1h}^{2n}\qquad \mathrm{and} \\
        		& (-1)^{g^{j+1}}\delta
        		_{j1}E_{i2}^{2n}E_{21}^{2n+1}E_{2h}^{2n}+(-1)^{g^{j+2}}\delta _{2h}\delta
        		_{j2}E_{i2}^{2n}:
        	\end{align*}
        if \(j=1\), we have \(-E_{i1}^{2n}E_{11}^{2n+1}E_{2h}^{2n}+\delta _{i1}E_{2h}^{2n}\) and \(E_{i2}^{2n}E_{21}^{2n+1}E_{2h}^{2n}\overset{\eqref{rel1_old}}{=}-E_{i1}^{2n}E_{11}^{2n+1}E_{2h}^{2n}+\delta _{i1}E_{2h}^{2n}\). Independently from \(i\) and \(h\), we do not obtain new relations. If \(j=2\), we have
        		\begin{align*}
        			& -E_{i1}^{2n}E_{12}^{2n+1}E_{2h}^{2n}+\delta
        			_{i2}E_{2h}^{2n}+E_{i2}^{2n}E_{21}^{2n+1}E_{1h}^{2n}\qquad \mathrm{and} \qquad \delta _{2h}E_{i2}^{2n}.
        		\end{align*}
        By \eqref{rel1_old} \(E_{i2}^{2n}E_{21}^{2n+1}E_{1h}^{2n}\) reduces to \(-E_{i1}^{2n}E_{11}^{2n+1}E_{1h}^{2n}+\delta _{i1}E_{1h}^{2n}\); by \eqref{rel3_old} \(E_{i1}^{2n}E_{12}^{2n+1}E_{2h}^{2n}\) reduces to \(-E_{i1}^{2n}E_{11}^{2n+1}E_{1h}^{2n}+\delta _{1h}E_{i1}^{2n}\), so that we need to compare
        		\begin{align*}
        			& -\delta _{1h}E_{i1}^{2n}+\delta _{i2}E_{2h}^{2n}+\delta
        			_{i1}E_{1h}^{2n}\qquad \mathrm{and} \qquad \delta _{2h}E_{i2}^{2n}.
        		\end{align*}
        If \(h=1\), we have \(-E_{i1}^{2n}+\delta _{i2}E_{21}^{2n}+\delta _{i1}E_{11}^{2n}\) and \(0\); if \(h=2\), we have	\(\delta _{i2}E_{22}^{2n}+\delta _{i1}E_{12}^{2n}\) and \(E_{i2}^{2n}\). Hence, for all \(i\in\{1,2\}\), no new reduction is needed.
        
        {\davide Therefore the set of reductions \eqref{rel1_old}-\eqref{rel8_old} does not give rise to new ambiguities, and is compatible with our chosen monomial order.}  
        {\lucrezia \sout{The left-antipode relations} The relations leading to a left antipode, i.e. 
        \begin{align*}
    E_{i1}^{1}E_{1j}^0+E_{i2}^1E_{2j}^0-\delta_{ij}1=0,
        \end{align*}
        for all \(i,j\in\{1,2\}\), 
        }{\davide
        do not appear in this set, hence by Bergman's Diamond Lemma they are not deducible from the set of reductions: this proves that \(T(V)\) is not a left Hopf algebra.}

        {\paolo Adding a sentence in which we observe that, by Bergman, the irreducible words form a basis and hence the relation we do not want consists of linearly independent elements. Thus, it is not a relation in the quotient and hence there exists at least one element on which the right antipode does not behave as a left antipode.}
    \end{example}
\end{invisible}

\begin{example}\label{ex:MatcomodZ2}
    Consider again the Hopf algebra \(H = \K \Z_2 = \K[g \mid g^2 = e]\) as in \zcref{ex:Z2inZ2}. The category of left \(H\)-comodules  \({}^H\mathfrak{M}\) is the category of \(\Z_2\)-graded vector spaces and it is a symmetric monoidal category with respect to the braiding uniquely determined by 
    \[
        \brd\left( u\ot v\right) =(-1)^{\overline{u} \cdot \overline{v}}v\ot u,
    \]
    for \(u\) and \(v\) homogeneous elements of degree \(\overline{u}\) and \(\overline{v}\), respectively, where \(\left( -1\right)^{e}=1,\left( -1\right) ^{g}=-1\). The matrix coalgebra \(C \coloneqq M_{r}(\K)\) is an \(H\)-comodule coalgebra with respect to the coaction uniquely determined by
    \[
        \delta (E_{ij}) = g^{i+j}\ot E_{ij}, \qquad \left(i,j \in \{1,2,\ldots,r\}\right),
    \]
    where \(E_{ij}\) is the matrix with \(1\) in position \((i,j)\) and \(0\) elsewhere. 
    \begin{invisible}
        Indeed
        \begin{eqnarray*}
            \delta _{\otimes }\left( \Delta \left( E_{ij}\right) \right) &=&\delta
            _{\otimes }\left( \sum_{k=1}^{r}E_{ik}\otimes E_{kj}\right)
            =\sum_{k=1}^{r}g^{i+2k+j}\otimes E_{ik}\otimes E_{kj} \\
            &=&g^{i+j}\otimes \sum_{k=1}^{r}E_{ik}\otimes E_{kj}=\left( H\otimes \Delta
            \right) \left( \delta \left( E_{ij}\right) \right) ,
        \end{eqnarray*}
        so that \(\Delta\) is colinear, while 
        \begin{equation*}
            \delta _{k}\left( \varepsilon \left( E_{ij}\right) \right) =\delta
            _{k}\left( \delta _{ij}\right) =\delta _{ij}1=\left\{ 
            \begin{array}{c}
                0\qquad i\neq j \\ 
                1\qquad i=j
            \end{array}
            \right.
        \end{equation*}
        and
        \begin{equation*}
            \left( H\otimes \varepsilon \right) \left( \delta \left( E_{ij}\right)
            \right) =g^{i+j}\varepsilon \left( E_{ij}\right) =g^{i+j}\delta
            _{ij}=\left\{ 
            \begin{array}{c}
                0\qquad i\neq j \\ 
                1\qquad i=j
            \end{array}
            \right.
        \end{equation*}
        and so also \(\varepsilon\) is colinear.
    \end{invisible}
    In \({}^H\mathfrak{M}\) with the braiding above, the coopposite comultiplication \(\Delta^\mathrm{cop} = \brd \Delta\) takes the explicit form
    \begin{align*}
        \Delta ^{\mathrm{cop}}\left( E_{ij}\right) = \sum_{k=1}^{r} (-1)^{g^{i+2k+j}} E_{kj} \otimes E_{ik} = (-1)^{g^{i+j}} \left( \sum_{k=1}^{r} E_{kj} \otimes E_{ik}\right).
    \end{align*}
    We apply the construction described in  \zcref{sect:GNTsym-monoidal}. For \(n\geq 0\), let 
    \[
        V_{2n}\coloneqq C, \qquad  V_{2n+1}\coloneqq C^{\mathrm{cop}},\qquad V\coloneqq \bigoplus_{n} V_n,
    \] 
    and consider the tensor algebra \(T(V) = \bigoplus_t V^{\ot t}\).
    Denote by \(E_{ij}^{k}\) the matrix \(E_{ij}\) in \(V_k\).
    Then, \(\rH(C) = T(V)/K\) where \(K\) is the ideal generated by the relations
    \begin{align*}
        &\sum_{k=1}^{r}E_{ik}^{2n}E_{kj}^{2n+1}-\delta _{ij}1, && n \geq 0, \\
        &\sum_{k=1}^{r}E_{kj}^{2n+1}E_{ik}^{2n+2}-(-1)^{g^{i+j}}\delta _{ij}1, &&
        n \geq 0, \\
        &\sum_{k=1}^{r}E_{ik}^{2n+1}E_{kj}^{2n}-\delta _{ij}1, && n \geq 1, \\
        &\sum_{k=1}^{r}E_{kj}^{2n+2}E_{ik}^{2n+1}-(-1)^{g^{i+j}}\delta _{ij}1, &&
        n \geq 0,
    \end{align*}
    for all \(i,j \in \{1,2,\ldots,r\}\).
    This is a right Hopf monoid with anti-multiplicative and anti-comultiplicative right antipode \(S\) uniquely determined by \(S(E_{ij}^k) = E_{ij}^{k+1}\) for \(i,j \in \{1,2,\ldots,r\}, k \geq 0\).
    We claim that this is not a left antipode because the relations leading to the left antipode condition, i.e. 
    \begin{equation}\label{eq:badbadbad}
    	\sum_{k=1}^r E_{ik}^{1} E_{kj}^0 - \delta_{ij}1 = 0,
    \end{equation}
    for all \(i,j\in\{1,2,\ldots,r\}\), do not hold in \(\rH(C)\). In order to do so, we determine a basis of irreducible words for \(T(V)/K\) via Bergman's Diamond Lemma. To this aim, we declare \(E_{ij}^{p}<E_{hk}^{q}\) if \(p<q\); if \(p=q\) we set 
    \begin{equation*}
        E_{11}^{p} < E_{12}^{p} < \cdots < E_{1r}^{p} < E_{21}^{p} < E_{22}^{p} < \cdots <E_{2r}^{p} < \cdots < E_{r\left( r-1\right) }^{p} < E_{rr}^{p}.
    \end{equation*}
    Then, we say that a word \(w\) is bigger than or equal to a word \(v\) if and only if:
    \begin{enumerate}[label=\roman*)]
        \item the word \(w\) is longer than the word \(v\), or
        
        \item \(w\) and \(v\) have the same length and the first letter in \(w\) which is different from the corresponding letter in \(v\) is bigger than the corresponding letter in \(v\), i.e., in the first spot in which they differ we have \(w_{j}\geq v_{j}\).
    \end{enumerate}
    Ordered with respect to this order, the reduction formulas are given by
    \begin{subequations}\label{eq:1st_rels}
        \begin{align}
            E_{ir}^{2n}E_{rj}^{2n+1} & \longrightarrow -\sum_{h=1}^{r-1} E_{ih}^{2n} E_{hj}^{2n+1} + \delta _{ij}1, && n\geq 0, \label{eq:rel1} \\
            E_{rj}^{2n+1} E_{ir}^{2n+2} & \longrightarrow - \sum_{h=1}^{r-1} E_{hj}^{2n+1} E_{ih}^{2n+2} +  \delta_{ij}1,&& n\geq 0,  \label{eq:rel2} \\
            E_{ir}^{2n+1} E_{rj}^{2n} & \longrightarrow -\sum_{h=1}^{r-1} E_{ih}^{2n+1} E_{hj}^{2n} + \delta _{ij}1, && n\geq 1, \label{eq:rel3} \\
            E_{rj}^{2n+2} E_{ir}^{2n+1} & \longrightarrow - \sum_{h=1}^{r-1} E_{hj}^{2n+2} E_{ih}^{2n+1} + \delta_{ij}1, && n\geq 0,  \label{eq:rel4}
        \end{align}        
    \end{subequations}
    for \(i,j \in \{1,2,\ldots,r\}\), because obviously \((-1)^{g^{i+j}}\delta_{ij}1 = (-1)^{g^{2i}}\delta_{ij}1 = \delta_{ij}1\). Therefore, as a \(\K\)-algebra, \(\rH(C)\) coincides with the one obtained by Green, Nichols, and Taft in \cite{GreenNicholsTaft} and so \eqref{eq:badbadbad} is not a valid relation in \(\rH(C)\). Nevertheless, let us verify this fact again, for the sake of completeness.
    
    To apply Bergman's Diamond Lemma, we analyse the ambiguities arising from \eqref{eq:1st_rels}. Some tedious, but otherwise straightforward, verifications show that the ambiguities arising from \eqref{eq:rel1}-\eqref{eq:rel3}, \eqref{eq:rel3}-\eqref{eq:rel1}, \eqref{eq:rel2}-\eqref{eq:rel4}, and \eqref{eq:rel4}-\eqref{eq:rel2} are resolvable. For instance, \eqref{eq:rel2} and \eqref{eq:rel4}, in this order, give rise to the overlap ambiguity     \(E_{rj}^{2n+1}E_{rr}^{2n+2}E_{ir}^{2n+1}\) for every \(n\geq 0\). However, the following chains of reductions
    \begin{align*}    
        & E_{rj}^{2n+1 }E_{rr}^{2n+2} E_{ir}^{2n+1} \xrightarrow{\eqref{eq:rel2}} 
        - \sum_{h=1}^{r-1} E_{hj}^{2n+1} E_{rh}^{2n+2} E_{ir}^{2n+1} + \delta _{rj} E_{ir}^{2n+1} \\
        & \xrightarrow{\eqref{eq:rel4}} \sum_{h=1}^{r-1} \sum_{k=1}^{r-1} E_{hj}^{2n+1} E_{kh}^{2n+2} E_{ik}^{2n+1} - \sum_{h=1}^{r-1} \delta _{ih} E_{hj}^{2n+1} + \delta_{rj}E_{ir}^{2n+1} \\
        & = \sum_{h=1}^{r-1} \sum_{k=1}^{r-1} E_{hj}^{2n+1} E_{kh}^{2n+2} E_{ik}^{2n+1} - \left( 1 - \delta_{ir} - \delta_{rj}\right) E_{ij}^{2n+1}
    \end{align*}
    and
    \begin{align*}
        & E_{rj}^{2n+1} E_{rr}^{2n+2} E_{ir}^{2n+1} \xrightarrow{\eqref{eq:rel4}} - \sum_{k=1}^{r-1} E_{rj}^{2n+1} E_{kr}^{2n+2} E_{ik}^{2n+1} + \delta _{ir} E_{rj}^{2n+1} \\
        & \xrightarrow{\eqref{eq:rel2}} \sum_{k=1}^{r-1} \sum_{h=1}^{r-1} E_{hj}^{2n+1} E_{kh}^{2n+2} E_{ik}^{2n+1} - \sum_{k=1}^{r-1} \delta _{kj} E_{ik}^{2n+1} + \delta_{ir} E_{rj}^{2n+1} \\
        & = \sum_{k=1}^{r-1} \sum_{h=1}^{r-1} E_{hj}^{2n+1} E_{kh}^{2n+2} E_{ik}^{2n+1} - \left( 1 - \delta_{rj} - \delta_{ir}\right) E_{ij}^{2n+1},
    \end{align*}
    show that the ambiguity is resolvable. Some more computations show that, instead, the ambiguities arising from \eqref{eq:rel1}-\eqref{eq:rel2}, \eqref{eq:rel2}-\eqref{eq:rel1}, \eqref{eq:rel3}-\eqref{eq:rel4}, and \eqref{eq:rel4}-\eqref{eq:rel3}, are not resolvable and so they lead us to consider the additional reduction formulas
    \begin{subequations}\label{eq:2nd_rels}
        \begin{align}
            & E_{ir}^{2n} E_{\left( r-1\right) j}^{2n+1} E_{k\left( r-1\right) }^{2n+2}  && \label{eq:rel5} \\
            & \rightarrow -\sum_{h=1}^{r-2} E_{ir}^{2n} E_{hj}^{2n+1} E_{kh}^{2n+2} + \sum_{h=1}^{r-1} E_{ih}^{2n} E_{hj}^{2n+1} E_{kr}^{2n+2} - \delta_{ij} E_{kr}^{2n+2} + \delta _{kj} E_{ir}^{2n}, && n\geq 0, \notag \\
            & E_{rj}^{2n+1} E_{i\left( r-1\right) }^{2n+2} E_{\left( r-1\right) k}^{2n+3} && \label{eq:rel6} \\
            & \rightarrow -\sum_{h=1}^{r-2} E_{rj}^{2n+1} E_{ih}^{2n+2} E_{hk}^{2n+3} + \sum_{h=1}^{r-1} E_{hj}^{2n+1} E_{ih}^{2n+2} E_{rk}^{2n+3} - \delta_{ij} E_{rk}^{2n+3} + \delta_{ik} E_{rj}^{2n+1}, && n\geq 0, \notag  \\
            & E_{rj}^{2n+2} E_{i\left( r-1\right) }^{2n+1} E_{\left( r-1\right) k}^{2n}  && \label{eq:rel7} \\
            & \rightarrow - \sum_{h=1}^{r-2} E_{rj}^{2n+2} E_{ih}^{2n+1} E_{hk}^{2n} + \sum_{h=1}^{r-1} E_{hj}^{2n+2} E_{ih}^{2n+1} E_{rk}^{2n} + \delta_{ik} E_{rj}^{2n+2} - \delta _{ij} E_{rk}^{2n}, && n\geq 1, \notag \\
            & E_{ir}^{2n+3} E_{\left( r-1\right) j}^{2n+2} E_{k\left( r-1\right) }^{2n+1} && \label{eq:rel8} \\
            & \rightarrow - \sum_{h=1}^{r-2} E_{ir}^{2n+3} E_{hj}^{2n+2} E_{kh}^{2n+1} + \sum_{h=1}^{r-1} E_{ih}^{2n+3} E_{hj}^{2n+2} E_{kr}^{2n+1} + \delta_{kj} E_{ir}^{2n+3} - \delta _{ij} E_{kr}^{2n+1}, && n\geq 0. \notag  
        \end{align}
    \end{subequations}    
    For instance, \eqref{eq:rel5} is obtained from reducing \(E_{ir}^{2n} E_{\left( r-1\right) j}^{2n+1} E_{k\left( r-1\right) }^{2n+2}\) via \eqref{eq:rel1} and \eqref{eq:rel2}, and then equating the results.

    Now, all the ambiguities arising from \eqref{eq:1st_rels} are resolvable, whence we analyse the additional ambiguities arising from \eqref{eq:2nd_rels} and from the interaction between \eqref{eq:1st_rels} and \eqref{eq:2nd_rels}. In total, they are \eqref{eq:rel1}-\eqref{eq:rel6}, \eqref{eq:rel1}-\eqref{eq:rel8}, \eqref{eq:rel2}-\eqref{eq:rel5}, \eqref{eq:rel2}-\eqref{eq:rel7}, \eqref{eq:rel3}-\eqref{eq:rel5}, \eqref{eq:rel3}-\eqref{eq:rel7}, \eqref{eq:rel4}-\eqref{eq:rel6}, \eqref{eq:rel4}-\eqref{eq:rel8}, \eqref{eq:rel5}-\eqref{eq:rel4}, \eqref{eq:rel5}-\eqref{eq:rel7}, \eqref{eq:rel6}-\eqref{eq:rel3}, \eqref{eq:rel6}-\eqref{eq:rel8}, \eqref{eq:rel7}-\eqref{eq:rel1}, \eqref{eq:rel7}-\eqref{eq:rel5}, \eqref{eq:rel8}-\eqref{eq:rel2}, \eqref{eq:rel8}-\eqref{eq:rel6}. Long and tedious computations show that all these ambiguities are resolvable. 
    \begin{invisible}
        For example, if we consider the ambiguity \(E_{rj}^{2n+2} E_{i\left( r-1\right)}^{2n+1} E_{\left( r-1\right) r}^{2n} E_{\left( r-1\right) l}^{2n+1} E_{k\left(r-1\right) }^{2n+2}\) for \(i,j,k,l \in \{1,2,\ldots,r\}\) and  \(n\geq 1\) arising from \eqref{eq:rel7} and \eqref{eq:rel5}, then the following chains of reductions
        {\small
        \begin{align*}
            E_{rj}^{2n+2} & E_{i\left( r-1\right) }^{2n+1} E_{\left( r-1\right) r}^{2n} E_{\left( r-1\right) l}^{2n+1} E_{k\left( r-1\right) }^{2n+2} \\
            \xrightarrow{\eqref{eq:rel7}} & - \sum_{h=1}^{r-2} E_{rj}^{2n+2} E_{ih}^{2n+1} E_{hr}^{2n} E_{\left( r-1\right)l}^{2n+1} E_{k\left( r-1\right)}^{2n+2} + \sum_{h=1}^{r-1} E_{hj}^{2n+2} E_{ih}^{2n+1} E_{rr}^{2n} E_{\left(r-1\right) l}^{2n+1} E_{k\left( r-1\right) }^{2n+2} + \\ 
            & \quad + \delta _{ir} E_{rj}^{2n+2} E_{\left( r-1\right) l}^{2n+1} E_{k\left( r-1\right)}^{2n+2} - \delta _{ij} E_{rr}^{2n} E_{\left( r-1\right)l}^{2n+1} E_{k\left( r-1\right) }^{2n+2} 
            \\
            = & -\sum_{h=1}^{r-2}E_{rj}^{2n+2} E_{ih}^{2n+1} E_{hr}^{2n} E_{\left(r-1\right) l}^{2n+1} E_{k\left( r-1\right) }^{2n+2} + \left(\sum_{h=1}^{r-1} E_{hj}^{2n+2} E_{ih}^{2n+1} -  \delta_{ij}1\right) E_{rr}^{2n}E_{\left( r-1\right) l}^{2n+1} E_{k\left( r-1\right)
            }^{2n+2} + \\
            & \quad + \delta _{ir} E_{rj}^{2n+2} E_{\left( r-1\right) l}^{2n+1} E_{k\left(
            r-1\right) }^{2n+2} \\
            \xrightarrow{\eqref{eq:rel5}} & \sum_{h=1}^{r-2} E_{rj}^{2n+2} E_{ih}^{2n+1} \left( \sum_{m=1}^{r-2} E_{hr}^{2n} E_{ml}^{2n+1} E_{km}^{2n+2} -\sum_{m=1}^{r-1}E_{hm}^{2n}E_{ml}^{2n+1}E_{kr}^{2n+2}+\delta_{hl}E_{kr}^{2n+2} - \delta _{kl}E_{hr}^{2n}\right) + \\
            & \quad - \left( \sum_{h=1}^{r-1}E_{hj}^{2n+2}E_{ih}^{2n+1}-\delta_{ij}1\right) \Bigg( \sum_{h=1}^{r-2}E_{rr}^{2n}E_{hl}^{2n+1}E_{kh}^{2n+2} - \sum_{h=1}^{r-1}E_{rh}^{2n}E_{hl}^{2n+1}E_{kr}^{2n+2} + \\ 
            & \quad\quad  +\delta_{rl}E_{kr}^{2n+2} - \delta _{kl}E_{rr}^{2n}\Bigg) +\delta_{ir}E_{rj}^{2n+2}E_{\left( r-1\right) l}^{2n+1}E_{k\left( r-1\right) }^{2n+2} \\
            = & E_{rj}^{2n+2}\left( \sum_{h=1}^{r-2}E_{ih}^{2n+1}\left( E_{hr}^{2n}\left( \sum_{m=1}^{r-2}E_{ml}^{2n+1}E_{km}^{2n+2}-\delta _{kl}1\right) -\left( \sum_{m=1}^{r-1}E_{hm}^{2n}E_{ml}^{2n+1}-\delta _{hl}1\right) E_{kr}^{2n+2}\right) \right) + \\
            & \quad -\left( \sum_{h=1}^{r-1}E_{hj}^{2n+2}E_{ih}^{2n+1}-\delta _{ij}1\right) \Bigg( E_{rr}^{2n}\left( \sum_{h=1}^{r-2}E_{hl}^{2n+1}E_{kh}^{2n+2}-\delta _{kl}1\right) + \\
            & \quad \quad -\left( \sum_{h=1}^{r-1}E_{rh}^{2n}E_{hl}^{2n+1}-\delta _{rl}1\right) E_{kr}^{2n+2}\Bigg) + \delta _{ir}E_{rj}^{2n+2}E_{\left( r-1\right) l}^{2n+1}E_{k\left( r-1\right) }^{2n+2} \\
            = & E_{rj}^{2n+2}\left( \sum_{h=1}^{r-2}E_{ih}^{2n+1}E_{hr}^{2n}\right) \left( \sum_{m=1}^{r-2}E_{ml}^{2n+1}E_{km}^{2n+2}-\delta _{kl}1\right) + \\ 
            & \quad - E_{rj}^{2n+2}\left( \sum_{h=1}^{r-2}E_{ih}^{2n+1}\left( \sum_{m=1}^{r-1}E_{hm}^{2n}E_{ml}^{2n+1}-\delta _{hl}1\right) \right) E_{kr}^{2n+2}+ \\
            & \quad \quad -\left( \sum_{h=1}^{r-1}E_{hj}^{2n+2}E_{ih}^{2n+1}-\delta _{ij}1\right) E_{rr}^{2n}\left( \sum_{h=1}^{r-2}E_{hl}^{2n+1}E_{kh}^{2n+2}-\delta _{kl}1\right) + \\ 
            & \quad \quad \quad +\left( \sum_{h=1}^{r-1}E_{hj}^{2n+2}E_{ih}^{2n+1}-\delta _{ij}1\right) \left( \sum_{h=1}^{r-1}E_{rh}^{2n}E_{hl}^{2n+1}-\delta _{rl}1\right) E_{kr}^{2n+2}+\delta _{ir}E_{rj}^{2n+2}E_{\left( r-1\right) l}^{2n+1}E_{k\left( r-1\right) }^{2n+2} \\
            = & E_{rj}^{2n+2}\left( \sum_{h=1}^{r-2}E_{ih}^{2n+1}E_{hr}^{2n}\right)
            \left( \sum_{m=1}^{r-2}E_{ml}^{2n+1}E_{km}^{2n+2}-\delta
            _{kl}1\right) + \\ 
            & \quad -E_{rj}^{2n+2}\left(\sum_{h=1}^{r-2}\sum_{m=1}^{r-1}E_{ih}^{2n+1}E_{hm}^{2n}E_{ml}^{2n+1}\right)
            E_{kr}^{2n+2}+\left( 1-\delta _{rl}-\delta _{\left( r-1\right) l}1\right)
            E_{rj}^{2n+2}E_{il}^{2n+1}E_{kr}^{2n+2}+ \\
            & \quad \quad  -\left(
            \sum_{h=1}^{r-1}E_{hj}^{2n+2}E_{ih}^{2n+1}-\delta
            _{ij}1\right) E_{rr}^{2n}\left(
            \sum_{h=1}^{r-2}E_{hl}^{2n+1}E_{kh}^{2n+2}-\delta
            _{kl}1\right) + \\ \
            & \quad \quad \quad +\left(
            \sum_{h=1}^{r-1}E_{hj}^{2n+2}E_{ih}^{2n+1}-\delta
            _{ij}1\right) \left( \sum_{h=1}^{r-1}E_{rh}^{2n}E_{hl}^{2n+1}\right)
            E_{kr}^{2n+2}+ \\
            & \quad \quad \quad \quad  -\delta _{rl}\left(
            \sum_{h=1}^{r-1}E_{hj}^{2n+2}E_{ih}^{2n+1}-\delta
            _{ij}1\right) E_{kr}^{2n+2}+\delta _{ir}E_{rj}^{2n+2}E_{\left( r-1\right)
            l}^{2n+1}E_{k\left( r-1\right) }^{2n+2} \\
            = & E_{rj}^{2n+2}\left( \sum_{h=1}^{r-2}E_{ih}^{2n+1}E_{hr}^{2n}\right)
            \left( \sum_{m=1}^{r-2}E_{ml}^{2n+1}E_{km}^{2n+2}-\delta
            _{kl}1\right) + \\ 
            & \quad -E_{rj}^{2n+2}\left(
            \sum_{h=1}^{r-2}\sum_{m=1}^{r-1}E_{ih}^{2n+1}E_{hm}^{2n}E_{ml}^{2n+1}\right)
            E_{kr}^{2n+2}+\left( 1-\delta _{\left( r-1\right) l}1\right)
            E_{rj}^{2n+2}E_{il}^{2n+1}E_{kr}^{2n+2}+ \\
            & \quad \quad  -\left(
            \sum_{h=1}^{r-1}E_{hj}^{2n+2}E_{ih}^{2n+1}-\delta
            _{ij}1\right) E_{rr}^{2n}\left(
            \sum_{h=1}^{r-2}E_{hl}^{2n+1}E_{kh}^{2n+2}-\delta
            _{kl}1\right) + \\
            & \quad \quad \quad +\left(\sum_{h=1}^{r-1}E_{hj}^{2n+2}E_{ih}^{2n+1}-\delta
            _{ij}1\right) \left( \sum_{h=1}^{r-1}E_{rh}^{2n}E_{hl}^{2n+1}\right)
            E_{kr}^{2n+2}+ \\
            & \quad \quad \quad \quad  -\delta _{rl}\left(
            E_{rj}^{2n+2}E_{ir}^{2n+1} +
            \sum_{h=1}^{r-1}E_{hj}^{2n+2}E_{ih}^{2n+1}-\delta
            _{ij}1\right) E_{kr}^{2n+2} + \\
            & \quad \quad \quad \quad \quad +\delta _{ir}E_{rj}^{2n+2}E_{\left( r-1\right)
            l}^{2n+1}E_{k\left( r-1\right) }^{2n+2} \\
            \xrightarrow{\eqref{eq:rel4}} & E_{rj}^{2n+2}\left(
            \sum_{h=1}^{r-2}E_{ih}^{2n+1}E_{hr}^{2n}\right) \left(
            \sum_{m=1}^{r-2}E_{ml}^{2n+1}E_{km}^{2n+2}-\delta _{kl}1\right) + \\
            & \quad -E_{rj}^{2n+2}\left(
            \sum_{h=1}^{r-2}\sum_{m=1}^{r-1}E_{ih}^{2n+1}E_{hm}^{2n}E_{ml}^{2n+1}\right)
            E_{kr}^{2n+2}+\left( 1-\delta _{\left( r-1\right) l}1\right)
            E_{rj}^{2n+2}E_{il}^{2n+1}E_{kr}^{2n+2}+ \\
            & \quad \quad  -\left(
            \sum_{h=1}^{r-1}E_{hj}^{2n+2}E_{ih}^{2n+1}-\delta
            _{ij}1\right) E_{rr}^{2n}\left(
            \sum_{h=1}^{r-2}E_{hl}^{2n+1}E_{kh}^{2n+2}-\delta
            _{kl}1\right) + \\
            & \quad \quad \quad +\left(
            \sum_{h=1}^{r-1}E_{hj}^{2n+2}E_{ih}^{2n+1}-\delta
            _{ij}1\right) \left( \sum_{h=1}^{r-1}E_{rh}^{2n}E_{hl}^{2n+1}\right)
            E_{kr}^{2n+2}+\delta _{ir}E_{rj}^{2n+2}E_{\left( r-1\right)
            l}^{2n+1}E_{k\left( r-1\right) }^{2n+2}
        \end{align*}    }
        and
        {\small
        \begin{align*}
            E_{rj}^{2n+2}& E_{i\left( r-1\right) }^{2n+1}E_{\left( r-1\right) r}^{2n}E_{\left( r-1\right) l}^{2n+1}E_{k\left( r-1\right) }^{2n+2} \\
            \xrightarrow{\eqref{eq:rel5}} & -\sum_{m=1}^{r-2}E_{rj}^{2n+2}E_{i\left( r-1\right) }^{2n+1}E_{\left( r-1\right) r}^{2n}E_{ml}^{2n+1}E_{km}^{2n+2}+\sum_{m=1}^{r-1}E_{rj}^{2n+2}E_{i\left( r-1\right) }^{2n+1}E_{\left( r-1\right) m}^{2n}E_{ml}^{2n+1}E_{kr}^{2n+2} \\
            & \quad -\delta _{\left( r-1\right) l}E_{rj}^{2n+2}E_{i\left( r-1\right) }^{2n+1}E_{kr}^{2n+2}+\delta _{kl}E_{rj}^{2n+2}E_{i\left( r-1\right) }^{2n+1}E_{\left( r-1\right) r}^{2n} \\
            = & E_{rj}^{2n+2}E_{i\left( r-1\right) }^{2n+1}E_{\left( r-1\right) r}^{2n}\left( -\sum_{m=1}^{r-2}E_{ml}^{2n+1}E_{km}^{2n+2}+\delta _{kl}1\right)  \\
            & \quad +\sum_{m=1}^{r-1}E_{rj}^{2n+2}E_{i\left( r-1\right) }^{2n+1}E_{\left( r-1\right) m}^{2n}E_{ml}^{2n+1}E_{kr}^{2n+2}-\delta _{\left( r-1\right) l}E_{rj}^{2n+2}E_{i\left( r-1\right) }^{2n+1}E_{kr}^{2n+2} \\
            \xrightarrow{\eqref{eq:rel7}} & \Bigg( -\sum_{h=1}^{r-2}E_{rj}^{2n+2}E_{ih}^{2n+1}E_{hr}^{2n}+ \sum_{h=1}^{r-1}E_{hj}^{2n+2}E_{ih}^{2n+1}E_{rr}^{2n} + \\
            & \quad +\delta _{ir}E_{rj}^{2n+2}-\delta _{ij}E_{rr}^{2n}\Bigg) \left( -\sum_{m=1}^{r-2}E_{ml}^{2n+1}E_{km}^{2n+2}+\delta _{kl}1\right) + \\
            & \quad \quad  +\sum_{m=1}^{r-1}\Bigg( -\sum_{h=1}^{r-2}E_{rj}^{2n+2}E_{ih}^{2n+1}E_{hm}^{2n}+ \sum_{h=1}^{r-1}E_{hj}^{2n+2}E_{ih}^{2n+1}E_{rm}^{2n} + \\
            & \quad \quad \quad +\delta _{im}E_{rj}^{2n+2}-\delta _{ij}E_{rm}^{2n}\Bigg) E_{ml}^{2n+1}E_{kr}^{2n+2}-\delta _{\left( r-1\right) l}E_{rj}^{2n+2}E_{i\left( r-1\right) }^{2n+1}E_{kr}^{2n+2} \\
            = & \left( E_{rj}^{2n+2}\left( \sum_{h=1}^{r-2}E_{ih}^{2n+1}E_{hr}^{2n}-\delta _{ir}1\right) -\left( \sum_{h=1}^{r-1}E_{hj}^{2n+2}E_{ih}^{2n+1}-\delta _{ij}1\right) E_{rr}^{2n}\right) \cdot \\
            & \quad \cdot \left( \sum_{m=1}^{r-2}E_{ml}^{2n+1}E_{km}^{2n+2}-\delta _{kl}1\right) +\Bigg( \sum_{m=1}^{r-1}\Bigg( -E_{rj}^{2n+2}\left( \sum_{h=1}^{r-2}E_{ih}^{2n+1}E_{hm}^{2n}-\delta _{im}1\right) + \\
            & \quad \quad +\left( \sum_{h=1}^{r-1}E_{hj}^{2n+2}E_{ih}^{2n+1}-\delta _{ij}1\right) E_{rm}^{2n}\Bigg) E_{ml}^{2n+1}\Bigg) E_{kr}^{2n+2}-\delta _{\left( r-1\right) l}E_{rj}^{2n+2}E_{i\left( r-1\right) }^{2n+1}E_{kr}^{2n+2} \\
            = & E_{rj}^{2n+2}\left( \sum_{h=1}^{r-2}E_{ih}^{2n+1}E_{hr}^{2n} -\delta _{ir}1\right) \left( \sum_{m=1}^{r-2}E_{ml}^{2n+1}E_{km}^{2n+2}-\delta _{kl}1\right) + \\
            & \quad -\left( \sum_{h=1}^{r-1}E_{hj}^{2n+2}E_{ih}^{2n+1}-\delta _{ij}1\right) E_{rr}^{2n}\left( \sum_{m=1}^{r-2}E_{ml}^{2n+1}E_{km}^{2n+2}-\delta _{kl}1\right) + \\
            & \quad \quad  -E_{rj}^{2n+2}\left( \sum_{m=1}^{r-1}\left( \sum_{h=1}^{r-2}E_{ih}^{2n+1}E_{hm}^{2n}-\delta _{im}1\right) E_{ml}^{2n+1}\right) E_{kr}^{2n+2} + \\
            & \quad \quad \quad +\left( \sum_{h=1}^{r-1}E_{hj}^{2n+2}E_{ih}^{2n+1}-\delta _{ij}1\right) \left( \sum_{m=1}^{r-1}E_{rm}^{2n}E_{ml}^{2n+1}\right) E_{kr}^{2n+2} + \\
            & \quad \quad \quad \quad -\delta _{\left( r-1\right) l}E_{rj}^{2n+2}E_{i\left( r-1\right) }^{2n+1}E_{kr}^{2n+2} \\
            = & E_{rj}^{2n+2}\left( \sum_{h=1}^{r-2}E_{ih}^{2n+1}E_{hr}^{2n}\right) \left( \sum_{m=1}^{r-2}E_{ml}^{2n+1}E_{km}^{2n+2} -\delta _{kl}1\right) + \\
            & \quad -\delta _{ir}E_{rj}^{2n+2}\left( \sum_{m=1}^{r-2}E_{ml}^{2n+1}E_{km}^{2n+2}-\delta _{kl}1\right) + \\
            & \quad \quad - \left( \sum_{h=1}^{r-1}E_{hj}^{2n+2}E_{ih}^{2n+1}-\delta _{ij}1\right) E_{rr}^{2n}\left( \sum_{m=1}^{r-2}E_{ml}^{2n+1}E_{km}^{2n+2}-\delta _{kl}1\right) + \\
            & \quad \quad \quad  -E_{rj}^{2n+2}\left( \sum_{m=1}^{r-1}\left( \sum_{h=1}^{r-2}E_{ih}^{2n+1}E_{hm}^{2n}-\delta _{im}1\right) E_{ml}^{2n+1}\right) E_{kr}^{2n+2} + \\
            & \quad \quad \quad \quad +\left( \sum_{h=1}^{r-1}E_{hj}^{2n+2}E_{ih}^{2n+1}-\delta _{ij}1\right) \left( \sum_{m=1}^{r-1}E_{rm}^{2n}E_{ml}^{2n+1}\right) E_{kr}^{2n+2} + \\
            & \quad \quad \quad \quad \quad -\delta _{\left( r-1\right) l}E_{rj}^{2n+2}E_{i\left( r-1\right) }^{2n+1}E_{kr}^{2n+2} \\
            =&E_{rj}^{2n+2}\left( \sum_{h=1}^{r- 2}E_{ih}^{2n+1}E_{hr}^{2n}\right) \left( \sum_{m=1}^{r-2}E_{ml}^{2n+1}E_{km}^{2n+2} -\delta _{kl}1\right) + \\
            & \quad -\delta _{ir}E_{rj}^{2n+2}\left( \sum_{m=1}^{r-2}E_{ml}^{2n+1}E_{km}^{2n+2}-\delta _{kl}1\right) + \\
            & \quad \quad -\left( \sum_{h=1}^{r-1}E_{hj}^{2n+2}E_{ih}^{2n+1}-\delta _{ij}1\right) E_{rr}^{2n}\left( \sum_{m=1}^{r-2}E_{ml}^{2n+1}E_{km}^{2n+2}-\delta _{kl}1\right) + \\
            & \quad \quad \quad  -E_{rj}^{2n+2}\left( \sum_{m=1}^{r-1}\sum_{h=1}^{r-2}E_{ih}^{2n+1}E_{hm}^{2n}E_{ml}^{2n+1}\right) E_{kr}^{2n+2}+\left( 1-\delta _{ir}1\right) E_{rj}^{2n+2}E_{il}^{2n+1}E_{kr}^{2n+2} + \\
            & \quad \quad \quad \quad +\left( \sum_{h=1}^{r-1}E_{hj}^{2n+2}E_{ih}^{2n+1}-\delta _{ij}1\right) \left( \sum_{m=1}^{r-1}E_{rm}^{2n}E_{ml}^{2n+1}\right) E_{kr}^{2n+2} + \\
            & \quad \quad \quad \quad \quad -\delta _{\left( r-1\right) l}E_{rj}^{2n+2}E_{i\left( r-1\right) }^{2n+1}E_{kr}^{2n+2} \\
            = & E_{rj}^{2n+2}\left( \sum_{h=1}^{r-2}E_{ih}^{2n+1}E_{hr}^{2n}\right) \left( \sum_{m=1}^{r-2}E_{ml}^{2n+1}E_{km}^{2n+2}-\delta _{kl}1\right) + \\
            & \quad -\delta _{ir}E_{rj}^{2n+2}\left( E_{rl}^{2n+1}E_{kr}^{2n+2}+\sum_{m=1}^{r-2}E_{ml}^{2n+1}E_{km}^{2n+2} -\delta _{kl}1\right) + \\
            & \quad \quad -\left( \sum_{h=1}^{r-1}E_{hj}^{2n+2}E_{ih}^{2n+1}-\delta _{ij}1\right) E_{rr}^{2n}\left( \sum_{m=1}^{r-2}E_{ml}^{2n+1}E_{km}^{2n+2}-\delta _{kl}1\right) + \\
            & \quad \quad \quad -E_{rj}^{2n+2}\left(
            \sum_{m=1}^{r-1}\sum_{h=1}^{r-2}E_{ih}^{2n+1}E_{hm}^{2n}E_{ml}^{2n+1}\right)
            E_{kr}^{2n+2} + \\
            & \quad \quad \quad \quad +\left( \sum_{h=1}^{r-1}E_{hj}^{2n+2}E_{ih}^{2n+1}-\delta _{ij}1\right) \left( \sum_{m=1}^{r-1}E_{rm}^{2n}E_{ml}^{2n+1}\right) E_{kr}^{2n+2} + \\
            & \quad \quad \quad \quad \quad +\left( 1-\delta _{\left( r-1\right) l}1\right) E_{rj}^{2n+2}E_{i\left( r-1\right) }^{2n+1}E_{kr}^{2n+2} \\
            \xrightarrow{\eqref{eq:rel2}} & E_{rj}^{2n+2}\left( \sum_{h=1}^{r-2}E_{ih}^{2n+1}E_{hr}^{2n}\right) \left( \sum_{m=1}^{r-2}E_{ml}^{2n+1}E_{km}^{2n+2}-\delta _{kl}1\right) +\delta _{ir}E_{rj}^{2n+2}E_{\left( r-1\right) l}^{2n+1}E_{k\left( r-1\right) }^{2n+2} + \\
            & \quad -\left( \sum_{h=1}^{r-1}E_{hj}^{2n+2}E_{ih}^{2n+1}-\delta _{ij}1\right) E_{rr}^{2n}\left(
            \sum_{m=1}^{r-2}E_{ml}^{2n+1}E_{km}^{2n+2}-\delta _{kl}1\right) + \\
            & \quad \quad  -E_{rj}^{2n+2}\left( \sum_{m=1}^{r-1}\sum_{h=1}^{r-2}E_{ih}^{2n+1}E_{hm}^{2n}E_{ml}^{2n+1}\right) E_{kr}^{2n+2} + \\
            & \quad \quad \quad + \left( \sum_{h=1}^{r-1}E_{hj}^{2n+2}E_{ih}^{2n+1}-\delta _{ij}1\right) \left( \sum_{m=1}^{r-1}E_{rm}^{2n}E_{ml}^{2n+1}\right) E_{kr}^{2n+2} + \\
            & \quad \quad \quad \quad +\left( 1-\delta _{\left( r-1\right) l}1\right) E_{rj}^{2n+2}E_{i\left( r-1\right) }^{2n+1}E_{kr}^{2n+2}
        \end{align*}    }
        show that the ambiguity is resolvable.
    \end{invisible}
    Therefore, by Bergman's Diamond Lemma, the words that are irreducible with respect to \eqref{eq:1st_rels} and \eqref{eq:2nd_rels} form a basis of the quotient \(T(V)/K\). Since the summands of 
    \begin{equation}\label{eq:uffa}
        \sum_{k=1}^r E_{ik}^{1}E_{kj}^0 - \delta_{ij}1
    \end{equation}
    for all \(i,j\in\{1,2,\ldots,r\}\), are irreducible words, they are linearly independent and so \eqref{eq:uffa} cannot be \(0\). Hence, \(\rH(C)\) is not a Hopf monoid.
\end{example}

\begin{invisible}[Rough computations of the \(M_r(\K)\) example]
    \(C=M_{r}(k)\) is a comodule over \(H=k\mathbb{Z}_{2}=k\left[ g\mid g^{2}=e%
    \right]\) with respect to%
    \begin{equation*}
    \delta (E_{ij})=g^{i+j}\otimes E_{ij}.
    \end{equation*}%
    It is a comodule coalgebra, too, with respect to%
    \begin{equation*}
    \Delta (E_{ij})=\sum_{k=1}^{r}E_{ik}\otimes E_{kj}\qquad \mathrm{and}\qquad
    \varepsilon (E_{ij})=\delta _{ij}.
    \end{equation*}%
    Indeed%
    \begin{eqnarray*}
    \delta _{\otimes }\left( \Delta \left( E_{ij}\right) \right) &=&\delta
    _{\otimes }\left( \sum_{k=1}^{r}E_{ik}\otimes E_{kj}\right)
    =\sum_{k=1}^{r}g^{i+2k+j}\otimes E_{ik}\otimes E_{kj} \\
    &=&g^{i+j}\otimes \sum_{k=1}^{r}E_{ik}\otimes E_{kj}=\left( H\otimes \Delta
    \right) \left( \delta \left( E_{ij}\right) \right) ,
    \end{eqnarray*}%
    so that \(\Delta\) is colinear, while 
    \begin{equation*}
    \delta _{k}\left( \varepsilon \left( E_{ij}\right) \right) =\delta
    _{k}\left( \delta _{ij}\right) =\delta _{ij}1=\left\{ 
    \begin{array}{c}
    0\qquad i\neq j \\ 
    1\qquad i=j%
    \end{array}%
    \right.
    \end{equation*}%
    and%
    \begin{equation*}
    \left( H\otimes \varepsilon \right) \left( \delta \left( E_{ij}\right)
    \right) =g^{i+j}\varepsilon \left( E_{ij}\right) =g^{i+j}\delta
    _{ij}=\left\{ 
    \begin{array}{c}
    0\qquad i\neq j \\ 
    1\qquad i=j%
    \end{array}%
    \right.
    \end{equation*}%
    and so also \(\varepsilon\) is colinear.
    
    We can look at \(M_{n}(k)\) as a comodule coalgebra in the braided category of 
    \(H\)-comodules with respect to the braiding%
    \begin{equation*}
    \mathfrak{c}:U\otimes V\rightarrow V\otimes U,\qquad u\otimes v\mapsto (-1)^{%
    \overline{u}\cdot \overline{v}}v\otimes u,
    \end{equation*}%
    where \(u\in U\) is homogeneous of degree \(\overline{u}\) and \(v\in V\) is
    homogeneous of degree \(\overline{v}\), and where \(\left( -1\right)
    ^{e}=1,\left( -1\right) ^{g}=-1\). Then,%
    \begin{equation*}
    \Delta ^{\mathrm{cop}}\left( E_{i,j}\right)
    =\sum_{k=1}^{r}(-1)^{g^{i+2k+j}}E_{kj}\otimes E_{ik}=(-1)^{g^{i+j}}\left(
    \sum_{k=1}^{r}E_{kj}\otimes E_{ik}\right)
    \end{equation*}%
    If we apply GNT's procedure, we shall quotient by the relations%
    \begin{eqnarray*}
    \sum_{k=1}^{r}E_{ik}^{2n}E_{kj}^{2n+1}-\delta _{ij}1,\qquad n &\geq &0, \\
    \sum_{k=1}^{r}E_{kj}^{2n+1}E_{ik}^{2n+2}-(-1)^{g^{i+j}}\delta _{ij}1,\qquad
    n &\geq &0, \\
    \sum_{k=1}^{r}E_{ik}^{2n+1}E_{kj}^{2n}-\delta _{ij}1,\qquad n &\geq &1, \\
    \sum_{k=1}^{r}E_{kj}^{2n+2}E_{ik}^{2n+1}-(-1)^{g^{i+j}}\delta _{ij}1,\qquad
    n &\geq &0.
    \end{eqnarray*}
    
    To apply Bergman's Diamond Lemma, let us analyse ambiguity by ambiguity.
    
    We declare \(E_{ij}^{p}<E_{hk}^{q}\) if \(p<q\); if \(p=q\) we have 
    \begin{equation*}
    E_{11}^{p}<E_{12}^{p}<\cdots <E_{1r}^{p}<E_{21}^{p}<E_{22}^{p}<\cdots
    <E_{2r}^{p}<\cdots <E_{r\left( r-1\right) }^{p}<E_{rr}^{p}
    \end{equation*}%
    . Then, a word \(w\) is bigger than or equal to a word \(v\) if and only if:
    
    \begin{enumerate}
    \item[i)] the word \(w\) is longer than the word \(v\), or
    
    \item[ii)] the first letter in \(w\) which is different from the corresponding
    letter in \(v\) is bigger than the corresponding letter in \(v\), i.e., in the
    first spot in which they differ we have \(w_{j}\geq v_{j}\).
    \end{enumerate}
    
    Ordered with respect to this order, the reduction formulas are given by%
    \begin{eqnarray}
    E_{ir}^{2n}E_{rj}^{2n+1} &\rightarrow
    &-\sum_{h=1}^{r-1}E_{ih}^{2n}E_{hj}^{2n+1}+\delta _{ij}1,\qquad n\geq 0,
    \label{rel1} \\
    E_{rj}^{2n+1}E_{ir}^{2n+2} &\rightarrow
    &-\sum_{h=1}^{r-1}E_{hj}^{2n+1}E_{ih}^{2n+2}+(-1)^{g^{i+j}}\delta
    _{ij}1,\qquad n\geq 0,  \label{rel2} \\
    E_{ir}^{2n+1}E_{rj}^{2n} &\rightarrow
    &-\sum_{h=1}^{r-1}E_{ih}^{2n+1}E_{hj}^{2n}+\delta _{ij}1,\qquad n\geq 1,
    \label{rel3} \\
    E_{rj}^{2n+2}E_{ir}^{2n+1} &\rightarrow
    &-\sum_{h=1}^{r-1}E_{hj}^{2n+2}E_{ih}^{2n+1}+(-1)^{g^{i+j}}\delta
    _{ij}1,\qquad n\geq 0.  \label{rel4}
    \end{eqnarray}
    
    \subsection{I round of ambiguites}
    
    The ambiguities are then
    
    \begin{itemize}
    \item \ref{rel1}-\ref{rel2} \(E_{ir}^{2n}E_{rj}^{2n+1}E_{kr}^{2n+2}\) for \(%
    n\geq 0\)%
    \begin{equation*}
    E_{ir}^{2n}E_{rj}^{2n+1}E_{kr}^{2n+2}\overset{\ref{rel1}}{\rightarrow }%
    -\sum_{h=1}^{r-1}E_{ih}^{2n}E_{hj}^{2n+1}E_{kr}^{2n+2}+\delta
    _{ij}E_{kr}^{2n+2}
    \end{equation*}%
    while%
    \begin{equation*}
    E_{ir}^{2n}E_{rj}^{2n+1}E_{kr}^{2n+2}\overset{\ref{rel2}}{\rightarrow }%
    -\sum_{h=1}^{r-1}E_{ir}^{2n}E_{hj}^{2n+1}E_{kh}^{2n+2}+(-1)^{g^{k+j}}\delta
    _{kj}E_{ir}^{2n}
    \end{equation*}%
    gives the additional reduction%
    \begin{equation*}
    E_{ir}^{2n}E_{\left( r-1\right) j}^{2n+1}E_{k\left( r-1\right)
    }^{2n+2}\rightarrow
    -\sum_{h=1}^{r-2}E_{ir}^{2n}E_{hj}^{2n+1}E_{kh}^{2n+2}+%
    \sum_{h=1}^{r-1}E_{ih}^{2n}E_{hj}^{2n+1}E_{kr}^{2n+2}-\delta
    _{ij}E_{kr}^{2n+2}+(-1)^{g^{k+j}}\delta _{kj}E_{ir}^{2n}.
    \end{equation*}
    
    \item \ref{rel1}-\ref{rel3} \(E_{ir}^{2n}E_{rr}^{2n+1}E_{rj}^{2n}\) for \(n\geq
    1\)%
    \begin{eqnarray*}
    &&E_{ir}^{2n}E_{rr}^{2n+1}E_{rj}^{2n}\overset{\ref{rel1}}{\rightarrow }%
    -\sum_{h=1}^{r-1}E_{ih}^{2n}E_{hr}^{2n+1}E_{rj}^{2n}+\delta _{ir}E_{rj}^{2n}%
    \overset{\ref{rel3}}{\rightarrow }\sum_{h=1}^{r-1}%
    \sum_{k=1}^{r-1}E_{ih}^{2n}E_{hk}^{2n+1}E_{kj}^{2n}-\sum_{h=1}^{r-1}\delta
    _{hj}E_{ih}^{2n}+\delta _{ir}E_{rj}^{2n} \\
    &=&\sum_{h=1}^{r-1}\sum_{k=1}^{r-1}E_{ih}^{2n}E_{hk}^{2n+1}E_{kj}^{2n}-%
    \left( 1-\delta _{rj}\right) E_{ij}^{2n}+\delta _{ir}E_{rj}^{2n},
    \end{eqnarray*}%
    while%
    \begin{eqnarray*}
    &&E_{ir}^{2n}E_{rr}^{2n+1}E_{rj}^{2n}\overset{\ref{rel3}}{\rightarrow }%
    -\sum_{h=1}^{r-1}E_{ir}^{2n}E_{rh}^{2n+1}E_{hj}^{2n}+\delta _{rj}E_{ir}^{2n}%
    \overset{\ref{rel1}}{\rightarrow }\sum_{h=1}^{r-1}%
    \sum_{k=1}^{r-1}E_{ik}^{2n}E_{kh}^{2n+1}E_{hj}^{2n}-\sum_{h=1}^{r-1}\delta
    _{ih}E_{hj}^{2n}+\delta _{rj}E_{ir}^{2n} \\
    &=&\sum_{h=1}^{r-1}\sum_{k=1}^{r-1}E_{ik}^{2n}E_{kh}^{2n+1}E_{hj}^{2n}-%
    \left( 1-\delta _{ir}\right) E_{ij}^{2n}+\delta _{rj}E_{ir}^{2n}.
    \end{eqnarray*}%
    Since%
    \begin{equation*}
    \left( 1-\delta _{rj}\right) E_{ij}^{2n}-\delta _{ir}E_{rj}^{2n}=\left(
    1-\delta _{ir}\right) E_{ij}^{2n}-\delta _{rj}E_{ir}^{2n}
    \end{equation*}%
    because case by case%
    \begin{equation*}
    \left\{ 
    \begin{array}{c}
    i=r,j=r,\qquad -E_{rr}^{2n}\qquad \mathrm{VS}\qquad -E_{rr}^{2n}\qquad
    \left( OK\right)  \\ 
    i=r,j<r,\qquad E_{rj}^{2n}-E_{rj}^{2n}\qquad \mathrm{VS}\qquad 0\qquad
    \left( OK\right)  \\ 
    i<r,j=r,\qquad 0\qquad \mathrm{VS}\qquad E_{ir}^{2n}-E_{ir}^{2n}\qquad
    \left( OK\right)  \\ 
    i<r,j<r,\qquad E_{ij}^{2n}\qquad \mathrm{VS}\qquad E_{ij}^{2n}\qquad \left(
    OK\right) 
    \end{array}%
    \right. 
    \end{equation*}%
    so that no new reduction needs to be added.
    
    \item \ref{rel2}-\ref{rel1} \(E_{rj}^{2n+1}E_{ir}^{2n+2}E_{rk}^{2n+3}\) for \(%
    n\geq 0\)%
    \begin{equation*}
    E_{rj}^{2n+1}E_{ir}^{2n+2}E_{rk}^{2n+3}\overset{\ref{rel2}}{\rightarrow }%
    -\sum_{h=1}^{r-1}E_{hj}^{2n+1}E_{ih}^{2n+2}E_{rk}^{2n+3}+(-1)^{g^{i+j}}%
    \delta _{ij}E_{rk}^{2n+3}
    \end{equation*}%
    while%
    \begin{equation*}
    E_{rj}^{2n+1}E_{ir}^{2n+2}E_{rk}^{2n+3}\overset{\ref{rel1}}{\rightarrow }%
    -\sum_{h=1}^{r-1}E_{rj}^{2n+1}E_{ih}^{2n+2}E_{hk}^{2n+3}+\delta
    _{ik}E_{rj}^{2n+1}
    \end{equation*}%
    gives the additional reduction%
    \begin{equation*}
    E_{rj}^{2n+1}E_{i\left( r-1\right) }^{2n+2}E_{\left( r-1\right)
    k}^{2n+3}\rightarrow
    -\sum_{h=1}^{r-2}E_{rj}^{2n+1}E_{ih}^{2n+2}E_{hk}^{2n+3}+%
    \sum_{h=1}^{r-1}E_{hj}^{2n+1}E_{ih}^{2n+2}E_{rk}^{2n+3}-(-1)^{g^{i+j}}\delta
    _{ij}E_{rk}^{2n+3}+\delta _{ik}E_{rj}^{2n+1}
    \end{equation*}
    
    \item \ref{rel2}-\ref{rel4} \(E_{rj}^{2n+1}E_{rr}^{2n+2}E_{ir}^{2n+1}\) for \(%
    n\geq 0\)%
    \begin{eqnarray*}
    &&E_{rj}^{2n+1}E_{rr}^{2n+2}E_{ir}^{2n+1}\overset{\ref{rel2}}{\rightarrow }%
    -\sum_{h=1}^{r-1}E_{hj}^{2n+1}E_{rh}^{2n+2}E_{ir}^{2n+1}+(-1)^{g^{r+j}}%
    \delta _{rj}E_{ir}^{2n+1} \\
    &&\overset{\ref{rel4}}{\rightarrow }\sum_{h=1}^{r-1}%
    \sum_{k=1}^{r-1}E_{hj}^{2n+1}E_{kh}^{2n+2}E_{ik}^{2n+1}-%
    \sum_{h=1}^{r-1}(-1)^{g^{i+h}}\delta _{ih}E_{hj}^{2n+1}+(-1)^{g^{r+j}}\delta
    _{rj}E_{ir}^{2n+1} \\
    &=&\sum_{h=1}^{r-1}\sum_{k=1}^{r-1}E_{hj}^{2n+1}E_{kh}^{2n+2}E_{ik}^{2n+1}-%
    \left( 1-\delta _{ir}\right) E_{ij}^{2n+1}+(-1)^{g^{r+j}}\delta
    _{rj}E_{ir}^{2n+1},
    \end{eqnarray*}%
    while%
    \begin{eqnarray*}
    &&E_{rj}^{2n+1}E_{rr}^{2n+2}E_{ir}^{2n+1}\overset{\ref{rel4}}{\rightarrow }%
    -\sum_{h=1}^{r-1}E_{rj}^{2n+1}E_{hr}^{2n+2}E_{ih}^{2n+1}+(-1)^{g^{i+r}}%
    \delta _{ir}E_{rj}^{2n+1} \\
    &&\overset{\ref{rel2}}{\rightarrow }\sum_{h=1}^{r-1}%
    \sum_{k=1}^{r-1}E_{kj}^{2n+1}E_{hk}^{2n+2}E_{ih}^{2n+1}-%
    \sum_{h=1}^{r-1}(-1)^{g^{h+j}}\delta _{hj}E_{ih}^{2n+1}+(-1)^{g^{i+r}}\delta
    _{ir}E_{rj}^{2n+1} \\
    &=&\sum_{h=1}^{r-1}\sum_{k=1}^{r-1}E_{kj}^{2n+1}E_{hk}^{2n+2}E_{ih}^{2n+1}-%
    \left( 1-\delta _{rj}\right) E_{ij}^{2n+1}+(-1)^{g^{i+r}}\delta
    _{ir}E_{rj}^{2n+1}.
    \end{eqnarray*}%
    Since%
    \begin{equation*}
    -\left( 1-\delta _{ir}\right) E_{ij}^{2n+1}+(-1)^{g^{r+j}}\delta
    _{rj}E_{ir}^{2n+1}=-\left( 1-\delta _{rj}\right)
    E_{ij}^{2n+1}+(-1)^{g^{i+r}}\delta _{ir}E_{rj}^{2n+1}
    \end{equation*}%
    because case by case%
    \begin{equation*}
    \left\{ 
    \begin{array}{c}
    i=r,j=r\qquad E_{rr}^{2n+1}=E_{rr}^{2n+1} \\ 
    i=r,j<r\qquad 0=-E_{rj}^{2n+1}+E_{rj}^{2n+1} \\ 
    i<r,j=r\qquad -E_{ir}^{2n+1}+E_{ir}^{2n+1}=0 \\ 
    i<r,j<r\qquad -E_{ij}^{2n+1}=-E_{ij}^{2n+1}%
    \end{array}%
    \right. 
    \end{equation*}%
    so that no new reduction needs to be added.
    
    \item \ref{rel3}-\ref{rel1} \(E_{ir}^{2n+1}E_{rr}^{2n}E_{rj}^{2n+1}\) for \(%
    n\geq 1\),%
    \begin{eqnarray*}
    &&E_{ir}^{2n+1}E_{rr}^{2n}E_{rj}^{2n+1}\overset{\ref{rel3}}{\rightarrow }%
    -\sum_{h=1}^{r-1}E_{ih}^{2n+1}E_{hr}^{2n}E_{rj}^{2n+1}+\delta
    _{ir}E_{rj}^{2n+1} \\
    &&\overset{\ref{rel1}}{\rightarrow }\sum_{h=1}^{r-1}%
    \sum_{k=1}^{r-1}E_{ih}^{2n+1}E_{hk}^{2n}E_{kj}^{2n+1}-\sum_{h=1}^{r-1}\delta
    _{hj}E_{ih}^{2n+1}+\delta _{ir}E_{rj}^{2n+1} \\
    &=&\sum_{h=1}^{r-1}\sum_{k=1}^{r-1}E_{ih}^{2n+1}E_{hk}^{2n}E_{kj}^{2n+1}-%
    \left( 1-\delta _{rj}\right) E_{ij}^{2n+1}+\delta _{ir}E_{rj}^{2n+1},
    \end{eqnarray*}%
    while
    
    \begin{eqnarray*}
    &&E_{ir}^{2n+1}E_{rr}^{2n}E_{rj}^{2n+1}\overset{\ref{rel1}}{\rightarrow }%
    -\sum_{h=1}^{r-1}E_{ir}^{2n+1}E_{rh}^{2n}E_{hj}^{2n+1}+\delta
    _{rj}E_{ir}^{2n+1} \\
    &&\overset{\ref{rel3}}{\rightarrow }\sum_{h=1}^{r-1}%
    \sum_{k=1}^{r-1}E_{ik}^{2n+1}E_{kh}^{2n}E_{hj}^{2n+1}-\sum_{h=1}^{r-1}\delta
    _{ih}E_{hj}^{2n+1}+\delta _{rj}E_{ir}^{2n+1} \\
    &=&\sum_{h=1}^{r-1}\sum_{k=1}^{r-1}E_{ik}^{2n+1}E_{kh}^{2n}E_{hj}^{2n+1}-%
    \left( 1-\delta _{ir}\right) E_{ij}^{2n+1}+\delta _{rj}E_{ir}^{2n+1}.
    \end{eqnarray*}%
    Since%
    \begin{equation*}
    -\left( 1-\delta _{rj}\right) E_{ij}^{2n+1}+\delta
    _{ir}E_{rj}^{2n+1}=-\left( 1-\delta _{ir}\right) E_{ij}^{2n+1}+\delta
    _{rj}E_{ir}^{2n+1}
    \end{equation*}%
    because case by case%
    \begin{eqnarray*}
    i &=&r,j=r\qquad E_{rr}^{2n+1}=E_{rr}^{2n+1}, \\
    i &=&r,j<r\qquad -E_{rj}^{2n+1}+E_{rj}^{2n+1}=0, \\
    i &<&r,j=r\qquad 0=-E_{ir}^{2n+1}+E_{ir}^{2n+1}, \\
    i &<&r,j<r\qquad -E_{ij}^{2n+1}=-E_{ij}^{2n+1},
    \end{eqnarray*}%
    so that no new reduction needs to be added.
    
    \item \ref{rel3}-\ref{rel4} \(E_{ir}^{2n+3}E_{rj}^{2n+2}E_{kr}^{2n+1}\) for \(%
    n\geq 0\).
    
    \begin{equation*}
    E_{ir}^{2n+3}E_{rj}^{2n+2}E_{kr}^{2n+1}\overset{\ref{rel3}}{\rightarrow }%
    -\sum_{h=1}^{r-1}E_{ih}^{2n+3}E_{hj}^{2n+2}E_{kr}^{2n+1}+\delta
    _{ij}E_{kr}^{2n+1}
    \end{equation*}%
    while 
    \begin{equation*}
    E_{ir}^{2n+3}E_{rj}^{2n+2}E_{kr}^{2n+1}\overset{\ref{rel4}}{\rightarrow }%
    -\sum_{h=1}^{r-1}E_{ir}^{2n+3}E_{hj}^{2n+2}E_{kh}^{2n+1}+(-1)^{g^{k+j}}%
    \delta _{kj}E_{ir}^{2n+3}
    \end{equation*}%
    so we have the additional reduction%
    \begin{equation*}
    E_{ir}^{2n+3}E_{\left( r-1\right) j}^{2n+2}E_{k\left( r-1\right)
    }^{2n+1}\rightarrow
    -\sum_{h=1}^{r-2}E_{ir}^{2n+3}E_{hj}^{2n+2}E_{kh}^{2n+1}+%
    \sum_{h=1}^{r-1}E_{ih}^{2n+3}E_{hj}^{2n+2}E_{kr}^{2n+1}+(-1)^{g^{k+j}}\delta
    _{kj}E_{ir}^{2n+3}-\delta _{ij}E_{kr}^{2n+1}
    \end{equation*}
    
    \item \ref{rel4}-\ref{rel2} \(E_{rj}^{2n+2}E_{rr}^{2n+1}E_{ir}^{2n+2}\) for \(%
    n\geq 0\),%
    \begin{eqnarray*}
    &&E_{rj}^{2n+2}E_{rr}^{2n+1}E_{ir}^{2n+2}\overset{\ref{rel4}}{\rightarrow }%
    -\sum_{h=1}^{r-1}E_{hj}^{2n+2}E_{rh}^{2n+1}E_{ir}^{2n+2}+(-1)^{g^{r+j}}%
    \delta _{rj}E_{ir}^{2n+2} \\
    &&\overset{\ref{rel2}}{\rightarrow }\sum_{h=1}^{r-1}%
    \sum_{k=1}^{r-1}E_{hj}^{2n+2}E_{kh}^{2n+1}E_{ik}^{2n+2}-%
    \sum_{h=1}^{r-1}(-1)^{g^{i+h}}\delta _{ih}E_{hj}^{2n+2}+(-1)^{g^{r+j}}\delta
    _{rj}E_{ir}^{2n+2} \\
    &=&\sum_{h=1}^{r-1}\sum_{k=1}^{r-1}E_{hj}^{2n+2}E_{kh}^{2n+1}E_{ik}^{2n+2}-%
    \left( 1-\delta _{ir}\right) E_{ij}^{2n+2}+(-1)^{g^{r+j}}\delta
    _{rj}E_{ir}^{2n+2}
    \end{eqnarray*}%
    while 
    \begin{eqnarray*}
    &&E_{rj}^{2n+2}E_{rr}^{2n+1}E_{ir}^{2n+2}\overset{\ref{rel2}}{\rightarrow }%
    -\sum_{h=1}^{r-1}E_{rj}^{2n+2}E_{hr}^{2n+1}E_{ih}^{2n+2}+(-1)^{g^{i+r}}%
    \delta _{ir}E_{rj}^{2n+2} \\
    &&\overset{\ref{rel4}}{\rightarrow }\sum_{h=1}^{r-1}%
    \sum_{k=1}^{r-1}E_{kj}^{2n+2}E_{hk}^{2n+1}E_{ih}^{2n+2}-%
    \sum_{h=1}^{r-1}(-1)^{g^{h+j}}\delta _{hj}E_{ih}^{2n+2}+(-1)^{g^{i+r}}\delta
    _{ir}E_{rj}^{2n+2} \\
    &=&\sum_{h=1}^{r-1}\sum_{k=1}^{r-1}E_{kj}^{2n+2}E_{hk}^{2n+1}E_{ih}^{2n+2}-%
    \left( 1-\delta _{rj}\right) E_{ij}^{2n+2}+(-1)^{g^{i+r}}\delta
    _{ir}E_{rj}^{2n+2}.
    \end{eqnarray*}%
    Since%
    \begin{equation*}
    -\left( 1-\delta _{ir}\right) E_{ij}^{2n+2}+(-1)^{g^{r+j}}\delta
    _{rj}E_{ir}^{2n+2}=-\left( 1-\delta _{rj}\right)
    E_{ij}^{2n+2}+(-1)^{g^{i+r}}\delta _{ir}E_{rj}^{2n+2}
    \end{equation*}%
    because, case by case,%
    \begin{eqnarray*}
    i &=&r,j=r,\qquad E_{rr}^{2n+2}=E_{rr}^{2n+2} \\
    i &=&r,j<r,\qquad 0=-E_{rj}^{2n+2}+E_{rj}^{2n+2} \\
    i &<&r,j=r,\qquad -E_{ir}^{2n+2}+E_{ir}^{2n+2}=0 \\
    i &<&r,j<r,\qquad -E_{ij}^{2n+2}=-E_{ij}^{2n+2}
    \end{eqnarray*}%
    so no new reduction needs to be added.
    
    \item \ref{rel4}-\ref{rel3} \(E_{rj}^{2n+2}E_{ir}^{2n+1}E_{rk}^{2n}\) for \(%
    n\geq 1\)%
    \begin{equation*}
    E_{rj}^{2n+2}E_{ir}^{2n+1}E_{rk}^{2n}\overset{\ref{rel4}}{\rightarrow }%
    -\sum_{h=1}^{r-1}E_{hj}^{2n+2}E_{ih}^{2n+1}E_{rk}^{2n}+(-1)^{g^{i+j}}\delta
    _{ij}E_{rk}^{2n}
    \end{equation*}%
    while 
    \begin{equation*}
    E_{rj}^{2n+2}E_{ir}^{2n+1}E_{rk}^{2n}\overset{\ref{rel3}}{\rightarrow }%
    -\sum_{h=1}^{r-1}E_{rj}^{2n+2}E_{ih}^{2n+1}E_{hk}^{2n}+\delta
    _{ik}E_{rj}^{2n+2}
    \end{equation*}%
    so we have the additional reduction%
    \begin{equation*}
    E_{rj}^{2n+2}E_{i\left( r-1\right) }^{2n+1}E_{\left( r-1\right)
    k}^{2n}\rightarrow
    -\sum_{h=1}^{r-2}E_{rj}^{2n+2}E_{ih}^{2n+1}E_{hk}^{2n}+%
    \sum_{h=1}^{r-1}E_{hj}^{2n+2}E_{ih}^{2n+1}E_{rk}^{2n}+\delta
    _{ik}E_{rj}^{2n+2}-(-1)^{g^{i+j}}\delta _{ij}E_{rk}^{2n}
    \end{equation*}
    \end{itemize}
    
    \begin{eqnarray*}
    E_{ir}^{2n}E_{rj}^{2n+1} &\rightarrow
    &-\sum_{h=1}^{r-1}E_{ih}^{2n}E_{hj}^{2n+1}+\delta _{ij}1,\qquad n\geq 0, \\
    E_{rj}^{2n+1}E_{ir}^{2n+2} &\rightarrow
    &-\sum_{h=1}^{r-1}E_{hj}^{2n+1}E_{ih}^{2n+2}+(-1)^{g^{i+j}}\delta
    _{ij}1,\qquad n\geq 0, \\
    E_{ir}^{2n+1}E_{rj}^{2n} &\rightarrow
    &-\sum_{h=1}^{r-1}E_{ih}^{2n+1}E_{hj}^{2n}+\delta _{ij}1,\qquad n\geq 1, \\
    E_{rj}^{2n+2}E_{ir}^{2n+1} &\rightarrow
    &-\sum_{h=1}^{r-1}E_{hj}^{2n+2}E_{ih}^{2n+1}+(-1)^{g^{i+j}}\delta
    _{ij}1,\qquad n\geq 0.
    \end{eqnarray*}
    
    \begin{itemize}
    \item 
    \medskip 
    
    Recall: 
    \begin{eqnarray*}
    E_{ir}^{2n}E_{rj}^{2n+1} &\rightarrow
    &-\sum_{h=1}^{r-1}E_{ih}^{2n}E_{hj}^{2n+1}+\delta _{ij}1,\qquad n\geq 0, \\
    E_{rj}^{2n+1}E_{ir}^{2n+2} &\rightarrow
    &-\sum_{h=1}^{r-1}E_{hj}^{2n+1}E_{ih}^{2n+2}+(-1)^{g^{i+j}}\delta
    _{ij}1,\qquad n\geq 0, \\
    E_{ir}^{2n+1}E_{rj}^{2n} &\rightarrow
    &-\sum_{h=1}^{r-1}E_{ih}^{2n+1}E_{hj}^{2n}+\delta _{ij}1,\qquad n\geq 1, \\
    E_{rj}^{2n+2}E_{ir}^{2n+1} &\rightarrow
    &-\sum_{h=1}^{r-1}E_{hj}^{2n+2}E_{ih}^{2n+1}+(-1)^{g^{i+j}}\delta
    _{ij}1,\qquad n\geq 0.
    \end{eqnarray*}
    
    To sum up, we need to add:%
    \begin{eqnarray}
    E_{ir}^{2n}E_{\left( r-1\right) j}^{2n+1}E_{k\left( r-1\right) }^{2n+2}
    &\rightarrow
    &-\sum_{h=1}^{r-2}E_{ir}^{2n}E_{hj}^{2n+1}E_{kh}^{2n+2}+%
    \sum_{h=1}^{r-1}E_{ih}^{2n}E_{hj}^{2n+1}E_{kr}^{2n+2}-\delta
    _{ij}E_{kr}^{2n+2}+(-1)^{g^{k+j}}\delta _{kj}E_{ir}^{2n},\qquad n\geq 0,
    \label{rel5} \\
    E_{rj}^{2n+1}E_{i\left( r-1\right) }^{2n+2}E_{\left( r-1\right) k}^{2n+3}
    &\rightarrow
    &-\sum_{h=1}^{r-2}E_{rj}^{2n+1}E_{ih}^{2n+2}E_{hk}^{2n+3}+%
    \sum_{h=1}^{r-1}E_{hj}^{2n+1}E_{ih}^{2n+2}E_{rk}^{2n+3}-(-1)^{g^{i+j}}\delta
    _{ij}E_{rk}^{2n+3}+\delta _{ik}E_{rj}^{2n+1},\qquad n\geq 0,  \label{rel6} \\
    E_{rj}^{2n+2}E_{i\left( r-1\right) }^{2n+1}E_{\left( r-1\right) k}^{2n}
    &\rightarrow
    &-\sum_{h=1}^{r-2}E_{rj}^{2n+2}E_{ih}^{2n+1}E_{hk}^{2n}+%
    \sum_{h=1}^{r-1}E_{hj}^{2n+2}E_{ih}^{2n+1}E_{rk}^{2n}+\delta
    _{ik}E_{rj}^{2n+2}-(-1)^{g^{i+j}}\delta _{ij}E_{rk}^{2n},\qquad n\geq 1,
    \label{rel7} \\
    E_{ir}^{2n+3}E_{\left( r-1\right) j}^{2n+2}E_{k\left( r-1\right) }^{2n+1}
    &\rightarrow
    &-\sum_{h=1}^{r-2}E_{ir}^{2n+3}E_{hj}^{2n+2}E_{kh}^{2n+1}+%
    \sum_{h=1}^{r-1}E_{ih}^{2n+3}E_{hj}^{2n+2}E_{kr}^{2n+1}+(-1)^{g^{k+j}}\delta
    _{kj}E_{ir}^{2n+3}-\delta _{ij}E_{kr}^{2n+1},\qquad \ n\geq 0,  \label{rel8}
    \end{eqnarray}
    \end{itemize}
    
    \begin{remark}
    Note that, in order not to have a two-sided antipode, we need to not obtain
    the following relations, for all \(i,j\in \{1,2\}\), 
    \begin{equation*}
    E_{i1}^{1}E_{1j}^{0}+E_{i2}^{1}E_{2j}^{0}-\delta _{ij}1=0.
    \end{equation*}
    \end{remark}
    
    \subsection{\textbf{II round of ambiguities}:}
    
    \begin{itemize}
    \item \ref{rel1}-\ref{rel6} \(E_{ir}^{2n}E_{rj}^{2n+1}E_{l\left( r-1\right)
    }^{2n+2}E_{\left( r-1\right) k}^{2n+3}\) for \(n\geq 0\)%
    \begin{equation*}
    E_{ir}^{2n}E_{rj}^{2n+1}E_{l\left( r-1\right) }^{2n+2}E_{\left( r-1\right)
    k}^{2n+3}\overset{\ref{rel1}}{\rightarrow }-\left(
    \sum_{h=1}^{r-1}E_{ih}^{2n}E_{hj}^{2n+1}-\delta _{ij}1\right) E_{l\left(
    r-1\right) }^{2n+2}E_{\left( r-1\right) k}^{2n+3},
    \end{equation*}%
    and%
    \begin{eqnarray*}
    &&E_{ir}^{2n}E_{rj}^{2n+1}E_{l\left( r-1\right) }^{2n+2}E_{\left( r-1\right)
    k}^{2n+3}\overset{\ref{rel6}}{\rightarrow }%
    -\sum_{h=1}^{r-2}E_{ir}^{2n}E_{rj}^{2n+1}E_{lh}^{2n+2}E_{hk}^{2n+3}+%
    \sum_{h=1}^{r-1}E_{ir}^{2n}E_{hj}^{2n+1}E_{lh}^{2n+2}E_{rk}^{2n+3}-(-1)^{g^{l+j}}\delta _{lj}E_{ir}^{2n}E_{rk}^{2n+3}+\delta _{lk}E_{ir}^{2n}E_{rj}^{2n+1}
    \\
    &=&-\left( E_{ir}^{2n}E_{rj}^{2n+1}\right) \left(
    \sum_{h=1}^{r-2}E_{lh}^{2n+2}E_{hk}^{2n+3}-\delta _{lk}1\right)
    +E_{ir}^{2n}E_{\left( r-1\right) j}^{2n+1}E_{l\left( r-1\right)
    }^{2n+2}E_{rk}^{2n+3}+%
    \sum_{h=1}^{r-2}E_{ir}^{2n}E_{hj}^{2n+1}E_{lh}^{2n+2}E_{rk}^{2n+3}-(-1)^{g^{l+j}}\delta _{lj}E_{ir}^{2n}E_{rk}^{2n+3}
    \\
    &&\overset{\ref{rel5}}{\rightarrow }-\left( E_{ir}^{2n}E_{rj}^{2n+1}\right)
    \left( \sum_{h=1}^{r-2}E_{lh}^{2n+2}E_{hk}^{2n+3}-\delta _{lk}1\right)
    -\sum_{h=1}^{r-2}E_{ir}^{2n}E_{hj}^{2n+1}E_{lh}^{2n+2}E_{rk}^{2n+3}+%
    \sum_{h=1}^{r-1}E_{ih}^{2n}E_{hj}^{2n+1}E_{lr}^{2n+2}E_{rk}^{2n+3}-\delta
    _{ij}E_{lr}^{2n+2}E_{rk}^{2n+3}+(-1)^{g^{l+j}}\delta
    _{lj}E_{ir}^{2n}E_{rk}^{2n+3}+ \\
    &&\qquad \qquad
    +\sum_{h=1}^{r-2}E_{ir}^{2n}E_{hj}^{2n+1}E_{lh}^{2n+2}E_{rk}^{2n+3}-(-1)^{g^{l+j}}\delta _{lj}E_{ir}^{2n}E_{rk}^{2n+3}
    \\
    &=&-\left( E_{ir}^{2n}E_{rj}^{2n+1}\right) \left(
    \sum_{h=1}^{r-2}E_{lh}^{2n+2}E_{hk}^{2n+3}-\delta _{lk}1\right) +\left(
    \sum_{h=1}^{r-1}E_{ih}^{2n}E_{hj}^{2n+1}-\delta _{ij}1\right) \left(
    E_{lr}^{2n+2}E_{rk}^{2n+3}\right)  \\
    &&\overset{\ref{rel1}}{\rightarrow }-\left(
    -\sum_{h=1}^{r-1}E_{ih}^{2n}E_{hj}^{2n+1}+\delta _{ij}1\right) \left(
    \sum_{h=1}^{r-2}E_{lh}^{2n+2}E_{hk}^{2n+3}-\delta _{lk}1\right) +\left(
    \sum_{h=1}^{r-1}E_{ih}^{2n}E_{hj}^{2n+1}-\delta _{ij}1\right) \left(
    -\sum_{h=1}^{r-1}E_{lh}^{2n}E_{hk}^{2n+1}+\delta _{lk}1\right)  \\
    \qquad \qquad  &=&\left( \sum_{h=1}^{r-1}E_{ih}^{2n}E_{hj}^{2n+1}-\delta
    _{ij}1\right) \left( \sum_{h=1}^{r-2}E_{lh}^{2n+2}E_{hk}^{2n+3}-\delta
    _{lk}1\right) -\left( \sum_{h=1}^{r-1}E_{ih}^{2n}E_{hj}^{2n+1}-\delta
    _{ij}1\right) \left( \sum_{h=1}^{r-1}E_{lh}^{2n}E_{hk}^{2n+1}-\delta
    _{lk}1\right)  \\
    &=&-\left( \sum_{h=1}^{r-1}E_{ih}^{2n}E_{hj}^{2n+1}-\delta _{ij}1\right)
    E_{l\left( r-1\right) }^{2n}E_{\left( r-1\right) k}^{2n+1}.
    \end{eqnarray*}%
    Hence, the ambiguity is resolvable.
    
    \item \ref{rel1}-\ref{rel8} \(E_{ir}^{2n+2}E_{rr}^{2n+3}E_{\left( r-1\right)
    j}^{2n+2}E_{k\left( r-1\right) }^{2n+1}\) for \(n\geq 0\)%
    \begin{eqnarray*}
    &&E_{ir}^{2n+2}E_{rr}^{2n+3}E_{\left( r-1\right) j}^{2n+2}E_{k\left(
    r-1\right) }^{2n+1}\overset{\ref{rel1}}{\rightarrow }%
    -\sum_{h=1}^{r-1}E_{ih}^{2n+2}E_{hr}^{2n+3}E_{\left( r-1\right)
    j}^{2n+2}E_{k\left( r-1\right) }^{2n+1}+\delta _{ir}E_{\left( r-1\right)
    j}^{2n+2}E_{k\left( r-1\right) }^{2n+1} \\
    &&\overset{\ref{rel8}}{\rightarrow }\sum_{h=1}^{r-1}%
    \sum_{l=1}^{r-2}E_{ih}^{2n+2}E_{hr}^{2n+3}E_{lj}^{2n+2}E_{kl}^{2n+1}-%
    \sum_{h=1}^{r-1}%
    \sum_{l=1}^{r-1}E_{ih}^{2n+2}E_{hl}^{2n+3}E_{lj}^{2n+2}E_{kr}^{2n+1}-(-1)^{g^{k+j}}\delta _{kj}\left( \sum_{h=1}^{r-1}E_{ih}^{2n+2}E_{hr}^{2n+3}\right) +\left( \sum_{h=1}^{r-1}\delta _{hj}E_{ih}^{2n+2}\right) E_{kr}^{2n+1}+\delta _{ir}E_{\left( r-1\right) j}^{2n+2}E_{k\left( r-1\right) }^{2n+1}
    \\
    &=&\left( \sum_{h=1}^{r-1}E_{ih}^{2n+2}E_{hr}^{2n+3}\right) \left(
    \sum_{l=1}^{r-2}E_{lj}^{2n+2}E_{kl}^{2n+1}-(-1)^{g^{k+j}}\delta
    _{kj}1\right)
    -\sum_{h=1}^{r-1}%
    \sum_{l=1}^{r-1}E_{ih}^{2n+2}E_{hl}^{2n+3}E_{lj}^{2n+2}E_{kr}^{2n+1}+\left(
    1-\delta _{jr}\right) E_{ij}^{2n+2}E_{kr}^{2n+1}+\delta _{ir}E_{\left(
    r-1\right) j}^{2n+2}E_{k\left( r-1\right) }^{2n+1} \\
    &=&\left( \sum_{h=1}^{r-1}E_{ih}^{2n+2}E_{hr}^{2n+3}\right) \left(
    \sum_{l=1}^{r-2}E_{lj}^{2n+2}E_{kl}^{2n+1}-(-1)^{g^{k+j}}\delta
    _{kj}1\right)
    -\sum_{h=1}^{r-1}%
    \sum_{l=1}^{r-1}E_{ih}^{2n+2}E_{hl}^{2n+3}E_{lj}^{2n+2}E_{kr}^{2n+1}+\left(
    1-\delta _{jr}\right) E_{ij}^{2n+2}E_{kr}^{2n+1}+\delta _{ir}E_{\left(
    r-1\right) j}^{2n+2}E_{k\left( r-1\right) }^{2n+1}
    \end{eqnarray*}%
    and%
    \begin{eqnarray*}
    &&E_{ir}^{2n+2}E_{rr}^{2n+3}E_{\left( r-1\right) j}^{2n+2}E_{k\left(
    r-1\right) }^{2n+1}\overset{\ref{rel8}}{\rightarrow }%
    -\sum_{h=1}^{r-2}E_{ir}^{2n+2}E_{rr}^{2n+3}E_{hj}^{2n+2}E_{kh}^{2n+1}+%
    \sum_{h=1}^{r-1}E_{ir}^{2n+2}E_{rh}^{2n+3}E_{hj}^{2n+2}E_{kr}^{2n+1}+(-1)^{g^{k+j}}\delta _{kj}E_{ir}^{2n+2}E_{rr}^{2n+3}-\delta _{rj}E_{ir}^{2n+2}E_{kr}^{2n+1}
    \\
    &=&-E_{ir}^{2n+2}E_{rr}^{2n+3}\left(
    \sum_{h=1}^{r-2}E_{hj}^{2n+2}E_{kh}^{2n+1}-(-1)^{g^{k+j}}\delta
    _{kj}1\right)
    +\sum_{h=1}^{r-1}E_{ir}^{2n+2}E_{rh}^{2n+3}E_{hj}^{2n+2}E_{kr}^{2n+1}-\delta
    _{rj}E_{ir}^{2n+2}E_{kr}^{2n+1} \\
    &&\overset{\ref{rel1}}{\rightarrow }\left(
    \sum_{l=1}^{r-1}E_{il}^{2n+2}E_{lr}^{2n+3}-\delta _{ir}1\right) \left(
    \sum_{h=1}^{r-2}E_{hj}^{2n+2}E_{kh}^{2n+1}-(-1)^{g^{k+j}}\delta
    _{kj}1\right)
    -\sum_{h=1}^{r-1}%
    \sum_{l=1}^{r-1}E_{il}^{2n+2}E_{lh}^{2n+3}E_{hj}^{2n+2}E_{kr}^{2n+1}+\left(
    \sum_{h=1}^{r-1}\delta _{ih}E_{hj}^{2n+2}\right) E_{kr}^{2n+1}-\delta
    _{rj}E_{ir}^{2n+2}E_{kr}^{2n+1} \\
    &=&\left( \sum_{l=1}^{r-1}E_{il}^{2n+2}E_{lr}^{2n+3}\right) \left(
    \sum_{h=1}^{r-2}E_{hj}^{2n+2}E_{kh}^{2n+1}-(-1)^{g^{k+j}}\delta
    _{kj}1\right)
    -\sum_{h=1}^{r-1}%
    \sum_{l=1}^{r-1}E_{il}^{2n+2}E_{lh}^{2n+3}E_{hj}^{2n+2}E_{kr}^{2n+1}-\delta
    _{ir}\left( \sum_{h=1}^{r-2}E_{hj}^{2n+2}E_{kh}^{2n+1}-(-1)^{g^{k+j}}\delta
    _{kj}1\right) +\left( 1-\delta _{ir}\right)
    E_{ij}^{2n+2}E_{kr}^{2n+1}-\delta _{rj}E_{ir}^{2n+2}E_{kr}^{2n+1}.
    \end{eqnarray*}%
    Now, we proceed to compare 
    \begin{equation*}
    \left( 1-\delta _{jr}\right) E_{ij}^{2n+2}E_{kr}^{2n+1}+\delta
    _{ir}E_{\left( r-1\right) j}^{2n+2}E_{k\left( r-1\right) }^{2n+1}\qquad 
    \mathrm{and}\qquad -\delta _{ir}\left(
    \sum_{h=1}^{r-2}E_{hj}^{2n+2}E_{kh}^{2n+1}-(-1)^{g^{k+j}}\delta
    _{kj}1\right) +\left( 1-\delta _{ir}\right)
    E_{ij}^{2n+2}E_{kr}^{2n+1}-\delta _{rj}E_{ir}^{2n+2}E_{kr}^{2n+1}
    \end{equation*}%
    case by case. If \(j=r\), we shall compare%
    \begin{equation*}
    \delta _{ir}E_{\left( r-1\right) r}^{2n+2}E_{k\left( r-1\right) }^{2n+1}
    \end{equation*}%
    and%
    \begin{eqnarray*}
    &&-\delta _{ir}\left(
    \sum_{h=1}^{r-2}E_{hr}^{2n+2}E_{kh}^{2n+1}-(-1)^{g^{k+r}}\delta
    _{kr}1\right) +\left( 1-\delta _{ir}\right)
    E_{ir}^{2n+2}E_{kr}^{2n+1}-E_{ir}^{2n+2}E_{kr}^{2n+1} \\
    &=&-\delta _{ir}\left(
    \sum_{h=1}^{r-2}E_{hr}^{2n+2}E_{kh}^{2n+1}-(-1)^{g^{k+r}}\delta
    _{kr}1+E_{ir}^{2n+2}E_{kr}^{2n+1}\right) .
    \end{eqnarray*}%
    In this case, if \(i<r\) both are \(0\) and so they are equal. If \(i=r\), the
    second one can be reduced further as%
    \begin{eqnarray*}
    &&-\sum_{h=1}^{r-2}E_{hr}^{2n+2}E_{kh}^{2n+1}+(-1)^{g^{k+r}}\delta
    _{kr}1-E_{rr}^{2n+2}E_{kr}^{2n+1} \\
    &&\overset{\ref{rel4}}{\rightarrow }%
    -\sum_{h=1}^{r-2}E_{hr}^{2n+2}E_{kh}^{2n+1}+(-1)^{g^{k+r}}\delta
    _{kr}1+\sum_{h=1}^{r-1}E_{hr}^{2n+2}E_{kh}^{2n+1}-(-1)^{g^{k+r}}\delta _{kr}1
    \\
    &=&E_{\left( r-1\right) r}^{2n+2}E_{k\left( r-1\right) }^{2n+1}
    \end{eqnarray*}%
    and so the two terms can be reduced to the same. If \(j<r\), then we shall
    compare%
    \begin{equation*}
    E_{ij}^{2n+2}E_{kr}^{2n+1}+\delta _{ir}E_{\left( r-1\right)
    j}^{2n+2}E_{k\left( r-1\right) }^{2n+1}
    \end{equation*}%
    and%
    \begin{equation*}
    -\delta _{ir}\left(
    \sum_{h=1}^{r-2}E_{hj}^{2n+2}E_{kh}^{2n+1}-(-1)^{g^{k+j}}\delta
    _{kj}1\right) +\left( 1-\delta _{ir}\right) E_{ij}^{2n+2}E_{kr}^{2n+1}.
    \end{equation*}%
    In this case, if \(i<r\) then both are \(E_{ij}^{2n+2}E_{kr}^{2n+1}\) and so
    they are equal. If \(i=r\), then the first one can be reduced further as%
    \begin{eqnarray*}
    &&E_{rj}^{2n+2}E_{kr}^{2n+1}+E_{\left( r-1\right) j}^{2n+2}E_{k\left(
    r-1\right) }^{2n+1}\overset{\ref{rel4}}{\rightarrow }%
    -\sum_{h=1}^{r-1}E_{hj}^{2n+2}E_{kh}^{2n+1}+(-1)^{g^{k+j}}\delta
    _{kj}1+E_{\left( r-1\right) j}^{2n+2}E_{k\left( r-1\right) }^{2n+1} \\
    &=&-\sum_{h=1}^{r-2}E_{hj}^{2n+2}E_{kh}^{2n+1}+(-1)^{g^{k+j}}\delta _{kj}1
    \end{eqnarray*}%
    which coincides with the second one. Since we exhausted all the cases, the
    original ambiguity is resolvable.
    
    \item \ref{rel2}-\ref{rel5} \(E_{rj}^{2n+1}E_{ir}^{2n+2}E_{\left( r-1\right)
    l}^{2n+3}E_{k\left( r-1\right) }^{2n+4}\) for \(n\geq 0\)%
    \begin{equation*}
    E_{rj}^{2n+1}E_{ir}^{2n+2}E_{\left( r-1\right) l}^{2n+3}E_{k\left(
    r-1\right) }^{2n+4}\overset{\ref{rel2}}{\rightarrow }\left(
    -\sum_{h=1}^{r-1}E_{hj}^{2n+1}E_{ih}^{2n+2}+(-1)^{g^{i+j}}\delta
    _{ij}1\right) E_{\left( r-1\right) l}^{2n+3}E_{k\left( r-1\right) }^{2n+4}
    \end{equation*}%
    while%
    \begin{eqnarray*}
    &&E_{rj}^{2n+1}E_{ir}^{2n+2}E_{\left( r-1\right) l}^{2n+3}E_{k\left(
    r-1\right) }^{2n+4}\overset{\ref{rel5}}{\rightarrow }%
    -\sum_{h=1}^{r-2}E_{rj}^{2n+1}E_{ir}^{2n+2}E_{hl}^{2n+3}E_{kh}^{2n+3}+%
    \sum_{h=1}^{r-1}E_{rj}^{2n+1}E_{ih}^{2n+2}E_{hl}^{2n+3}E_{kr}^{2n+4}-\delta
    _{il}E_{rj}^{2n+1}E_{kr}^{2n+4}+(-1)^{g^{k+l}}\delta
    _{kl}E_{rj}^{2n+1}E_{ir}^{2n+2} \\
    &=&E_{rj}^{2n+1}E_{ir}^{2n+2}\left(
    -\sum_{h=1}^{r-2}E_{hl}^{2n+3}E_{kh}^{2n+3}+(-1)^{g^{k+l}}\delta
    _{kl}1\right) +E_{rj}^{2n+1}E_{i\left( r-1\right) }^{2n+2}E_{\left(
    r-1\right)
    l}^{2n+3}E_{kr}^{2n+4}+%
    \sum_{h=1}^{r-2}E_{rj}^{2n+1}E_{ih}^{2n+2}E_{hl}^{2n+3}E_{kr}^{2n+4}-\delta
    _{il}E_{rj}^{2n+1}E_{kr}^{2n+4} \\
    &&\overset{\ref{rel6}}{\rightarrow }E_{rj}^{2n+1}E_{ir}^{2n+2}\left(
    -\sum_{h=1}^{r-2}E_{hl}^{2n+3}E_{kh}^{2n+3}+(-1)^{g^{k+l}}\delta
    _{kl}1\right)
    -\sum_{h=1}^{r-2}E_{rj}^{2n+1}E_{ih}^{2n+2}E_{hl}^{2n+3}E_{kr}^{2n+4}+%
    \sum_{h=1}^{r-1}E_{hj}^{2n+1}E_{ih}^{2n+2}E_{rl}^{2n+3}E_{kr}^{2n+4}-(-1)^{g^{i+j}}\delta _{ij}E_{rl}^{2n+3}E_{kr}^{2n+4}+
    \\
    &&\qquad \qquad +\delta
    _{il}E_{rj}^{2n+1}E_{kr}^{2n+4}+%
    \sum_{h=1}^{r-2}E_{rj}^{2n+1}E_{ih}^{2n+2}E_{hl}^{2n+3}E_{kr}^{2n+4}-\delta
    _{il}E_{rj}^{2n+1}E_{kr}^{2n+4} \\
    &=&E_{rj}^{2n+1}E_{ir}^{2n+2}\left(
    -\sum_{h=1}^{r-2}E_{hl}^{2n+3}E_{kh}^{2n+3}+(-1)^{g^{k+l}}\delta
    _{kl}1\right) -\left(
    -\sum_{h=1}^{r-1}E_{hj}^{2n+1}E_{ih}^{2n+2}+(-1)^{g^{i+j}}\delta
    _{ij}1\right) E_{rl}^{2n+3}E_{kr}^{2n+4} \\
    &&\overset{\ref{rel2}}{\rightarrow }\left(
    -\sum_{h=1}^{r-1}E_{hj}^{2n+1}E_{ih}^{2n+2}+(-1)^{g^{i+j}}\delta
    _{ij}1\right) \left(
    -\sum_{h=1}^{r-2}E_{hl}^{2n+3}E_{kh}^{2n+3}+(-1)^{g^{k+l}}\delta
    _{kl}1\right) -\left(
    -\sum_{h=1}^{r-1}E_{hj}^{2n+1}E_{ih}^{2n+2}+(-1)^{g^{i+j}}\delta
    _{ij}1\right) \left(
    -\sum_{h=1}^{r-1}E_{hl}^{2n+3}E_{kh}^{2n+4}+(-1)^{g^{k+l}}\delta
    _{kl}1\right)  \\
    &=&\left( -\sum_{h=1}^{r-1}E_{hj}^{2n+1}E_{ih}^{2n+2}+(-1)^{g^{i+j}}\delta
    _{ij}1\right) E_{\left( r-1\right) l}^{2n+3}E_{k\left( r-1\right) }^{2n+4},
    \end{eqnarray*}%
    and so the ambiguity is resolvable.
    
    \item \ref{rel2}-\ref{rel7} \(E_{rj}^{2n+1}E_{rr}^{2n+2}E_{i\left( r-1\right)
    }^{2n+1}E_{\left( r-1\right) k}^{2n}\) for \(n\geq 1\)%
    \begin{eqnarray*}
    &&E_{rj}^{2n+1}E_{rr}^{2n+2}E_{i\left( r-1\right) }^{2n+1}E_{\left(
    r-1\right) k}^{2n}\overset{\ref{rel2}}{\rightarrow }%
    -\sum_{h=1}^{r-1}E_{hj}^{2n+1}E_{rh}^{2n+2}E_{i\left( r-1\right)
    }^{2n+1}E_{\left( r-1\right) k}^{2n}+(-1)^{g^{r+j}}\delta _{rj}E_{i\left(
    r-1\right) }^{2n+1}E_{\left( r-1\right) k}^{2n} \\
    &&\overset{\ref{rel7}}{\rightarrow }\sum_{h=1}^{r-1}%
    \sum_{l=1}^{r-2}E_{hj}^{2n+1}E_{rh}^{2n+2}E_{il}^{2n+1}E_{lk}^{2n}-%
    \sum_{h=1}^{r-1}%
    \sum_{l=1}^{r-1}E_{hj}^{2n+1}E_{lh}^{2n+2}E_{il}^{2n+1}E_{rk}^{2n}-\delta
    _{ik}\left( \sum_{h=1}^{r-1}E_{hj}^{2n+1}E_{rh}^{2n+2}\right)
    +\sum_{h=1}^{r-1}(-1)^{g^{i+h}}\delta
    _{ih}E_{hj}^{2n+1}E_{rk}^{2n}+(-1)^{g^{r+j}}\delta _{rj}E_{i\left(
    r-1\right) }^{2n+1}E_{\left( r-1\right) k}^{2n} \\
    &=&\left( \sum_{h=1}^{r-1}E_{hj}^{2n+1}E_{rh}^{2n+2}\right) \left(
    \sum_{l=1}^{r-2}E_{il}^{2n+1}E_{lk}^{2n}-\delta _{ik}1\right)
    -\sum_{h=1}^{r-1}%
    \sum_{l=1}^{r-1}E_{hj}^{2n+1}E_{lh}^{2n+2}E_{il}^{2n+1}E_{rk}^{2n}+%
    \sum_{h=1}^{r-1}(-1)^{g^{i+h}}\delta
    _{ih}E_{hj}^{2n+1}E_{rk}^{2n}+(-1)^{g^{r+j}}\delta _{rj}E_{i\left(
    r-1\right) }^{2n+1}E_{\left( r-1\right) k}^{2n}
    \end{eqnarray*}%
    while%
    \begin{eqnarray*}
    &&E_{rj}^{2n+1}E_{rr}^{2n+2}E_{i\left( r-1\right) }^{2n+1}E_{\left(
    r-1\right) k}^{2n}\overset{\ref{rel7}}{\rightarrow }%
    -\sum_{h=1}^{r-2}E_{rj}^{2n+1}E_{rr}^{2n+2}E_{ih}^{2n+1}E_{hk}^{2n}+%
    \sum_{h=1}^{r-1}E_{rj}^{2n+1}E_{hr}^{2n+2}E_{ih}^{2n+1}E_{rk}^{2n}+\delta
    _{ik}E_{rj}^{2n+1}E_{rr}^{2n+2}-(-1)^{g^{i+r}}\delta
    _{ir}E_{rj}^{2n+1}E_{rk}^{2n} \\
    &=&-E_{rj}^{2n+1}E_{rr}^{2n+2}\left(
    \sum_{h=1}^{r-2}E_{ih}^{2n+1}E_{hk}^{2n}-\delta _{ik}1\right)
    +\sum_{h=1}^{r-1}E_{rj}^{2n+1}E_{hr}^{2n+2}E_{ih}^{2n+1}E_{rk}^{2n}-(-1)^{g^{i+r}}\delta _{ir}E_{rj}^{2n+1}E_{rk}^{2n}
    \\
    &&\overset{\ref{rel2}}{\rightarrow }\left(
    \sum_{h=1}^{r-1}E_{hj}^{2n+1}E_{rh}^{2n+2}-(-1)^{g^{r+j}}\delta
    _{rj}1\right) \left( \sum_{h=1}^{r-2}E_{ih}^{2n+1}E_{hk}^{2n}-\delta
    _{ik}1\right)
    -\sum_{h=1}^{r-1}%
    \sum_{l=1}^{r-1}E_{lj}^{2n+1}E_{hl}^{2n+2}E_{ih}^{2n+1}E_{rk}^{2n}+%
    \sum_{h=1}^{r-1}(-1)^{g^{h+j}}\delta
    _{hj}E_{ih}^{2n+1}E_{rk}^{2n}-(-1)^{g^{i+r}}\delta
    _{ir}E_{rj}^{2n+1}E_{rk}^{2n} \\
    &=&\left( \sum_{h=1}^{r-1}E_{hj}^{2n+1}E_{rh}^{2n+2}\right) \left(
    \sum_{h=1}^{r-2}E_{ih}^{2n+1}E_{hk}^{2n}-\delta _{ik}1\right)
    -\sum_{h=1}^{r-1}%
    \sum_{l=1}^{r-1}E_{lj}^{2n+1}E_{hl}^{2n+2}E_{ih}^{2n+1}E_{rk}^{2n}-(-1)^{g^{r+j}}\delta _{rj}\left( \sum_{h=1}^{r-2}E_{ih}^{2n+1}E_{hk}^{2n}-\delta _{ik}1\right) +\sum_{h=1}^{r-1}(-1)^{g^{h+j}}\delta _{hj}E_{ih}^{2n+1}E_{rk}^{2n}-(-1)^{g^{i+r}}\delta _{ir}E_{rj}^{2n+1}E_{rk}^{2n}
    \end{eqnarray*}%
    and so we need to compare%
    \begin{equation*}
    \sum_{h=1}^{r-1}(-1)^{g^{i+h}}\delta
    _{ih}E_{hj}^{2n+1}E_{rk}^{2n}+(-1)^{g^{r+j}}\delta _{rj}E_{i\left(
    r-1\right) }^{2n+1}E_{\left( r-1\right) k}^{2n}\qquad \mathrm{and}\qquad
    -(-1)^{g^{r+j}}\delta _{rj}\left(
    \sum_{h=1}^{r-2}E_{ih}^{2n+1}E_{hk}^{2n}-\delta _{ik}1\right)
    +\sum_{h=1}^{r-1}(-1)^{g^{h+j}}\delta
    _{hj}E_{ih}^{2n+1}E_{rk}^{2n}-(-1)^{g^{i+r}}\delta
    _{ir}E_{rj}^{2n+1}E_{rk}^{2n}
    \end{equation*}%
    case by case. As before, if \(j=r\) then we need to compare%
    \begin{equation*}
    \sum_{h=1}^{r-1}(-1)^{g^{i+h}}\delta
    _{ih}E_{hr}^{2n+1}E_{rk}^{2n}+E_{i\left( r-1\right) }^{2n+1}E_{\left(
    r-1\right) k}^{2n}\overset{\ref{rel3}}{\rightarrow }-\sum_{h=1}^{r-1}%
    \sum_{l=1}^{r-1}(-1)^{g^{i+h}}\delta
    _{ih}E_{hl}^{2n+1}E_{lk}^{2n}+\sum_{h=1}^{r-1}(-1)^{g^{i+h}}\delta
    _{ih}\delta _{hk}1+E_{i\left( r-1\right) }^{2n+1}E_{\left( r-1\right) k}^{2n}
    \end{equation*}%
    and%
    \begin{equation*}
    -\sum_{h=1}^{r-2}E_{ih}^{2n+1}E_{hk}^{2n}+\delta _{ik}1-(-1)^{g^{i+r}}\delta
    _{ir}E_{rr}^{2n+1}E_{rk}^{2n}\overset{\ref{rel3}}{\rightarrow }%
    -\sum_{h=1}^{r-2}E_{ih}^{2n+1}E_{hk}^{2n}+\delta
    _{ik}1+\sum_{h=1}^{r-1}(-1)^{g^{i+r}}\delta
    _{ir}E_{rh}^{2n+1}E_{hk}^{2n}-(-1)^{g^{i+r}}\delta _{ir}\delta _{rk}1.
    \end{equation*}%
    In this case, if \(i=r\) then the first term is \(E_{r\left( r-1\right)
    }^{2n+1}E_{\left( r-1\right) k}^{2n}\) which coincides with the second as%
    \begin{equation*}
    -\sum_{h=1}^{r-2}E_{rh}^{2n+1}E_{hk}^{2n}+\delta
    _{rk}1+\sum_{h=1}^{r-1}E_{rh}^{2n+1}E_{hk}^{2n}-\delta _{rk}1=E_{r\left(
    r-1\right) }^{2n+1}E_{\left( r-1\right) k}^{2n}.
    \end{equation*}%
    If, instead, \(i<r\) then%
    \begin{equation*}
    -\sum_{h=1}^{r-1}\sum_{l=1}^{r-1}(-1)^{g^{i+h}}\delta
    _{ih}E_{hl}^{2n+1}E_{lk}^{2n}+\sum_{h=1}^{r-1}(-1)^{g^{i+h}}\delta
    _{ih}\delta _{hk}1+E_{i\left( r-1\right) }^{2n+1}E_{\left( r-1\right)
    k}^{2n}=-\sum_{l=1}^{r-1}E_{il}^{2n+1}E_{lk}^{2n}+\delta _{ik}1+E_{i\left(
    r-1\right) }^{2n+1}E_{\left( r-1\right) k}^{2n}
    \end{equation*}%
    still equals%
    \begin{equation*}
    -\sum_{h=1}^{r-2}E_{ih}^{2n+1}E_{hk}^{2n}+\delta _{ik}1.
    \end{equation*}%
    If \(j<r\) then we need to compare%
    \begin{equation*}
    \sum_{h=1}^{r-1}(-1)^{g^{i+h}}\delta _{ih}E_{hj}^{2n+1}E_{rk}^{2n}
    \end{equation*}%
    and%
    \begin{equation*}
    \sum_{h=1}^{r-1}(-1)^{g^{h+j}}\delta
    _{hj}E_{ih}^{2n+1}E_{rk}^{2n}-(-1)^{g^{i+r}}\delta
    _{ir}E_{rj}^{2n+1}E_{rk}^{2n}=E_{ij}^{2n+1}E_{rk}^{2n}-(-1)^{g^{i+r}}\delta
    _{ir}E_{rj}^{2n+1}E_{rk}^{2n}
    \end{equation*}%
    but if \(i=r\) then both of them are \(0\), while if \(i<r\) then both of them are 
    \(E_{ij}^{2n+1}E_{rk}^{2n}\). Thus, the ambiguity is resolvable.
    
    \item \ref{rel3}-\ref{rel5} \(E_{ir}^{2n+1}E_{rr}^{2n}E_{\left( r-1\right)
    j}^{2n+1}E_{k\left( r-1\right) }^{2n+2}\) for \(n\geq 1\)%
    \begin{eqnarray*}
    &&E_{ir}^{2n+1}E_{rr}^{2n}E_{\left( r-1\right) j}^{2n+1}E_{k\left(
    r-1\right) }^{2n+2}\overset{\ref{rel3}}{\rightarrow }%
    -\sum_{h=1}^{r-1}E_{ih}^{2n+1}E_{hr}^{2n}E_{\left( r-1\right)
    j}^{2n+1}E_{k\left( r-1\right) }^{2n+2}+\delta _{ir}E_{\left( r-1\right)
    j}^{2n+1}E_{k\left( r-1\right) }^{2n+2} \\
    &&\overset{\ref{rel5}}{\rightarrow }\sum_{h=1}^{r-1}%
    \sum_{l=1}^{r-2}E_{ih}^{2n+1}E_{hr}^{2n}E_{lj}^{2n+1}E_{kl}^{2n+2}-%
    \sum_{h=1}^{r-1}%
    \sum_{l=1}^{r-1}E_{ih}^{2n+1}E_{hl}^{2n}E_{lj}^{2n+1}E_{kr}^{2n+2}+%
    \sum_{h=1}^{r-1}\delta _{hj}E_{ih}^{2n+1}E_{kr}^{2n+2}-(-1)^{g^{k+j}}\delta
    _{kj}\left( \sum_{h=1}^{r-1}E_{ih}^{2n+1}E_{hr}^{2n}\right) +\delta
    _{ir}E_{\left( r-1\right) j}^{2n+1}E_{k\left( r-1\right) }^{2n+2} \\
    &=&\left( \sum_{h=1}^{r-1}E_{ih}^{2n+1}E_{hr}^{2n}\right) \left(
    \sum_{l=1}^{r-2}E_{lj}^{2n+1}E_{kl}^{2n+2}-(-1)^{g^{k+j}}\delta
    _{kj}1\right)
    -\sum_{h=1}^{r-1}%
    \sum_{l=1}^{r-1}E_{ih}^{2n+1}E_{hl}^{2n}E_{lj}^{2n+1}E_{kr}^{2n+2}+\left(
    1-\delta _{rj}\right) E_{ij}^{2n+1}E_{kr}^{2n+2}+\delta _{ir}E_{\left(
    r-1\right) j}^{2n+1}E_{k\left( r-1\right) }^{2n+2}
    \end{eqnarray*}%
    while%
    \begin{eqnarray*}
    &&E_{ir}^{2n+1}E_{rr}^{2n}E_{\left( r-1\right) j}^{2n+1}E_{k\left(
    r-1\right) }^{2n+2}\overset{\ref{rel5}}{\rightarrow }%
    -\sum_{l=1}^{r-2}E_{ir}^{2n+1}E_{rr}^{2n}E_{lj}^{2n+1}E_{kl}^{2n+2}+%
    \sum_{l=1}^{r-1}E_{ir}^{2n+1}E_{rl}^{2n}E_{lj}^{2n+1}E_{kr}^{2n+2}-\delta
    _{rj}E_{ir}^{2n+1}E_{kr}^{2n+2}+(-1)^{g^{k+j}}\delta
    _{kj}E_{ir}^{2n+1}E_{rr}^{2n} \\
    &=&-E_{ir}^{2n+1}E_{rr}^{2n}\left(
    \sum_{l=1}^{r-2}E_{lj}^{2n+1}E_{kl}^{2n+2}-(-1)^{g^{k+j}}\delta
    _{kj}1\right)
    +\sum_{l=1}^{r-1}E_{ir}^{2n+1}E_{rl}^{2n}E_{lj}^{2n+1}E_{kr}^{2n+2}-\delta
    _{rj}E_{ir}^{2n+1}E_{kr}^{2n+2} \\
    &&\overset{\ref{rel3}}{\rightarrow }\left(
    \sum_{h=1}^{r-1}E_{ih}^{2n+1}E_{hr}^{2n}-\delta _{ir}1\right) \left(
    \sum_{l=1}^{r-2}E_{lj}^{2n+1}E_{kl}^{2n+2}-(-1)^{g^{k+j}}\delta
    _{kj}1\right)
    -\sum_{l=1}^{r-1}%
    \sum_{h=1}^{r-1}E_{ih}^{2n+1}E_{hl}^{2n}E_{lj}^{2n+1}E_{kr}^{2n+2}+%
    \sum_{l=1}^{r-1}\delta _{il}E_{lj}^{2n+1}E_{kr}^{2n+2}-\delta
    _{rj}E_{ir}^{2n+1}E_{kr}^{2n+2} \\
    &=&\left( \sum_{h=1}^{r-1}E_{ih}^{2n+1}E_{hr}^{2n}\right) \left(
    \sum_{l=1}^{r-2}E_{lj}^{2n+1}E_{kl}^{2n+2}-(-1)^{g^{k+j}}\delta
    _{kj}1\right)
    -\sum_{l=1}^{r-1}%
    \sum_{h=1}^{r-1}E_{ih}^{2n+1}E_{hl}^{2n}E_{lj}^{2n+1}E_{kr}^{2n+2}-\delta
    _{ir}\left( \sum_{l=1}^{r-2}E_{lj}^{2n+1}E_{kl}^{2n+2}-(-1)^{g^{k+j}}\delta
    _{kj}1\right) +\left( 1-\delta _{ir}\right)
    E_{ij}^{2n+1}E_{kr}^{2n+2}-\delta _{rj}E_{ir}^{2n+1}E_{kr}^{2n+2}
    \end{eqnarray*}%
    and so we need to compare%
    \begin{equation*}
    \left( 1-\delta _{rj}\right) E_{ij}^{2n+1}E_{kr}^{2n+2}+\delta
    _{ir}E_{\left( r-1\right) j}^{2n+1}E_{k\left( r-1\right) }^{2n+2}\qquad \ 
    \mathrm{and}\qquad -\delta _{ir}\left(
    \sum_{l=1}^{r-2}E_{lj}^{2n+1}E_{kl}^{2n+2}-(-1)^{g^{k+j}}\delta
    _{kj}1\right) +\left( 1-\delta _{ir}\right)
    E_{ij}^{2n+1}E_{kr}^{2n+2}-\delta _{rj}E_{ir}^{2n+1}E_{kr}^{2n+2}.
    \end{equation*}%
    Let us proceed case by case. For \(i=r,j=r\) we need to compare \(E_{\left(
    r-1\right) r}^{2n+1}E_{k\left( r-1\right) }^{2n+2}\) and 
    \begin{equation*}
    -\sum_{l=1}^{r-2}E_{lr}^{2n+1}E_{kl}^{2n+2}+(-1)^{g^{k+r}}\delta
    _{kr}1-E_{rr}^{2n+1}E_{kr}^{2n+2}\overset{\ref{rel2}}{\rightarrow }%
    -\sum_{l=1}^{r-2}E_{lr}^{2n+1}E_{kl}^{2n+2}+(-1)^{g^{k+r}}\delta
    _{kr}1+\sum_{h=1}^{r-1}E_{hr}^{2n+1}E_{kh}^{2n+2}-(-1)^{g^{k+r}}\delta
    _{kr}1=E_{\left( r-1\right) r}^{2n+1}E_{k\left( r-1\right) }^{2n+2}.
    \end{equation*}%
    For \(i<r,j=r\) both are \(0\). For \(i=r,j<r\) we need to compare 
    \begin{equation*}
    E_{rj}^{2n+1}E_{kr}^{2n+2}+E_{\left( r-1\right) j}^{2n+1}E_{k\left(
    r-1\right) }^{2n+2}\overset{\ref{rel2}}{\rightarrow }%
    -\sum_{h=1}^{r-2}E_{hj}^{2n+1}E_{kh}^{2n+2}+(-1)^{g^{k+j}}\delta _{kj}1
    \end{equation*}%
    and%
    \begin{equation*}
    -\sum_{l=1}^{r-2}E_{lj}^{2n+1}E_{kl}^{2n+2}+(-1)^{g^{k+j}}\delta _{kj}1.
    \end{equation*}%
    Finally, for \(i<r,j<r\) they are both equal to \(E_{ij}^{2n+1}E_{kr}^{2n+2}\).
    Thus, the ambiguity is resolvable.
    
    \item \ref{rel3}-\ref{rel7} \(E_{ir}^{2n+3}E_{rj}^{2n+2}E_{l\left( r-1\right)
    }^{2n+1}E_{\left( r-1\right) k}^{2n}\) for \(n\geq 1\)%
    \begin{equation*}
    E_{ir}^{2n+3}E_{rj}^{2n+2}E_{l\left( r-1\right) }^{2n+1}E_{\left( r-1\right)
    k}^{2n}\overset{\ref{rel3}}{\rightarrow }\left(
    -\sum_{h=1}^{r-1}E_{ih}^{2n+3}E_{hj}^{2n+2}+\delta _{ij}1\right) E_{l\left(
    r-1\right) }^{2n+1}E_{\left( r-1\right) k}^{2n}
    \end{equation*}%
    while%
    \begin{eqnarray*}
    &&E_{ir}^{2n+3}E_{rj}^{2n+2}E_{l\left( r-1\right) }^{2n+1}E_{\left(
    r-1\right) k}^{2n}\overset{\ref{rel7}}{\rightarrow }%
    -\sum_{h=1}^{r-2}E_{ir}^{2n+3}E_{rj}^{2n+2}E_{lh}^{2n+1}E_{hk}^{2n}+%
    \sum_{h=1}^{r-1}E_{ir}^{2n+3}E_{hj}^{2n+2}E_{lh}^{2n+1}E_{rk}^{2n}+\delta
    _{lk}E_{ir}^{2n+3}E_{rj}^{2n+2}-(-1)^{g^{l+j}}\delta
    _{lj}E_{ir}^{2n+3}E_{rk}^{2n} \\
    &=&E_{ir}^{2n+3}E_{rj}^{2n+2}\left(
    -\sum_{h=1}^{r-2}E_{lh}^{2n+1}E_{hk}^{2n}+\delta _{lk}1\right)
    +E_{ir}^{2n+3}E_{\left( r-1\right) j}^{2n+2}E_{l\left( r-1\right)
    }^{2n+1}E_{rk}^{2n}+%
    \sum_{h=1}^{r-2}E_{ir}^{2n+3}E_{hj}^{2n+2}E_{lh}^{2n+1}E_{rk}^{2n}-(-1)^{g^{l+j}}\delta _{lj}E_{ir}^{2n+3}E_{rk}^{2n}
    \\
    &&\overset{\ref{rel8}}{\rightarrow }E_{ir}^{2n+3}E_{rj}^{2n+2}\left(
    -\sum_{h=1}^{r-2}E_{lh}^{2n+1}E_{hk}^{2n}+\delta _{lk}1\right)
    +\sum_{h=1}^{r-1}E_{ih}^{2n+3}E_{hj}^{2n+2}E_{lr}^{2n+1}E_{rk}^{2n}-\delta
    _{ij}E_{lr}^{2n+1}E_{rk}^{2n} \\
    &=&E_{ir}^{2n+3}E_{rj}^{2n+2}\left(
    -\sum_{h=1}^{r-2}E_{lh}^{2n+1}E_{hk}^{2n}+\delta _{lk}1\right) +\left(
    \sum_{h=1}^{r-1}E_{ih}^{2n+3}E_{hj}^{2n+2}-\delta _{ij}1\right)
    E_{lr}^{2n+1}E_{rk}^{2n} \\
    &&\overset{\ref{rel3}}{\rightarrow }\left(
    -\sum_{h=1}^{r-1}E_{ih}^{2n+3}E_{hj}^{2n+2}+\delta _{ij}1\right) \left(
    -\sum_{h=1}^{r-2}E_{lh}^{2n+1}E_{hk}^{2n}+\delta _{lk}1\right) -\left(
    -\sum_{h=1}^{r-1}E_{ih}^{2n+3}E_{hj}^{2n+2}+\delta _{ij}1\right) \left(
    -\sum_{h=1}^{r-1}E_{lh}^{2n+1}E_{hk}^{2n}+\delta _{lk}1\right)  \\
    &=&\left( -\sum_{h=1}^{r-1}E_{ih}^{2n+3}E_{hj}^{2n+2}+\delta _{ij}1\right)
    E_{l\left( r-1\right) }^{2n+1}E_{\left( r-1\right) k}^{2n}
    \end{eqnarray*}%
    and hence the ambiguity is resolvable.
    
    \item \ref{rel4}-\ref{rel6} \(E_{rj}^{2n+2}E_{rr}^{2n+1}E_{i\left( r-1\right)
    }^{2n+2}E_{\left( r-1\right) k}^{2n+3}\) for \(n\geq 0\)%
    \begin{eqnarray*}
    &&E_{rj}^{2n+2}E_{rr}^{2n+1}E_{i\left( r-1\right) }^{2n+2}E_{\left(
    r-1\right) k}^{2n+3}\overset{\ref{rel4}}{\rightarrow }%
    -\sum_{h=1}^{r-1}E_{hj}^{2n+2}E_{rh}^{2n+1}E_{i\left( r-1\right)
    }^{2n+2}E_{\left( r-1\right) k}^{2n+3}+(-1)^{g^{r+j}}\delta _{rj}E_{i\left(
    r-1\right) }^{2n+2}E_{\left( r-1\right) k}^{2n+3} \\
    &&\overset{\ref{rel6}}{\rightarrow }\sum_{h=1}^{r-1}%
    \sum_{l=1}^{r-2}E_{hj}^{2n+2}E_{rh}^{2n+1}E_{il}^{2n+2}E_{lk}^{2n+3}-%
    \sum_{h=1}^{r-1}%
    \sum_{l=1}^{r-1}E_{hj}^{2n+2}E_{lh}^{2n+1}E_{il}^{2n+2}E_{rk}^{2n+3}+%
    \sum_{h=1}^{r-1}(-1)^{g^{i+h}}\delta _{ih}E_{hj}^{2n+2}E_{rk}^{2n+3}-\delta
    _{ik}\left( \sum_{h=1}^{r-1}E_{hj}^{2n+2}E_{rh}^{2n+1}\right)
    +(-1)^{g^{r+j}}\delta _{rj}E_{i\left( r-1\right) }^{2n+2}E_{\left(
    r-1\right) k}^{2n+3} \\
    &=&\left( \sum_{h=1}^{r-1}E_{hj}^{2n+2}E_{rh}^{2n+1}\right) \left(
    \sum_{l=1}^{r-2}E_{il}^{2n+2}E_{lk}^{2n+3}-\delta _{ik}1\right)
    -\sum_{h=1}^{r-1}%
    \sum_{l=1}^{r-1}E_{hj}^{2n+2}E_{lh}^{2n+1}E_{il}^{2n+2}E_{rk}^{2n+3}+%
    \sum_{h=1}^{r-1}(-1)^{g^{i+h}}\delta
    _{ih}E_{hj}^{2n+2}E_{rk}^{2n+3}+(-1)^{g^{r+j}}\delta _{rj}E_{i\left(
    r-1\right) }^{2n+2}E_{\left( r-1\right) k}^{2n+3}
    \end{eqnarray*}%
    while%
    \begin{eqnarray*}
    &&E_{rj}^{2n+2}E_{rr}^{2n+1}E_{i\left( r-1\right) }^{2n+2}E_{\left(
    r-1\right) k}^{2n+3}\overset{\ref{rel6}}{\rightarrow }%
    -\sum_{l=1}^{r-2}E_{rj}^{2n+2}E_{rr}^{2n+1}E_{il}^{2n+2}E_{lk}^{2n+3}+%
    \sum_{l=1}^{r-1}E_{rj}^{2n+2}E_{lr}^{2n+1}E_{il}^{2n+2}E_{rk}^{2n+3}-(-1)^{g^{i+r}}\delta _{ir}E_{rj}^{2n+2}E_{rk}^{2n+3}+\delta _{ik}E_{rj}^{2n+2}E_{rr}^{2n+1}
    \\
    &=&E_{rj}^{2n+2}E_{rr}^{2n+1}\left(
    -\sum_{l=1}^{r-2}E_{il}^{2n+2}E_{lk}^{2n+3}+\delta _{ik}1\right)
    +\sum_{l=1}^{r-1}E_{rj}^{2n+2}E_{lr}^{2n+1}E_{il}^{2n+2}E_{rk}^{2n+3}-(-1)^{g^{i+r}}\delta _{ir}E_{rj}^{2n+2}E_{rk}^{2n+3}
    \\
    &&\overset{\ref{rel4}}{\rightarrow }\left(
    \sum_{h=1}^{r-1}E_{hj}^{2n+2}E_{rh}^{2n+1}-(-1)^{g^{r+j}}\delta
    _{rj}1\right) \left( \sum_{l=1}^{r-2}E_{il}^{2n+2}E_{lk}^{2n+3}-\delta
    _{ik}1\right)
    -\sum_{l=1}^{r-1}%
    \sum_{h=1}^{r-1}E_{hj}^{2n+2}E_{lh}^{2n+1}E_{il}^{2n+2}E_{rk}^{2n+3}+%
    \sum_{l=1}^{r-1}(-1)^{g^{l+j}}\delta
    _{lj}E_{il}^{2n+2}E_{rk}^{2n+3}-(-1)^{g^{i+r}}\delta
    _{ir}E_{rj}^{2n+2}E_{rk}^{2n+3} \\
    &=&\left( \sum_{h=1}^{r-1}E_{hj}^{2n+2}E_{rh}^{2n+1}\right) \left(
    \sum_{l=1}^{r-2}E_{il}^{2n+2}E_{lk}^{2n+3}-\delta _{ik}1\right)
    -\sum_{l=1}^{r-1}%
    \sum_{h=1}^{r-1}E_{hj}^{2n+2}E_{lh}^{2n+1}E_{il}^{2n+2}E_{rk}^{2n+3}-(-1)^{g^{r+j}}\delta _{rj}\left( \sum_{l=1}^{r-2}E_{il}^{2n+2}E_{lk}^{2n+3}-\delta _{ik}1\right) +\sum_{l=1}^{r-1}(-1)^{g^{l+j}}\delta _{lj}E_{il}^{2n+2}E_{rk}^{2n+3}-(-1)^{g^{i+r}}\delta _{ir}E_{rj}^{2n+2}E_{rk}^{2n+3}
    \end{eqnarray*}%
    so we need to compare%
    \begin{equation*}
    \sum_{h=1}^{r-1}(-1)^{g^{i+h}}\delta
    _{ih}E_{hj}^{2n+2}E_{rk}^{2n+3}+(-1)^{g^{r+j}}\delta _{rj}E_{i\left(
    r-1\right) }^{2n+2}E_{\left( r-1\right) k}^{2n+3}\qquad \mathrm{and}\qquad
    -(-1)^{g^{r+j}}\delta _{rj}\left(
    \sum_{l=1}^{r-2}E_{il}^{2n+2}E_{lk}^{2n+3}-\delta _{ik}1\right)
    +\sum_{l=1}^{r-1}(-1)^{g^{l+j}}\delta
    _{lj}E_{il}^{2n+2}E_{rk}^{2n+3}-(-1)^{g^{i+r}}\delta
    _{ir}E_{rj}^{2n+2}E_{rk}^{2n+3}.
    \end{equation*}%
    We proceed case by case. If \(i=r,j=r\) then both reduce to \(E_{r\left(
    r-1\right) }^{2n+2}E_{\left( r-1\right) k}^{2n+3}\) because%
    \begin{equation*}
    -\sum_{l=1}^{r-2}E_{rl}^{2n+2}E_{lk}^{2n+3}+\delta
    _{rk}1-E_{rr}^{2n+2}E_{rk}^{2n+3}\overset{\ref{rel1}}{\rightarrow }%
    -\sum_{l=1}^{r-2}E_{rl}^{2n+2}E_{lk}^{2n+3}+\delta
    _{rk}1+\sum_{h=1}^{r-1}E_{rh}^{2n}E_{hk}^{2n+1}-\delta _{rk}1=E_{r\left(
    r-1\right) }^{2n+2}E_{\left( r-1\right) k}^{2n+3}.
    \end{equation*}%
    If \(i<r,j=r\) then the first term reduces to%
    \begin{equation*}
    E_{ir}^{2n+2}E_{rk}^{2n+3}+E_{i\left( r-1\right) }^{2n+2}E_{\left(
    r-1\right) k}^{2n+3}\overset{\ref{rel1}}{\rightarrow }%
    -\sum_{h=1}^{r-1}E_{ih}^{2n}E_{hk}^{2n+1}+\delta _{ik}1+E_{i\left(
    r-1\right) }^{2n+2}E_{\left( r-1\right)
    k}^{2n+3}=-\sum_{l=1}^{r-2}E_{il}^{2n+2}E_{lk}^{2n+3}+\delta _{ik}1
    \end{equation*}%
    which coincides with the second. If \(i=r,j<r\) then both equal \(0\). Finally,
    if \(i<r,j<r\) then both reduce to \(E_{ij}^{2n+2}E_{rk}^{2n+3}\). Hence, the
    ambiguity is resolvable.
    
    \item \ref{rel4}-\ref{rel8} \(E_{rj}^{2n+4}E_{ir}^{2n+3}E_{\left( r-1\right)
    l}^{2n+2}E_{k\left( r-1\right) }^{2n+1}\) for \(n\geq 0\)%
    \begin{equation*}
    E_{rj}^{2n+4}E_{ir}^{2n+3}E_{\left( r-1\right) l}^{2n+2}E_{k\left(
    r-1\right) }^{2n+1}\overset{\ref{rel4}}{\rightarrow }\left(
    -\sum_{h=1}^{r-1}E_{hj}^{2n+4}E_{ih}^{2n+3}+(-1)^{g^{i+j}}\delta
    _{ij}\right) E_{\left( r-1\right) l}^{2n+2}E_{k\left( r-1\right) }^{2n+1}
    \end{equation*}%
    while%
    \begin{eqnarray*}
    &&E_{rj}^{2n+4}E_{ir}^{2n+3}E_{\left( r-1\right) l}^{2n+2}E_{k\left(
    r-1\right) }^{2n+1}\overset{\ref{rel8}}{\rightarrow }%
    -\sum_{h=1}^{r-2}E_{rj}^{2n+4}E_{ir}^{2n+3}E_{hl}^{2n+2}E_{kh}^{2n+1}+%
    \sum_{h=1}^{r-1}E_{rj}^{2n+4}E_{ih}^{2n+3}E_{hl}^{2n+2}E_{kr}^{2n+1}+(-1)^{g^{k+l}}\delta _{kl}E_{rj}^{2n+4}E_{ir}^{2n+3}-\delta _{il}E_{rj}^{2n+4}E_{kr}^{2n+1}
    \\
    &=&E_{rj}^{2n+4}E_{ir}^{2n+3}\left(
    -\sum_{h=1}^{r-2}E_{hl}^{2n+2}E_{kh}^{2n+1}+(-1)^{g^{k+l}}\delta
    _{kl}1\right) +E_{rj}^{2n+4}E_{i\left( r-1\right) }^{2n+3}E_{\left(
    r-1\right)
    l}^{2n+2}E_{kr}^{2n+1}+%
    \sum_{h=1}^{r-2}E_{rj}^{2n+4}E_{ih}^{2n+3}E_{hl}^{2n+2}E_{kr}^{2n+1}-\delta
    _{il}E_{rj}^{2n+4}E_{kr}^{2n+1} \\
    &&\overset{\ref{rel7}}{\rightarrow }E_{rj}^{2n+4}E_{ir}^{2n+3}\left(
    -\sum_{h=1}^{r-2}E_{hl}^{2n+2}E_{kh}^{2n+1}+(-1)^{g^{k+l}}\delta
    _{kl}1\right) +\left(
    \sum_{h=1}^{r-1}E_{hj}^{2n+4}E_{ih}^{2n+3}-(-1)^{g^{i+j}}\delta
    _{ij}1\right) E_{rl}^{2n+2}E_{kr}^{2n+1} \\
    &&\overset{\ref{rel4}}{\rightarrow }\left(
    -\sum_{h=1}^{r-1}E_{hj}^{2n+4}E_{ih}^{2n+3}+(-1)^{g^{i+j}}\delta
    _{ij}1\right) \left(
    -\sum_{h=1}^{r-2}E_{hl}^{2n+2}E_{kh}^{2n+1}+(-1)^{g^{k+l}}\delta
    _{kl}1\right) -\left(
    -\sum_{h=1}^{r-1}E_{hj}^{2n+4}E_{ih}^{2n+3}+(-1)^{g^{i+j}}\delta
    _{ij}1\right) \left(
    -\sum_{h=1}^{r-1}E_{hl}^{2n+2}E_{kh}^{2n+1}+(-1)^{g^{k+l}}\delta
    _{kl}1\right)  \\
    &=&\left( -\sum_{h=1}^{r-1}E_{hj}^{2n+4}E_{ih}^{2n+3}+(-1)^{g^{i+j}}\delta
    _{ij}\right) E_{\left( r-1\right) l}^{2n+2}E_{k\left( r-1\right) }^{2n+1},
    \end{eqnarray*}%
    so that the ambiguity is resolvable.
    
    \item \ref{rel5}-\ref{rel4} \(E_{ir}^{2n}E_{\left( r-1\right)
    j}^{2n+1}E_{r\left( r-1\right) }^{2n+2}E_{kr}^{2n+1}\) for \(n\geq 0\)%
    \begin{eqnarray*}
    &&E_{ir}^{2n}E_{\left( r-1\right) j}^{2n+1}E_{r\left( r-1\right)
    }^{2n+2}E_{kr}^{2n+1}\overset{\ref{rel5}}{\rightarrow }%
    -\sum_{h=1}^{r-2}E_{ir}^{2n}E_{hj}^{2n+1}E_{rh}^{2n+2}E_{kr}^{2n+1}+%
    \sum_{h=1}^{r-1}E_{ih}^{2n}E_{hj}^{2n+1}E_{rr}^{2n+2}E_{kr}^{2n+1}-\delta
    _{ij}E_{rr}^{2n+2}E_{kr}^{2n+1}+(-1)^{g^{r+j}}\delta
    _{rj}E_{ir}^{2n}E_{kr}^{2n+1} \\
    &&\overset{\ref{rel4}}{\rightarrow }\sum_{h=1}^{r-2}%
    \sum_{l=1}^{r-1}E_{ir}^{2n}E_{hj}^{2n+1}E_{lh}^{2n+2}E_{kl}^{2n+1}-%
    \sum_{h=1}^{r-2}(-1)^{g^{k+h}}\delta
    _{kh}E_{ir}^{2n}E_{hj}^{2n+1}-\sum_{h=1}^{r-1}%
    \sum_{l=1}^{r-1}E_{ih}^{2n}E_{hj}^{2n+1}E_{lr}^{2n+2}E_{kl}^{2n+1}+%
    \sum_{h=1}^{r-1}(-1)^{g^{k+r}}\delta
    _{kr}E_{ih}^{2n}E_{hj}^{2n+1}+\sum_{h=1}^{r-1}\delta
    _{ij}E_{hr}^{2n+2}E_{kh}^{2n+1}-(-1)^{g^{k+r}}\delta _{kr}\delta
    _{ij}1+(-1)^{g^{r+j}}\delta _{rj}E_{ir}^{2n}E_{kr}^{2n+1}
    \end{eqnarray*}%
    and%
    \begin{eqnarray*}
    &&E_{ir}^{2n}E_{\left( r-1\right) j}^{2n+1}E_{r\left( r-1\right)
    }^{2n+2}E_{kr}^{2n+1}\overset{\ref{rel4}}{\rightarrow }%
    -\sum_{h=1}^{r-1}E_{ir}^{2n}E_{\left( r-1\right) j}^{2n+1}E_{h\left(
    r-1\right) }^{2n+2}E_{kh}^{2n+1}+(-1)^{g^{k+\left( r-1\right) }}\delta
    _{k\left( r-1\right) }E_{ir}^{2n}E_{\left( r-1\right) j}^{2n+1} \\
    &&\overset{\ref{rel5}}{\rightarrow }\sum_{h=1}^{r-1}%
    \sum_{l=1}^{r-2}E_{ir}^{2n}E_{lj}^{2n+1}E_{hl}^{2n+2}E_{kh}^{2n+1}-%
    \sum_{h=1}^{r-1}%
    \sum_{l=1}^{r-1}E_{il}^{2n}E_{lj}^{2n+1}E_{hr}^{2n+2}E_{kh}^{2n+1}+%
    \sum_{h=1}^{r-1}\delta
    _{ij}E_{hr}^{2n+2}E_{kh}^{2n+1}-\sum_{h=1}^{r-1}(-1)^{g^{h+j}}\delta
    _{hj}E_{ir}^{2n}E_{kh}^{2n+1}+(-1)^{g^{k+\left( r-1\right) }}\delta
    _{k\left( r-1\right) }E_{ir}^{2n}E_{\left( r-1\right) j}^{2n+1}
    \end{eqnarray*}%
    so that we need to compare%
    \begin{equation*}
    -\sum_{h=1}^{r-2}(-1)^{g^{k+h}}\delta
    _{kh}E_{ir}^{2n}E_{hj}^{2n+1}+\sum_{h=1}^{r-1}(-1)^{g^{k+r}}\delta
    _{kr}E_{ih}^{2n}E_{hj}^{2n+1}-(-1)^{g^{k+r}}\delta _{kr}\delta
    _{ij}1+(-1)^{g^{r+j}}\delta _{rj}E_{ir}^{2n}E_{kr}^{2n+1}
    \end{equation*}%
    with%
    \begin{equation*}
    -\sum_{h=1}^{r-1}(-1)^{g^{h+j}}\delta
    _{hj}E_{ir}^{2n}E_{kh}^{2n+1}+(-1)^{g^{k+\left( r-1\right) }}\delta
    _{k\left( r-1\right) }E_{ir}^{2n}E_{\left( r-1\right) j}^{2n+1}.
    \end{equation*}%
    For \(k=r,j=r\)%
    \begin{equation*}
    \sum_{h=1}^{r-1}E_{ih}^{2n}E_{hr}^{2n+1}-\delta
    _{ir}1+E_{ir}^{2n}E_{rr}^{2n+1}\overset{\ref{rel1}}{\rightarrow }%
    \sum_{h=1}^{r-1}E_{ih}^{2n}E_{hr}^{2n+1}-\delta
    _{ir}1-\sum_{h=1}^{r-1}E_{ih}^{2n}E_{hr}^{2n+1}+\delta _{ir}1=0
    \end{equation*}%
    and the second term is identically \(0\), so that they are equal. For \(k=r,j<r\)%
    \begin{equation*}
    \sum_{h=1}^{r-1}E_{ih}^{2n}E_{hj}^{2n+1}-\delta _{ij}1
    \end{equation*}%
    and%
    \begin{equation*}
    -E_{ir}^{2n}E_{rj}^{2n+1}\overset{\ref{rel1}}{\rightarrow }%
    \sum_{h=1}^{r-1}E_{ih}^{2n}E_{hj}^{2n+1}-\delta _{ij}1
    \end{equation*}%
    and so they are equal. For \(k=r-1\)%
    \begin{equation*}
    (-1)^{g^{r+j}}\delta _{rj}E_{ir}^{2n}E_{\left( r-1\right) r}^{2n+1}
    \end{equation*}%
    and%
    \begin{equation*}
    -\sum_{h=1}^{r-1}(-1)^{g^{h+j}}\delta _{hj}E_{ir}^{2n}E_{\left( r-1\right)
    h}^{2n+1}+E_{ir}^{2n}E_{\left( r-1\right) j}^{2n+1}
    \end{equation*}%
    and they are always equal. For \(k\leq r-2\)%
    \begin{equation*}
    -E_{ir}^{2n}E_{kj}^{2n+1}+(-1)^{g^{r+j}}\delta _{rj}E_{ir}^{2n}E_{kr}^{2n+1}
    \end{equation*}%
    and%
    \begin{equation*}
    -\sum_{h=1}^{r-1}(-1)^{g^{h+j}}\delta _{hj}E_{ir}^{2n}E_{kh}^{2n+1}
    \end{equation*}%
    and they are always equal. Thus, the ambiguity is resolvable.
    
    \item \ref{rel5}-\ref{rel7} \(E_{ir}^{2n}E_{\left( r-1\right)
    j}^{2n+1}E_{r\left( r-1\right) }^{2n+2}E_{l\left( r-1\right)
    }^{2n+1}E_{\left( r-1\right) k}^{2n}\) for \(n\geq 1\)%
    \begin{eqnarray*}
    &&E_{ir}^{2n}E_{\left( r-1\right) j}^{2n+1}E_{r\left( r-1\right)
    }^{2n+2}E_{l\left( r-1\right) }^{2n+1}E_{\left( r-1\right) k}^{2n}\overset{%
    \ref{rel5}}{\rightarrow }%
    -\sum_{h=1}^{r-2}E_{ir}^{2n}E_{hj}^{2n+1}E_{rh}^{2n+2}E_{l\left( r-1\right)
    }^{2n+1}E_{\left( r-1\right)
    k}^{2n}+\sum_{h=1}^{r-1}E_{ih}^{2n}E_{hj}^{2n+1}E_{rr}^{2n+2}E_{l\left(
    r-1\right) }^{2n+1}E_{\left( r-1\right) k}^{2n}-\delta
    _{ij}E_{rr}^{2n+2}E_{l\left( r-1\right) }^{2n+1}E_{\left( r-1\right)
    k}^{2n}+(-1)^{g^{r+j}}\delta _{rj}E_{ir}^{2n}E_{l\left( r-1\right)
    }^{2n+1}E_{\left( r-1\right) k}^{2n} \\
    &=&-\sum_{h=1}^{r-2}E_{ir}^{2n}E_{hj}^{2n+1}E_{rh}^{2n+2}E_{l\left(
    r-1\right) }^{2n+1}E_{\left( r-1\right) k}^{2n}+\left(
    \sum_{h=1}^{r-1}E_{ih}^{2n}E_{hj}^{2n+1}-\delta _{ij}1\right)
    E_{rr}^{2n+2}E_{l\left( r-1\right) }^{2n+1}E_{\left( r-1\right)
    k}^{2n}+(-1)^{g^{r+j}}\delta _{rj}E_{ir}^{2n}E_{l\left( r-1\right)
    }^{2n+1}E_{\left( r-1\right) k}^{2n} \\
    &&\overset{\ref{rel7}}{\rightarrow }\sum_{h=1}^{r-2}%
    \sum_{m=1}^{r-2}E_{ir}^{2n}E_{hj}^{2n+1}E_{rh}^{2n+2}E_{lm}^{2n+1}E_{mk}^{2n}-\sum_{h=1}^{r-2}\sum_{m=1}^{r-1}E_{ir}^{2n}E_{hj}^{2n+1}E_{mh}^{2n+2}E_{lm}^{2n+1}E_{rk}^{2n}-\sum_{h=1}^{r-2}\delta _{lk}E_{ir}^{2n}E_{hj}^{2n+1}E_{rh}^{2n+2}+\sum_{h=1}^{r-2}(-1)^{g^{l+h}}\delta _{lh}E_{ir}^{2n}E_{hj}^{2n+1}E_{rk}^{2n}+
    \\
    &&\qquad \qquad +\left( \sum_{h=1}^{r-1}E_{ih}^{2n}E_{hj}^{2n+1}-\delta
    _{ij}1\right) \left(
    -\sum_{m=1}^{r-2}E_{rr}^{2n+2}E_{lm}^{2n+1}E_{mk}^{2n}+%
    \sum_{m=1}^{r-1}E_{mr}^{2n+2}E_{lm}^{2n+1}E_{rk}^{2n}+\delta
    _{lk}E_{rr}^{2n+2}-(-1)^{g^{l+r}}\delta _{lr}E_{rk}^{2n}\right)
    +(-1)^{g^{r+j}}\delta _{rj}E_{ir}^{2n}E_{l\left( r-1\right)
    }^{2n+1}E_{\left( r-1\right) k}^{2n} \\
    &=&\sum_{h=1}^{r-2}%
    \sum_{m=1}^{r-2}E_{ir}^{2n}E_{hj}^{2n+1}E_{rh}^{2n+2}E_{lm}^{2n+1}E_{mk}^{2n}-\sum_{h=1}^{r-2}\sum_{m=1}^{r-1}E_{ir}^{2n}E_{hj}^{2n+1}E_{mh}^{2n+2}E_{lm}^{2n+1}E_{rk}^{2n}-\sum_{h=1}^{r-2}\delta _{lk}E_{ir}^{2n}E_{hj}^{2n+1}E_{rh}^{2n+2}+\sum_{h=1}^{r-2}(-1)^{g^{l+h}}\delta _{lh}E_{ir}^{2n}E_{hj}^{2n+1}E_{rk}^{2n}+
    \\
    &&\qquad
    -\sum_{h=1}^{r-1}%
    \sum_{m=1}^{r-2}E_{ih}^{2n}E_{hj}^{2n+1}E_{rr}^{2n+2}E_{lm}^{2n+1}E_{mk}^{2n}+\sum_{h=1}^{r-1}\sum_{m=1}^{r-1}E_{ih}^{2n}E_{hj}^{2n+1}E_{mr}^{2n+2}E_{lm}^{2n+1}E_{rk}^{2n}+\sum_{h=1}^{r-1}\delta _{lk}E_{ih}^{2n}E_{hj}^{2n+1}E_{rr}^{2n+2}-\sum_{h=1}^{r-1}(-1)^{g^{l+r}}\delta _{lr}E_{ih}^{2n}E_{hj}^{2n+1}E_{rk}^{2n}+
    \\
    &&\qquad -\delta _{ij}1\left(
    -\sum_{m=1}^{r-2}E_{rr}^{2n+2}E_{lm}^{2n+1}E_{mk}^{2n}+%
    \sum_{m=1}^{r-1}E_{mr}^{2n+2}E_{lm}^{2n+1}E_{rk}^{2n}+\delta
    _{lk}E_{rr}^{2n+2}-(-1)^{g^{l+r}}\delta _{lr}E_{rk}^{2n}\right)
    +(-1)^{g^{r+j}}\delta _{rj}E_{ir}^{2n}E_{l\left( r-1\right)
    }^{2n+1}E_{\left( r-1\right) k}^{2n}
    \end{eqnarray*}%
    while%
    \begin{eqnarray*}
    &&E_{ir}^{2n}E_{\left( r-1\right) j}^{2n+1}E_{r\left( r-1\right)
    }^{2n+2}E_{l\left( r-1\right) }^{2n+1}E_{\left( r-1\right) k}^{2n}\overset{%
    \ref{rel7}}{\rightarrow }-\sum_{h=1}^{r-2}E_{ir}^{2n}E_{\left( r-1\right)
    j}^{2n+1}E_{r\left( r-1\right)
    }^{2n+2}E_{lh}^{2n+1}E_{hk}^{2n}+\sum_{h=1}^{r-1}E_{ir}^{2n}E_{\left(
    r-1\right) j}^{2n+1}E_{h\left( r-1\right)
    }^{2n+2}E_{lh}^{2n+1}E_{rk}^{2n}+\delta _{lk}E_{ir}^{2n}E_{\left( r-1\right)
    j}^{2n+1}E_{r\left( r-1\right) }^{2n+2}-(-1)^{g^{l+\left( r-1\right)
    }}\delta _{l\left( r-1\right) }E_{ir}^{2n}E_{\left( r-1\right)
    j}^{2n+1}E_{rk}^{2n} \\
    &=&E_{ir}^{2n}E_{\left( r-1\right) j}^{2n+1}E_{r\left( r-1\right)
    }^{2n+2}\left( -\sum_{h=1}^{r-2}E_{lh}^{2n+1}E_{hk}^{2n}+\delta
    _{lk}1\right) +\sum_{h=1}^{r-1}E_{ir}^{2n}E_{\left( r-1\right)
    j}^{2n+1}E_{h\left( r-1\right)
    }^{2n+2}E_{lh}^{2n+1}E_{rk}^{2n}-(-1)^{g^{l+\left( r-1\right) }}\delta
    _{l\left( r-1\right) }E_{ir}^{2n}E_{\left( r-1\right) j}^{2n+1}E_{rk}^{2n} \\
    &&\overset{\ref{rel5}}{\rightarrow }\left(
    -\sum_{m=1}^{r-2}E_{ir}^{2n}E_{mj}^{2n+1}E_{rm}^{2n+2}+%
    \sum_{m=1}^{r-1}E_{im}^{2n}E_{mj}^{2n+1}E_{rr}^{2n+2}-\delta
    _{ij}E_{rr}^{2n+2}+(-1)^{g^{r+j}}\delta _{rj}E_{ir}^{2n}\right) \left(
    -\sum_{h=1}^{r-2}E_{lh}^{2n+1}E_{hk}^{2n}+\delta _{lk}1\right) + \\
    &&\qquad \qquad
    -\sum_{h=1}^{r-1}%
    \sum_{m=1}^{r-2}E_{ir}^{2n}E_{mj}^{2n+1}E_{hm}^{2n+2}E_{lh}^{2n+1}E_{rk}^{2n}+\sum_{h=1}^{r-1}\sum_{m=1}^{r-1}E_{im}^{2n}E_{mj}^{2n+1}E_{hr}^{2n+2}E_{lh}^{2n+1}E_{rk}^{2n}-\sum_{h=1}^{r-1}\delta _{ij}E_{hr}^{2n+2}E_{lh}^{2n+1}E_{rk}^{2n}+\sum_{h=1}^{r-1}(-1)^{g^{h+j}}\delta _{hj}E_{ir}^{2n}E_{lh}^{2n+1}E_{rk}^{2n}-(-1)^{g^{l+\left( r-1\right) }}\delta _{l\left( r-1\right) }E_{ir}^{2n}E_{\left( r-1\right) j}^{2n+1}E_{rk}^{2n}
    \\
    &=&\sum_{h=1}^{r-2}%
    \sum_{m=1}^{r-2}E_{ir}^{2n}E_{mj}^{2n+1}E_{rm}^{2n+2}E_{lh}^{2n+1}E_{hk}^{2n}-\sum_{h=1}^{r-2}\sum_{m=1}^{r-1}E_{im}^{2n}E_{mj}^{2n+1}E_{rr}^{2n+2}E_{lh}^{2n+1}E_{hk}^{2n}+\sum_{h=1}^{r-2}\delta _{ij}E_{rr}^{2n+2}E_{lh}^{2n+1}E_{hk}^{2n}-\sum_{h=1}^{r-2}(-1)^{g^{r+j}}\delta _{rj}E_{ir}^{2n}E_{lh}^{2n+1}E_{hk}^{2n}+
    \\
    &&\qquad +\left(
    -\sum_{m=1}^{r-2}E_{ir}^{2n}E_{mj}^{2n+1}E_{rm}^{2n+2}+%
    \sum_{m=1}^{r-1}E_{im}^{2n}E_{mj}^{2n+1}E_{rr}^{2n+2}-\delta
    _{ij}E_{rr}^{2n+2}+(-1)^{g^{r+j}}\delta _{rj}E_{ir}^{2n}\right) \delta
    _{lk}1+ \\
    &&\qquad
    -\sum_{h=1}^{r-1}%
    \sum_{m=1}^{r-2}E_{ir}^{2n}E_{mj}^{2n+1}E_{hm}^{2n+2}E_{lh}^{2n+1}E_{rk}^{2n}+\sum_{h=1}^{r-1}\sum_{m=1}^{r-1}E_{im}^{2n}E_{mj}^{2n+1}E_{hr}^{2n+2}E_{lh}^{2n+1}E_{rk}^{2n}-\sum_{h=1}^{r-1}\delta _{ij}E_{hr}^{2n+2}E_{lh}^{2n+1}E_{rk}^{2n}+\sum_{h=1}^{r-1}(-1)^{g^{h+j}}\delta _{hj}E_{ir}^{2n}E_{lh}^{2n+1}E_{rk}^{2n}-(-1)^{g^{l+\left( r-1\right) }}\delta _{l\left( r-1\right) }E_{ir}^{2n}E_{\left( r-1\right) j}^{2n+1}E_{rk}^{2n}
    \end{eqnarray*}%
    so that we need to compare%
    \begin{equation*}
    \sum_{h=1}^{r-2}(-1)^{g^{l+h}}\delta
    _{lh}E_{ir}^{2n}E_{hj}^{2n+1}E_{rk}^{2n}-\sum_{h=1}^{r-1}(-1)^{g^{l+r}}%
    \delta _{lr}E_{ih}^{2n}E_{hj}^{2n+1}E_{rk}^{2n}+(-1)^{g^{l+r}}\delta
    _{ij}\delta _{lr}E_{rk}^{2n}+(-1)^{g^{r+j}}\delta _{rj}E_{ir}^{2n}E_{l\left(
    r-1\right) }^{2n+1}E_{\left( r-1\right) k}^{2n}
    \end{equation*}%
    with%
    \begin{equation*}
    -\sum_{h=1}^{r-2}(-1)^{g^{r+j}}\delta
    _{rj}E_{ir}^{2n}E_{lh}^{2n+1}E_{hk}^{2n}+(-1)^{g^{r+j}}\delta _{rj}\delta
    _{lk}E_{ir}^{2n}+\sum_{h=1}^{r-1}(-1)^{g^{h+j}}\delta
    _{hj}E_{ir}^{2n}E_{lh}^{2n+1}E_{rk}^{2n}-(-1)^{g^{l+\left( r-1\right)
    }}\delta _{l\left( r-1\right) }E_{ir}^{2n}E_{\left( r-1\right)
    j}^{2n+1}E_{rk}^{2n}.
    \end{equation*}%
    For \(j=r,l=r\)%
    \begin{eqnarray*}
    &&-\sum_{h=1}^{r-1}E_{ih}^{2n}E_{hr}^{2n+1}E_{rk}^{2n}+\delta
    _{ir}E_{rk}^{2n}+E_{ir}^{2n}E_{r\left( r-1\right) }^{2n+1}E_{\left(
    r-1\right) k}^{2n}\overset{\ref{rel3}}{\rightarrow }\sum_{h=1}^{r-1}%
    \sum_{m=1}^{r-1}E_{ih}^{2n}E_{hm}^{2n+1}E_{mk}^{2n}-\sum_{h=1}^{r-1}\delta
    _{hk}E_{ih}^{2n}+\delta _{ir}E_{rk}^{2n}+E_{ir}^{2n}E_{r\left( r-1\right)
    }^{2n+1}E_{\left( r-1\right) k}^{2n} \\
    &&\overset{\ref{rel1}}{\rightarrow }\sum_{h=1}^{r-1}%
    \sum_{m=1}^{r-1}E_{ih}^{2n}E_{hm}^{2n+1}E_{mk}^{2n}-\left( 1-\delta
    _{rk}\right) E_{ik}^{2n}+\delta
    _{ir}E_{rk}^{2n}-\sum_{h=1}^{r-1}E_{ih}^{2n}E_{h\left( r-1\right)
    }^{2n+1}E_{\left( r-1\right) k}^{2n}+\delta _{i\left( r-1\right) }E_{\left(
    r-1\right) k}^{2n} \\
    &=&\sum_{h=1}^{r-1}\sum_{m=1}^{r-2}E_{ih}^{2n}E_{hm}^{2n+1}E_{mk}^{2n}-%
    \left( 1-\delta _{ir}-\delta _{i\left( r-1\right) }\right)
    E_{ik}^{2n}+\delta _{rk}E_{ir}^{2n}
    \end{eqnarray*}%
    and%
    \begin{eqnarray*}
    &&-\sum_{h=1}^{r-2}E_{ir}^{2n}E_{rh}^{2n+1}E_{hk}^{2n}+\delta
    _{rk}E_{ir}^{2n}\overset{\ref{rel1}}{\rightarrow }\sum_{h=1}^{r-2}%
    \sum_{m=1}^{r-1}E_{im}^{2n}E_{mh}^{2n+1}E_{hk}^{2n}-\sum_{h=1}^{r-2}\delta
    _{ih}E_{hk}^{2n}+\delta _{rk}E_{ir}^{2n} \\
    &=&\sum_{h=1}^{r-1}\sum_{m=1}^{r-2}E_{ih}^{2n}E_{hm}^{2n+1}E_{mk}^{2n}-%
    \left( 1-\delta _{ir}-\delta _{i\left( r-1\right) }\right)
    E_{ik}^{2n}+\delta _{rk}E_{ir}^{2n},
    \end{eqnarray*}%
    so that they are equal. For \(j=r,l=r-1\)%
    \begin{equation*}
    E_{ir}^{2n}E_{\left( r-1\right) \left( r-1\right) }^{2n+1}E_{\left(
    r-1\right) k}^{2n}
    \end{equation*}%
    and%
    \begin{equation*}
    -\sum_{h=1}^{r-2}E_{ir}^{2n}E_{\left( r-1\right) h}^{2n+1}E_{hk}^{2n}+\delta
    _{\left( r-1\right) k}E_{ir}^{2n}-E_{ir}^{2n}E_{\left( r-1\right)
    r}^{2n+1}E_{rk}^{2n}\overset{\ref{rel3}}{\rightarrow }%
    -\sum_{h=1}^{r-2}E_{ir}^{2n}E_{\left( r-1\right) h}^{2n+1}E_{hk}^{2n}+\delta
    _{\left( r-1\right) k}E_{ir}^{2n}+\sum_{h=1}^{r-1}E_{ir}^{2n}E_{\left(
    r-1\right) h}^{2n+1}E_{hk}^{2n}-\delta _{\left( r-1\right)
    k}E_{ir}^{2n}=E_{ir}^{2n}E_{\left( r-1\right) \left( r-1\right)
    }^{2n+1}E_{\left( r-1\right) k}^{2n},
    \end{equation*}%
    and so they are equal. For \(j=r,l\leq r-2\)%
    \begin{equation*}
    E_{ir}^{2n}E_{lr}^{2n+1}E_{rk}^{2n}+E_{ir}^{2n}E_{l\left( r-1\right)
    }^{2n+1}E_{\left( r-1\right) k}^{2n}\overset{\ref{rel3}}{\rightarrow }%
    E_{ir}^{2n}\left( -\sum_{h=1}^{r-1}E_{lh}^{2n+1}E_{hk}^{2n}+\delta
    _{lk}1\right) +E_{ir}^{2n}E_{l\left( r-1\right) }^{2n+1}E_{\left( r-1\right)
    k}^{2n}=E_{ir}^{2n}\left( -\sum_{h=1}^{r-2}E_{lh}^{2n+1}E_{hk}^{2n}+\delta
    _{lk}1\right) 
    \end{equation*}%
    and%
    \begin{equation*}
    -\sum_{h=1}^{r-2}E_{ir}^{2n}E_{lh}^{2n+1}E_{hk}^{2n}+\delta
    _{lk}E_{ir}^{2n}=E_{ir}^{2n}\left(
    -\sum_{h=1}^{r-2}E_{lh}^{2n+1}E_{hk}^{2n}+\delta _{lk}1\right) ,
    \end{equation*}%
    so that they are equal. For \(j<r,l=r\)%
    \begin{equation*}
    -\sum_{h=1}^{r-1}E_{ih}^{2n}E_{hj}^{2n+1}E_{rk}^{2n}+\delta _{ij}E_{rk}^{2n}
    \end{equation*}%
    and%
    \begin{equation*}
    E_{ir}^{2n}E_{rj}^{2n+1}E_{rk}^{2n}\overset{\ref{rel1}}{\rightarrow }%
    -\sum_{h=1}^{r-1}E_{ih}^{2n}E_{hj}^{2n+1}E_{rk}^{2n}+\delta _{ij}E_{rk}^{2n}
    \end{equation*}%
    and so they are equal. For \(j<r,l=r-1\), the first term is identically \(0\) and%
    \begin{equation*}
    E_{ir}^{2n}E_{\left( r-1\right) j}^{2n+1}E_{rk}^{2n}-E_{ir}^{2n}E_{\left(
    r-1\right) j}^{2n+1}E_{rk}^{2n}=0
    \end{equation*}%
    so that they are equal. For \(j<r,l\leq r-2\,\), both terms are identically \(%
    E_{ir}^{2n}E_{lj}^{2n+1}E_{rk}^{2n}\) and so they are equal. Thus, the
    ambiguity is resolvable.
    
    \item \ref{rel6}-\ref{rel3} \(E_{rj}^{2n+1}E_{i\left( r-1\right)
    }^{2n+2}E_{\left( r-1\right) r}^{2n+3}E_{rk}^{2n+2}\) for \(n\geq 0\)%
    \begin{eqnarray*}
    &&E_{rj}^{2n+1}E_{i\left( r-1\right) }^{2n+2}E_{\left( r-1\right)
    r}^{2n+3}E_{rk}^{2n+2}\overset{\ref{rel6}}{\rightarrow }%
    -\sum_{h=1}^{r-2}E_{rj}^{2n+1}E_{ih}^{2n+2}E_{hr}^{2n+3}E_{rk}^{2n+2}+%
    \sum_{h=1}^{r-1}E_{hj}^{2n+1}E_{ih}^{2n+2}E_{rr}^{2n+3}E_{rk}^{2n+2}-(-1)^{g^{i+j}}\delta _{ij}E_{rr}^{2n+3}E_{rk}^{2n+2}+\delta _{ir}E_{rj}^{2n+1}E_{rk}^{2n+2}
    \\
    &=&-\sum_{h=1}^{r-2}E_{rj}^{2n+1}E_{ih}^{2n+2}E_{hr}^{2n+3}E_{rk}^{2n+2}+%
    \left( \sum_{h=1}^{r-1}E_{hj}^{2n+1}E_{ih}^{2n+2}-(-1)^{g^{i+j}}\delta
    _{ij}1\right) E_{rr}^{2n+3}E_{rk}^{2n+2}+\delta
    _{ir}E_{rj}^{2n+1}E_{rk}^{2n+2} \\
    &&\overset{\ref{rel3}}{\rightarrow }\sum_{h=1}^{r-2}%
    \sum_{l=1}^{r-1}E_{rj}^{2n+1}E_{ih}^{2n+2}E_{hl}^{2n+3}E_{lk}^{2n+2}-%
    \sum_{h=1}^{r-2}\delta _{hk}E_{rj}^{2n+1}E_{ih}^{2n+2}+\left(
    \sum_{h=1}^{r-1}E_{hj}^{2n+1}E_{ih}^{2n+2}-(-1)^{g^{i+j}}\delta
    _{ij}1\right) \left( -\sum_{h=1}^{r-1}E_{rh}^{2n+3}E_{hk}^{2n+2}+\delta
    _{rk}1\right) +\delta _{ir}E_{rj}^{2n+1}E_{rk}^{2n+2} \\
    &=&\sum_{h=1}^{r-2}%
    \sum_{l=1}^{r-1}E_{rj}^{2n+1}E_{ih}^{2n+2}E_{hl}^{2n+3}E_{lk}^{2n+2}-\left(
    1-\delta _{\left( r-1\right) k}-\delta _{rk}\right)
    E_{rj}^{2n+1}E_{ik}^{2n+2}+\left(
    \sum_{h=1}^{r-1}E_{hj}^{2n+1}E_{ih}^{2n+2}-(-1)^{g^{i+j}}\delta
    _{ij}1\right) \left( -\sum_{h=1}^{r-1}E_{rh}^{2n+3}E_{hk}^{2n+2}+\delta
    _{rk}1\right) +\delta _{ir}E_{rj}^{2n+1}E_{rk}^{2n+2} \\
    &=&\sum_{h=1}^{r-2}%
    \sum_{l=1}^{r-1}E_{rj}^{2n+1}E_{ih}^{2n+2}E_{hl}^{2n+3}E_{lk}^{2n+2}+\left(
    \sum_{h=1}^{r-1}E_{hj}^{2n+1}E_{ih}^{2n+2}-(-1)^{g^{i+j}}\delta
    _{ij}1\right) \left( -\sum_{h=1}^{r-1}E_{rh}^{2n+3}E_{hk}^{2n+2}\right)
    +\delta _{rk}\left(
    E_{rj}^{2n+1}E_{ir}^{2n+2}+%
    \sum_{h=1}^{r-1}E_{hj}^{2n+1}E_{ih}^{2n+2}-(-1)^{g^{i+j}}\delta
    _{ij}1\right) -\left( 1-\delta _{\left( r-1\right) k}-\delta _{ir}\right)
    E_{rj}^{2n+1}E_{ik}^{2n+2} \\
    &&\overset{\ref{rel2}}{\rightarrow }\sum_{h=1}^{r-2}%
    \sum_{l=1}^{r-1}E_{rj}^{2n+1}E_{ih}^{2n+2}E_{hl}^{2n+3}E_{lk}^{2n+2}+\left(
    \sum_{h=1}^{r-1}E_{hj}^{2n+1}E_{ih}^{2n+2}-(-1)^{g^{i+j}}\delta
    _{ij}1\right) \left( -\sum_{h=1}^{r-1}E_{rh}^{2n+3}E_{hk}^{2n+2}\right)
    -\left( 1-\delta _{\left( r-1\right) k}-\delta _{ir}\right)
    E_{rj}^{2n+1}E_{ik}^{2n+2}
    \end{eqnarray*}%
    and%
    \begin{eqnarray*}
    &&E_{rj}^{2n+1}E_{i\left( r-1\right) }^{2n+2}E_{\left( r-1\right)
    r}^{2n+3}E_{rk}^{2n+2}\overset{\ref{rel3}}{\rightarrow }%
    -\sum_{h=1}^{r-1}E_{rj}^{2n+1}E_{i\left( r-1\right) }^{2n+2}E_{\left(
    r-1\right) h}^{2n+3}E_{hk}^{2n+2}+\delta _{\left( r-1\right)
    k}E_{rj}^{2n+1}E_{i\left( r-1\right) }^{2n+2} \\
    &&\overset{\ref{rel6}}{\rightarrow }\sum_{h=1}^{r-1}%
    \sum_{l=1}^{r-2}E_{rj}^{2n+1}E_{il}^{2n+2}E_{lh}^{2n+3}E_{hk}^{2n+2}-%
    \sum_{h=1}^{r-1}%
    \sum_{l=1}^{r-1}E_{lj}^{2n+1}E_{il}^{2n+2}E_{rh}^{2n+3}E_{hk}^{2n+2}+%
    \sum_{h=1}^{r-1}(-1)^{g^{i+j}}\delta
    _{ij}E_{rh}^{2n+3}E_{hk}^{2n+2}-\sum_{h=1}^{r-1}\delta
    _{ih}E_{rj}^{2n+1}E_{hk}^{2n+2}+\delta _{\left( r-1\right)
    k}E_{rj}^{2n+1}E_{i\left( r-1\right) }^{2n+2} \\
    &=&\sum_{h=1}^{r-1}%
    \sum_{l=1}^{r-2}E_{rj}^{2n+1}E_{il}^{2n+2}E_{lh}^{2n+3}E_{hk}^{2n+2}+\left(
    \sum_{l=1}^{r-1}E_{lj}^{2n+1}E_{il}^{2n+2}-(-1)^{g^{i+j}}\delta
    _{ij}1\right) \left( -\sum_{h=1}^{r-1}E_{rh}^{2n+3}E_{hk}^{2n+2}\right)
    -\left( 1-\delta _{ir}-\delta _{\left( r-1\right) k}\right)
    E_{rj}^{2n+1}E_{ik}^{2n+2}
    \end{eqnarray*}%
    so that they are always equal and the ambiguity is resolvable.
    
    \item \ref{rel6}-\ref{rel8} \(E_{rj}^{2n+1}E_{i\left( r-1\right)
    }^{2n+2}E_{\left( r-1\right) r}^{2n+3}E_{\left( r-1\right)
    l}^{2n+2}E_{k\left( r-1\right) }^{2n+1}\) for \(n\geq 0\)%
    \begin{eqnarray*}
    &&E_{rj}^{2n+1}E_{i\left( r-1\right) }^{2n+2}E_{\left( r-1\right)
    r}^{2n+3}E_{\left( r-1\right) l}^{2n+2}E_{k\left( r-1\right) }^{2n+1}\overset%
    {\ref{rel6}}{\rightarrow }%
    -\sum_{h=1}^{r-2}E_{rj}^{2n+1}E_{ih}^{2n+2}E_{hr}^{2n+3}E_{\left( r-1\right)
    l}^{2n+2}E_{k\left( r-1\right)
    }^{2n+1}+\sum_{h=1}^{r-1}E_{hj}^{2n+1}E_{ih}^{2n+2}E_{rr}^{2n+3}E_{\left(
    r-1\right) l}^{2n+2}E_{k\left( r-1\right) }^{2n+1}-(-1)^{g^{i+j}}\delta
    _{ij}E_{rr}^{2n+3}E_{\left( r-1\right) l}^{2n+2}E_{k\left( r-1\right)
    }^{2n+1}+\delta _{ir}E_{rj}^{2n+1}E_{\left( r-1\right) l}^{2n+2}E_{k\left(
    r-1\right) }^{2n+1} \\
    &=&-\sum_{h=1}^{r-2}E_{rj}^{2n+1}E_{ih}^{2n+2}E_{hr}^{2n+3}E_{\left(
    r-1\right) l}^{2n+2}E_{k\left( r-1\right) }^{2n+1}+\left(
    \sum_{h=1}^{r-1}E_{hj}^{2n+1}E_{ih}^{2n+2}-(-1)^{g^{i+j}}\delta
    _{ij}1\right) E_{rr}^{2n+3}E_{\left( r-1\right) l}^{2n+2}E_{k\left(
    r-1\right) }^{2n+1}+\delta _{ir}E_{rj}^{2n+1}E_{\left( r-1\right)
    l}^{2n+2}E_{k\left( r-1\right) }^{2n+1} \\
    &&\overset{\ref{rel8}}{\rightarrow }\sum_{h=1}^{r-2}%
    \sum_{m=1}^{r-2}E_{rj}^{2n+1}E_{ih}^{2n+2}E_{hr}^{2n+3}E_{ml}^{2n+2}E_{km}^{2n+1}-\sum_{h=1}^{r-2}\sum_{m=1}^{r-1}E_{rj}^{2n+1}E_{ih}^{2n+2}E_{hm}^{2n+3}E_{ml}^{2n+2}E_{kr}^{2n+1}-\sum_{h=1}^{r-2}(-1)^{g^{k+l}}\delta _{kl}E_{rj}^{2n+1}E_{ih}^{2n+2}E_{hr}^{2n+3}+\sum_{h=1}^{r-2}\delta _{hl}E_{rj}^{2n+1}E_{ih}^{2n+2}E_{kr}^{2n+1}+
    \\
    &&\qquad \qquad +\left(
    \sum_{h=1}^{r-1}E_{hj}^{2n+1}E_{ih}^{2n+2}-(-1)^{g^{i+j}}\delta
    _{ij}1\right) \left(
    -\sum_{h=1}^{r-2}E_{rr}^{2n+3}E_{hl}^{2n+2}E_{kh}^{2n+1}+%
    \sum_{h=1}^{r-1}E_{rh}^{2n+3}E_{hl}^{2n+2}E_{kr}^{2n+1}+(-1)^{g^{k+l}}\delta
    _{kl}E_{rr}^{2n+3}-\delta _{rl}E_{kr}^{2n+1}\right) +\delta
    _{ir}E_{rj}^{2n+1}E_{\left( r-1\right) l}^{2n+2}E_{k\left( r-1\right)
    }^{2n+1} \\
    &=&E_{rj}^{2n+1}\left( \sum_{h=1}^{r-2}E_{ih}^{2n+2}E_{hr}^{2n+3}\right)
    \left( \sum_{m=1}^{r-2}E_{ml}^{2n+2}E_{km}^{2n+1}-(-1)^{g^{k+l}}\delta
    _{kl}1\right) -E_{rj}^{2n+1}\left(
    \sum_{h=1}^{r-2}\sum_{m=1}^{r-1}E_{ih}^{2n+2}E_{hm}^{2n+3}E_{ml}^{2n+2}%
    \right) E_{kr}^{2n+1}+\left( 1-\delta _{rl}-\delta _{\left( r-1\right)
    l}\right) E_{rj}^{2n+1}E_{il}^{2n+2}E_{kr}^{2n+1}+ \\
    &&\qquad \qquad +\left(
    \sum_{h=1}^{r-1}E_{hj}^{2n+1}E_{ih}^{2n+2}-(-1)^{g^{i+j}}\delta
    _{ij}1\right) E_{rr}^{2n+3}\left(
    -\sum_{h=1}^{r-2}E_{hl}^{2n+2}E_{kh}^{2n+1}+(-1)^{g^{k+l}}\delta
    _{kl}1\right) +\left(
    \sum_{h=1}^{r-1}E_{hj}^{2n+1}E_{ih}^{2n+2}-(-1)^{g^{i+j}}\delta
    _{ij}1\right) \left( \sum_{h=1}^{r-1}E_{rh}^{2n+3}E_{hl}^{2n+2}-\delta
    _{rl}1\right) E_{kr}^{2n+1}+\delta _{ir}E_{rj}^{2n+1}E_{\left( r-1\right)
    l}^{2n+2}E_{k\left( r-1\right) }^{2n+1} \\
    &=&E_{rj}^{2n+1}\left( \sum_{h=1}^{r-2}E_{ih}^{2n+2}E_{hr}^{2n+3}\right)
    \left( \sum_{m=1}^{r-2}E_{ml}^{2n+2}E_{km}^{2n+1}-(-1)^{g^{k+l}}\delta
    _{kl}1\right) -E_{rj}^{2n+1}\left(
    \sum_{h=1}^{r-2}\sum_{m=1}^{r-1}E_{ih}^{2n+2}E_{hm}^{2n+3}E_{ml}^{2n+2}%
    \right) E_{kr}^{2n+1}+\left( 1-\delta _{rl}-\delta _{\left( r-1\right)
    l}\right) E_{rj}^{2n+1}E_{il}^{2n+2}E_{kr}^{2n+1}+ \\
    &&\qquad +\left(
    \sum_{h=1}^{r-1}E_{hj}^{2n+1}E_{ih}^{2n+2}-(-1)^{g^{i+j}}\delta
    _{ij}1\right) E_{rr}^{2n+3}\left(
    -\sum_{h=1}^{r-2}E_{hl}^{2n+2}E_{kh}^{2n+1}+(-1)^{g^{k+l}}\delta
    _{kl}1\right) +\left( \sum_{h=1}^{r-1}E_{hj}^{2n+1}E_{ih}^{2n+2}\right)
    \left( \sum_{h=1}^{r-1}E_{rh}^{2n+3}E_{hl}^{2n+2}-\delta _{rl}1\right)
    E_{kr}^{2n+1}+ \\
    &&\qquad \qquad -(-1)^{g^{i+j}}\delta _{ij}\left(
    \sum_{h=1}^{r-1}E_{rh}^{2n+3}E_{hl}^{2n+2}-\delta _{rl}1\right)
    E_{kr}^{2n+1}+\delta _{ir}E_{rj}^{2n+1}E_{\left( r-1\right)
    l}^{2n+2}E_{k\left( r-1\right) }^{2n+1} \\
    &=&E_{rj}^{2n+1}\left( \sum_{h=1}^{r-2}E_{ih}^{2n+2}E_{hr}^{2n+3}\right)
    \left( \sum_{m=1}^{r-2}E_{ml}^{2n+2}E_{km}^{2n+1}-(-1)^{g^{k+l}}\delta
    _{kl}1\right) -E_{rj}^{2n+1}\left(
    \sum_{h=1}^{r-2}\sum_{m=1}^{r-1}E_{ih}^{2n+2}E_{hm}^{2n+3}E_{ml}^{2n+2}%
    \right) E_{kr}^{2n+1}+\left( 1-\delta _{rl}-\delta _{\left( r-1\right)
    l}\right) E_{rj}^{2n+1}E_{il}^{2n+2}E_{kr}^{2n+1}+ \\
    &&\qquad +\left(
    \sum_{h=1}^{r-1}E_{hj}^{2n+1}E_{ih}^{2n+2}-(-1)^{g^{i+j}}\delta
    _{ij}1\right) E_{rr}^{2n+3}\left(
    -\sum_{h=1}^{r-2}E_{hl}^{2n+2}E_{kh}^{2n+1}+(-1)^{g^{k+l}}\delta
    _{kl}1\right) +\left(
    \sum_{h=1}^{r-1}E_{hj}^{2n+1}E_{ih}^{2n+2}-(-1)^{g^{i+j}}\delta _{ij}\right)
    \left( \sum_{h=1}^{r-1}E_{rh}^{2n+3}E_{hl}^{2n+2}\right) E_{kr}^{2n+1}+ \\
    &&\qquad \qquad +\delta _{rl}\left(
    \sum_{h=1}^{r-1}E_{hj}^{2n+1}E_{ih}^{2n+2}-(-1)^{g^{i+j}}\delta _{ij}\right)
    E_{kr}^{2n+1}+\delta _{ir}E_{rj}^{2n+1}E_{\left( r-1\right)
    l}^{2n+2}E_{k\left( r-1\right) }^{2n+1} \\
    &=&E_{rj}^{2n+1}\left( \sum_{h=1}^{r-2}E_{ih}^{2n+2}E_{hr}^{2n+3}\right)
    \left( \sum_{m=1}^{r-2}E_{ml}^{2n+2}E_{km}^{2n+1}-(-1)^{g^{k+l}}\delta
    _{kl}1\right) -E_{rj}^{2n+1}\left(
    \sum_{h=1}^{r-2}\sum_{m=1}^{r-1}E_{ih}^{2n+2}E_{hm}^{2n+3}E_{ml}^{2n+2}%
    \right) E_{kr}^{2n+1}+\left( 1-\delta _{\left( r-1\right) l}\right)
    E_{rj}^{2n+1}E_{il}^{2n+2}E_{kr}^{2n+1}+ \\
    &&\qquad +\left(
    \sum_{h=1}^{r-1}E_{hj}^{2n+1}E_{ih}^{2n+2}-(-1)^{g^{i+j}}\delta
    _{ij}1\right) E_{rr}^{2n+3}\left(
    -\sum_{h=1}^{r-2}E_{hl}^{2n+2}E_{kh}^{2n+1}+(-1)^{g^{k+l}}\delta
    _{kl}1\right) +\left(
    \sum_{h=1}^{r-1}E_{hj}^{2n+1}E_{ih}^{2n+2}-(-1)^{g^{i+j}}\delta _{ij}\right)
    \left( \sum_{h=1}^{r-1}E_{rh}^{2n+3}E_{hl}^{2n+2}\right) E_{kr}^{2n+1}+ \\
    &&\qquad \qquad +\delta _{rl}\left(
    -E_{rj}^{2n+1}E_{ir}^{2n+2}+%
    \sum_{h=1}^{r-1}E_{hj}^{2n+1}E_{ih}^{2n+2}-(-1)^{g^{i+j}}\delta _{ij}\right)
    E_{kr}^{2n+1}+\delta _{ir}E_{rj}^{2n+1}E_{\left( r-1\right)
    l}^{2n+2}E_{k\left( r-1\right) }^{2n+1} \\
    &&\overset{\ref{rel2}}{\rightarrow }E_{rj}^{2n+1}\left(
    \sum_{h=1}^{r-2}E_{ih}^{2n+2}E_{hr}^{2n+3}\right) \left(
    \sum_{m=1}^{r-2}E_{ml}^{2n+2}E_{km}^{2n+1}-(-1)^{g^{k+l}}\delta
    _{kl}1\right) -E_{rj}^{2n+1}\left(
    \sum_{h=1}^{r-2}\sum_{m=1}^{r-1}E_{ih}^{2n+2}E_{hm}^{2n+3}E_{ml}^{2n+2}%
    \right) E_{kr}^{2n+1}+\left( 1-\delta _{\left( r-1\right) l}\right)
    E_{rj}^{2n+1}E_{il}^{2n+2}E_{kr}^{2n+1}+ \\
    &&\qquad \qquad +\left(
    \sum_{h=1}^{r-1}E_{hj}^{2n+1}E_{ih}^{2n+2}-(-1)^{g^{i+j}}\delta
    _{ij}1\right) E_{rr}^{2n+3}\left(
    -\sum_{h=1}^{r-2}E_{hl}^{2n+2}E_{kh}^{2n+1}+(-1)^{g^{k+l}}\delta
    _{kl}1\right) +\left(
    \sum_{h=1}^{r-1}E_{hj}^{2n+1}E_{ih}^{2n+2}-(-1)^{g^{i+j}}\delta _{ij}\right)
    \left( \sum_{h=1}^{r-1}E_{rh}^{2n+3}E_{hl}^{2n+2}\right)
    E_{kr}^{2n+1}+\delta _{ir}E_{rj}^{2n+1}E_{\left( r-1\right)
    l}^{2n+2}E_{k\left( r-1\right) }^{2n+1}
    \end{eqnarray*}%
    and%
    \begin{eqnarray*}
    &&E_{rj}^{2n+1}E_{i\left( r-1\right) }^{2n+2}E_{\left( r-1\right)
    r}^{2n+3}E_{\left( r-1\right) l}^{2n+2}E_{k\left( r-1\right) }^{2n+1}\overset%
    {\ref{rel8}}{\rightarrow }-\sum_{h=1}^{r-2}E_{rj}^{2n+1}E_{i\left(
    r-1\right) }^{2n+2}E_{\left( r-1\right)
    r}^{2n+3}E_{hl}^{2n+2}E_{kh}^{2n+1}+\sum_{h=1}^{r-1}E_{rj}^{2n+1}E_{i\left(
    r-1\right) }^{2n+2}E_{\left( r-1\right)
    h}^{2n+3}E_{hl}^{2n+2}E_{kr}^{2n+1}+(-1)^{g^{k+l}}\delta
    _{kl}E_{rj}^{2n+1}E_{i\left( r-1\right) }^{2n+2}E_{\left( r-1\right)
    r}^{2n+3}-\delta _{\left( r-1\right) l}E_{rj}^{2n+1}E_{i\left( r-1\right)
    }^{2n+2}E_{kr}^{2n+1} \\
    &=&E_{rj}^{2n+1}E_{i\left( r-1\right) }^{2n+2}E_{\left( r-1\right)
    r}^{2n+3}\left(
    -\sum_{h=1}^{r-2}E_{hl}^{2n+2}E_{kh}^{2n+1}+(-1)^{g^{k+l}}\delta
    _{kl}1\right) +\sum_{h=1}^{r-1}E_{rj}^{2n+1}E_{i\left( r-1\right)
    }^{2n+2}E_{\left( r-1\right) h}^{2n+3}E_{hl}^{2n+2}E_{kr}^{2n+1}-\delta
    _{\left( r-1\right) l}E_{rj}^{2n+1}E_{i\left( r-1\right)
    }^{2n+2}E_{kr}^{2n+1} \\
    &&\overset{\ref{rel6}}{\rightarrow }\left(
    -\sum_{h=1}^{r-2}E_{rj}^{2n+1}E_{ih}^{2n+2}E_{hr}^{2n+3}+%
    \sum_{h=1}^{r-1}E_{hj}^{2n+1}E_{ih}^{2n+2}E_{rr}^{2n+3}-(-1)^{g^{i+j}}\delta
    _{ij}E_{rr}^{2n+3}+\delta _{ir}E_{rj}^{2n+1}\right) \left(
    -\sum_{h=1}^{r-2}E_{hl}^{2n+2}E_{kh}^{2n+1}+(-1)^{g^{k+l}}\delta
    _{kl}1\right) + \\
    &&\qquad \qquad
    -\sum_{h=1}^{r-1}%
    \sum_{m=1}^{r-2}E_{rj}^{2n+1}E_{im}^{2n+2}E_{mh}^{2n+3}E_{hl}^{2n+2}E_{kr}^{2n+1}+\sum_{h=1}^{r-1}\sum_{m=1}^{r-1}E_{mj}^{2n+1}E_{im}^{2n+2}E_{rh}^{2n+3}E_{hl}^{2n+2}E_{kr}^{2n+1}-\sum_{h=1}^{r-1}(-1)^{g^{i+j}}\delta _{ij}E_{rh}^{2n+3}E_{hl}^{2n+2}E_{kr}^{2n+1}+\sum_{h=1}^{r-1}\delta _{ih}E_{rj}^{2n+1}E_{hl}^{2n+2}E_{kr}^{2n+1}-\delta _{\left( r-1\right) l}E_{rj}^{2n+1}E_{i\left( r-1\right) }^{2n+2}E_{kr}^{2n+1}
    \\
    &=&E_{rj}^{2n+1}\left( -\sum_{h=1}^{r-2}E_{ih}^{2n+2}E_{hr}^{2n+3}+\delta
    _{ir}1\right) \left(
    -\sum_{h=1}^{r-2}E_{hl}^{2n+2}E_{kh}^{2n+1}+(-1)^{g^{k+l}}\delta
    _{kl}1\right) +\left(
    \sum_{h=1}^{r-1}E_{hj}^{2n+1}E_{ih}^{2n+2}-(-1)^{g^{i+j}}\delta
    _{ij}1\right) E_{rr}^{2n+3}\left(
    -\sum_{h=1}^{r-2}E_{hl}^{2n+2}E_{kh}^{2n+1}+(-1)^{g^{k+l}}\delta
    _{kl}1\right) + \\
    &&-E_{rj}^{2n+1}\left(
    \sum_{h=1}^{r-1}\sum_{m=1}^{r-2}E_{im}^{2n+2}E_{mh}^{2n+3}E_{hl}^{2n+2}%
    \right) E_{kr}^{2n+1}+\left(
    \sum_{m=1}^{r-1}E_{mj}^{2n+1}E_{im}^{2n+2}-(-1)^{g^{i+j}}\delta _{ij}\right)
    \left( \sum_{h=1}^{r-1}E_{rh}^{2n+3}E_{hl}^{2n+2}\right)
    E_{kr}^{2n+1}+\left( 1-\delta _{ir}\right)
    E_{rj}^{2n+1}E_{il}^{2n+2}E_{kr}^{2n+1}-\delta _{\left( r-1\right)
    l}E_{rj}^{2n+1}E_{i\left( r-1\right) }^{2n+2}E_{kr}^{2n+1} \\
    &=&E_{rj}^{2n+1}\left( \sum_{h=1}^{r-2}E_{ih}^{2n+2}E_{hr}^{2n+3}\right)
    \left( \sum_{h=1}^{r-2}E_{hl}^{2n+2}E_{kh}^{2n+1}-(-1)^{g^{k+l}}\delta
    _{kl}1\right) -\delta _{ir}E_{rj}^{2n+1}\left(
    \sum_{h=1}^{r-2}E_{hl}^{2n+2}E_{kh}^{2n+1}-(-1)^{g^{k+l}}\delta
    _{kl}1\right) + \\
    &&\qquad +\left(
    \sum_{h=1}^{r-1}E_{hj}^{2n+1}E_{ih}^{2n+2}-(-1)^{g^{i+j}}\delta
    _{ij}1\right) E_{rr}^{2n+3}\left(
    -\sum_{h=1}^{r-2}E_{hl}^{2n+2}E_{kh}^{2n+1}+(-1)^{g^{k+l}}\delta
    _{kl}1\right) -E_{rj}^{2n+1}\left(
    \sum_{h=1}^{r-1}\sum_{m=1}^{r-2}E_{im}^{2n+2}E_{mh}^{2n+3}E_{hl}^{2n+2}%
    \right) E_{kr}^{2n+1}+ \\
    &&\qquad \qquad +\left(
    \sum_{m=1}^{r-1}E_{mj}^{2n+1}E_{im}^{2n+2}-(-1)^{g^{i+j}}\delta _{ij}\right)
    \left( \sum_{h=1}^{r-1}E_{rh}^{2n+3}E_{hl}^{2n+2}\right)
    E_{kr}^{2n+1}+\left( 1-\delta _{ir}\right)
    E_{rj}^{2n+1}E_{il}^{2n+2}E_{kr}^{2n+1}-\delta _{\left( r-1\right)
    l}E_{rj}^{2n+1}E_{i\left( r-1\right) }^{2n+2}E_{kr}^{2n+1} \\
    &=&E_{rj}^{2n+1}\left( \sum_{h=1}^{r-2}E_{ih}^{2n+2}E_{hr}^{2n+3}\right)
    \left( \sum_{h=1}^{r-2}E_{hl}^{2n+2}E_{kh}^{2n+1}-(-1)^{g^{k+l}}\delta
    _{kl}1\right) -\delta _{ir}E_{rj}^{2n+1}\left(
    E_{rl}^{2n+2}E_{kr}^{2n+1}+%
    \sum_{h=1}^{r-2}E_{hl}^{2n+2}E_{kh}^{2n+1}-(-1)^{g^{k+l}}\delta
    _{kl}1\right) + \\
    &&\qquad +\left(
    \sum_{h=1}^{r-1}E_{hj}^{2n+1}E_{ih}^{2n+2}-(-1)^{g^{i+j}}\delta
    _{ij}1\right) E_{rr}^{2n+3}\left(
    -\sum_{h=1}^{r-2}E_{hl}^{2n+2}E_{kh}^{2n+1}+(-1)^{g^{k+l}}\delta
    _{kl}1\right) -E_{rj}^{2n+1}\left(
    \sum_{h=1}^{r-1}\sum_{m=1}^{r-2}E_{im}^{2n+2}E_{mh}^{2n+3}E_{hl}^{2n+2}%
    \right) E_{kr}^{2n+1}+ \\
    &&\qquad \qquad +\left(
    \sum_{m=1}^{r-1}E_{mj}^{2n+1}E_{im}^{2n+2}-(-1)^{g^{i+j}}\delta _{ij}\right)
    \left( \sum_{h=1}^{r-1}E_{rh}^{2n+3}E_{hl}^{2n+2}\right)
    E_{kr}^{2n+1}+\left( 1-\delta _{\left( r-1\right) l}\right)
    E_{rj}^{2n+1}E_{il}^{2n+2}E_{kr}^{2n+1} \\
    &&\overset{\ref{rel4}}{\rightarrow }E_{rj}^{2n+1}\left(
    \sum_{h=1}^{r-2}E_{ih}^{2n+2}E_{hr}^{2n+3}\right) \left(
    \sum_{h=1}^{r-2}E_{hl}^{2n+2}E_{kh}^{2n+1}-(-1)^{g^{k+l}}\delta
    _{kl}1\right) +\delta _{ir}E_{rj}^{2n+1}E_{\left( r-1\right)
    l}^{2n+2}E_{k\left( r-1\right) }^{2n+1}+ \\
    &&\qquad +\left(
    \sum_{h=1}^{r-1}E_{hj}^{2n+1}E_{ih}^{2n+2}-(-1)^{g^{i+j}}\delta
    _{ij}1\right) E_{rr}^{2n+3}\left(
    -\sum_{h=1}^{r-2}E_{hl}^{2n+2}E_{kh}^{2n+1}+(-1)^{g^{k+l}}\delta
    _{kl}1\right) -E_{rj}^{2n+1}\left(
    \sum_{h=1}^{r-1}\sum_{m=1}^{r-2}E_{im}^{2n+2}E_{mh}^{2n+3}E_{hl}^{2n+2}%
    \right) E_{kr}^{2n+1}+ \\
    &&\qquad \qquad +\left(
    \sum_{m=1}^{r-1}E_{mj}^{2n+1}E_{im}^{2n+2}-(-1)^{g^{i+j}}\delta _{ij}\right)
    \left( \sum_{h=1}^{r-1}E_{rh}^{2n+3}E_{hl}^{2n+2}\right)
    E_{kr}^{2n+1}+\left( 1-\delta _{\left( r-1\right) l}\right)
    E_{rj}^{2n+1}E_{il}^{2n+2}E_{kr}^{2n+1}
    \end{eqnarray*}%
    and so they are always equal. Thus, the ambiguity is resolvable.
    
    \item \ref{rel7}-\ref{rel1} \(E_{rj}^{2n+2}E_{i\left( r-1\right)
    }^{2n+1}E_{\left( r-1\right) r}^{2n}E_{rk}^{2n+1}\) for \(n\geq 1\)%
    \begin{eqnarray*}
    &&E_{rj}^{2n+2}E_{i\left( r-1\right) }^{2n+1}E_{\left( r-1\right)
    r}^{2n}E_{rk}^{2n+1}\overset{\ref{rel7}}{\rightarrow }%
    -\sum_{h=1}^{r-2}E_{rj}^{2n+2}E_{ih}^{2n+1}E_{hr}^{2n}E_{rk}^{2n+1}+%
    \sum_{h=1}^{r-1}E_{hj}^{2n+2}E_{ih}^{2n+1}E_{rr}^{2n}E_{rk}^{2n+1}+\delta
    _{ir}E_{rj}^{2n+2}E_{rk}^{2n+1}-(-1)^{g^{i+j}}\delta
    _{ij}E_{rr}^{2n}E_{rk}^{2n+1} \\
    &=&-\sum_{h=1}^{r-2}E_{rj}^{2n+2}E_{ih}^{2n+1}E_{hr}^{2n}E_{rk}^{2n+1}+%
    \left( \sum_{h=1}^{r-1}E_{hj}^{2n+2}E_{ih}^{2n+1}-(-1)^{g^{i+j}}\delta
    _{ij}1\right) E_{rr}^{2n}E_{rk}^{2n+1}+\delta _{ir}E_{rj}^{2n+2}E_{rk}^{2n+1}
    \\
    &&\overset{\ref{rel1}}{\rightarrow }\sum_{h=1}^{r-2}%
    \sum_{l=1}^{r-1}E_{rj}^{2n+2}E_{ih}^{2n+1}E_{hl}^{2n}E_{lk}^{2n+1}-%
    \sum_{h=1}^{r-2}\delta _{hk}E_{rj}^{2n+2}E_{ih}^{2n+1}+\left(
    \sum_{h=1}^{r-1}E_{hj}^{2n+2}E_{ih}^{2n+1}-(-1)^{g^{i+j}}\delta
    _{ij}1\right) \left( -\sum_{h=1}^{r-1}E_{rh}^{2n}E_{hk}^{2n+1}+\delta
    _{rk}1\right) +\delta _{ir}E_{rj}^{2n+2}E_{rk}^{2n+1} \\
    &=&\sum_{h=1}^{r-2}%
    \sum_{l=1}^{r-1}E_{rj}^{2n+2}E_{ih}^{2n+1}E_{hl}^{2n}E_{lk}^{2n+1}+\left(
    \sum_{h=1}^{r-1}E_{hj}^{2n+2}E_{ih}^{2n+1}-(-1)^{g^{i+j}}\delta
    _{ij}1\right) \left( -\sum_{h=1}^{r-1}E_{rh}^{2n}E_{hk}^{2n+1}\right)
    +\delta _{rk}\left(
    \sum_{h=1}^{r-1}E_{hj}^{2n+2}E_{ih}^{2n+1}-(-1)^{g^{i+j}}\delta
    _{ij}1\right) -\left( 1-\delta _{rk}-\delta _{\left( r-1\right) k}\right)
    E_{rj}^{2n+2}E_{ik}^{2n+1}+\delta _{ir}E_{rj}^{2n+2}E_{rk}^{2n+1} \\
    &=&\sum_{h=1}^{r-2}%
    \sum_{l=1}^{r-1}E_{rj}^{2n+2}E_{ih}^{2n+1}E_{hl}^{2n}E_{lk}^{2n+1}+\left(
    \sum_{h=1}^{r-1}E_{hj}^{2n+2}E_{ih}^{2n+1}-(-1)^{g^{i+j}}\delta
    _{ij}1\right) \left( -\sum_{h=1}^{r-1}E_{rh}^{2n}E_{hk}^{2n+1}\right)
    +\delta _{rk}\left(
    E_{rj}^{2n+2}E_{ir}^{2n+1}+%
    \sum_{h=1}^{r-1}E_{hj}^{2n+2}E_{ih}^{2n+1}-(-1)^{g^{i+j}}\delta
    _{ij}1\right) -\left( 1-\delta _{\left( r-1\right) k}-\delta _{ir}\right)
    E_{rj}^{2n+2}E_{ik}^{2n+1} \\
    &&\overset{\ref{rel4}}{\rightarrow }\sum_{h=1}^{r-2}%
    \sum_{l=1}^{r-1}E_{rj}^{2n+2}E_{ih}^{2n+1}E_{hl}^{2n}E_{lk}^{2n+1}+\left(
    \sum_{h=1}^{r-1}E_{hj}^{2n+2}E_{ih}^{2n+1}-(-1)^{g^{i+j}}\delta
    _{ij}1\right) \left( -\sum_{h=1}^{r-1}E_{rh}^{2n}E_{hk}^{2n+1}\right)
    -\left( 1-\delta _{\left( r-1\right) k}-\delta _{ir}\right)
    E_{rj}^{2n+2}E_{ik}^{2n+1}
    \end{eqnarray*}%
    and%
    \begin{eqnarray*}
    &&E_{rj}^{2n+2}E_{i\left( r-1\right) }^{2n+1}E_{\left( r-1\right)
    r}^{2n}E_{rk}^{2n+1}\overset{\ref{rel1}}{\rightarrow }%
    -\sum_{h=1}^{r-1}E_{rj}^{2n+2}E_{i\left( r-1\right) }^{2n+1}E_{\left(
    r-1\right) h}^{2n}E_{hk}^{2n+1}+\delta _{\left( r-1\right)
    k}E_{rj}^{2n+2}E_{i\left( r-1\right) }^{2n+1} \\
    &&\overset{\ref{rel7}}{\rightarrow }\sum_{h=1}^{r-1}%
    \sum_{l=1}^{r-2}E_{rj}^{2n+2}E_{il}^{2n+1}E_{lh}^{2n}E_{hk}^{2n+1}+\left(
    \sum_{l=1}^{r-1}E_{lj}^{2n+2}E_{il}^{2n+1}-(-1)^{g^{i+j}}\delta _{ij}\right)
    \left( -\sum_{h=1}^{r-1}E_{rh}^{2n}E_{hk}^{2n+1}\right)
    -\sum_{h=1}^{r-1}\delta _{ih}E_{rj}^{2n+2}E_{hk}^{2n+1}+\delta _{\left(
    r-1\right) k}E_{rj}^{2n+2}E_{i\left( r-1\right) }^{2n+1} \\
    &=&\sum_{h=1}^{r-1}%
    \sum_{l=1}^{r-2}E_{rj}^{2n+2}E_{il}^{2n+1}E_{lh}^{2n}E_{hk}^{2n+1}+\left(
    \sum_{l=1}^{r-1}E_{lj}^{2n+2}E_{il}^{2n+1}-(-1)^{g^{i+j}}\delta _{ij}\right)
    \left( -\sum_{h=1}^{r-1}E_{rh}^{2n}E_{hk}^{2n+1}\right) -\left( 1-\delta
    _{ir}\right) E_{rj}^{2n+2}E_{ik}^{2n+1}+\delta _{\left( r-1\right)
    k}E_{rj}^{2n+2}E_{ik}^{2n+1} \\
    &=&\sum_{h=1}^{r-1}%
    \sum_{l=1}^{r-2}E_{rj}^{2n+2}E_{il}^{2n+1}E_{lh}^{2n}E_{hk}^{2n+1}+\left(
    \sum_{l=1}^{r-1}E_{lj}^{2n+2}E_{il}^{2n+1}-(-1)^{g^{i+j}}\delta _{ij}\right)
    \left( -\sum_{h=1}^{r-1}E_{rh}^{2n}E_{hk}^{2n+1}\right) -\left( 1-\delta
    _{ir}-\delta _{\left( r-1\right) k}\right) E_{rj}^{2n+2}E_{ik}^{2n+1}
    \end{eqnarray*}%
    so that they are always equal and the ambiguity is resolvable.
    
    \item \ref{rel7}-\ref{rel5} \(E_{rj}^{2n+2}E_{i\left( r-1\right)
    }^{2n+1}E_{\left( r-1\right) r}^{2n}E_{\left( r-1\right) l}^{2n+1}E_{k\left(
    r-1\right) }^{2n+2}\) for \(n\geq 1\)%
    \begin{eqnarray*}
    &&E_{rj}^{2n+2}E_{i\left( r-1\right) }^{2n+1}E_{\left( r-1\right)
    r}^{2n}E_{\left( r-1\right) l}^{2n+1}E_{k\left( r-1\right) }^{2n+2}\overset{%
    \ref{rel7}}{\rightarrow }%
    -\sum_{h=1}^{r-2}E_{rj}^{2n+2}E_{ih}^{2n+1}E_{hr}^{2n}E_{\left( r-1\right)
    l}^{2n+1}E_{k\left( r-1\right)
    }^{2n+2}+\sum_{h=1}^{r-1}E_{hj}^{2n+2}E_{ih}^{2n+1}E_{rr}^{2n}E_{\left(
    r-1\right) l}^{2n+1}E_{k\left( r-1\right) }^{2n+2}+\delta
    _{ir}E_{rj}^{2n+2}E_{\left( r-1\right) l}^{2n+1}E_{k\left( r-1\right)
    }^{2n+2}-(-1)^{g^{i+j}}\delta _{ij}E_{rr}^{2n}E_{\left( r-1\right)
    l}^{2n+1}E_{k\left( r-1\right) }^{2n+2} \\
    &=&-\sum_{h=1}^{r-2}E_{rj}^{2n+2}E_{ih}^{2n+1}E_{hr}^{2n}E_{\left(
    r-1\right) l}^{2n+1}E_{k\left( r-1\right) }^{2n+2}+\left(
    \sum_{h=1}^{r-1}E_{hj}^{2n+2}E_{ih}^{2n+1}-(-1)^{g^{i+j}}\delta
    _{ij}1\right) E_{rr}^{2n}E_{\left( r-1\right) l}^{2n+1}E_{k\left( r-1\right)
    }^{2n+2}+\delta _{ir}E_{rj}^{2n+2}E_{\left( r-1\right) l}^{2n+1}E_{k\left(
    r-1\right) }^{2n+2} \\
    &&\overset{\ref{rel5}}{\rightarrow }%
    -\sum_{h=1}^{r-2}E_{rj}^{2n+2}E_{ih}^{2n+1}\left(
    -\sum_{m=1}^{r-2}E_{hr}^{2n}E_{ml}^{2n+1}E_{km}^{2n+2}+%
    \sum_{m=1}^{r-1}E_{hm}^{2n}E_{ml}^{2n+1}E_{kr}^{2n+2}-\delta
    _{hl}E_{kr}^{2n+2}+(-1)^{g^{k+l}}\delta _{kl}E_{hr}^{2n}\right) + \\
    &&\qquad +\left(
    \sum_{h=1}^{r-1}E_{hj}^{2n+2}E_{ih}^{2n+1}-(-1)^{g^{i+j}}\delta
    _{ij}1\right) \left(
    -\sum_{h=1}^{r-2}E_{rr}^{2n}E_{hl}^{2n+1}E_{kh}^{2n+2}+%
    \sum_{h=1}^{r-1}E_{rh}^{2n}E_{hl}^{2n+1}E_{kr}^{2n+2}-\delta
    _{rl}E_{kr}^{2n+2}+(-1)^{g^{k+l}}\delta _{kl}E_{rr}^{2n}\right) +\delta
    _{ir}E_{rj}^{2n+2}E_{\left( r-1\right) l}^{2n+1}E_{k\left( r-1\right)
    }^{2n+2} \\
    &=&E_{rj}^{2n+2}\left( \sum_{h=1}^{r-2}E_{ih}^{2n+1}\left( E_{hr}^{2n}\left(
    \sum_{m=1}^{r-2}E_{ml}^{2n+1}E_{km}^{2n+2}-(-1)^{g^{k+l}}\delta _{kl}\right)
    -\left( \sum_{m=1}^{r-1}E_{hm}^{2n}E_{ml}^{2n+1}-\delta _{hl}1\right)
    E_{kr}^{2n+2}\right) \right) + \\
    &&\qquad +\left(
    \sum_{h=1}^{r-1}E_{hj}^{2n+2}E_{ih}^{2n+1}-(-1)^{g^{i+j}}\delta
    _{ij}1\right) \left( -E_{rr}^{2n}\left(
    \sum_{h=1}^{r-2}E_{hl}^{2n+1}E_{kh}^{2n+2}-(-1)^{g^{k+l}}\delta
    _{kl}1\right) +\left( \sum_{h=1}^{r-1}E_{rh}^{2n}E_{hl}^{2n+1}-\delta
    _{rl}1\right) E_{kr}^{2n+2}\right) +\delta _{ir}E_{rj}^{2n+2}E_{\left(
    r-1\right) l}^{2n+1}E_{k\left( r-1\right) }^{2n+2} \\
    &=&E_{rj}^{2n+2}\left( \sum_{h=1}^{r-2}E_{ih}^{2n+1}E_{hr}^{2n}\right)
    \left( \sum_{m=1}^{r-2}E_{ml}^{2n+1}E_{km}^{2n+2}-(-1)^{g^{k+l}}\delta
    _{kl}\right) -E_{rj}^{2n+2}\left( \sum_{h=1}^{r-2}E_{ih}^{2n+1}\left(
    \sum_{m=1}^{r-1}E_{hm}^{2n}E_{ml}^{2n+1}-\delta _{hl}1\right) \right)
    E_{kr}^{2n+2}+ \\
    &&\qquad -\left(
    \sum_{h=1}^{r-1}E_{hj}^{2n+2}E_{ih}^{2n+1}-(-1)^{g^{i+j}}\delta
    _{ij}1\right) E_{rr}^{2n}\left(
    \sum_{h=1}^{r-2}E_{hl}^{2n+1}E_{kh}^{2n+2}-(-1)^{g^{k+l}}\delta
    _{kl}1\right) +\left(
    \sum_{h=1}^{r-1}E_{hj}^{2n+2}E_{ih}^{2n+1}-(-1)^{g^{i+j}}\delta
    _{ij}1\right) \left( \sum_{h=1}^{r-1}E_{rh}^{2n}E_{hl}^{2n+1}-\delta
    _{rl}1\right) E_{kr}^{2n+2}+\delta _{ir}E_{rj}^{2n+2}E_{\left( r-1\right)
    l}^{2n+1}E_{k\left( r-1\right) }^{2n+2} \\
    &=&E_{rj}^{2n+2}\left( \sum_{h=1}^{r-2}E_{ih}^{2n+1}E_{hr}^{2n}\right)
    \left( \sum_{m=1}^{r-2}E_{ml}^{2n+1}E_{km}^{2n+2}-(-1)^{g^{k+l}}\delta
    _{kl}\right) -E_{rj}^{2n+2}\left(
    \sum_{h=1}^{r-2}\sum_{m=1}^{r-1}E_{ih}^{2n+1}E_{hm}^{2n}E_{ml}^{2n+1}\right)
    E_{kr}^{2n+2}+\left( 1-\delta _{rl}-\delta _{\left( r-1\right) l}\right)
    E_{rj}^{2n+2}E_{il}^{2n+1}E_{kr}^{2n+2}+ \\
    &&\qquad -\left(
    \sum_{h=1}^{r-1}E_{hj}^{2n+2}E_{ih}^{2n+1}-(-1)^{g^{i+j}}\delta
    _{ij}1\right) E_{rr}^{2n}\left(
    \sum_{h=1}^{r-2}E_{hl}^{2n+1}E_{kh}^{2n+2}-(-1)^{g^{k+l}}\delta
    _{kl}1\right) +\left(
    \sum_{h=1}^{r-1}E_{hj}^{2n+2}E_{ih}^{2n+1}-(-1)^{g^{i+j}}\delta
    _{ij}1\right) \left( \sum_{h=1}^{r-1}E_{rh}^{2n}E_{hl}^{2n+1}\right)
    E_{kr}^{2n+2}+ \\
    &&\qquad \qquad -\delta _{rl}\left(
    \sum_{h=1}^{r-1}E_{hj}^{2n+2}E_{ih}^{2n+1}-(-1)^{g^{i+j}}\delta
    _{ij}1\right) E_{kr}^{2n+2}+\delta _{ir}E_{rj}^{2n+2}E_{\left( r-1\right)
    l}^{2n+1}E_{k\left( r-1\right) }^{2n+2} \\
    &=&E_{rj}^{2n+2}\left( \sum_{h=1}^{r-2}E_{ih}^{2n+1}E_{hr}^{2n}\right)
    \left( \sum_{m=1}^{r-2}E_{ml}^{2n+1}E_{km}^{2n+2}-(-1)^{g^{k+l}}\delta
    _{kl}\right) -E_{rj}^{2n+2}\left(
    \sum_{h=1}^{r-2}\sum_{m=1}^{r-1}E_{ih}^{2n+1}E_{hm}^{2n}E_{ml}^{2n+1}\right)
    E_{kr}^{2n+2}+\left( 1-\delta _{\left( r-1\right) l}\right)
    E_{rj}^{2n+2}E_{il}^{2n+1}E_{kr}^{2n+2}+ \\
    &&\qquad -\left(
    \sum_{h=1}^{r-1}E_{hj}^{2n+2}E_{ih}^{2n+1}-(-1)^{g^{i+j}}\delta
    _{ij}1\right) E_{rr}^{2n}\left(
    \sum_{h=1}^{r-2}E_{hl}^{2n+1}E_{kh}^{2n+2}-(-1)^{g^{k+l}}\delta
    _{kl}1\right) +\left(
    \sum_{h=1}^{r-1}E_{hj}^{2n+2}E_{ih}^{2n+1}-(-1)^{g^{i+j}}\delta
    _{ij}1\right) \left( \sum_{h=1}^{r-1}E_{rh}^{2n}E_{hl}^{2n+1}\right)
    E_{kr}^{2n+2}+ \\
    &&\qquad \qquad -\delta _{rl}\left(
    E_{rj}^{2n+2}E_{ir}^{2n+1}+%
    \sum_{h=1}^{r-1}E_{hj}^{2n+2}E_{ih}^{2n+1}-(-1)^{g^{i+j}}\delta
    _{ij}1\right) E_{kr}^{2n+2}+\delta _{ir}E_{rj}^{2n+2}E_{\left( r-1\right)
    l}^{2n+1}E_{k\left( r-1\right) }^{2n+2} \\
    &&\overset{\ref{rel4}}{\rightarrow }E_{rj}^{2n+2}\left(
    \sum_{h=1}^{r-2}E_{ih}^{2n+1}E_{hr}^{2n}\right) \left(
    \sum_{m=1}^{r-2}E_{ml}^{2n+1}E_{km}^{2n+2}-(-1)^{g^{k+l}}\delta _{kl}\right)
    -E_{rj}^{2n+2}\left(
    \sum_{h=1}^{r-2}\sum_{m=1}^{r-1}E_{ih}^{2n+1}E_{hm}^{2n}E_{ml}^{2n+1}\right)
    E_{kr}^{2n+2}+\left( 1-\delta _{\left( r-1\right) l}\right)
    E_{rj}^{2n+2}E_{il}^{2n+1}E_{kr}^{2n+2}+ \\
    &&\qquad -\left(
    \sum_{h=1}^{r-1}E_{hj}^{2n+2}E_{ih}^{2n+1}-(-1)^{g^{i+j}}\delta
    _{ij}1\right) E_{rr}^{2n}\left(
    \sum_{h=1}^{r-2}E_{hl}^{2n+1}E_{kh}^{2n+2}-(-1)^{g^{k+l}}\delta
    _{kl}1\right) +\left(
    \sum_{h=1}^{r-1}E_{hj}^{2n+2}E_{ih}^{2n+1}-(-1)^{g^{i+j}}\delta
    _{ij}1\right) \left( \sum_{h=1}^{r-1}E_{rh}^{2n}E_{hl}^{2n+1}\right)
    E_{kr}^{2n+2}+\delta _{ir}E_{rj}^{2n+2}E_{\left( r-1\right)
    l}^{2n+1}E_{k\left( r-1\right) }^{2n+2}
    \end{eqnarray*}%
    and%
    \begin{eqnarray*}
    &&E_{rj}^{2n+2}E_{i\left( r-1\right) }^{2n+1}E_{\left( r-1\right)
    r}^{2n}E_{\left( r-1\right) l}^{2n+1}E_{k\left( r-1\right) }^{2n+2}\overset{%
    \ref{rel5}}{\rightarrow }-\sum_{m=1}^{r-2}E_{rj}^{2n+2}E_{i\left( r-1\right)
    }^{2n+1}E_{\left( r-1\right)
    r}^{2n}E_{ml}^{2n+1}E_{km}^{2n+2}+\sum_{m=1}^{r-1}E_{rj}^{2n+2}E_{i\left(
    r-1\right) }^{2n+1}E_{\left( r-1\right)
    m}^{2n}E_{ml}^{2n+1}E_{kr}^{2n+2}-\delta _{\left( r-1\right)
    l}E_{rj}^{2n+2}E_{i\left( r-1\right)
    }^{2n+1}E_{kr}^{2n+2}+(-1)^{g^{k+l}}\delta _{kl}E_{rj}^{2n+2}E_{i\left(
    r-1\right) }^{2n+1}E_{\left( r-1\right) r}^{2n} \\
    &=&E_{rj}^{2n+2}E_{i\left( r-1\right) }^{2n+1}E_{\left( r-1\right)
    r}^{2n}\left(
    -\sum_{m=1}^{r-2}E_{ml}^{2n+1}E_{km}^{2n+2}+(-1)^{g^{k+l}}\delta
    _{kl}1\right) +\sum_{m=1}^{r-1}E_{rj}^{2n+2}E_{i\left( r-1\right)
    }^{2n+1}E_{\left( r-1\right) m}^{2n}E_{ml}^{2n+1}E_{kr}^{2n+2}-\delta
    _{\left( r-1\right) l}E_{rj}^{2n+2}E_{i\left( r-1\right)
    }^{2n+1}E_{kr}^{2n+2} \\
    &&\overset{\ref{rel7}}{\rightarrow }\left(
    -\sum_{h=1}^{r-2}E_{rj}^{2n+2}E_{ih}^{2n+1}E_{hr}^{2n}+%
    \sum_{h=1}^{r-1}E_{hj}^{2n+2}E_{ih}^{2n+1}E_{rr}^{2n}+\delta
    _{ir}E_{rj}^{2n+2}-(-1)^{g^{i+j}}\delta _{ij}E_{rr}^{2n}\right) \left(
    -\sum_{m=1}^{r-2}E_{ml}^{2n+1}E_{km}^{2n+2}+(-1)^{g^{k+l}}\delta
    _{kl}1\right) + \\
    &&\qquad +\sum_{m=1}^{r-1}\left(
    -\sum_{h=1}^{r-2}E_{rj}^{2n+2}E_{ih}^{2n+1}E_{hm}^{2n}+%
    \sum_{h=1}^{r-1}E_{hj}^{2n+2}E_{ih}^{2n+1}E_{rm}^{2n}+\delta
    _{im}E_{rj}^{2n+2}-(-1)^{g^{i+j}}\delta _{ij}E_{rm}^{2n}\right)
    E_{ml}^{2n+1}E_{kr}^{2n+2}-\delta _{\left( r-1\right)
    l}E_{rj}^{2n+2}E_{i\left( r-1\right) }^{2n+1}E_{kr}^{2n+2} \\
    &=&\left( E_{rj}^{2n+2}\left(
    \sum_{h=1}^{r-2}E_{ih}^{2n+1}E_{hr}^{2n}-\delta _{ir}1\right) -\left(
    \sum_{h=1}^{r-1}E_{hj}^{2n+2}E_{ih}^{2n+1}-(-1)^{g^{i+j}}\delta
    _{ij}1\right) E_{rr}^{2n}\right) \left(
    \sum_{m=1}^{r-2}E_{ml}^{2n+1}E_{km}^{2n+2}-(-1)^{g^{k+l}}\delta
    _{kl}1\right) + \\
    &&\qquad +\left( \sum_{m=1}^{r-1}\left( -E_{rj}^{2n+2}\left(
    \sum_{h=1}^{r-2}E_{ih}^{2n+1}E_{hm}^{2n}-\delta _{im}1\right) +\left(
    \sum_{h=1}^{r-1}E_{hj}^{2n+2}E_{ih}^{2n+1}-(-1)^{g^{i+j}}\delta
    _{ij}1\right) E_{rm}^{2n}\right) E_{ml}^{2n+1}\right) E_{kr}^{2n+2}-\delta
    _{\left( r-1\right) l}E_{rj}^{2n+2}E_{i\left( r-1\right)
    }^{2n+1}E_{kr}^{2n+2} \\
    &=&E_{rj}^{2n+2}\left( \sum_{h=1}^{r-2}E_{ih}^{2n+1}E_{hr}^{2n}-\delta
    _{ir}1\right) \left(
    \sum_{m=1}^{r-2}E_{ml}^{2n+1}E_{km}^{2n+2}-(-1)^{g^{k+l}}\delta
    _{kl}1\right) -\left(
    \sum_{h=1}^{r-1}E_{hj}^{2n+2}E_{ih}^{2n+1}-(-1)^{g^{i+j}}\delta
    _{ij}1\right) E_{rr}^{2n}\left(
    \sum_{m=1}^{r-2}E_{ml}^{2n+1}E_{km}^{2n+2}-(-1)^{g^{k+l}}\delta
    _{kl}1\right) + \\
    &&\qquad -E_{rj}^{2n+2}\left( \sum_{m=1}^{r-1}\left(
    \sum_{h=1}^{r-2}E_{ih}^{2n+1}E_{hm}^{2n}-\delta _{im}1\right)
    E_{ml}^{2n+1}\right) E_{kr}^{2n+2}+\left(
    \sum_{h=1}^{r-1}E_{hj}^{2n+2}E_{ih}^{2n+1}-(-1)^{g^{i+j}}\delta
    _{ij}1\right) \left( \sum_{m=1}^{r-1}E_{rm}^{2n}E_{ml}^{2n+1}\right)
    E_{kr}^{2n+2}-\delta _{\left( r-1\right) l}E_{rj}^{2n+2}E_{i\left(
    r-1\right) }^{2n+1}E_{kr}^{2n+2} \\
    &=&E_{rj}^{2n+2}\left( \sum_{h=1}^{r-2}E_{ih}^{2n+1}E_{hr}^{2n}\right)
    \left( \sum_{m=1}^{r-2}E_{ml}^{2n+1}E_{km}^{2n+2}-(-1)^{g^{k+l}}\delta
    _{kl}1\right) -\delta _{ir}E_{rj}^{2n+2}\left(
    \sum_{m=1}^{r-2}E_{ml}^{2n+1}E_{km}^{2n+2}-(-1)^{g^{k+l}}\delta
    _{kl}1\right) -\left(
    \sum_{h=1}^{r-1}E_{hj}^{2n+2}E_{ih}^{2n+1}-(-1)^{g^{i+j}}\delta
    _{ij}1\right) E_{rr}^{2n}\left(
    \sum_{m=1}^{r-2}E_{ml}^{2n+1}E_{km}^{2n+2}-(-1)^{g^{k+l}}\delta
    _{kl}1\right) + \\
    &&\qquad -E_{rj}^{2n+2}\left( \sum_{m=1}^{r-1}\left(
    \sum_{h=1}^{r-2}E_{ih}^{2n+1}E_{hm}^{2n}-\delta _{im}1\right)
    E_{ml}^{2n+1}\right) E_{kr}^{2n+2}+\left(
    \sum_{h=1}^{r-1}E_{hj}^{2n+2}E_{ih}^{2n+1}-(-1)^{g^{i+j}}\delta
    _{ij}1\right) \left( \sum_{m=1}^{r-1}E_{rm}^{2n}E_{ml}^{2n+1}\right)
    E_{kr}^{2n+2}-\delta _{\left( r-1\right) l}E_{rj}^{2n+2}E_{i\left(
    r-1\right) }^{2n+1}E_{kr}^{2n+2} \\
    &=&E_{rj}^{2n+2}\left( \sum_{h=1}^{r-2}E_{ih}^{2n+1}E_{hr}^{2n}\right)
    \left( \sum_{m=1}^{r-2}E_{ml}^{2n+1}E_{km}^{2n+2}-(-1)^{g^{k+l}}\delta
    _{kl}1\right) -\delta _{ir}E_{rj}^{2n+2}\left(
    \sum_{m=1}^{r-2}E_{ml}^{2n+1}E_{km}^{2n+2}-(-1)^{g^{k+l}}\delta
    _{kl}1\right) -\left(
    \sum_{h=1}^{r-1}E_{hj}^{2n+2}E_{ih}^{2n+1}-(-1)^{g^{i+j}}\delta
    _{ij}1\right) E_{rr}^{2n}\left(
    \sum_{m=1}^{r-2}E_{ml}^{2n+1}E_{km}^{2n+2}-(-1)^{g^{k+l}}\delta
    _{kl}1\right) + \\
    &&\qquad -E_{rj}^{2n+2}\left(
    \sum_{m=1}^{r-1}\sum_{h=1}^{r-2}E_{ih}^{2n+1}E_{hm}^{2n}E_{ml}^{2n+1}\right)
    E_{kr}^{2n+2}+\left( 1-\delta _{ir}\right)
    E_{rj}^{2n+2}E_{il}^{2n+1}E_{kr}^{2n+2}+\left(
    \sum_{h=1}^{r-1}E_{hj}^{2n+2}E_{ih}^{2n+1}-(-1)^{g^{i+j}}\delta
    _{ij}1\right) \left( \sum_{m=1}^{r-1}E_{rm}^{2n}E_{ml}^{2n+1}\right)
    E_{kr}^{2n+2}-\delta _{\left( r-1\right) l}E_{rj}^{2n+2}E_{i\left(
    r-1\right) }^{2n+1}E_{kr}^{2n+2} \\
    &=&E_{rj}^{2n+2}\left( \sum_{h=1}^{r-2}E_{ih}^{2n+1}E_{hr}^{2n}\right)
    \left( \sum_{m=1}^{r-2}E_{ml}^{2n+1}E_{km}^{2n+2}-(-1)^{g^{k+l}}\delta
    _{kl}1\right) -\delta _{ir}E_{rj}^{2n+2}\left(
    E_{rl}^{2n+1}E_{kr}^{2n+2}+%
    \sum_{m=1}^{r-2}E_{ml}^{2n+1}E_{km}^{2n+2}-(-1)^{g^{k+l}}\delta
    _{kl}1\right) -\left(
    \sum_{h=1}^{r-1}E_{hj}^{2n+2}E_{ih}^{2n+1}-(-1)^{g^{i+j}}\delta
    _{ij}1\right) E_{rr}^{2n}\left(
    \sum_{m=1}^{r-2}E_{ml}^{2n+1}E_{km}^{2n+2}-(-1)^{g^{k+l}}\delta
    _{kl}1\right) + \\
    &&\qquad -E_{rj}^{2n+2}\left(
    \sum_{m=1}^{r-1}\sum_{h=1}^{r-2}E_{ih}^{2n+1}E_{hm}^{2n}E_{ml}^{2n+1}\right)
    E_{kr}^{2n+2}+\left(
    \sum_{h=1}^{r-1}E_{hj}^{2n+2}E_{ih}^{2n+1}-(-1)^{g^{i+j}}\delta
    _{ij}1\right) \left( \sum_{m=1}^{r-1}E_{rm}^{2n}E_{ml}^{2n+1}\right)
    E_{kr}^{2n+2}+\left( 1-\delta _{\left( r-1\right) l}\right)
    E_{rj}^{2n+2}E_{i\left( r-1\right) }^{2n+1}E_{kr}^{2n+2} \\
    &&\overset{\ref{rel2}}{\rightarrow }E_{rj}^{2n+2}\left(
    \sum_{h=1}^{r-2}E_{ih}^{2n+1}E_{hr}^{2n}\right) \left(
    \sum_{m=1}^{r-2}E_{ml}^{2n+1}E_{km}^{2n+2}-(-1)^{g^{k+l}}\delta
    _{kl}1\right) +\delta _{ir}E_{rj}^{2n+2}E_{\left( r-1\right)
    l}^{2n+1}E_{k\left( r-1\right) }^{2n+2}-\left(
    \sum_{h=1}^{r-1}E_{hj}^{2n+2}E_{ih}^{2n+1}-(-1)^{g^{i+j}}\delta
    _{ij}1\right) E_{rr}^{2n}\left(
    \sum_{m=1}^{r-2}E_{ml}^{2n+1}E_{km}^{2n+2}-(-1)^{g^{k+l}}\delta
    _{kl}1\right) + \\
    &&\qquad -E_{rj}^{2n+2}\left(
    \sum_{m=1}^{r-1}\sum_{h=1}^{r-2}E_{ih}^{2n+1}E_{hm}^{2n}E_{ml}^{2n+1}\right)
    E_{kr}^{2n+2}+\left(
    \sum_{h=1}^{r-1}E_{hj}^{2n+2}E_{ih}^{2n+1}-(-1)^{g^{i+j}}\delta
    _{ij}1\right) \left( \sum_{m=1}^{r-1}E_{rm}^{2n}E_{ml}^{2n+1}\right)
    E_{kr}^{2n+2}+\left( 1-\delta _{\left( r-1\right) l}\right)
    E_{rj}^{2n+2}E_{i\left( r-1\right) }^{2n+1}E_{kr}^{2n+2}
    \end{eqnarray*}%
    and so they are always equal. Thus, the ambiguity is resolvable.
    
    \item \ref{rel8}-\ref{rel2} \(E_{ir}^{2n+3}E_{\left( r-1\right)
    j}^{2n+2}E_{r\left( r-1\right) }^{2n+1}E_{kr}^{2n+2}\) for \(n\geq 0\)%
    \begin{eqnarray*}
    &&E_{ir}^{2n+3}E_{\left( r-1\right) j}^{2n+2}E_{r\left( r-1\right)
    }^{2n+1}E_{kr}^{2n+2}\overset{\ref{rel8}}{\rightarrow }%
    -\sum_{h=1}^{r-2}E_{ir}^{2n+3}E_{hj}^{2n+2}E_{rh}^{2n+1}E_{kr}^{2n+2}+%
    \sum_{h=1}^{r-1}E_{ih}^{2n+3}E_{hj}^{2n+2}E_{rr}^{2n+1}E_{kr}^{2n+2}+(-1)^{g^{r+j}}\delta _{rj}E_{ir}^{2n+3}E_{kr}^{2n+2}-\delta _{ij}E_{rr}^{2n+1}E_{kr}^{2n+2}
    \\
    &=&-\sum_{h=1}^{r-2}E_{ir}^{2n+3}E_{hj}^{2n+2}E_{rh}^{2n+1}E_{kr}^{2n+2}+%
    \left( \sum_{h=1}^{r-1}E_{ih}^{2n+3}E_{hj}^{2n+2}-\delta _{ij}1\right)
    E_{rr}^{2n+1}E_{kr}^{2n+2}+(-1)^{g^{r+j}}\delta
    _{rj}E_{ir}^{2n+3}E_{kr}^{2n+2} \\
    &&\overset{\ref{rel2}}{\rightarrow }\sum_{h=1}^{r-2}%
    \sum_{l=1}^{r-1}E_{ir}^{2n+3}E_{hj}^{2n+2}E_{lh}^{2n+1}E_{kl}^{2n+2}-%
    \sum_{h=1}^{r-2}(-1)^{g^{k+h}}\delta _{kh}E_{ir}^{2n+3}E_{hj}^{2n+2}+\left(
    \sum_{h=1}^{r-1}E_{ih}^{2n+3}E_{hj}^{2n+2}-\delta _{ij}1\right) \left(
    -\sum_{h=1}^{r-1}E_{hr}^{2n+1}E_{kh}^{2n+2}+(-1)^{g^{k+r}}\delta
    _{kr}1\right) +(-1)^{g^{r+j}}\delta _{rj}E_{ir}^{2n+3}E_{kr}^{2n+2} \\
    &=&\sum_{h=1}^{r-2}%
    \sum_{l=1}^{r-1}E_{ir}^{2n+3}E_{hj}^{2n+2}E_{lh}^{2n+1}E_{kl}^{2n+2}-\left(
    1-\delta _{kr}-\delta _{k\left( r-1\right) }\right)
    E_{ir}^{2n+3}E_{kj}^{2n+2}-\left(
    \sum_{h=1}^{r-1}E_{ih}^{2n+3}E_{hj}^{2n+2}-\delta _{ij}1\right) \left(
    \sum_{h=1}^{r-1}E_{hr}^{2n+1}E_{kh}^{2n+2}-(-1)^{g^{k+r}}\delta
    _{kr}1\right) +(-1)^{g^{k+r}}\delta _{kr}\left(
    \sum_{h=1}^{r-1}E_{ih}^{2n+3}E_{hj}^{2n+2}-\delta _{ij}1\right)
    +(-1)^{g^{r+j}}\delta _{rj}E_{ir}^{2n+3}E_{kr}^{2n+2} \\
    &=&\sum_{h=1}^{r-2}%
    \sum_{l=1}^{r-1}E_{ir}^{2n+3}E_{hj}^{2n+2}E_{lh}^{2n+1}E_{kl}^{2n+2}-\left(
    1-\delta _{k\left( r-1\right) }-\delta _{rj}\right)
    E_{ir}^{2n+3}E_{kj}^{2n+2}-\left(
    \sum_{h=1}^{r-1}E_{ih}^{2n+3}E_{hj}^{2n+2}-\delta _{ij}1\right) \left(
    \sum_{h=1}^{r-1}E_{hr}^{2n+1}E_{kh}^{2n+2}-(-1)^{g^{k+r}}\delta
    _{kr}1\right) +(-1)^{g^{k+r}}\delta _{kr}\left(
    E_{ir}^{2n+3}E_{rj}^{2n+2}+\sum_{h=1}^{r-1}E_{ih}^{2n+3}E_{hj}^{2n+2}-\delta
    _{ij}1\right)  \\
    &&\overset{\ref{rel3}}{\rightarrow }\sum_{h=1}^{r-2}%
    \sum_{l=1}^{r-1}E_{ir}^{2n+3}E_{hj}^{2n+2}E_{lh}^{2n+1}E_{kl}^{2n+2}-\left(
    1-\delta _{k\left( r-1\right) }-\delta _{rj}\right)
    E_{ir}^{2n+3}E_{kj}^{2n+2}-\left(
    \sum_{h=1}^{r-1}E_{ih}^{2n+3}E_{hj}^{2n+2}-\delta _{ij}1\right) \left(
    \sum_{h=1}^{r-1}E_{hr}^{2n+1}E_{kh}^{2n+2}-(-1)^{g^{k+r}}\delta
    _{kr}1\right) 
    \end{eqnarray*}%
    and%
    \begin{eqnarray*}
    &&E_{ir}^{2n+3}E_{\left( r-1\right) j}^{2n+2}E_{r\left( r-1\right)
    }^{2n+1}E_{kr}^{2n+2}\overset{\ref{rel2}}{\rightarrow }%
    -\sum_{l=1}^{r-1}E_{ir}^{2n+3}E_{\left( r-1\right) j}^{2n+2}E_{l\left(
    r-1\right) }^{2n+1}E_{kl}^{2n+2}+(-1)^{g^{k+\left( r-1\right) }}\delta
    _{k\left( r-1\right) }E_{ir}^{2n+3}E_{\left( r-1\right) j}^{2n+2} \\
    &&\overset{\ref{rel8}}{\rightarrow }-\sum_{l=1}^{r-1}\left(
    -\sum_{h=1}^{r-2}E_{ir}^{2n+3}E_{hj}^{2n+2}E_{lh}^{2n+1}+%
    \sum_{h=1}^{r-1}E_{ih}^{2n+3}E_{hj}^{2n+2}E_{lr}^{2n+1}+(-1)^{g^{l+j}}\delta
    _{lj}E_{ir}^{2n+3}-\delta _{ij}E_{lr}^{2n+1}\right)
    E_{kl}^{2n+2}+(-1)^{g^{k+\left( r-1\right) }}\delta _{k\left( r-1\right)
    }E_{ir}^{2n+3}E_{\left( r-1\right) j}^{2n+2} \\
    &=&\sum_{h=1}^{r-2}%
    \sum_{l=1}^{r-1}E_{ir}^{2n+3}E_{hj}^{2n+2}E_{lh}^{2n+1}E_{kl}^{2n+2}-\left(
    \sum_{h=1}^{r-1}E_{ih}^{2n+3}E_{hj}^{2n+2}-\delta _{ij}1\right) \left(
    \sum_{l=1}^{r-1}E_{lr}^{2n+1}E_{kl}^{2n+2}\right) -\left( 1-\delta
    _{rj}-\delta _{k\left( r-1\right) }\right) E_{ir}^{2n+3}E_{kj}^{2n+2}
    \end{eqnarray*}%
    and so they are always equal. Hence, the ambiguity is resolvable.
    
    \item \ref{rel8}-\ref{rel6} \(E_{ir}^{2n+3}E_{\left( r-1\right)
    j}^{2n+2}E_{r\left( r-1\right) }^{2n+1}E_{l\left( r-1\right)
    }^{2n+2}E_{\left( r-1\right) k}^{2n+3}\) for \(n\geq 0\)%
    \begin{eqnarray*}
    &&E_{ir}^{2n+3}E_{\left( r-1\right) j}^{2n+2}E_{r\left( r-1\right)
    }^{2n+1}E_{l\left( r-1\right) }^{2n+2}E_{\left( r-1\right) k}^{2n+3}\overset{%
    \ref{rel8}}{\rightarrow }%
    -\sum_{h=1}^{r-2}E_{ir}^{2n+3}E_{hj}^{2n+2}E_{rh}^{2n+1}E_{l\left(
    r-1\right) }^{2n+2}E_{\left( r-1\right)
    k}^{2n+3}+\sum_{h=1}^{r-1}E_{ih}^{2n+3}E_{hj}^{2n+2}E_{rr}^{2n+1}E_{l\left(
    r-1\right) }^{2n+2}E_{\left( r-1\right) k}^{2n+3}+(-1)^{g^{r+j}}\delta
    _{rj}E_{ir}^{2n+3}E_{l\left( r-1\right) }^{2n+2}E_{\left( r-1\right)
    k}^{2n+3}-\delta _{ij}E_{rr}^{2n+1}E_{l\left( r-1\right) }^{2n+2}E_{\left(
    r-1\right) k}^{2n+3} \\
    &=&-\sum_{h=1}^{r-2}E_{ir}^{2n+3}E_{hj}^{2n+2}\left( E_{rh}^{2n+1}E_{l\left(
    r-1\right) }^{2n+2}E_{\left( r-1\right) k}^{2n+3}\right) +\left(
    \sum_{h=1}^{r-1}E_{ih}^{2n+3}E_{hj}^{2n+2}-\delta _{ij}1\right)
    E_{rr}^{2n+1}E_{l\left( r-1\right) }^{2n+2}E_{\left( r-1\right)
    k}^{2n+3}+(-1)^{g^{r+j}}\delta _{rj}E_{ir}^{2n+3}E_{l\left( r-1\right)
    }^{2n+2}E_{\left( r-1\right) k}^{2n+3} \\
    &&\overset{\ref{rel6}}{\rightarrow }%
    -\sum_{h=1}^{r-2}E_{ir}^{2n+3}E_{hj}^{2n+2}\left(
    -\sum_{m=1}^{r-2}E_{rh}^{2n+1}E_{lm}^{2n+2}E_{mk}^{2n+3}+%
    \sum_{m=1}^{r-1}E_{mh}^{2n+1}E_{lm}^{2n+2}E_{rk}^{2n+3}-(-1)^{g^{l+h}}\delta
    _{lh}E_{rk}^{2n+3}+\delta _{lk}E_{rh}^{2n+1}\right) + \\
    &&\qquad +\left( \sum_{h=1}^{r-1}E_{ih}^{2n+3}E_{hj}^{2n+2}-\delta
    _{ij}1\right) \left(
    -\sum_{m=1}^{r-2}E_{rr}^{2n+1}E_{lm}^{2n+2}E_{mk}^{2n+3}+%
    \sum_{m=1}^{r-1}E_{mr}^{2n+1}E_{lm}^{2n+2}E_{rk}^{2n+3}-(-1)^{g^{l+r}}\delta
    _{lr}E_{rk}^{2n+3}+\delta _{lk}E_{rr}^{2n+1}\right) +(-1)^{g^{r+j}}\delta
    _{rj}E_{ir}^{2n+3}E_{l\left( r-1\right) }^{2n+2}E_{\left( r-1\right)
    k}^{2n+3} \\
    &=&E_{ir}^{2n+3}\left( \sum_{h=1}^{r-2}E_{hj}^{2n+2}E_{rh}^{2n+1}\right)
    \left( \sum_{m=1}^{r-2}E_{lm}^{2n+2}E_{mk}^{2n+3}-\delta _{lk}1\right)
    -E_{ir}^{2n+3}\left(
    \sum_{h=1}^{r-2}\sum_{m=1}^{r-1}E_{hj}^{2n+2}E_{mh}^{2n+1}E_{lm}^{2n+2}%
    \right) E_{rk}^{2n+3}+\sum_{h=1}^{r-2}(-1)^{g^{l+h}}\delta
    _{lh}E_{ir}^{2n+3}E_{hj}^{2n+2}E_{rk}^{2n+3}+ \\
    &&\qquad -\left( \sum_{h=1}^{r-1}E_{ih}^{2n+3}E_{hj}^{2n+2}-\delta
    _{ij}1\right) E_{rr}^{2n+1}\left(
    \sum_{m=1}^{r-2}E_{lm}^{2n+2}E_{mk}^{2n+3}-\delta _{lk}1\right) +\left(
    \sum_{h=1}^{r-1}E_{ih}^{2n+3}E_{hj}^{2n+2}-\delta _{ij}1\right) \left(
    \sum_{m=1}^{r-1}E_{mr}^{2n+1}E_{lm}^{2n+2}-(-1)^{g^{l+r}}\delta _{lr}\right)
    E_{rk}^{2n+3}+(-1)^{g^{r+j}}\delta _{rj}E_{ir}^{2n+3}E_{l\left( r-1\right)
    }^{2n+2}E_{\left( r-1\right) k}^{2n+3} \\
    &=&E_{ir}^{2n+3}\left( \sum_{h=1}^{r-2}E_{hj}^{2n+2}E_{rh}^{2n+1}\right)
    \left( \sum_{m=1}^{r-2}E_{lm}^{2n+2}E_{mk}^{2n+3}-\delta _{lk}1\right)
    -E_{ir}^{2n+3}\left(
    \sum_{h=1}^{r-2}\sum_{m=1}^{r-1}E_{hj}^{2n+2}E_{mh}^{2n+1}E_{lm}^{2n+2}%
    \right) E_{rk}^{2n+3}+\left( 1-\delta _{lr}-\delta _{l\left( r-1\right)
    }\right) E_{ir}^{2n+3}E_{lj}^{2n+2}E_{rk}^{2n+3}+ \\
    &&\qquad -\left( \sum_{h=1}^{r-1}E_{ih}^{2n+3}E_{hj}^{2n+2}-\delta
    _{ij}1\right) E_{rr}^{2n+1}\left(
    \sum_{m=1}^{r-2}E_{lm}^{2n+2}E_{mk}^{2n+3}-\delta _{lk}1\right) +\left(
    \sum_{h=1}^{r-1}E_{ih}^{2n+3}E_{hj}^{2n+2}-\delta _{ij}1\right) \left(
    \sum_{m=1}^{r-1}E_{mr}^{2n+1}E_{lm}^{2n+2}\right) E_{rk}^{2n+3}+ \\
    &&\qquad \qquad -(-1)^{g^{l+r}}\delta _{lr}\left(
    \sum_{h=1}^{r-1}E_{ih}^{2n+3}E_{hj}^{2n+2}-\delta _{ij}1\right)
    E_{rk}^{2n+3}+(-1)^{g^{r+j}}\delta _{rj}E_{ir}^{2n+3}E_{l\left( r-1\right)
    }^{2n+2}E_{\left( r-1\right) k}^{2n+3} \\
    &=&E_{ir}^{2n+3}\left( \sum_{h=1}^{r-2}E_{hj}^{2n+2}E_{rh}^{2n+1}\right)
    \left( \sum_{m=1}^{r-2}E_{lm}^{2n+2}E_{mk}^{2n+3}-\delta _{lk}1\right)
    -E_{ir}^{2n+3}\left(
    \sum_{h=1}^{r-2}\sum_{m=1}^{r-1}E_{hj}^{2n+2}E_{mh}^{2n+1}E_{lm}^{2n+2}%
    \right) E_{rk}^{2n+3}+\left( 1-\delta _{l\left( r-1\right) }\right)
    E_{ir}^{2n+3}E_{lj}^{2n+2}E_{rk}^{2n+3}+ \\
    &&\qquad -\left( \sum_{h=1}^{r-1}E_{ih}^{2n+3}E_{hj}^{2n+2}-\delta
    _{ij}1\right) E_{rr}^{2n+1}\left(
    \sum_{m=1}^{r-2}E_{lm}^{2n+2}E_{mk}^{2n+3}-\delta _{lk}1\right) +\left(
    \sum_{h=1}^{r-1}E_{ih}^{2n+3}E_{hj}^{2n+2}-\delta _{ij}1\right) \left(
    \sum_{m=1}^{r-1}E_{mr}^{2n+1}E_{lm}^{2n+2}\right) E_{rk}^{2n+3}+ \\
    &&\qquad \qquad -(-1)^{g^{l+r}}\delta _{lr}\left(
    E_{ir}^{2n+3}E_{rj}^{2n+2}+\sum_{h=1}^{r-1}E_{ih}^{2n+3}E_{hj}^{2n+2}-\delta
    _{ij}1\right) E_{rk}^{2n+3}+(-1)^{g^{r+j}}\delta
    _{rj}E_{ir}^{2n+3}E_{l\left( r-1\right) }^{2n+2}E_{\left( r-1\right)
    k}^{2n+3} \\
    &&\overset{\ref{rel3}}{\rightarrow }E_{ir}^{2n+3}\left(
    \sum_{h=1}^{r-2}E_{hj}^{2n+2}E_{rh}^{2n+1}\right) \left(
    \sum_{m=1}^{r-2}E_{lm}^{2n+2}E_{mk}^{2n+3}-\delta _{lk}1\right)
    -E_{ir}^{2n+3}\left(
    \sum_{h=1}^{r-2}\sum_{m=1}^{r-1}E_{hj}^{2n+2}E_{mh}^{2n+1}E_{lm}^{2n+2}%
    \right) E_{rk}^{2n+3}+\left( 1-\delta _{l\left( r-1\right) }\right)
    E_{ir}^{2n+3}E_{lj}^{2n+2}E_{rk}^{2n+3}+ \\
    &&\qquad -\left( \sum_{h=1}^{r-1}E_{ih}^{2n+3}E_{hj}^{2n+2}-\delta
    _{ij}1\right) E_{rr}^{2n+1}\left(
    \sum_{m=1}^{r-2}E_{lm}^{2n+2}E_{mk}^{2n+3}-\delta _{lk}1\right) +\left(
    \sum_{h=1}^{r-1}E_{ih}^{2n+3}E_{hj}^{2n+2}-\delta _{ij}1\right) \left(
    \sum_{m=1}^{r-1}E_{mr}^{2n+1}E_{lm}^{2n+2}\right)
    E_{rk}^{2n+3}+(-1)^{g^{r+j}}\delta _{rj}E_{ir}^{2n+3}E_{l\left( r-1\right)
    }^{2n+2}E_{\left( r-1\right) k}^{2n+3}
    \end{eqnarray*}%
    and%
    \begin{eqnarray*}
    &&E_{ir}^{2n+3}E_{\left( r-1\right) j}^{2n+2}E_{r\left( r-1\right)
    }^{2n+1}E_{l\left( r-1\right) }^{2n+2}E_{\left( r-1\right) k}^{2n+3}\overset{%
    \ref{rel6}}{\rightarrow }-\sum_{m=1}^{r-2}E_{ir}^{2n+3}E_{\left( r-1\right)
    j}^{2n+2}E_{r\left( r-1\right)
    }^{2n+1}E_{lm}^{2n+2}E_{mk}^{2n+3}+\sum_{m=1}^{r-1}E_{ir}^{2n+3}E_{\left(
    r-1\right) j}^{2n+2}E_{m\left( r-1\right)
    }^{2n+1}E_{lm}^{2n+2}E_{rk}^{2n+3}-(-1)^{g^{l+\left( r-1\right) }}\delta
    _{l\left( r-1\right) }E_{ir}^{2n+3}E_{\left( r-1\right)
    j}^{2n+2}E_{rk}^{2n+3}+\delta _{lk}E_{ir}^{2n+3}E_{\left( r-1\right)
    j}^{2n+2}E_{r\left( r-1\right) }^{2n+1} \\
    &=&E_{ir}^{2n+3}E_{\left( r-1\right) j}^{2n+2}E_{r\left( r-1\right)
    }^{2n+1}\left( -\sum_{m=1}^{r-2}E_{lm}^{2n+2}E_{mk}^{2n+3}+\delta
    _{lk}1\right) +\sum_{m=1}^{r-1}\left( E_{ir}^{2n+3}E_{\left( r-1\right)
    j}^{2n+2}E_{m\left( r-1\right) }^{2n+1}\right)
    E_{lm}^{2n+2}E_{rk}^{2n+3}-(-1)^{g^{l+\left( r-1\right) }}\delta _{l\left(
    r-1\right) }E_{ir}^{2n+3}E_{\left( r-1\right) j}^{2n+2}E_{rk}^{2n+3} \\
    &&\overset{\ref{rel8}}{\rightarrow }\left(
    -\sum_{h=1}^{r-2}E_{ir}^{2n+3}E_{hj}^{2n+2}E_{rh}^{2n+1}+%
    \sum_{h=1}^{r-1}E_{ih}^{2n+3}E_{hj}^{2n+2}E_{rr}^{2n+1}+(-1)^{g^{r+j}}\delta
    _{rj}E_{ir}^{2n+3}-\delta _{ij}E_{rr}^{2n+1}\right) \left(
    -\sum_{m=1}^{r-2}E_{lm}^{2n+2}E_{mk}^{2n+3}+\delta _{lk}1\right) + \\
    &&\qquad +\sum_{m=1}^{r-1}\left(
    -\sum_{h=1}^{r-2}E_{ir}^{2n+3}E_{hj}^{2n+2}E_{mh}^{2n+1}+%
    \sum_{h=1}^{r-1}E_{ih}^{2n+3}E_{hj}^{2n+2}E_{mr}^{2n+1}+(-1)^{g^{m+j}}\delta
    _{mj}E_{ir}^{2n+3}-\delta _{ij}E_{mr}^{2n+1}\right)
    E_{lm}^{2n+2}E_{rk}^{2n+3}-(-1)^{g^{l+\left( r-1\right) }}\delta _{l\left(
    r-1\right) }E_{ir}^{2n+3}E_{\left( r-1\right) j}^{2n+2}E_{rk}^{2n+3} \\
    &=&E_{ir}^{2n+3}\left(
    \sum_{h=1}^{r-2}E_{hj}^{2n+2}E_{rh}^{2n+1}-(-1)^{g^{r+j}}\delta
    _{rj}1\right) \left( \sum_{m=1}^{r-2}E_{lm}^{2n+2}E_{mk}^{2n+3}-\delta
    _{lk}1\right) -\left( \sum_{h=1}^{r-1}E_{ih}^{2n+3}E_{hj}^{2n+2}-\delta
    _{ij}1\right) E_{rr}^{2n+1}\left(
    \sum_{m=1}^{r-2}E_{lm}^{2n+2}E_{mk}^{2n+3}-\delta _{lk}1\right) + \\
    &&\qquad -E_{ir}^{2n+3}\left(
    \sum_{m=1}^{r-1}\sum_{h=1}^{r-2}E_{hj}^{2n+2}E_{mh}^{2n+1}E_{lm}^{2n+2}%
    \right) E_{rk}^{2n+3}+E_{ir}^{2n+3}\left(
    \sum_{m=1}^{r-1}(-1)^{g^{m+j}}\delta _{mj}E_{lm}^{2n+2}\right)
    E_{rk}^{2n+3}+\left( \sum_{h=1}^{r-1}E_{ih}^{2n+3}E_{hj}^{2n+2}-\delta
    _{ij}1\right) \left( \sum_{m=1}^{r-1}E_{mr}^{2n+1}E_{lm}^{2n+2}\right)
    E_{rk}^{2n+3}-(-1)^{g^{l+\left( r-1\right) }}\delta _{l\left( r-1\right)
    }E_{ir}^{2n+3}E_{\left( r-1\right) j}^{2n+2}E_{rk}^{2n+3} \\
    &=&E_{ir}^{2n+3}\left( \sum_{h=1}^{r-2}E_{hj}^{2n+2}E_{rh}^{2n+1}\right)
    \left( \sum_{m=1}^{r-2}E_{lm}^{2n+2}E_{mk}^{2n+3}-\delta _{lk}1\right)
    -(-1)^{g^{r+j}}\delta _{rj}E_{ir}^{2n+3}\left(
    \sum_{m=1}^{r-2}E_{lm}^{2n+2}E_{mk}^{2n+3}-\delta _{lk}1\right) -\left(
    \sum_{h=1}^{r-1}E_{ih}^{2n+3}E_{hj}^{2n+2}-\delta _{ij}1\right)
    E_{rr}^{2n+1}\left( \sum_{m=1}^{r-2}E_{lm}^{2n+2}E_{mk}^{2n+3}-\delta
    _{lk}1\right) + \\
    &&\qquad -E_{ir}^{2n+3}\left(
    \sum_{m=1}^{r-1}\sum_{h=1}^{r-2}E_{hj}^{2n+2}E_{mh}^{2n+1}E_{lm}^{2n+2}%
    \right) E_{rk}^{2n+3}+\left( 1-\delta _{rj}\right)
    E_{ir}^{2n+3}E_{lj}^{2n+2}E_{rk}^{2n+3}+\left(
    \sum_{h=1}^{r-1}E_{ih}^{2n+3}E_{hj}^{2n+2}-\delta _{ij}1\right) \left(
    \sum_{m=1}^{r-1}E_{mr}^{2n+1}E_{lm}^{2n+2}\right)
    E_{rk}^{2n+3}-(-1)^{g^{l+\left( r-1\right) }}\delta _{l\left( r-1\right)
    }E_{ir}^{2n+3}E_{\left( r-1\right) j}^{2n+2}E_{rk}^{2n+3} \\
    &=&E_{ir}^{2n+3}\left( \sum_{h=1}^{r-2}E_{hj}^{2n+2}E_{rh}^{2n+1}\right)
    \left( \sum_{m=1}^{r-2}E_{lm}^{2n+2}E_{mk}^{2n+3}-\delta _{lk}1\right)
    -(-1)^{g^{r+j}}\delta _{rj}E_{ir}^{2n+3}\left(
    E_{lr}^{2n+2}E_{rk}^{2n+3}+\sum_{m=1}^{r-2}E_{lm}^{2n+2}E_{mk}^{2n+3}-\delta
    _{lk}1\right) -\left( \sum_{h=1}^{r-1}E_{ih}^{2n+3}E_{hj}^{2n+2}-\delta
    _{ij}1\right) E_{rr}^{2n+1}\left(
    \sum_{m=1}^{r-2}E_{lm}^{2n+2}E_{mk}^{2n+3}-\delta _{lk}1\right) + \\
    &&\qquad -E_{ir}^{2n+3}\left(
    \sum_{m=1}^{r-1}\sum_{h=1}^{r-2}E_{hj}^{2n+2}E_{mh}^{2n+1}E_{lm}^{2n+2}%
    \right) E_{rk}^{2n+3}+\left(
    \sum_{h=1}^{r-1}E_{ih}^{2n+3}E_{hj}^{2n+2}-\delta _{ij}1\right) \left(
    \sum_{m=1}^{r-1}E_{mr}^{2n+1}E_{lm}^{2n+2}\right) E_{rk}^{2n+3}+\left(
    1-(-1)^{g^{l+\left( r-1\right) }}\delta _{l\left( r-1\right) }\right)
    E_{ir}^{2n+3}E_{lj}^{2n+2}E_{rk}^{2n+3} \\
    &&\overset{\ref{rel1}}{\rightarrow }E_{ir}^{2n+3}\left(
    \sum_{h=1}^{r-2}E_{hj}^{2n+2}E_{rh}^{2n+1}\right) \left(
    \sum_{m=1}^{r-2}E_{lm}^{2n+2}E_{mk}^{2n+3}-\delta _{lk}1\right)
    +(-1)^{g^{r+j}}\delta _{rj}E_{ir}^{2n+3}E_{l\left( r-1\right)
    }^{2n+2}E_{\left( r-1\right) k}^{2n+3}-\left(
    \sum_{h=1}^{r-1}E_{ih}^{2n+3}E_{hj}^{2n+2}-\delta _{ij}1\right)
    E_{rr}^{2n+1}\left( \sum_{m=1}^{r-2}E_{lm}^{2n+2}E_{mk}^{2n+3}-\delta
    _{lk}1\right) + \\
    &&\qquad -E_{ir}^{2n+3}\left(
    \sum_{m=1}^{r-1}\sum_{h=1}^{r-2}E_{hj}^{2n+2}E_{mh}^{2n+1}E_{lm}^{2n+2}%
    \right) E_{rk}^{2n+3}+\left(
    \sum_{h=1}^{r-1}E_{ih}^{2n+3}E_{hj}^{2n+2}-\delta _{ij}1\right) \left(
    \sum_{m=1}^{r-1}E_{mr}^{2n+1}E_{lm}^{2n+2}\right) E_{rk}^{2n+3}+\left(
    1-(-1)^{g^{l+\left( r-1\right) }}\delta _{l\left( r-1\right) }\right)
    E_{ir}^{2n+3}E_{lj}^{2n+2}E_{rk}^{2n+3}
    \end{eqnarray*}%
    and they are equal. Therefore, the ambiguity is resolvable.
    \end{itemize}%
\end{invisible}%

\section{One-sided Hopf monoids, Hopf modules and Frobenius functors}\label{sec:HopfFrobenius}

In this section we prove our main result which tightens the connection between Hopf and Frobenius properties. Namely, we characterise the existence of certain one-sided antipodes on a bimonoid \(B\) in terms of being Frobenius for the free right Hopf module functor \(- \ot B \colon \cM \to \cM^B_B\).

Throughout this section, let \((\cM,\ot,\I,\brd)\) be a braided monoidal category which admits the coequalisers of reflexive pairs (pairs of parallel arrows admitting a common section) and the equalisers of coreflexive pairs (pairs of parallel arrows admitting a common retraction).
Let \(\left( B,m,u,\Delta ,\varepsilon \right) \) be a bimonoid in \(\cM\). Many of the notions and constructions that we are going to use in this section already appeared in \cite{Lyubashenko}, even if over an abelian braided monoidal category.

\medskip

If \((M,\mu_M,\delta_M)\) is a right \(B\)-Hopf module, then we have a coreflexive pair
\[
\xymatrix @C=70pt {
M \ar@<+1.5ex>[r]^-{\delta_M} \ar@<-1.5ex>[r]_-{(M \ot u)\crho_M^{-1}} & M \ot B. \ar@{.>}[l]|-{\crho_M(M \ot \varepsilon)}
}
\]
This admits an equaliser in \(\cM\), that we denote by \(\coinv{M}{B}\) and we call the \emph{space of coinvariants}. That is, we have the following equaliser diagram in \(\cM\)
\begin{equation}\label{eq:coinv}
\xymatrix @C=30pt {
\coinv{M}{B} \ar[r]^-{e_M} & M \ar@<+0.7ex>[rr]^-{\delta_M} \ar@<-0.7ex>[rr]_-{(M \ot u)\crho_M^{-1}} & & M \ot B
}
\end{equation}
for every Hopf module \(M\). Similarly, we have the reflexive pair
\[
\xymatrix @C=70pt {
M \ot B \ar@<+1.5ex>[r]^-{\mu_M} \ar@<-1.5ex>[r]_-{\crho_M(M \ot \varepsilon)} & M. \ar@{.>}[l]|-{(M \ot u)\crho_M^{-1}}
}
\]
This admits a coequaliser in \(\cM\), that we denote by \(\inv{M}{B}\) and we call the \emph{space of invariants}. That is, we have the following coequaliser diagram in \(\cM\)
\begin{equation}\label{eq:coeq}
    \xymatrix @C=30pt {
        M \ot B \ar@<+0.7ex>[rr]^-{\mu_M} \ar@<-0.7ex>[rr]_-{\crho_M(M \ot \varepsilon)} & & M \ar[r]^-{q_M} & \inv{M}{B}
    }
\end{equation}
for every Hopf module \(M\).

\begin{remark}\label{rmk:inv-coinv} Taking the invariants, resp.\@ the coinvariants, of a Hopf module defines functors \(\cM^B_B\to \cM\). On morphisms, these functors behave as follows. 

For every morphism \(f\colon M\to N\) in \(\cM^B_B\), we have 
\[q_{N}  f  \mu_M=q_{N}  \mu_N  (f\ot B)=q_{N}  \crho_N (N\ot \varepsilon)  (f\ot B)=q_{N} \crho_N(f\ot \I )  (M\ot \varepsilon)= q_N  f \crho_M(M\ot \varepsilon).\] 
By the universal property of \((\inv{M}{B}, q_M)\), there exists a unique morphism \(\overline{f}^B\colon \inv{M}{B}\to \inv{N}{B}\) such that \(\overline{f}^B  q_M= q_N   f\). 

Similarly, for every morphism \(f\colon M\to N\) in \(\cM^B_B\), we have 
\[\delta_N  f  e_M=(f\ot B) \delta_M  e_M=(f\ot B)  (M\ot u)\crho^{-1}_M  e_M=(N\ot u)  (f\ot \I)\crho^{-1}_M  e_M=(N\ot u)\crho^{-1}_N  f  e_M.\] 
By the universal property of \((N^{\mathrm{co}B}, e_N)\), there exists a unique morphism \(f^{\mathrm{co}B}\colon M^{\mathrm{co}B}\to N^{\mathrm{co}B} \) such that \(f  e_M=e_N  f^{\mathrm{co}B}\). 
\end{remark}

The following result is a useful generalisation of the linear case.

\begin{proposition}\label{prop:ff}
    Let \(B\) be a bimonoid in \(\cM\). We have an adjoint triple
    \begin{equation}\label{eq:FundTriple}
    \begin{gathered}
    \xymatrix @R=30pt{
    \cM_{B}^{B} \ar@/_4ex/@<-0.3ex>[d]_-{\inv{(-)}{B}} \ar@/^4ex/@<+0.3ex>[d]^-{\coinv{(-)}{B}} \\ 
    \cM \ar[u]|-{-\ot B}
    }
    \end{gathered}
    \end{equation}
    where the functor \(-\ot B\colon \mathcal{M}\to \mathcal{M}^B_B\) is fully faithful.    
\end{proposition}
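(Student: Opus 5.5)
We first make \(- \ot B\) into a functor \(\cM \to \cM^B_B\): for \(X\) in \(\cM\), the object \(X \ot B\) is a right Hopf module with action \(X \ot m\) and coaction \(X \ot \Delta\). Compatibility \eqref{eq:HopfMod} is just the bimonoid axiom \eqref{eq:bialgebra-compatibility} tensored with \(X\). For \(f \colon X \to Y\), the morphism \(f \ot B\) is clearly linear and colinear. We then build the two adjunctions by exhibiting units and counits (or natural hom-bijections) and checking the triangle identities. Full faithfulness will follow from the counit of the left adjunction being an isomorphism, as in \zcref{lemma:frobenius}'s setting.

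\emph{Left adjunction} \(\inv{(-)}{B} \dashv - \ot B\). For \(X\) in \(\cM\), apply \(- \ot B\) and compute \(\inv{(X \ot B)}{B}\). We claim that \(\crho_X(X \ot \varepsilon) \colon X \ot B \to X\) is a coequaliser of \(X \ot m\) and \(\crho_{X\ot B}(X \ot B \ot \varepsilon)\). This pair is split: the maps \(X \ot B \xrightarrow{} X\) and the sections \((X \ot u)\crho_X^{-1}\colon X \to X\ot B\) and \(X \ot B \ot u\) (suitably with unit constraints) form a split (absolute) coequaliser, by unitality and \(\varepsilon m = \varepsilon\ot\varepsilon\). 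Hence the counit \(\epsilon_X \colon \inv{(X\ot B)}{B} \to X\) is an isomorphism.

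For the unit \(\eta_M \colon M \to \inv{M}{B} \ot B\) we take \((q_M \ot B)\delta_M\). This is colinear by coassociativity. Linearity follows from \eqref{eq:HopfMod} together with \(q_M\mu_M = q_M\crho_M(M\ot\varepsilon)\). The triangle identities reduce to counitality of \(\delta_M\) and to the splitting above.

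\emph{Right adjunction} \(- \ot B \dashv \coinv{(-)}{B}\). Dually, \(\coinv{(X\ot B)}{B} \cong X\) via \((X\ot u)\crho_X^{-1}\), because the coreflexive pair \(X\ot\Delta\), \((X\ot B\ot u)\crho^{-1}\) is a split equaliser with retractions given by \(\varepsilon\); this gives \(\gamma\) invertible. The counit \(\theta_M \colon \coinv{M}{B}\ot B \to M\) is \(\mu_M(e_M \ot B)\). It is linear by associativity. Its colinearity is the main obstacle: we use \eqref{eq:HopfMod}, then \(\delta_M e_M = (M\ot u)\crho^{-1}e_M\), then unitality of \(m\) and \(\Delta u = u\ot u\). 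This produces braidings \(\brd_{B,\I}\) and \(\brd_{\I,B}\), which must be discharged via \eqref{eq:braidI}. This braided bookkeeping is the delicate point, since it is where \(\brd\) enters. The triangle identities again follow from the split equaliser description and unitality.

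Finally, since the counit \(\epsilon\) of \(\inv{(-)}{B} \dashv -\ot B\) is a natural isomorphism, the right adjoint \(-\ot B\) is fully faithful (equivalently, \(\gamma\) is invertible). Naturality of all maps in \(X\) and \(M\) is routine from \zcref{rmk:inv-coinv}.
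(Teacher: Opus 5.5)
Your proof is correct and follows the paper's route: identify \(X\) with \(\inv{(X\ot B)}{B}\) via \(\crho_X(X\ot\varepsilon)\) and with \(\coinv{(X\ot B)}{B}\) via \((X\ot u)\crho_X^{-1}\), take \((q_M\ot B)\delta_M\) and \(\mu_M(e_M\ot B)\) as the other unit and counit, and check the triangle identities. The paper checks the universal properties by hand using exactly the splitting maps, swaps the letters \(\eta\leftrightarrow\gamma\) and \(\epsilon\leftrightarrow\theta\) relative to your labelling, and leaves implicit the (co)linearity checks you sketch. One correction: the splitting \(X\ot B\to X\ot B\ot B\) for the coequaliser must be \((X\ot u\ot B)(\crho_X^{-1}\ot B)\), which inserts the unit in the middle. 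Your \(X\ot B\ot u\) is only the common section of the reflexive pair: both composites with it are the identity, so it cannot satisfy \(\crho_{X\ot B}(X\ot B\ot\varepsilon)\,t=(X\ot u)\crho_X^{-1}\crho_X(X\ot\varepsilon)\).
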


\begin{proof}
We first prove the right-hand side adjunction, by constructing its unit and counit. To begin with, suppose that \(X\) is an object in \(\cM\) and let us show that \((X,(X \ot u)   \crho_{X}^{-1})\) satisfies the conditions to be the equaliser \(\coinv{(X \ot B)}{B}\). 
Since 
\begin{align*}
    (X\ot\Delta)(X\ot u)\crho^{-1}_X & =(X\ot  u\ot u)(X\ot \crho_{\I}^{-1})\crho_X^{-1} =(X\ot B\ot u)(X\ot u\ot \I)\crho_{X\ot \I}^{-1}\crho^{-1}_X \\
    & =(X\ot B\ot u)\crho^{-1}_{X\ot B}(X\ot u)\crho^{-1}_X,
\end{align*} 
we have that \((X \ot u)   \crho_{X}^{-1}\) equalizes \(X \ot \Delta\) and \((X \ot B \ot u)   \crho_{X \ot B}^{-1}\). For any morphism \(f \colon Y \to X \ot B\) in \(\cM\) such that 
\[(X \ot \Delta)   f = (X \ot B \ot u)   \crho_{X \ot B}^{-1}   f,\] 
we have that
\begin{align*}
f & = (\crho_X \ot B) \, (X \ot \varepsilon \ot B) \, (X \ot \Delta) \, f = (\crho_X \ot B) \, (X \ot \varepsilon \ot B) \, (X \ot B \ot u) \, \crho_{X \ot B}^{-1} \, f \\
& = (\crho_X \ot B) \, (X \ot \I \ot u) \, (X \ot \varepsilon \ot \I) \, \crho_{X \ot B}^{-1} \, f = (\crho_X \ot B) \, (X \ot \I \ot u) \, \crho_{X \ot \I}^{-1} \, (X \ot \varepsilon) \, f \\
& = (X \ot u) \, \crho_{X}^{-1} \, \crho_X \, (X \ot \varepsilon) \, f
\end{align*}
i.e.,
\[
\xymatrix @C=30pt {
X \ar[rr]^{(X \ot u)\crho_{X}^{-1}} & & X \ot B \ar@<+0.7ex>[rr]^-{X \ot \Delta} \ar@<-0.7ex>[rr]_-{(X \ot B \ot u)\crho_{X \ot B}^{-1}} & & X \ot B \ot B \\
& Y \ar[ur]_-{f} \ar@{.>}[ul]^-{\crho_X (X \ot \varepsilon) f} & & &
}
\]
which means that \((X,(X \ot u)   \crho_{X}^{-1})\) is, in fact, the equaliser of \(X \ot \Delta\) and \((X \ot B \ot u)   \crho_{X \ot B}^{-1}\). 

As a consequence, we have that there exists an isomorphism \(\eta_X \colon X \to \coinv{(X \ot B)}{B}\), natural in \(X\), which realises \(X\) as the space of coinvariants of \(X \ot B\), and which satisfies
\begin{equation}\label{eq:eta}
    e_{X \ot B}   \eta_X = (X \ot u)   \crho_{X}^{-1}
\end{equation}
and whose inverse is
\begin{equation}\label{eq:etainv}
    \eta_X^{-1} = \crho_X   (X \ot \varepsilon)   e_{X \ot B}.
\end{equation}
As for the counit of the right-hand side adjunction, the morphism
\begin{equation}\label{eq:epsilon}
    \epsilon_M \colon \left(\coinv{M}{B} \ot B \xrightarrow{e_M \ot B} M \ot B \xrightarrow{\mu_M} M\right)
\end{equation}
for \(M\) varying in \(\cM^B_B\), defines a natural transformation such that
\[
    \begin{gathered}
        \xymatrix @!0 @C=100pt @R=50pt {
            X \ot B \ar[d]_-{\crho_X^{-1} \ot B} \ar[rr]^-{\eta_{X} \ot B} & & \coinv{(X\ot B)}{B} \ot B \ar[dl]|-{e_{X \ot B} \ot B} \ar@/^3ex/[ddl]^{\epsilon_{X \ot B}} \ar@{}[ddl]|-{\color{gray}\eqref{eq:epsilon}} \\
            X \ot \I \ot B \ar@{}[urr]^(0.4){\color{gray}\eqref{eq:eta}} \ar[r]^-{X \ot u \ot B} \ar[dr]_-{X \ot \clambda_B} & X \ot B \ot B \ar@{}[dl]|(0.3){\color{gray}\text{unitality}} \ar[d]|-{X \ot m} \\
            & X \ot B
        }
    \end{gathered}
\]
commutes and
\[
\begin{gathered}
\xymatrix @!0 @C=120pt @R=70pt {
\coinv{M}{B} \ar@{}[dr]|-{\color{gray}\text{naturality}} \ar[d]_-{e_M} \ar[r]^-{\eta_{\coinv{M}{B}}} & \coinv{(\coinv{M}{B} \ot B)}{B} \ar[d]|-{\coinv{(e_{M} \ot B)}{B}} \ar[dr]^-{\coinv{\epsilon_{M}}{B}} & \\
M \ar@{}[dr]|-{\color{gray}\eqref{eq:eta}} \ar[r]^-{\eta_M} \ar[d]_-{\crho_M^{-1}} & \coinv{(M \ot B)}{B} \ar@{}[ur]|(0.3){\color{gray}\eqref{eq:epsilon}} \ar@{}[dr]|-{\color{gray}\text{naturality}} \ar[r]^-{\coinv{\mu_M}{B}} \ar[d]^-{e_{M \ot B}} & \coinv{M}{B} \ar[d]^-{e_M} \\
M \ot \I \ar[r]_-{M \ot u} & M \ot B \ar[r]_-{\mu_M} & M
}
\end{gathered}
\]
commutes, i.e., \(e_M   \coinv{\epsilon_M}{B}   \eta_{\coinv{M}{B}} = e_M\). Since the equaliser morphism \(e_M\) is a monomorphism, we get the first of the two needed identities: \(\coinv{\epsilon_M}{B}   \eta_{\coinv{M}{B}}=\id_{\coinv{M}{B}}\). 

We now discuss the left-hand side adjunction. As before, let us show that \((X,\crho_X   (X \ot \varepsilon))\) satisfies the conditions to the be the coequaliser \(\inv{(X \ot B)}{B}\). For every object \(X\) in \(\cM\), \(\crho_X   (X \ot \varepsilon)\) coequalises \(X \ot m\) and \(\crho_{X \ot B}   (X \ot B \ot \varepsilon)\) and for any \(f \colon X \ot B \to Y\) such that
\[f   (X \ot m) = f   \crho_{X \ot B}   (X \ot B \ot \varepsilon),\]
we have that
\begin{align*}
f & = f \, (X \ot m) \, (X \ot u \ot B) \, (\crho_X^{-1} \ot B) = f \, \crho_{X \ot B} \, (X \ot B \ot \varepsilon)  \, (X \ot u \ot B) \, (\crho_X^{-1} \ot B) \\
& = f \, (X \ot u) \, \crho_X^{-1} \, \crho_{X} \, (X \ot \varepsilon)
\end{align*}
i.e.,
\[
\xymatrix @C=30pt {
X \ot B \ot B \ar@<+0.7ex>[rr]^-{X \ot m} \ar@<-0.7ex>[rr]_-{\crho_{X \ot B}(X \ot B \ot \varepsilon)} & & X \ot B \ar[rr]^-{\crho_X(X \ot \varepsilon)} \ar[dr]_-{f} & & X \ar@{.>}[dl]^-{ f (X \ot u) \crho_X^{-1}} \\
 & & & Y &
}
\]
which means that, in fact, \((X,\crho_X   (X \ot \varepsilon))\) is the coequaliser of \(X \ot m\) and \(\crho_{X \ot B}   (X \ot B \ot \varepsilon)\). As a consequence, we have that there exists an isomorphism \(\theta_{X} \colon \inv{(X \ot B)}{B} \to X\), natural in \(X\), which realises \(X\) as the space of invariants of \(X \ot B\) and which satisfies 
\begin{equation}\label{eq:theta}
    \theta_X   q_{X \ot B} = \crho_X   (X \ot \varepsilon). 
\end{equation}
It follows that \(\theta^{-1}_X   \crho_X   (X\ot\varepsilon)=q_{X\ot B}\). This \(\theta\) is the counit of the left-hand side adjunction.

The unit of the adjunction is given by
\begin{equation}\label{eq:gamma}
    \gamma_M \colon \left( M \xrightarrow{\delta_M} M \ot B \xrightarrow{q_M \ot B} \inv{M}{B} \ot B \right)
\end{equation}
which, for \(M\) varying in \(\cM^B_B\), defines a natural transformation such that
\vspace{5pt}

\[
    \xymatrix @=50pt {
        X \ot B \ar@{}[ddr]|(0.625){\color{gray}\text{counitality}\quad } \ar[rr]^{\gamma_{X \ot B}} \ar[dr]|-{X \ot \Delta} \ar@/_5ex/[ddr]_-{X \ot \clambda_B^{-1}} & & \inv{X \ot B}{B} \ot B \ar[d]^-{\theta_X \ot B} \\
        & X \ot B \ot B \ar@{}[r]|-{\color{gray}\eqref{eq:theta}} \ar@{}[u]|-{\color{gray}\eqref{eq:gamma}} \ar[ur]|-{q_{X \ot B} \ot B} \ar[d]|-{X \ot \varepsilon \ot B} & X \ot B \\
         & X \ot \I \ot B \ar[ur]_-{\crho_X \ot B} & 
    }
\]
\vspace{5pt}

commutes, i.e., \(\left(\theta_X \ot B\right)   \gamma_{X \ot B} = \id_{X \ot B}\). Since

\vspace{5pt}
\[
    \xymatrix @=50pt {
        M \ar[r]^-{q_M} \ar[dr]_-{\delta_M} & \inv{M}{B} \ar@{}[d]|-{\color{gray}\text{naturality}} \ar[rr]^-{\inv{\gamma_M}{B}} \ar[dr]^-{\inv{\delta_M}{B}} & & \inv{\inv{M}{B} \ot B}{B} \ar[d]^-{\theta_{\inv{M}{B}}} \\
         & M \ot B \ar@{}[dr]|-{\color{gray}\eqref{eq:theta}} \ar[r]_-{q_{M \ot B}} \ar[d]_-{M \ot \varepsilon} & \inv{M \ot B}{B} \ar@{}[r]|-{\color{gray}\text{naturality}} \ar@{}[u]|-{\color{gray}\eqref{eq:gamma}} \ar[ur]|-{\inv{q_M \ot B}{B}} \ar[d]_-{\theta_M} & \inv{M}{B} \\
          & M \ot \I \ar[r]_-{\crho_M} & M \ar[ur]_-{q_M} & 
    }
\]

\vspace{5pt}

commutes as well, i.e., \(\theta_{\inv{M}{B}}   \inv{\gamma_M}{B}   q_M = q_M\), and since \(q_M\) is an epimorphism, we obtain the second needed identity: \(\theta_{\inv{M}{B}}   \inv{\gamma_M}{B} = \id_{\inv{M}{B}}\). \end{proof}

In this setting, the natural transformation \(\sigma\colon (-)^{\mathrm{co}B}\to \inv{(-)}{B}\) (see \eqref{def:sigma}) is given by
\begin{equation}\label{eq:sigma}
    \sigma _{M}\colon M^{\mathrm{co}B}\xrightarrow{\theta_{\coinv{M}{B}}^{-1}} \inv{\coinv{M}{B} \ot B}{B} \xrightarrow{\inv{\epsilon_M}{B}} \inv{M}{B}.
\end{equation}
We want to investigate what can be said if this is a natural isomorphism, that is to say, we are interested in characterizing when the functor \(-\ot B\) is a Frobenius functor, extending \cite[Theorem 3.7]{Saracco-Frobenius}. 
Recall also the following result.

\vspace{8pt}

\begin{remark}  \label{rmk:sigma-splitmono-iso} 
Let \(\mathcal{L}\dashv \mathcal{F}\dashv \mathcal{R}\) be an adjoint triple with \(\mathcal{F}\) fully faithful, such as \eqref{eq:FundTriple} from \zcref{prop:ff}. By \cite[Proposition 2.26 and Theorem 2.24]{AB22} it follows that \(\mathcal{R}\) is Frobenius if and only if \(\sigma\) as in \eqref{def:sigma} is a split-mono, if and only if \(\sigma\) is a natural isomorphism.
\end{remark}

\vspace{8pt}

\begin{theorem}
\label{prop:Frobenius-OneSidedHopf}
   Let \((\cM,\ot,\I,\brd)\) be a braided monoidal category which admits the coequalisers of reflexive pairs and the equalisers of coreflexive pairs. Let \((B,m, u, \Delta,\varepsilon)\) be a bimonoid in \(\cM\) for which the endofunctor \(B \ot -\) preserves reflexive coequalisers. 
   Then, the following assertions are equivalent:
   \begin{enumerate}[label=\((\roman*)\)]
       \item\label{item:Frobenius-OneSidedHopf1} \(B\) is a right Hopf monoid in \(\cM\) whose right antipode \(S\) is anti-multiplicative and anti-comultiplicative; 
       \item\label{item:Frobenius-OneSidedHopf2} \(\sigma\) is a split-mono;
       \item\label{item:Frobenius-OneSidedHopf3} \(\sigma\) is a natural isomorphism;
       \item\label{item:Frobenius-OneSidedHopf3bis} the free Hopf module functor \(- \ot B \colon \cM \to \cM^B_B\) from \eqref{eq:FundTriple} is Frobenius;
       \item\label{item:Frobenius-OneSidedHopf4} the component \(\sigma_{\rmod{B} \ot \hopfmod{B}}\) is an isomorphism;
       \item\label{item:Frobenius-OneSidedHopf5} the following canonical morphism $i_B$ is an isomorphism:
       \[i_B \coloneqq  \left(B \xrightarrow{\eta_B} \coinv{(\rmod{B} \ot \hopfmod{B})}{B} \xrightarrow{\sigma_{\rmod{B} \ot \hopfmod{B}}} \inv{\rmod{B} \ot \hopfmod{B}}{B}\right).\]
   \end{enumerate}
\end{theorem}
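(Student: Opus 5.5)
The equivalences \ref{item:Frobenius-OneSidedHopf2}\(\Leftrightarrow\)\ref{item:Frobenius-OneSidedHopf3}\(\Leftrightarrow\)\ref{item:Frobenius-OneSidedHopf3bis} are formal. By \zcref{prop:ff}, \(-\ot B\) is the fully faithful middle functor of an adjoint triple. Hence \zcref{rmk:sigma-splitmono-iso} together with \zcref{lemma:frobenius} gives these three equivalences directly. Moreover \ref{item:Frobenius-OneSidedHopf3}\(\Rightarrow\)\ref{item:Frobenius-OneSidedHopf4} is trivial, and \ref{item:Frobenius-OneSidedHopf4}\(\Leftrightarrow\)\ref{item:Frobenius-OneSidedHopf5} holds because \(\eta_B\) is an isomorphism (see \eqref{eq:eta}, \eqref{eq:etainv}). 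So the substance lies in two implications: \ref{item:Frobenius-OneSidedHopf5}\(\Rightarrow\)\ref{item:Frobenius-OneSidedHopf1} and \ref{item:Frobenius-OneSidedHopf1}\(\Rightarrow\)\ref{item:Frobenius-OneSidedHopf3}. I would follow the strategy of \cite[Theorem 3.7]{Saracco-Frobenius}, keeping track of braidings.

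For \ref{item:Frobenius-OneSidedHopf5}\(\Rightarrow\)\ref{item:Frobenius-OneSidedHopf1}, I would first compute \(\inv{(\rmod{B}\ot\hopfmod{B})}{B}\) and \(i_B\) explicitly.
\begin{itemize}
\item Since \(B\ot-\) preserves reflexive coequalisers, \(B\ot q_B\) is the coequaliser of \(B\ot m\) and \(B\ot\crho_B(B\ot\varepsilon)\).
\item Composing with the isomorphism \(\can\)-type map \((m\ot B)(B\ot\brd_{B,B})(\Delta\ot B)\), which intertwines the diagonal action with the regular action on the last factor, one gets \(\inv{(\rmod{B}\ot\hopfmod{B})}{B}\cong B\ot\inv{B}{B}\cong B\ot\I\). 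The quotient map is then of the form \(\pi=\crho_B(m\ot\varepsilon)(B\ot\brd_{B,B})(\Delta\ot B)\)-like; the precise formula needs care.
\item Unwinding \eqref{eq:sigma}, \(i_B\) then becomes, up to these identifications, \(\pi(B\ot u)\crho_B^{-1}\) composed with \(\Delta\). This is a concrete endomorphism of \(B\) built from \(\Delta\), \(m\), \(\varepsilon\) and \(\brd\).
\end{itemize}
If \(i_B\) is invertible, I would define \(S\) from \(i_B^{-1}\), e.g.\ \(S\coloneqq i_B^{-1}\) composed with a suitable counit or braiding. I would then verify \(\id*S=u\varepsilon\) using \zcref{prop:one_sided_ant_can}: produce a section of \(\can\) in \(\prescript{B}{}{\Hom}_B\) out of the inverse of the coequaliser comparison. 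Anti-multiplicativity and anti-comultiplicativity should come from naturality: \(i_B\) is natural and compatible with the Hopf-module morphisms \(m\colon B\ot B\ot B\to B\ot B\) and \(\Delta\ot B\). Uniqueness of factorisations through \(q\) then transports these compatibilities to \(S\).

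For \ref{item:Frobenius-OneSidedHopf1}\(\Rightarrow\)\ref{item:Frobenius-OneSidedHopf3}, I would construct an explicit inverse \(\tau_M\colon\inv{M}{B}\to\coinv{M}{B}\). On \(M\) consider \(P_M\coloneqq\mu_M(M\ot S)\delta_M\). The steps are:
\begin{itemize}
\item Anti-multiplicativity of \(S\), the Hopf module axiom \eqref{eq:HopfMod} and \(\id*S=u\varepsilon\) give \(P_M\mu_M=P_M\crho_M(M\ot\varepsilon)\), so \(P_M\) factors through \(q_M\).
\item Anti-comultiplicativity and the right antipode property give \(\delta_MP_M=(P_M\ot u)\crho^{-1}\), so \(P_M\) factors through \(e_M\).
\item This yields \(\tau_M\) with \(e_M\tau_Mq_M=P_M\).
\end{itemize}
One then checks \(\sigma_M\tau_M=\id\) and \(\tau_M\sigma_M=\id\), again by cancelling \(q_M\) and \(e_M\). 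By \zcref{rmk:sigma-splitmono-iso}, even the single identity \(\tau_M\sigma_M=\id\) suffices, since it shows \(\sigma\) is a split mono. I would prove exactly that one, noting that \(\sigma_M\) is characterised by \(\sigma_M\theta_{\coinv{M}{B}}q=q_M\mu_M(e_M\ot B)\).

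The main obstacle is the direction \ref{item:Frobenius-OneSidedHopf5}\(\Rightarrow\)\ref{item:Frobenius-OneSidedHopf1}, specifically deriving anti-(co)multiplicativity of \(S\) from invertibility of one component. There, the braided bookkeeping (\(\brd\) versus \(\brd^{-1}\), and the hypothesis on \(B\ot-\)) is delicate. I would try first to express everything via \zcref{prop:one_sided_ant_can} and string diagrams.
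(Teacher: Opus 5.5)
Your handling of the formal equivalences and of \ref{item:Frobenius-OneSidedHopf1}\(\Rightarrow\)\ref{item:Frobenius-OneSidedHopf2} is the paper's argument. The paper also factors \(\mu_M(M\ot S)\delta_M\) through \(e_M\) (using anti-comultiplicativity) and then through \(q_M\) (using anti-multiplicativity). It checks only \(\rho_M\sigma_M=\id\) and then invokes \zcref{rmk:sigma-splitmono-iso}.

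The gap is \ref{item:Frobenius-OneSidedHopf5}\(\Rightarrow\)\ref{item:Frobenius-OneSidedHopf1}. It is not carried out, and it starts from a false step. The map \((m\ot B)(B\ot\brd_{B,B})(\Delta\ot B)\) is not an isomorphism for a general bimonoid. Take \(\Vect\) and \(B=\Bbbk\mathbb{N}\) with grouplike basis \(x^a\): the map sends \(x^a\ot x^b\mapsto x^{a+b}\ot x^a\), which is not surjective. Its invertibility is itself a Hopf-type condition, so it cannot be used to compute \(\inv{(\rmod{B}\ot\hopfmod{B})}{B}\) before an antipode exists. You do not need it anyway. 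By \eqref{eq:easysigma}, \(i_B=q_{\hat B}(B\ot u)\crho_B^{-1}\). The right candidate is \(S\coloneqq i_B^{-1}q_{\hat B}(u\ot B)\clambda_B^{-1}\), i.e.\ \([1\ot y]=[S(y)\ot 1]\) in the invariants, which is exactly \eqref{eq:defS}.

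The mechanisms you propose for the three identities also do not work as stated.

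\emph{Right antipode identity.} \(i_B\) is a single morphism, so its ``naturality'' gives nothing. What the paper uses is that \(m\ot B\colon B\ot\hat B\to\hat B\) induces left \(B\)-actions on \(\coinv{\hat B}{B}\) and on \(\inv{\hat B}{B}\). The action on the invariants is exactly where the hypothesis that \(B\ot-\) preserves reflexive coequalisers enters. For these actions \(e_{\hat B}\), \(q_{\hat B}\), \(\eta_B\), and hence \(\sigma_{\hat B}^{-1}\), are linear. This yields \(\sigma_{\hat B}^{-1}q_{\hat B}=\eta_Bm(B\ot S)\), hence \eqref{eq:rightantipode}, and then \(\id*S=u\varepsilon\). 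You never say how \zcref{prop:one_sided_ant_can} would produce a section of \(\can\) from \(i_B^{-1}\).

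\emph{Anti-multiplicativity.} The relevant map \((B\ot m)(\brd_{B,B}\ot B)\colon B\ot\hat B\to\hat B\) is right linear but not colinear. In \(\Vect\) it is \(b\ot x\ot y\mapsto x\ot by\), and at \(y=1\) colinearity would force \(x\ot b_{(1)}\ot b_{(2)}=x\ot b\ot 1\). So naturality of \(\sigma\) says nothing about it. The paper instead uses the induced action \(\nu\) on the invariants only, together with \eqref{eq:qnew} and the defining relation of \(S\).

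\emph{Anti-comultiplicativity.} \(\Delta\ot B\) is not a Hopf module map \(\hat B\to B\ot\hat B\). It is one into \(\rmod{B}\ot\rmod{B}\ot\hopfmod{B}\), but there nothing is known about \(\sigma\). The paper needs an explicit check that \(\lambda\) from \eqref{eq:deflambda} coequalises \(\mu_{\hat B}\) and \((B\ot\crho_B)(B\ot B\ot\varepsilon)\). This is a genuinely braided computation using \eqref{eq:rightantipode} and the braid relation. It is followed by comparing \(\lambda(B\ot u)\crho_B^{-1}=\Delta\) with \(\lambda(u\ot B)=(S\ot S)\Delta^{\mathrm{cop}}\clambda_B\) through the induced \(\bar\lambda\).

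Without these ingredients, or genuine substitutes for them, the implication to \ref{item:Frobenius-OneSidedHopf1} is unproved.
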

In order to prove \zcref{prop:Frobenius-OneSidedHopf}, we need a preliminary result.

\begin{lemma}\label{lem:easysigma}
    Let \((\cM,\ot,\I,\brd)\) be a braided monoidal category which admits the coequalisers of reflexive pairs and the equalisers of coreflexive pairs. Let \((B,m, u, \Delta,\varepsilon)\) be a bimonoid in \(\cM\) and let \((M,\mu_M,\delta_M)\) be a right Hopf module over \(B\). Then 
\begin{equation}\label{eq:easysigma}    \sigma_M = q_M   e_M.
    \end{equation}\end{lemma}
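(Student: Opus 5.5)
The strategy is to unfold the definition \eqref{eq:sigma} and use the explicit descriptions of \(\theta\) and \(\epsilon\) from the proof of \zcref{prop:ff}. Recall that \(\sigma_M = \inv{\epsilon_M}{B}\,\theta^{-1}_{\coinv{M}{B}}\). Here \(\theta_X\) is the isomorphism characterised by \eqref{eq:theta}, namely \(\theta_X\, q_{X\ot B} = \crho_X(X\ot\varepsilon)\). Composing with \(X \ot u\) and using \(\varepsilon u = \id_\I\) gives
\[
\theta_X\, q_{X\ot B}\,(X\ot u)\crho_X^{-1} = \id_X,
\]
so that \(\theta_X^{-1} = q_{X\ot B}(X\ot u)\crho_X^{-1}\), because \(\theta_X\) is invertible. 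We apply this with \(X = \coinv{M}{B}\).

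Next we compute
\[
\sigma_M = \inv{\epsilon_M}{B}\, q_{\coinv{M}{B}\ot B}\,(\coinv{M}{B}\ot u)\crho^{-1}_{\coinv{M}{B}}.
\]
By the defining property of \(\inv{f}{B}\) in \zcref{rmk:inv-coinv}, we have \(\inv{\epsilon_M}{B}\, q_{\coinv{M}{B}\ot B} = q_M\,\epsilon_M\), and \(\epsilon_M = \mu_M(e_M\ot B)\) by \eqref{eq:epsilon}. Therefore
\[
\sigma_M = q_M\,\mu_M\,(e_M\ot B)(\coinv{M}{B}\ot u)\crho^{-1}_{\coinv{M}{B}} = q_M\,\mu_M\,(M\ot u)(e_M\ot \I)\crho^{-1}_{\coinv{M}{B}}.
\]
Naturality of \(\crho\) turns \((e_M\ot\I)\crho^{-1}_{\coinv{M}{B}}\) into \(\crho_M^{-1}e_M\). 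Unitality of the action, \(\mu_M(M\ot u)\crho_M^{-1}=\id_M\), then gives \(\sigma_M = q_M e_M\), which is \eqref{eq:easysigma}.

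There is no real obstacle. The only point needing care is the inversion of \(\theta_X\), which relies on \(\theta_X\) being an isomorphism together with the coequaliser property of \(q_{X\ot B}\). One could alternatively check directly that \(q_{X\ot B}(X\ot u)\crho_X^{-1}\) is a two-sided inverse, but one-sided suffices given invertibility. No hypothesis on \(B\ot-\) is needed here, consistent with the statement of the lemma.
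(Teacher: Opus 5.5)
Your proof is correct, and it differs from the paper's only in how it handles \(\theta_{\coinv{M}{B}}\). The paper never inverts \(\theta\) explicitly. Instead it shows \(q_M e_M \theta_{\coinv{M}{B}} = \inv{\epsilon_M}{B}\) by precomposing with the epimorphism \(q_{\coinv{M}{B}\ot B}\): it uses \eqref{eq:theta}, then the coequaliser relation \(q_M\mu_M = q_M\crho_M(M\ot\varepsilon)\) from \eqref{eq:coeq}, which turns \(q_M\crho_M(M\ot\varepsilon)(e_M\ot B)\) into \(q_M\epsilon_M\), and finally cancels the epimorphism. You instead read off the explicit inverse \(\theta_X^{-1} = q_{X\ot B}(X\ot u)\crho_X^{-1}\), using \(\varepsilon u = \id_\I\) together with the invertibility of \(\theta_X\). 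After that, unitality of \(\mu_M\) takes the place of the coequaliser relation for \(q_M\), and no epimorphism needs to be cancelled. Both arguments are equally short. Yours has the small bonus of an explicit formula for \(\theta^{-1}\), which is the factorisation already visible in the proof of \zcref{prop:ff}. The paper's version stays on the \(\varepsilon\)-side and uses only the defining properties of \(\theta\) and \(q_M\).
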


\begin{proof}
    Let us show that \(q_M   e_M   \theta_{\coinv{M}{B}} = \inv{\epsilon_M}{B}\): in view of the definition \eqref{eq:sigma} of \(\sigma\), this implies \eqref{eq:easysigma}. To this aim, consider the following computation
    \begin{align*}
        q_M  \, e_M  \, \theta_{\coinv{M}{B}}  \, q_{\coinv{M}{B} \ot B}  \overset{\eqref{eq:theta}}&{=} q_M  \, e_M  \, \crho_{\coinv{M}{B}}  \, (\coinv{M}{B} \ot \varepsilon) = q_M  \, \crho_{M}  \, (e_M \ot \I)  \, (\coinv{M}{B} \ot \varepsilon) \\
        & = q_M  \, \crho_{M}  \, (M \ot \varepsilon)  \, (e_M \ot B) \overset{\eqref{eq:coeq}}{=} q_M  \, \mu_M  \, (e_M \ot B) \\
        \overset{\eqref{eq:epsilon}}&{=} q_M  \, \epsilon_M = \inv{\epsilon_M}{B}  \, q_{\coinv{M}{B} \ot B}
    \end{align*}
    where in the last equality we used the fact that \(\epsilon_M \colon \coinv{M}{B} \ot \hopfmod{B} \to \hopfmod{M}\) is a morphism of Hopf modules, in view of \zcref{rmk:inv-coinv}. Since \(q_{\coinv{M}{B} \ot B}\) is an epimorphism in \(\cM\), \(q_M   e_M   \theta_{\coinv{M}{B}} = \inv{\epsilon_M}{B}\) holds and the proof is over.
\end{proof}

\begin{proof}[Proof of \zcref{prop:Frobenius-OneSidedHopf}.]

\ref{item:Frobenius-OneSidedHopf1} \(\Rightarrow\) \ref{item:Frobenius-OneSidedHopf2}. Suppose that \(B\) admits a right antipode \(S\) which is a bimonoid anti-homomorphism. We show that \(\sigma\) is a split-mono. 

First of all, for every right \(B\)-Hopf module \((M, \mu_M, \delta_M)\), we show that the projection \(\mu_M(M\ot S)\delta_M\) factors through \(\coinv{M}{B}\). The commutativity of
\[
    \begin{tikzcd}[column sep=1.25cm,row sep=1cm]
    \arrow["{\begin{array}{c} \color{gray}\mathsf{anti}\mbox{-} \\ \color{gray}\mathsf{comultiplicativity} \end{array}}"{description,xshift=5pt, yshift=25pt}, draw=none, from=1-3, to=3-4]
    	M && {M \ot B} & {M\ot B} & M \\
    	{M \ot B} & {M \ot B \ot B} & {M\ot B\ot B\ot B} \\
    	&& {M\ot B\ot B\ot B} & {M\ot B\ot B\ot B} \\
    	{M\ot B} & {M \ot \I \ot B} & {M\ot B\ot B} & {M\ot B\ot B\ot B} \\
    	& {M \ot B \ot \I} && {M\ot B\ot B} \\
    	M & {M\ot \I} &&& {M\ot B}
    	\arrow["{\delta_M}", from=1-1, to=1-3]
    	\arrow["{\delta_M}"{description}, from=1-1, to=2-1]
    	\arrow["{M\ot S}", from=1-3, to=1-4]
    	\arrow["{\delta_M\ot\Delta}"{description}, from=1-3, to=2-3]
    	\arrow["{\mu_M}", from=1-4, to=1-5]
    	\arrow["{\delta_M\ot\Delta}"{description}, from=1-4, to=3-4]
    	\arrow["{\delta_M}"{description}, from=1-5, to=6-5]
    	\arrow["{M \ot \Delta}", from=2-1, to=2-2]
    	\arrow["{M \ot S}"{description}, from=2-1, to=4-1]
    	\arrow["{M \ot \clambda_B^{-1}}"{description}, from=2-1, to=4-2]
    	\arrow["{M \ot \Delta \ot B}", from=2-2, to=2-3]
    	\arrow["{M \ot \varepsilon \ot B}"{description}, from=2-2, to=4-2]
    	\arrow["{M\ot B\ot S \ot S}"{description}, from=2-3, to=3-3]
    	\arrow["{M \ot B \ot\brd_{B,B}}"{yshift=3pt}, from=3-3, to=3-4]
    	\arrow["{M \ot m \ot B}"{description}, from=3-3, to=4-3]
    	\arrow["{M \ot\brd_{B,B} \ot B}"{description}, from=3-4, to=4-4]
    	\arrow["{\mu_M}"{description}, from=4-1, to=6-1]
    	\arrow["{M \ot u \ot  S}", from=4-2, to=4-3]
    	\arrow["{M \ot \brd_{\I,B}}"{description}, from=4-2, to=5-2]
    	\arrow["{M \ot \crho_B^{-1}}"{description}, curve={height=13pt}, from=2-1, to=5-2]
    	\arrow["{M \ot S \ot u}"', from=5-2, to=5-4]
    	\arrow["{M\ot\brd_{B,B}}"{description}, from=4-3, to=5-4]
    	\arrow["{M\ot B \ot m}"{description}, from=4-4, to=5-4]
    	\arrow["{\mu_M\ot B}"{description}, from=5-4, to=6-5]
    	\arrow["{\crho_M^{-1}}"', from=6-1, to=6-2]
    	\arrow["{M \ot u}"', from=6-2, to=6-5]
        \arrow["{\color{gray}\mathsf{coassociativity}}"{description}, draw=none, from=1-1, to=2-3]
        \arrow["{\begin{array}{c} \color{gray}\mathsf{Hopf} \\ \color{gray}\mathsf{module} \end{array}}"{description,xshift=5pt}, draw=none, from=1-4, to=6-5]
        \arrow["{\color{gray}\mathsf{antipode}}"{description,xshift=-5pt}, draw=none, from=2-2, to=4-3]
        \arrow["{\color{gray}\brd \, \mathsf{natural}}"{description, pos=0.4,xshift=10pt}, draw=none, from=3-3, to=5-4]
        \arrow["{\color{gray}\brd \, \mathsf{natural}}"{yshift=5pt}, draw=none, from=4-3, to=5-2]
        \arrow["{\color{gray}\mathsf{counitality}}"{xshift=-10pt,yshift=30pt}, draw=none, from=2-2, to=4-1]
        \arrow["{\color{gray}\mathsf{naturality \, + \, functoriality}}"{yshift=-5pt}, draw=none, from=6-1, to=5-4]
        \arrow["{\color{gray}\eqref{eq:braidI}}"{description,yshift=6pt}, draw=none, from=2-1, to=5-2]
    \end{tikzcd}
\]
shows that \(\delta_M\mu_M(M\ot S)\delta_M = (M\ot u)\crho^{-1}_M\mu_M(M\ot S)\delta_M\) and
so, by the universal property of the equaliser \((M^{\mathrm{co}B}, e_M)\), there exists a unique morphism \(\zeta_M\colon M\to M^{\mathrm{co}B}\) such that 
\begin{equation}\label{eq:defZeta}
    e_M\zeta_M=\mu_M (M\ot S)\delta_M. 
\end{equation}

We now show that \(\zeta_M\) factors through \(\inv{M}{B}\), i.e.\@ that \(\zeta_M\mu_M=\zeta_M \crho_M(M\ot \varepsilon)\). We have
\begin{align*}
&e_M\zeta_M\mu_M \overset{\eqref{eq:defZeta}}{=} \mu_M(M\ot S)\delta_M\mu_M \\
\overset{\eqref{eq:HopfMod}}&{=} \mu_M(M\ot S)(\mu_M\ot B)(M\ot B\ot m)(M\ot \brd_{B,B}\ot B)(\delta_M\ot\Delta) \\
& = \mu_M(\mu_M\ot B)(M\ot B\ot Sm)(M\ot \brd_{B,B}\ot B)(\delta_M\ot\Delta) \\
\overset{\eqref{eq:antibimon}}&{=} \mu_M(\mu_M\ot B)(M\ot B\ot m)(M\ot B\ot S\ot S)(M\ot B\ot \brd_{B,B})(M\ot \brd_{B,B}\ot B)(\delta_M\ot\Delta) \\
\overset{\eqref{eq:hexagon}}&{=} \mu_M(M\ot m)(M\ot B\ot m)(M\ot B\ot S\ot S)(M\ot \brd_{B,B\ot B})(M\ot B\ot \Delta)(\delta_M\ot B)\\
& = \mu_M(M\ot m)(M\ot m\ot B)(M\ot B\ot S\ot S)(M\ot  \Delta\ot B)(M\ot \brd_{B,B})(\delta_M\ot B)\\
\overset{\eqref{eq:right-antipode}}&{=} \mu_M(M\ot m)(M\ot u\ot B)(M\ot \varepsilon\ot B)(M\ot  B\ot S)(M\ot \brd_{B,B})(\delta_M\ot B)\\
& = \mu_M(M\ot \clambda_B)(M\ot \I\ot S)(M\ot \varepsilon\ot B)(M\ot \brd_{B,B})(\delta_M\ot B)\\
& = \mu_M(M\ot \clambda_B)(M\ot \I\ot S)(M\ot \brd_{\I,B})(M\ot B\ot \varepsilon)(\delta_M\ot B)\\
\overset{\eqref{eq:braidI}}&{=}\mu_M(M\ot \clambda_B)(M\ot \I\ot S)(M\ot \clambda^{-1}_{B})(M\ot \crho_{B})(\delta_M\ot \I)(M\ot \varepsilon)\\
& = \mu_M(M\ot \clambda_B)(M\ot \clambda^{-1}_B)(M\ot S)(M\ot \crho_{B})(\delta_M\ot \I)(M\ot \varepsilon)\\
& = \mu_M(M\ot S)\delta_M\crho_M(M\ot \varepsilon) \overset{\eqref{eq:defZeta}}{=}e_M\zeta_M \crho_M(M\ot \varepsilon).
\end{align*}
Since \(e_M\) is a monomorphism, we get
\begin{equation}\label{eq:zeta-mi}
    \zeta_M\mu_M=\zeta_M \crho_M(M\ot\varepsilon).
\end{equation}
Thus, by the universal property of the coequaliser \((\inv{M}{B}, q_M)\), there exists a unique morphism \(\rho_M\colon \inv{M}{B}\to M^{\mathrm{co}B}\) such that
\begin{equation}\label{eq:def-gamma}
    \rho_M q_M=\zeta_M.
\end{equation}
It remains to check that \(\rho\coloneqq  (\rho_M)_{M\in \cM^B_B }\) is a left inverse of \(\sigma\). Note that from \(m(\Id\ot S)\Delta=u\varepsilon\) it follows that \(Su=u\). In fact, by composing both sides on the right by \(u\), we get \(m(\Id\ot S)\Delta u=u\varepsilon u=u\), and the left hand side is \(m(\Id\ot S)\Delta u=m(\Id\ot S)(u\ot u)\clambda_\I^{-1}= m(u\ot B)(\I\ot S)(\I\ot u)\clambda^{-1}_{\I}=\clambda_B(\I\ot S)(\I\ot u)\clambda^{-1}_{\I}=S\clambda_B(\I\ot u)\clambda^{-1}_{\I}=S u\clambda_\I\clambda^{-1}_{\I}=Su\), so 
\begin{equation}\label{eq:Sunital}
Su=u.
\end{equation}
Then, for every \(M\in \cM^B_B\), we have 
\begin{align*}
    & e_M \rho_M \sigma_M \crho_{M^{\mathrm{co}B}}(M^{\mathrm{co}B}\ot \varepsilon)\overset{\eqref{eq:sigma}}{=} e_M \rho_M  \overline{\epsilon_{M}}^B\theta^{-1}_{M^{\mathrm{co}B}} \crho_{M^{\mathrm{co}B}}(M^{\mathrm{co}B}\ot \varepsilon) \overset{\eqref{eq:theta}}{=} e_M\rho_M\overline{\epsilon_{M}}^B q_{M^{\mathrm{co}B}\ot B} \\
    & = e_M\rho_M q_M \epsilon_M \overset{\eqref{eq:def-gamma}}{=} e_M \zeta_M \epsilon_M \overset{\eqref{eq:epsilon}}{=} e_M \zeta_M \mu_M (e_M\ot B) \overset{\eqref{eq:zeta-mi}}{=}e_M \zeta_M \crho_M (M\ot \varepsilon)(e_M\ot B) \\
    &= e_M \zeta_M e_M \crho_{M^{\mathrm{co}B}} (M^{\mathrm{co}B}\ot \varepsilon) \overset{\eqref{eq:defZeta}}{=}\mu_M(M\ot S)\delta_M e_M \crho_{M^{\mathrm{co}B}} (M^{\mathrm{co}B}\ot \varepsilon) \\
    \overset{\eqref{eq:coinv}}&{=} \mu_M (M\ot S)(M\ot u) \crho_M^{-1}e_M \crho_{M^{\mathrm{co}B}} (M^{\mathrm{co}B}\ot \varepsilon) \overset{\eqref{eq:Sunital}}{=} \mu_M (M\ot u) \crho_M^{-1}e_M \crho_{M^{\mathrm{co}B}} (M^{\mathrm{co}B}\ot \varepsilon)\\
    & = e_M \crho_{M^{\mathrm{co}B}} (M^{\mathrm{co}B}\ot \varepsilon),
\end{align*}
from where the equality \(\rho_M \sigma_M = \Id_{M^{\mathrm{co}B}}\) follows by cancelling the monomorphism \(e_M\) on the left and the epimorphism \(\crho_{M^{\mathrm{co}B}} (M^{\mathrm{co}B}\ot \varepsilon)\) on the right. Thus, \(\sigma_M\) is a split-mono.

\ref{item:Frobenius-OneSidedHopf2} \(\Leftrightarrow\) \ref{item:Frobenius-OneSidedHopf3}. It follows from \zcref{rmk:sigma-splitmono-iso}.

\ref{item:Frobenius-OneSidedHopf3} \(\Leftrightarrow\) \ref{item:Frobenius-OneSidedHopf3bis}. This is \zcref{lemma:frobenius}.

\ref{item:Frobenius-OneSidedHopf3} \(\Rightarrow\) \ref{item:Frobenius-OneSidedHopf4}. Evident.

\ref{item:Frobenius-OneSidedHopf4} \(\Leftrightarrow\) \ref{item:Frobenius-OneSidedHopf5}. It is obvious since \(\eta\) is a natural isomorphism.

\ref{item:Frobenius-OneSidedHopf4} \(\Rightarrow\) \ref{item:Frobenius-OneSidedHopf1}. 
Consider the Hopf module \(\hat{B} \coloneqq  \rmod{B} \ot \hopfmod{B}\) and 
\[\sigma_{\hat{B}}^{-1} \colon \inv{(\rmod{B} \ot \hopfmod{B})}{B} \to \coinv{(\rmod{B} \ot \hopfmod{B})}{B}.\]
Define
\begin{equation}\label{eq:defS}
    S \coloneqq  \left(B \xrightarrow{\clambda^{-1}_B} \I \ot B \xrightarrow{u \ot B} B \ot B \xrightarrow{q_{\hat B}} \inv{(B \ot B)}{B} \xrightarrow{\sigma_{\hat{B}}^{-1}} \coinv{(B \ot B)}{B} \xrightarrow{\eta_B^{-1}} B\right).
\end{equation}
We claim that this \(S\) is a bimonoid anti-homomorphism and a right antipode for \(B\).

First of all, we show that \(S   u = u\), as we need it to prove that \(m   (B \ot S)   \Delta = u   \varepsilon\). To this aim, observe that \((u \ot B)   \clambda_B^{-1}   u \colon \I \to B \ot B\)
equalises \(B \ot \Delta\) and \((B \ot B \ot u)   \crho_{B \ot B}^{-1}\). Hence there exists a unique \(v \colon \I \to \coinv{(B \ot B)}{B}\) such that
\begin{equation}\label{eq:def-v}
    e_{\hat B}   v = (u \ot B)   \clambda_B^{-1}   u.
\end{equation}
Therefore,
\begin{align*}
    S \, u \overset{\eqref{eq:defS}}&{=} \eta_B^{-1} \, \sigma_{\hat B}^{-1} \, q_{\hat B} \, (u \ot B) \, \clambda_{B}^{-1} \, u 
    \overset{\eqref{eq:def-v}}{=} \eta_B^{-1} \, \sigma_{\hat B}^{-1} \, q_{\hat B} \, e_{\hat B} \, v  \overset{\eqref{eq:easysigma}}{=} \eta_B^{-1} \, v 
    \overset{\eqref{eq:etainv}}{=} \crho_B \, (B \ot \varepsilon) \, e_{\hat B} \, v \\
    \overset{\eqref{eq:def-v}}&{=} \crho_B \, (B \ot \varepsilon) \, (u \ot B) \, \clambda_B^{-1} \, u = u \, \crho_\I \, \clambda_{\I}^{-1} \, \varepsilon \, u = u,
\end{align*}
i.e.
\begin{equation}\label{eq:Sunit}
    S   u = u.
\end{equation}

Secondly, we will need the following  observation. Since
\begin{align*}
    & \mu_{\hat B} (u \ot B \ot B)   \left(\clambda_B^{-1} \ot B\right)   (u \ot B)   \clambda_B^{-1} \\
     \overset{\eqref{eq:diagonal}}&{=} (m \ot m)   (B \ot \brd_{B,B} \ot B)   (B \ot B \ot \Delta)   (u \ot B \ot B)   \left(\clambda_B^{-1} \ot B\right)   (u \ot B)   \clambda_B^{-1} \\
    & = (m \ot m)   (B \ot \brd_{B,B} \ot B)   (u \ot B \ot B \ot B)   \left(\clambda_B^{-1} \ot B \ot B\right)   (u \ot B \ot B)   \left(\clambda_B^{-1} \ot B\right)   \Delta \\
    & = (m \ot m)   (u \ot B \ot B \ot B)   \left(\clambda_B^{-1} \ot B \ot B\right)  (B \ot u \ot B)   (\brd_{\I,B} \ot B)    \left(\clambda_B^{-1} \ot B\right)   \Delta \\
     \overset{\eqref{eq:braidI}}&{=} (B \ot m)   (B \ot u \ot B)   \left(\crho_B^{-1} \ot B\right)   \Delta = \Delta,
\end{align*}%
we see that
\begin{equation}\label{eq:Deltamu}
    \Delta = \mu_{\hat B}   (u \ot B \ot B)   \left(\clambda_B^{-1} \ot B\right)   (u \ot B)   \clambda_B^{-1}.
\end{equation}

Thirdly, we remark that \(\coinv{(\rmod{B} \ot \hopfmod{B})}{B}\) is a left \(B\)-module with respect to the left \(B\)-action induced by
\begin{equation}\label{eq:regularleftBaction}
    B \ot (B \ot B) = (B \ot B) \ot B \xrightarrow{m \ot B} B \ot B.
\end{equation}
This follows from the fact that the forgetful functor \(\Cc^{\mathbb{T}}\to \Cc\) from the category of monoids \(\Cc^{\mathbb{T}}\) over a monad \(\mathbb{T}\) to the underlying category \(\Cc\) induces, on \(\Cc^{\mathbb{T}}\), all the limits that exist in \(\Cc\) (see \cite[Proposition 4.3.1]{Borceux2}). However, it can also be checked directly via the following diagram:
\[
\xymatrix @C=35pt{
    B \ot \coinv{(B \ot B)}{B} \ar[r]^-{B \ot e_{\hat B}} \ar@{.>}[d]_-{\mu_{\coinv{\hat{B}}{B}}} & B \ot B \ot B \ar[d]_-{m \ot B} \ar@<+0.5ex>[rrrr]^-{B \ot B \ot \Delta} \ar@<-0.5ex>[rrrr]_-{(B \ot B \ot B \ot u)\left(B \ot B \ot \crho_B^{-1}\right)} & & & & B \ot B \ot B \ot B \ar[d]^-{m \ot B \ot B}  \\
    \coinv{(B \ot B)}{B} \ar[r]_-{e_{\hat B}} & B \ot B \ar@<+0.5ex>[rrrr]^-{B \ot \Delta} \ar@<-0.5ex>[rrrr]_-{(B \ot B \ot u)\left(B \ot \crho_B^{-1}\right)} & & & & B \ot B \ot B,
}
\]
whose horizontal rows are equalisers. The two right-hand side squares (with the top arrows, resp.\@ the bottom arrows from each row) both commute. From this, it is immediate that \((m\ot B)(B\ot e_{\hat B})\) equalizes \(B \ot \Delta\) and \((B\ot B\ot u)(B\ot \crho^{-1}_{B})\), whence the definition of \(\mu_{\coinv{\hat{B}}{B}}\). The commutativity of the left-hand side square is the left \(B\)-linearity of \(e_{\hat B}\), namely:
\begin{equation}\label{eq:mucoinv}
    e_{\hat B}   \mu_{\coinv{\hat B}{B}} = (m \ot B)   (B \ot e_{\hat B}).
\end{equation}
In addition, it turns out that \(\eta_B \colon B \to \coinv{(\rmod{B} \ot \hopfmod{B})}{B}\) is left \(B\)-linear with respect to this action. In fact, since the right \(B\)-action on \(\rmod{B} \ot \hopfmod{B}\) is not involved in the construction of \(\coinv{(\rmod{B} \ot \hopfmod{B})}{B}\), we can verify directly that
\begin{align*}
    e_{\hat B}  \, \eta_B  \, m \overset{\eqref{eq:eta}}&{=} (B \ot u)  \, \crho_B^{-1}  \, m = (m \ot B)   (B \ot B \ot u)   \crho_{B \ot B}^{-1}  = (m \ot B)  (B \ot B \ot u)  \left( B \ot \crho_{B}^{-1}\right) \\
     \overset{\eqref{eq:eta}}&{=} (m \ot B)   \left(B \ot e_{\hat B}\right)  \left(B \ot \eta_B\right) \overset{\eqref{eq:mucoinv}}{=} e_{\hat B}  \, \mu_{\coinv{\hat B}{B}}   \left(B \ot \eta_B\right),
\end{align*}
from which we deduce that
\begin{equation}\label{eq:etalin}
    \eta_B \, m = \mu_{\coinv{\hat B}{B}} \, \left(B \ot \eta_B\right),
\end{equation}
because \(e_{\hat B}\) is a monomorphism. In particular, \(\eta_B^{-1}\) is left \(B\)-linear as well. 

Under the hypothesis that \(B \ot -\) preserves reflexive coequalisers, \(\inv{(\rmod{B} \ot \hopfmod{B})}{B}\) is a left \(B\)-module with respect to the left \(B\)-action induced by \eqref{eq:regularleftBaction}, too. Again, this can be checked directly via the following commutative diagram with coequaliser rows:
\[
\xymatrix @C=35pt{
     B \ot B \ot B \ot B \ar[d]_-{m \ot B \ot B} \ar@<+0.5ex>[rrrr]^-{B \ot \mu_{B \ot B}} \ar@<-0.5ex>[rrrr]_-{\left(B \ot B \ot \crho_B\right)(B \ot B \ot B \ot \varepsilon)} & & & &  B \ot B \ot B \ar[d]^-{m \ot B} \ar[r]^-{B \ot q_{\hat B}} & B \ot \inv{(B \ot B)}{B} \ar@{.>}[d]^-{\mu_{\inv{\hat{B}}{B}}} \\
    B \ot B \ot B \ar@<+0.5ex>[rrrr]^-{\mu_{B \ot B}} \ar@<-0.5ex>[rrrr]_-{\left(B \ot \crho_B\right)(B \ot B \ot \varepsilon)} & & & &  B \ot B \ar[r]_-{q_{\hat B}} & \inv{(B \ot B)}{B},
}
\]
from which we also see that \(q_{\hat B}\) is left \(B\)-linear with respect to this action, i.e.,
\begin{equation}\label{eq:muinv}
    q_{\hat B}   (m \ot B) = \mu_{\inv{\hat B}{B}}   (B \ot q_{\hat B}).
\end{equation}
In addition, it turns out that \(\sigma_{\hat B}\) is left \(B\)-linear with respect to the action \(\mu_{\coinv{\hat{B}}{B}}\) on the domain and \(\mu_{\inv{\hat{B}}{B}}\) on the codomain, because \(\sigma_{\hat B} = q_{\hat B}   e_{\hat B}\) by \zcref{lem:easysigma}, and \(q_{\hat B}\) and we have seen that \(e_{\hat B}\) are both left \(B\)-linear. Therefore, so is \(\sigma_{\hat B}^{-1}\), i.e.,
\begin{equation}\label{eq:sigmalin}
    \sigma_{\hat B}^{-1}   \mu_{\inv{\hat B}{B}} = \mu_{\coinv{\hat B}{B}}   \left(B \ot \sigma_{\hat B}^{-1}\right).
\end{equation}
We can also make use of these actions to express \(\sigma_{\hat B}^{-1}\) in terms of \(S\):
\begin{align*}
    \sigma_{\hat B}^{-1}    q_{\hat B} & = \sigma_{\hat B}^{-1}    q_{\hat B}    (m \ot B)    (B \ot u \ot B)    (\crho_B^{-1} \ot B) \overset{\eqref{eq:muinv}}{=} \sigma_{\hat B}^{-1}    \mu_{\inv{\hat B}{B}}    \left(B \ot q_{\hat B}\right)    (B \ot u \ot B)    (B \ot \clambda_B^{-1}) \\
    \overset{\eqref{eq:sigmalin}}&{=} \mu_{\coinv{\hat B}{B}}    \left(B \ot \sigma_{\hat B}^{-1}\right)    \left(B \ot q_{\hat B}\right)    (B \ot u \ot B)    (B \ot \clambda_B^{-1}) \overset{\eqref{eq:defS}}{=} \mu_{\coinv{\hat B}{B}}    \left(B \ot \eta_B\right)    \left(B \ot S\right) \\ \overset{\eqref{eq:etalin}}&{=} \eta_B \,   m    \left(B \ot S\right),
\end{align*}
that is to say,
\begin{equation}\label{eq:sigmainvS}
    \sigma_{\hat B}^{-1}   q_{\hat B} = \eta_B   m   \left(B \ot S\right).
\end{equation}
Finally, this last relation allows us to see that
\begin{align*}
    \eta_B  \,   m   \,  (B \ot S)     \mu_{\hat B}  \overset{\eqref{eq:sigmainvS}}&{=} \sigma_{\hat B}^{-1}    \, q_{\hat B}  \,   \mu_{\hat B} \overset{\eqref{eq:coeq}}{=} \sigma_{\hat B}^{-1}\,     q_{\hat B}  \,   (B \ot \crho_B)     (B \ot B \ot \varepsilon) \\
    \overset{\eqref{eq:sigmainvS}}&{=} \eta_B \,    m  \,   (B \ot S)     (B \ot \crho_B)     (B \ot B \ot \varepsilon),
\end{align*}
that is to say,
\begin{equation}\label{eq:rightantipode}
    m   (B \ot S)   \mu_{\hat B} = m   (B \ot S)   (B \ot \crho_B)   (B \ot B \ot \varepsilon)
\end{equation}
because \(\eta_B\) is an isomorphism.

Now, we can prove that \(S\) is a right antipode for \(B\) by the following direct computation:
\begin{align*}
     m  \left(B \ot S\right)  \Delta \overset{\eqref{eq:Deltamu}}&{=} m  \left(B \ot S\right)  \mu_{\hat B}  (u \ot B \ot B)  (\clambda_B^{-1} \ot B)  (u \ot B)  \,\clambda_B^{-1} \\
    \overset{\eqref{eq:rightantipode}}&{=} m  (B \ot S)  (B \ot \crho_B)  (B \ot B \ot \varepsilon)  (u \ot B \ot B)  (\clambda_B^{-1} \ot B)  (u \ot B) \, \clambda_B^{-1} \\
    & = m \, (u \ot B) \, \clambda_B^{-1} \, S \, \crho_B \, (B \ot \varepsilon) \, (u \ot B) \, \clambda_B^{-1} = S \, \crho_B \, (B \ot \varepsilon) \, (u \ot B) \, \clambda_B^{-1} \\
    & = S \, \crho_B \, (u \ot \I) \, \clambda_\I^{-1} \, \varepsilon = S \, u \, \varepsilon  \overset{\eqref{eq:Sunit}}{=} u \, \varepsilon,
\end{align*}
i.e., \(m   \left(B \ot S\right)   \Delta = u   \varepsilon\) as expected.

Thus, we are left with proving that \(S\) is counital, anti-comultiplicative and anti-mul\-ti\-pli\-ca\-ti\-ve. However, the fact that \(S\) is a right antipode already guarantees that it is counital:
\begin{align*}
    \varepsilon & = \varepsilon \, u \, \varepsilon = \varepsilon \, m \, (B \ot S) \, \Delta = \clambda_\I \, (\varepsilon \ot \varepsilon) \, (B \ot S) \, \Delta = \clambda_\I \, \big(\I \ot \left(\varepsilon \, S\right)\big) \, (\varepsilon \ot B) \, \Delta = \varepsilon \, S.
\end{align*}

In order to show that \(S\) is anti-multiplicative, we need a few additional intermediate formulas. First of all, \(B \ot B\) is a left \(B\)-module also with respect to the action
\begin{equation}\label{eq:twistaction}
    B \ot B \ot B \xrightarrow{\brd_{B,B} \ot B} B \ot B \ot B \xrightarrow{B \ot m} B \ot B.
\end{equation}

Under the hypothesis that \(B \ot -\) preserves reflexive coequalisers, \(\inv{(\rmod{B} \ot \hopfmod{B})}{B}\) is a left \(B\)-module also with respect to the left \(B\)-action induced by \eqref{eq:twistaction}. Again, this can be checked directly via the following diagram
\[
\xymatrix @C=35pt @R=35pt{
     B \ot B \ot B \ot B \ar[d]|-{(B \ot m \ot B)(\brd_{B,B}\ot B \ot B)} \ar@<+0.5ex>[rrrr]^-{B \ot \mu_{B \ot B}} \ar@<-0.5ex>[rrrr]_-{\left(B \ot B \ot \crho_B\right)(B \ot B \ot B \ot \varepsilon)} & & & &  B \ot B \ot B \ar[d]|-{(B \ot m)(\brd_{B,B}\ot B)} \ar[r]^-{B \ot q_{\hat B}} & B \ot \inv{(B \ot B)}{B} \ar@{.>}[d]^-{\nu_{\inv{\hat{B}}{B}}} \\
    B \ot B \ot B \ar@<+0.5ex>[rrrr]^-{\mu_{B \ot B}} \ar@<-0.5ex>[rrrr]_-{\left(B \ot \crho_B\right)(B \ot B \ot \varepsilon)} & & & &  B \ot B \ar[r]_-{q_{\hat B}} & \inv{(B \ot B)}{B},
}
\]
which commutes sequentially since
\begin{align*}
    & (B \ot m)  (\brd_{B,B} \ot B)  (B \ot \mu_{\hat B}) \\ 
     \overset{\eqref{eq:diagonal}}&{=} (B \ot m)  (\brd_{B,B} \ot B)  (B \ot m \ot m)  (B \ot B \ot \brd_{B,B} \ot B) (B \ot B \ot B \ot \Delta) \\
    & = (B \ot m)  (B \ot B \ot m)   (\brd_{B,B} \ot B \ot B)   (B \ot m \ot B \ot B)   (B \ot B \ot \brd_{B,B} \ot B)  (B \ot B \ot B \ot \Delta) \\
    & = (B \ot m)   (B \ot m \ot B)   (\brd_{B,B} \ot B \ot B)   (B \ot m \ot B \ot B)   (B \ot B \ot \brd_{B,B} \ot B)  (B \ot B \ot B \ot \Delta) \\ 
    & = (B \ot m)   (m \ot m \ot B)   (\brd_{B,B \ot B} \ot B \ot B)   (B \ot B \ot \brd_{B,B} \ot B)   (B \ot B \ot B \ot \Delta) \\
     \overset{\eqref{eq:hexagon}}&{=} (B \ot m)   (m \ot m \ot B)   (B \ot \brd_{B,B} \ot B \ot B)   (\brd_{B,B}\ot \brd_{B,B} \ot B)    (B \ot B \ot B \ot \Delta) 
    \\ 
    \overset{\eqref{eq:hexagon}}&{=}  (B \ot m)   (m \ot B \ot B)   (B \ot B \ot m \ot B)  (B \ot \brd_{B \ot B,B} \ot B) (\brd_{B,B} \ot B \ot \Delta) \\
    & = (B \ot m)   (m \ot B \ot B)   (B \ot \brd_{B,B} \ot B)   (B \ot m \ot B \ot B)   (\brd_{B,B} \ot B \ot \Delta) \\
    \overset{\eqref{eq:diagonal}}&{=} \mu_{\hat B}   (B \ot m \ot B)   (\brd_{B,B} \ot B \ot B),
\end{align*}
and the commutativity of the other square is simply the functoriality of \(\ot\). From the above, we see that \(q_{\hat B}\) is left \(B\)-linear with respect to the action \( (B \ot m)   (\brd_{B,B} \ot B)\) on the domain, and this new action \(\nu_{\inv{\hat B}{B}}\) on the codomain, i.e.,
\begin{equation}\label{eq:nuinv}
    q_{\hat B}   (B \ot m)   (\brd_{B,B} \ot B) = \nu_{\inv{\hat B}{B}}   (B \ot q_{\hat B}).
\end{equation}
In addition, \(q_{\hat B}\) and \(\nu_{\inv{\hat B}{B}}\) satisfy
\begin{align*}
    q_{\hat B} & = q_{\hat B} \, (B \ot m) \, (B \ot B \ot u) \, (B \ot \crho_B^{-1}) \, \brd_{B,B} \, \brd_{B,B}^{-1} \\
    & = q_{\hat B} \, (B \ot m) \, (B \ot B \ot u) \, \crho_{B \ot B}^{-1} \, \brd_{B,B} \, \brd_{B,B}^{-1} \\
    & = q_{\hat B} \, (B \ot m) \, (B \ot B \ot u)  \, \left(\brd_{B,B} \ot \I\right) \, \crho_{B \ot B}^{-1} \, \brd_{B,B}^{-1} \\
    & = q_{\hat B} \, (B \ot m) \, \left(\brd_{B,B} \ot B\right) \, (B \ot B \ot u) \, \left(B \ot \crho_{B}^{-1}\right) \, \brd_{B,B}^{-1} \\
   \overset{\eqref{eq:nuinv}}&{=} \nu_{\inv{\hat B}{B}} \, (B \ot q_{\hat B}) \, (B \ot B \ot u) \, \left(B \ot \crho_{B}^{-1}\right) \, \brd_{B,B}^{-1}.
\end{align*}
That is,
\begin{equation}\label{eq:qnew}
    q_{\hat B} = \nu_{\inv{\hat B}{B}}   (B \ot q_{\hat B})   (B \ot B \ot u)   \left(B \ot \crho_{B}^{-1}\right)   \brd_{B,B}^{-1}.
\end{equation}
Now, we can finally show that \(S\) is anti-multiplicative as in \eqref{eq:antibimon}. 
We compute
\begin{align*}
    \eta_B  \,  m  \,  (S \ot S)    \brd_{B,B} \overset{\eqref{eq:sigmainvS}}&{=} \sigma_{\hat B}^{-1}    q_{\hat B}    (S \ot B)    \brd_{B,B} \\
    \overset{\eqref{eq:qnew}}&{=} \sigma_{\hat B}^{-1}   \nu_{\inv{\hat B}{B}}    (B \ot q_{\hat B})    (B \ot B \ot u)    \left(B \ot \crho_{B}^{-1}\right)    \brd^{-1}_{B,B}    (S \ot B)   \brd_{B,B} \\
    \overset{\eqref{eq:eta}}&{=} \sigma_{\hat B}^{-1}   \nu_{\inv{\hat B}{B}}    (B \ot q_{\hat B})    (B \ot e_{\hat B})    \left(B \ot \eta_{B}\right)    (B \ot S) \\
    \overset{\eqref{eq:easysigma}}&{=} \sigma_{\hat B}^{-1}   \nu_{\inv{\hat B}{B}}    (B \ot \sigma_{\hat B})    \left(B \ot \eta_{B}\right)    (B \ot S) \\
    \overset{\eqref{eq:defS}}&{=} \sigma_{\hat B}^{-1}   \nu_{\inv{\hat B}{B}}    (B \ot q_{\hat B})    \left(B \ot u \ot B\right)    (B \ot \clambda_B^{-1}) \\
    \overset{\eqref{eq:nuinv}}&{=} \sigma_{\hat B}^{-1}    q_{\hat B}    (B \ot m)    (\brd_{B,B} \ot B)    \left(B \ot u \ot B\right)    (\crho_B^{-1} \ot B) \\
    \overset{\eqref{eq:braidI}}&{=} \sigma_{\hat B}^{-1}    q_{\hat B}    (B \ot m)    \left(u \ot B \ot B\right)    (\clambda_B^{-1} \ot B) \\
    & = \eta_B    \eta_B^{-1}    \sigma_{\hat B}^{-1}    q_{\hat B}    \left(u \ot B\right)    \clambda_B^{-1}    m \stackrel{\eqref{eq:defS}}{=} \eta_B  S  m,
\end{align*}
and hence we deduce that \(m   (S \ot S)   \brd_{B,B} = S   m\), because \(\eta_B\) is an isomorphism.

It remains to verify the anti-comultiplicativity. To this aim, consider the composition
\begin{equation}\label{eq:deflambda}\lambda \coloneqq  (m \ot m)   (B \ot S \ot B \ot S)   (B \ot \brd_{B,B} \ot B)   (\Delta \ot \Delta^{\mathrm{cop}})\end{equation}
from \(B \ot B\) to \(B \ot B\), where \(\Delta^{\mathrm{cop}} = \brd_{B,B}\Delta\).
We claim that it factors through \(\inv{(\rmod{B} \ot \hopfmod{B})}{B}\). To verify this, we need to prove that \(\lambda\) equalises \(\mu_{\hat B}\) and \((B \ot B \ot \crho_B)   (B \ot B \ot \varepsilon)\). 

One has
{\small
\begin{align*}
    \lambda \mu_{\hat B}
    \overset{\eqref{eq:deflambda}}&{=} (m\ot m)(B\ot S\ot B\ot S) (B\ot \brd_{B\ot B, B})(\Delta \ot \Delta)\mu_{\hat B}\\
    \overset{(\dagger)}&{=} (m\ot B)(B\ot S\ot B) (B\ot \brd_{B, B})(B\ot m\ot B)(B\ot B\ot S\ot B)(\Delta \ot \Delta)\mu_{\hat B}\\
    \overset{\eqref{eq:diagonal},\eqref{eq:bialgebra-compatibility}}&{=} (m\ot B)(B\ot S\ot B) (B\ot \brd_{B, B})(B\ot m\ot B)(B\ot B\ot S\ot B) (m\ot m\ot m\ot m)\\
        & \hspace{2em}(B\ot \brd_{B,B}\ot B\ot B\ot \brd_{B,B}\ot B) (\Delta \ot \Delta \ot \Delta \ot \Delta) (B\ot \brd_{B,B}\ot B)(B\ot B\ot \Delta) \\
    \overset{(\dagger),\eqref{eq:hexagon}}&{=} (m\ot B)(B\ot S\ot B) (B\ot \brd_{B, B})(B\ot m\ot B)(B\ot B\ot S\ot B) (m\ot m\ot m\ot m)\\
        & \hspace{2em} (B\ot \brd_{B,B}\ot \brd_{B,B}\ot \brd_{B,B}\ot B) (B \ot B \ot \brd_{B,B}\ot \brd_{B,B}\ot B\ot B) (B^{\ot 3}\ot \brd_{B,B}\ot B^{\ot 3}) \\
        & \hspace{2em} (B^{\ot 4} \ot \Delta \ot \Delta) (\Delta\ot \Delta\ot \Delta) \\
    &=(m\ot B)(B\ot S\ot B) (B\ot \brd_{B, B})(B\ot m\ot B)(B\ot B\ot S\ot B) (m\ot m\ot m\ot m) \\
        & \hspace{2em}(B\ot \brd_{B,B}\ot \brd_{B,B}\ot \brd_{B,B}\ot B) (B \ot B \ot \brd_{B,B}\ot \brd_{B,B}\ot B\ot B) (B^{\ot 3}\ot \brd_{B,B}\ot B^{\ot 3}) \\
        & \hspace{2em}(B^{\ot 5} \ot \Delta \ot B)(B^{\ot 5}\ot \Delta) (\Delta \ot \Delta \ot \Delta) \\
    \overset{\eqref{eq:hexagon}}&{=} (m\ot B)(B\ot S\ot B) (B\ot \brd_{B, B})(B\ot m\ot B)(B\ot B\ot S\ot B) (m\ot m\ot m\ot m) \\
        & \hspace{2em}(B\ot \brd_{B,B}\ot \brd_{B,B}\ot B^{\ot 3}) (B^{\ot 4} \ot \brd_{B, B\ot B}\ot B)(B^{\ot 5} \ot \Delta \ot B) \\
        & \hspace{2em} (B^{\ot 2}\ot \brd_{B \ot B,B}\ot B^{\ot 2})(B^{\ot 5}\ot \Delta) (\Delta \ot \Delta \ot \Delta) \\
    \overset{(\dagger)}&{=} (m\ot B)(B\ot S\ot B) (B\ot \brd_{B, B})(B\ot m\ot B)(B\ot B\ot S\ot B) (m\ot m\ot m\ot m) \\
        & \hspace{2em}(B^{\ot 3} \ot \brd_{B,B}\ot B^{\ot 3})(B^{\ot 4} \ot \Delta \ot B^{\ot 2}) (B\ot  \brd_{B,B}\ot B^{\ot 5})(B^{\ot 4} \ot \brd_{B, B}\ot B) \\
        & \hspace{2em} (B^{\ot 2}\ot \brd_{B \ot B,B}\ot B^{\ot 2})(B^{\ot 5}\ot \Delta) (\Delta \ot \Delta \ot \Delta) \\
    &= (m\ot B)(B\ot S\ot B) (B\ot \brd_{B, B}) \Bigg(  B\!\ot \Big(\! m(B\ot S) (m\ot m)(B\ot \brd_{B,B}\ot B)(B^{\ot 2}\ot \Delta)\!\Big)\!\ot\! B \Bigg)\\
    & \hspace{2em} (m\ot B^{\ot 3}\ot m)(B\ot \brd_{B,B}\ot B\ot \brd_{B,B}\ot B) (B^{\ot 2}\ot \brd_{B \ot B,B}\ot B^{\ot 2})(B^{\ot 5}\ot \Delta) (\Delta \ot \Delta \ot \Delta) 
    \\
\overset{\eqref{eq:rightantipode}}&{=} (m\ot B)(B\ot S\ot B) (B\ot \brd_{B, B}) \Bigg(  B\ot \Big(\! m(B\ot S)(B\ot \crho_B)(B\ot B\ot \varepsilon)\!\Big) \ot  B \Bigg)\\
    & \hspace{2em} (m\ot B^{\ot 3}\ot m)(B\ot \brd_{B,B}\ot B\ot \brd_{B,B}\ot B) (B^{\ot 2}\ot \brd_{B \ot B,B}\ot B^{\ot 2})(B^{\ot 5}\ot \Delta) (\Delta \ot \Delta \ot \Delta)
    \\
&= (m\ot B)(B\ot S\ot B) (B\ot \brd_{B, B})(B \ot m \ot B)(B \ot B \ot S \ot B)(B \ot B \ot \crho_B \ot B) (B^{\ot 3} \ot \varepsilon \ot B)\\
    & \hspace{2em} (m\ot B^{\ot 3}\ot m)(B\ot \brd_{B,B}\ot B\ot \brd_{B,B}\ot B) (B^{\ot 2}\ot \brd_{B \ot B,B}\ot B^{\ot 2})(B^{\ot 5}\ot \Delta) (\Delta \ot \Delta \ot \Delta)\\
&= (m\ot B)(B\ot S\ot B) (B\ot \brd_{B, B})(B \ot m \ot B)(B \ot B \ot S \ot B)(B \ot B \ot \crho_B \ot B)\\
    & \hspace{2em} (m\ot B^{\ot\! 2} \ot \I\ot m)(B\ot \brd_{\!B,B}\ot B\ot \brd_{\!B,\I}\ot B) (B^{\ot\! 2}\ot \brd_{\!B\! \ot\! B,B} \ot \I\ot B)(B^{\ot\! 5}\ot \clambda_B^{-1}) (\Delta \ot \Delta \ot \Delta)\\
\overset{\eqref{eq:braidI}}&{=} (m\ot B)(B\ot S\ot B) (B\ot \brd_{B, B})(B \ot m \ot B)(B \ot B \ot S \ot B)\\
    & \hspace{2em} (m\ot B^{\ot 2} \ot m)(B\ot \brd_{B,B}\ot B^{\ot 3}) (B^{\ot 2}\ot \brd_{B \ot B,B}\ot B) (\Delta \ot \Delta \ot \Delta)\\
\overset{\eqref{eq:hexagon}}&{=} (m\ot B) (B\ot S\ot B) (B\ot \brd_{B,B}) (m\ot B\ot B)(B^{\ot 2} \ot m \ot B) (B^{\ot 2} \ot B\ot S\ot B)  \\ 
    & \hspace{2em} (B\ot \brd_{B \ot B,B}\ot B\ot B)(B^{\ot 4}\ot m)(B\ot B\ot B\ot \brd_{B,B}\ot B)(\Delta \ot \Delta \ot \Delta)\\
\overset{(\dagger)}&{=} (m\ot B) (B\ot S\ot B)(m\ot \brd_{B,B}) (B\ot \brd_{B,B}\ot B) (B\ot m\ot B\ot m) \\ 
    & \hspace{2em}(B\ot B\ot S \ot \brd_{B,B}\ot B)(\Delta \ot \Delta \ot \Delta) \\
\overset{(\dagger)}&{=} (m\ot B) (B\ot S\ot B) (m\ot m\ot B)(B \ot B \ot \brd_{B,B \ot B})(B\ot \brd_{B,B}\ot B\ot B) \\ 
    &\hspace{2em} (B\ot m\ot B\ot B \ot B)(B\ot B\ot S \ot \brd_{B,B}\ot B)(\Delta \ot \Delta \ot \Delta)\\
\overset{\eqref{eq:hexagon}}&{=} (m\ot B) (B\ot S\ot B) (m\ot m\ot B)(B \ot B \ot B \ot \brd_{B,B})\\
    &\hspace{2em} (B \ot B \ot \brd_{B,B} \ot B)(B\ot \brd_{B,B}\ot B\ot B)(B\ot  B \ot \brd_{B,B}\ot B) \\ 
    &\hspace{2em} (B\ot m\ot B\ot B \ot B)(B\ot B\ot S \ot B \ot B\ot B)(\Delta \ot \Delta \ot \Delta)\\
\overset{(\star)}&{=} (m\ot B) (B\ot S\ot B) (m\ot m\ot B)(B \ot B \ot B \ot \brd_{B,B})\\
    &\hspace{2em} (B\ot \brd_{B,B}\ot B\ot B)(B \ot B \ot \brd_{B,B} \ot B)(B\ot \brd_{B,B}\ot B\ot B) \\ 
    &\hspace{2em} (B\ot m\ot B\ot B \ot B)(B\ot B\ot S \ot B \ot B\ot B)(\Delta \ot \Delta \ot \Delta)\\
\overset{\eqref{eq:hexagon}}&{=} (m\ot B) (B\ot S\ot B) (m\ot m\ot B)(B\ot \brd_{B,B}\ot B\ot B)(B \ot B \ot \brd_{B,B \ot B})\\
    &\hspace{2em} (B\ot \brd_{B,B}\ot B\ot B)(B\ot m\ot B\ot B \ot B)(B\ot B\ot S \ot B \ot B\ot B)(\Delta \ot \Delta \ot \Delta)\\
\overset{(\dagger)}&{=} (m\ot B) (B\ot S\ot B) (m\ot m\ot B)(B\ot \brd_{B,B}\ot B\ot B)(B \ot B \ot \Delta \ot B)(B \ot B \ot \brd_{B,B})\\
    &\hspace{2em} (B\ot \brd_{B,B}\ot B)(B\ot m\ot B\ot B)(B\ot B\ot S \ot B \ot B)(\Delta \ot \Delta \ot B)\\
\overset{\eqref{eq:hexagon},\eqref{eq:rightantipode}}&{=} \Big(\big(  m(B\ot S)(B\ot \crho_B)(B\ot B\ot \varepsilon)\big)\ot B\Big)(B\ot \brd_{B, B\ot B}) (B\ot m\ot B\ot B) \\ 
    &\hspace{2em}(\Delta \ot S\ot B\ot B) (B\ot \Delta \ot B) \\
\overset{(\dagger)}&{=} (m\ot m)(B\ot S \ot B\ot S)(B\ot \brd_{B,B}\ot B)(B\ot B\ot \brd_{B,B})(\Delta \ot B\ot \crho_B)(B\ot \Delta \ot \varepsilon)\\
\overset{\eqref{eq:deflambda}}&{=} \lambda (B\ot B\ot \crho_B)(B\ot B\ot \varepsilon),
\end{align*}
}%
where in \((\dagger)\) we used naturality of \(\brd\), and in \((\star)\) the third Reidemeister's move (a.k.a, the braid relation).
Thus \(\lambda\) equalises \(\mu_{\hat B}\) and \((B\ot B\ot \crho_B)(B\ot B\ot \varepsilon)\) as desired and hence there exists a unique \(\bar\lambda \colon \inv{\rmod{B} \ot \hopfmod{B}}{B} \to B \ot B\) such that $\bar\lambda q_{\hat B} = \lambda$.

Now, remark that
\begin{align*}
    \lambda (B\ot u)\crho_B^{-1}
    \overset{\eqref{eq:deflambda}}&{=} (m\ot m) (B\ot S\ot B\ot S) (B\ot \brd_{B,B}\ot B)(\Delta \ot \Delta^{\mathrm{cop}})(B\ot u)\crho_B^{-1}\\
    &=(m\ot m) (B\ot S\ot B\ot S) (B\ot \brd_{B,B}\ot B)(\Delta \ot u\ot u) (\crho_B^{-1}\ot\I)\crho_{B}^{-1}\\
    &= (m\ot m) (B\ot S\ot B\ot S) (B\ot u\ot B\ot u)(\crho_B^{-1}\ot \crho_B^{-1})\Delta\\
    \overset{\eqref{eq:Sunit}}&{=} (m\ot m)(B\ot u\ot B\ot u)(\crho_B^{-1}\ot \crho_B^{-1})\Delta = \Delta,
\end{align*}
that is 
\begin{equation}
    \label{eq:lambdaunitDelta} \lambda (B\ot u)\crho_B^{-1} =\Delta,
\end{equation}
whence
\[\Delta   S   \clambda_B \!\overset{\eqref{eq:lambdaunitDelta}}{=}\! \lambda   (B \ot u)   \crho_B^{-1}   S   \clambda_ B \!\overset{\eqref{eq:eta}}{=}\! \lambda   e_{\hat B}   \eta_B   S   \clambda_B \!\overset{\eqref{eq:defS}}{=} \!\bar\lambda   q_{\hat B}   e_{\hat B}   \sigma_{\hat B}^{-1}   q_{\hat B}   (u \ot B) \!\overset{\eqref{eq:easysigma}}{=} \!\bar\lambda   q_{\hat B}   (u \ot B) = \lambda   (u \ot B)\]
and, by definition of \(\lambda\), 
\begin{align*}
    \lambda \, (u \ot B) \overset{\eqref{eq:deflambda}}&{=}(m \ot m) \, (B \ot S \ot B \ot S) \, (B \ot \brd_{B,B} \ot B) \, (\Delta \ot \Delta^{\mathrm{cop}}) \, (u \ot B) \\
    & = (m \ot m) \, (B \ot S \ot B \ot S) \, (B \ot \brd_{B,B} \ot B) \, (u \ot B \ot B \ot B) \, (\clambda_B^{-1} \ot B \ot B) \, (u \ot \Delta^{\mathrm{cop}}) \\
    & = (m \ot m) \, (u \ot B \ot B \ot B) \, (\clambda_B^{-1} \ot B \ot B) \, (S \ot B \ot S) \, (\brd_{B,B} \ot B) \, (u \ot \Delta^{\mathrm{cop}}) \\
    & = (B \ot m) \, (S \ot B \ot S) \, (\brd_{B,B} \ot B) \, (u \ot \Delta^{\mathrm{cop}}) \\
    & = (B \ot m) \, (\brd_{B,B} \ot B) \, (B \ot S \ot S) \, (u \ot \Delta^{\mathrm{cop}}) \\
    & = (B \ot m) \, (\brd_{B,B} \ot B) \, (B \ot S \ot S) \, (u \ot B \ot B) \, (\clambda_B^{-1}\ot B) \, \Delta^{\mathrm{cop}} \, \clambda_B \\
    & = (B \ot m) \, (B \ot u \ot B) \, (\crho_B^{-1}\ot B) \, (S \ot S) \, \Delta^{\mathrm{cop}} \, \clambda_B = (S \ot S) \, \Delta^{\mathrm{cop}} \, \clambda_B,
\end{align*}%
that is, \(\Delta   S = (S \ot S)   \Delta^{\mathrm{cop}}\) as in \eqref{eq:antibimon} and the proof is complete.
\end{proof}

In \zcref{ex:trivial}, we have seen that every right Hopf algebra \(H\) in the sense of \cite{GreenNicholsTaft} whose antipode is a bialgebra anti-homomorphism gives rise to right Hopf monoids in categories of modules \({}_B\mathfrak{M}\) over cocommutative bialgebras \(B\). In particular, \({}_B\mathfrak{M}\) is complete and cocomplete and both \(H \ot -\) and \(- \ot H\) preserve all colimits. The braided structure on \({}_B\mathfrak{M}\) is simply the flip. Therefore, the hypotheses of \zcref{prop:Frobenius-OneSidedHopf} are satisfied and so
\[
    \begin{gathered}
    \xymatrix {
    \left({}_B\mathfrak{M}\right)_{H}^{H} \ar@/_4ex/@<-0.3ex>[d]_-{\inv{(-)}{H}} \ar@/^4ex/@<+0.3ex>[d]^-{\coinv{(-)}{H}} \\ 
    {{}_B\mathfrak{M}}^{\phantom{H}}_{\phantom{H}} \ar[u]|-{-\ot H}
    }
    \end{gathered}
\]
is an ambidextrous adjunction and \(- \ot H\) is Frobenius. In particular, \(\coinv{M}{H} \cong M/MH^+\) for every object \(M\) in \(\left({}_B\mathfrak{M}\right)_{H}^{H}\), where \(H^+ = \ker(\varepsilon)\). 

Similarly, the right Hopf algebra \(H \coloneqq \rH(M_r(\Bbbk))\) in \({}^{\Z_2}\mathfrak{M}\) from \zcref{ex:MatcomodZ2} satisfies the hypotheses of \zcref{prop:Frobenius-OneSidedHopf} and so
\[
    \begin{gathered}
    \xymatrix {
    \left({}^{\Z_2}\mathfrak{M}\right)_{H}^{H} \ar@/_4ex/@<-0.3ex>[d]_-{\inv{(-)}{H}} \ar@/^4ex/@<+0.3ex>[d]^-{\coinv{(-)}{H}} \\ 
    {{}^{\Z_2}\mathfrak{M}}^{\phantom{H}}_{\phantom{H}} \ar[u]|-{-\ot H}
    }
    \end{gathered}
\]
is an ambidextrous adjunction and \(- \ot H\) is Frobenius.

\begin{invisible}
    \begin{example}
        Let \(C = \C c \oplus \C s\), cocommutative coalgebra with respect to 
        \begin{align*}
            \Delta(c) & = c \ot c - s \ot s, \qquad \varepsilon(c) = 1, \\
            \Delta(s) & = s \ot c + c \ot s, \qquad \varepsilon(s) = 0.
        \end{align*}
        Then \(C^{\mathrm{cop}} = C\) and the coalgebra direct sum \emph{\`a la} Takeuchi \(\bigoplus_n C_n\), which is the vector space direct sum with the induced comultiplication and counit, is
        \[\mathsf{span}_\C \left\{c_n,s_n \mid n \in \N\right\}.\]
        Then 
        \[T\left(\bigoplus_n C_n\right) \cong \C\langle c_n,s_n \mid n \in \N\rangle\]
        and the ideal \(I + TS(I)T\) of Green-Nichols-Taft has generators
        \begin{align*}
            & c_nc_{n+1} - s_ns_{n+1} = 1 & & \forall\, n \geq 0, \\
            & c_ns_{n+1} + s_nc_{n+1} = 0 & & \forall\, n \geq 0, \\
            & c_{n+1}c_n - s_{n+1}s_n = 1 & & \forall\, n \geq 1, \\
            & s_{n+1}c_n + c_{n+1}s_n = 0 & & \forall\, n \geq 1.
        \end{align*}
        {\paolo[To Be Continued \ldots]}
    \end{example}
\end{invisible}

\begin{invisible}
    \begin{example}\label{ex:MatinZ2}
        Let us keep the setting from \zcref{ex:Z2inZ2}, but let us denote by \(u\) and \(g\) the elements of \(\Z_2\) and let us now consider the matrix coalgebra
        \[C \coloneqq  M_2(\K)\]
        with
        \[\Delta(E_{ij}) = E_{i1} \ot E_{1j} + E_{i2} \ot E_{2j} \qquad \text{and} \qquad \varepsilon(E_{ij}) = \delta_{ij}\]
        for all \(i,j \in \{1,2\}\). It admits an action of \(\K \Z_2\) uniquely determined by
        \[g \cdot \begin{pmatrix}
            a & b \\ c & d
        \end{pmatrix} = \begin{pmatrix}
            0 & 1 \\ 1 & 0
        \end{pmatrix}\begin{pmatrix}
            a & b \\ c & d
        \end{pmatrix}\begin{pmatrix}
            0 & 1 \\ 1 & 0
        \end{pmatrix} = \begin{pmatrix}
            d & c \\ b & a
        \end{pmatrix}.\]
        Namely, \(g\) acts on the element of the canonical basis via
        \[E_{11} \mapsto E_{22}, \quad E_{12} \mapsto E_{21}, \quad E_{21} \mapsto E_{12}, \quad E_{22} \mapsto E_{11}.\]
        With this action, \(M_2(\K)\) becomes a \(\K \Z_2\)-module coalgebra, because
        \begin{align*}
            g \cdot E_{11} \ot g \cdot E_{11} + g \cdot E_{12} \ot g \cdot E_{21} & = E_{22} \ot E_{22} + E_{21} \ot E_{12} = \Delta(E_{22}) = \Delta(g \cdot E_{11}), \\
            g \cdot E_{11} \ot g \cdot E_{12} + g \cdot E_{12} \ot g \cdot E_{22} & = E_{22} \ot E_{21} + E_{21} \ot E_{11} = \Delta(E_{21}) = \Delta(g \cdot E_{12}), \\
            g \cdot E_{21} \ot g \cdot E_{11} + g \cdot E_{22} \ot g \cdot E_{21} & = E_{12} \ot E_{22} + E_{11} \ot E_{12} = \Delta(E_{12}) = \Delta(g \cdot E_{21}), \\
            g \cdot E_{21} \ot g \cdot E_{12} + g \cdot E_{22} \ot g \cdot E_{22} & = E_{12} \ot E_{21} + E_{11} \ot E_{11} = \Delta(E_{11}) = \Delta(g \cdot E_{22}),
        \end{align*}
        and clearly \(\varepsilon\) is left \(\K\Z_2\)-linear, too. In this case, \(C^\mathrm{cop}\) has comultiplication
        \begin{align*}
            \Delta^{\mathrm{cop}}(E_{11}) & = \frac{1}{2}\Big(1 \ot 1 + 1 \ot g + g \ot 1 - g \ot g \Big) \cdot \Big(E_{11} \ot E_{11} + E_{21} \ot E_{12}\Big) \\
            & = \frac{1}{2}\Big(E_{11} \ot E_{11} + E_{11} \ot g \cdot E_{11} + g \cdot E_{11} \ot E_{11} - g \cdot E_{11} \ot g \cdot E_{11} + \\
            & \hspace{1cm} + E_{21} \ot E_{12} + E_{21} \ot g \cdot E_{12} + g \cdot E_{21} \ot E_{12} - g \cdot E_{21} \ot g \cdot E_{12}\Big) \\
            & = \frac{1}{2}\Big(E_{11} \ot E_{11} + E_{11} \ot E_{22} + E_{22} \ot E_{11} - E_{22} \ot E_{22} + \\
            & \hspace{1cm} + E_{21} \ot E_{12} + E_{21} \ot E_{21} + E_{12} \ot E_{12} - E_{12} \ot E_{21}\Big), \\
            \Delta^{\mathrm{cop}}(E_{12}) & = \frac{1}{2}\Big(1 \ot 1 + 1 \ot g + g \ot 1 - g \ot g \Big) \cdot \Big(E_{12} \ot E_{11} + E_{22} \ot E_{12}\Big) \\
            & = \frac{1}{2}\Big(E_{12} \ot E_{11} + E_{12} \ot g \cdot E_{11} + g \cdot E_{12} \ot E_{11} - g \cdot E_{12} \ot g \cdot E_{11} + \\
            & \hspace{1cm} + E_{22} \ot E_{12} + E_{22} \ot g \cdot E_{12} + g \cdot E_{22} \ot E_{12} - g \cdot E_{22} \ot g \cdot E_{12}\Big) \\
            & = \frac{1}{2}\Big(E_{12} \ot E_{11} + E_{12} \ot E_{22} + E_{21} \ot E_{11} - E_{21} \ot E_{22} + \\
            & \hspace{1cm} + E_{22} \ot E_{12} + E_{22} \ot E_{21} + E_{11} \ot E_{12} - E_{11} \ot E_{21}\Big), \\
            \Delta^{\mathrm{cop}}(E_{21}) & = \frac{1}{2}\Big(1 \ot 1 + 1 \ot g + g \ot 1 - g \ot g \Big) \cdot \Big(E_{11} \ot E_{21} + E_{21} \ot E_{22}\Big) \\
            & = \frac{1}{2}\Big(E_{11} \ot E_{21} + E_{11} \ot g \cdot E_{21} + g \cdot E_{11} \ot E_{21} - g \cdot E_{11} \ot g \cdot E_{21} + \\
            & \hspace{1cm} + E_{21} \ot E_{22} + E_{21} \ot g \cdot E_{22} + g \cdot E_{21} \ot E_{22} - g \cdot E_{21} \ot g \cdot E_{22}\Big) \\
            & = \frac{1}{2}\Big(E_{11} \ot E_{21} + E_{11} \ot E_{12} + E_{22} \ot E_{21} - E_{22} \ot E_{12} + \\
            & \hspace{1cm} + E_{21} \ot E_{22} + E_{21} \ot E_{12} + E_{121} \ot E_{22} - E_{12} \ot E_{11}\Big),\\
            \Delta^{\mathrm{cop}}(E_{22}) & = \frac{1}{2}\Big(1 \ot 1 + 1 \ot g + g \ot 1 - g \ot g \Big) \cdot \Big(E_{12} \ot E_{21} + E_{22} \ot E_{22}\Big) \\
            & = \frac{1}{2}\Big(E_{12} \ot E_{21} + E_{12} \ot g \cdot E_{21} + g \cdot E_{12} \ot E_{21} - g \cdot E_{12} \ot g \cdot E_{21} + \\
            & \hspace{1cm} + E_{22} \ot E_{22} + E_{22} \ot g \cdot E_{22} + g \cdot E_{22} \ot E_{22} - g \cdot E_{22} \ot g \cdot E_{22}\Big) \\
            & = \frac{1}{2}\Big(E_{12} \ot E_{21} + E_{12} \ot E_{12} + E_{21} \ot E_{21} - E_{21} \ot E_{12} + \\
            & \hspace{1cm} + E_{22} \ot E_{22} + E_{22} \ot E_{11} + E_{11} \ot E_{22} - E_{11} \ot E_{11}\Big).
        \end{align*}
    
        \paolo{Perform NGT on this example and see what's the outcome \ldots}
    \end{example}

    \begin{example}
        Let us keep the same notation as in \zcref{ex:MatinZ2}. The left \(\K\Z_2\)-module \(M_2(\K)\) with the same action above can be made, in fact, a \(\Z_2\)-graded \(\Z_2\)-module by setting
        \[\delta(E_{ij}) = g^{i+j} \ot E_{ij}\]
        for all \(i,j \in \{1,2\}\). As a consequence, \(M_2(\K)\) with these structures is a left-left Yetter-Drinfeld module over \(\K\Z_2\). We already know that \((\Delta,\varepsilon)\) are \(\K\Z_2\)-linear with respect to the diagonal action on the tensor product. Let us verify that they are also colinear with respect to the diagonal coaction:
        \begin{align*}
            \delta\Delta(E_{11}) & = \delta\Big(E_{11} \ot E_{11} + E_{12} \ot E_{21}\Big) = u \ot E_{11} \ot E_{11} + u \ot E_{12} \ot E_{21} \\
            & = (\K\Z_2 \ot \Delta)\delta(E_{11}), \\
            \delta\Delta(E_{12}) & = \delta\Big(E_{11} \ot E_{12} + E_{12} \ot E_{22}\Big) = g \ot E_{11} \ot E_{12} + g \ot E_{12} \ot E_{22} \\
            & = (\K\Z_2 \ot \Delta)\delta(E_{12}), \\
            \delta\Delta(E_{21}) & = \delta\Big(E_{21} \ot E_{11} + E_{22} \ot E_{21}\Big) = g \ot E_{21} \ot E_{11} + g \ot E_{22} \ot E_{21} \\
            & = (\K\Z_2 \ot \Delta)\delta(E_{21}), \\
            \delta\Delta(E_{22}) & = \delta\Big(E_{21} \ot E_{12} + E_{22} \ot E_{22}\Big) = u \ot E_{21} \ot E_{12} + u \ot E_{22} \ot E_{22} \\
            & = (\K\Z_2 \ot \Delta)\delta(E_{22}),
        \end{align*}
        and
        \[(\K\Z_2 \ot \varepsilon)\delta(E_{11}) = u, \quad (\K\Z_2 \ot \varepsilon)\delta(E_{12}) = 0, \quad (\K\Z_2 \ot \varepsilon)\delta(E_{21}) = 0, \quad (\K\Z_2 \ot \varepsilon)\delta(E_{22}) = u.\]
        Therefore, \(M_2(\K)\) is a coalgebra in the braided monoidal category \(\prescript{\Z_2}{\Z_2}{\mathcal{YD}}\), with braiding
        \[\mf{c}(x \ot y) = x_{[-1]} \cdot y \ot x_{[0]}.\]
         In this case, \(C^\mathrm{cop}\) has comultiplication
        \begin{align*}
            \Delta^{\mathrm{cop}}(E_{11}) & = \mf{c}\Big(E_{11} \ot E_{11} + E_{12} \ot E_{21}\Big) = E_{11} \ot E_{11} + g \cdot E_{21} \ot E_{12} \\
            & = E_{11} \ot E_{11} + E_{12} \ot E_{12}, \\
            \Delta^{\mathrm{cop}}(E_{12}) & = \mf{c}\Big(E_{11} \ot E_{12} + E_{12} \ot E_{22}\Big) = E_{12} \ot E_{11} + g \cdot E_{22} \ot E_{12} \\
            & = E_{12} \ot E_{11} + E_{11} \ot E_{12}, \\
            \Delta^{\mathrm{cop}}(E_{21}) & = \mf{c}\Big(E_{21} \ot E_{11} + E_{22} \ot E_{21}\Big) = g \cdot E_{11} \ot E_{21} + E_{21} \ot E_{22} \\
            & = E_{22} \ot E_{21} + E_{21} \ot E_{22}, \\
            \Delta^{\mathrm{cop}}(E_{22}) & = \mf{c}\Big(E_{21} \ot E_{12} + E_{22} \ot E_{22}\Big) = g \cdot E_{12} \ot E_{21} + E_{22} \ot E_{22} \\
            & = E_{21} \ot E_{21} + E_{22} \ot E_{22}. \\
        \end{align*}
    
        \paolo{Perform NGT on this example and see what's the outcome \ldots}
    \end{example}
\end{invisible}

\begin{invisible}

\section{Free one-sided Hopf algebra generated by a bialgebra (work in progress)}

\marginpar{\paolo{\tiny We may even assume \(\cM\) cocomplete: a category which admits coproducts and reflecive coequalisers is automatically cocomplete}}
Let \((\cM,\ot,\I,\brd)\) be a braided monoidal category admitting coequalisers of reflexive pairs, equalisers of coreflexive pairs, and denumerable coproducts. Assume also that the endofunctors \(X \ot -\) and \(- \ot X\) preserve these coproducts and the reflexive coequalisers. Then the forgetful functor \(\Mon(\cM) \to \cM\) admits a left adjoint (see, e.g., \cite[Theorem VII.3.2]{Maclane}) and we can apply \cite[Theorem 3.5]{Porst} to conclude that \(\Mon(\cM)\) admits denumerable coproducts. In this setting, also the category \(\mathrm{Bimon}(\cM)\) admits denumerable coproducts as follows (the proof is a straightforward adaptation of \cite[Corollary 2.6.2]{Pareigis}): suppose that \(\{B_n \mid n \in \N\}\) is a family of bialgebras in \(\cM\). Consider their coproduct \(\cB \coloneqq  \bigsqcup_n B_n\) as algebras. Since the tensor product of algebras in a braided monoidal category is an algebra itself with multiplication \((m \ot m)(\id \ot \brd \ot \id)\), there exist unique algebra maps \(\Delta \colon \cB \to \cB \ot \cB\) and \(\varepsilon \colon \cB \to \I\) for which the following diagrams commute
\[
\xymatrix @C=70pt {
B_m \ar[r]^{j_m} \ar[d]_-{\Delta_m} & \bigsqcup_n B_n \ar@{.>}[d]^-{\Delta} \\
B_m \ot B_m \ar[r]_-{j_m \ot j_m} & \left(\bigsqcup_n B_n\right) \ot \left(\bigsqcup_n B_n\right)
}
\qquad 
\xymatrix{
B_m \ar[rr]^-{j_m} \ar[dr]_-{\varepsilon_m} & & \bigsqcup_n B_n \ar@{.>}[dl]^-{\varepsilon} \\
& \I & 
}
\]
In view of the uniqueness part of the universal property of the coproduct, one can verify that \((\Delta,\varepsilon)\) is a coalgebra structure on \(\cB\) and that \(\cB\) satisfies the universal property of the coproduct in the category \(\mathrm{Bimon}(\cM)\).

\medskip

Now, let \((B,m,u,\Delta,\varepsilon)\) be a bialgebra in \(\cM\) and write \(B^{\mathrm{op},\mathrm{cop}}\) for the bialgebra whose multiplication is \(m   \brd^{-1}\) and whose comultiplication is \(\brd   \Delta\). It is still a bialgebra in \(\cM\). 

\begin{remark}
    If \(H\) is a Hopf algebra in \(\cM\), then one can verify that 
    \[S   m^{\mathrm{op}} = m   (S \ot S) \qquad \text{and} \qquad \Delta   S = (S \ot S)   \Delta^{\mathrm{cop}}\]
    where \(m^{\mathrm{op}} = m   \brd^{-1}\) and \(\Delta^{\mathrm{cop}} = \brd   \Delta\), by, for instance, verifying that \(S   m^{\mathrm{op}}\) is left convolution inverse of \(m^{\mathrm{op}}\) in \(\Hom(H^{\mathrm{op},\mathrm{cop}} \ot H^{\mathrm{op},\mathrm{cop}},H^{\mathrm{op},\mathrm{cop}})\), while \(m   (S \ot S)\) is right convolution inverse of it therein.
\end{remark}

Define a family of bialgebras \(\{B_n \mid n \in \N\}\) as follows:
\[B_{2n} \coloneqq  B \qquad \text{and} \qquad B_{2n+1} \coloneqq  B^{\mathrm{op},\mathrm{cop}},\]
and consider their bialgebra coproduct \(\cB \coloneqq  \bigsqcup_n B_n\) in \(\cM\), with canonical morphisms \(j_n \colon B_n \to \cB\). By the universal property of the coproduct, there is a unique bialgebra morphism \(S \colon \cB^{\mathrm{op},\mathrm{cop}} \to \cB\) such that the following diagram commutes
\[
\xymatrix{
B_n^{\mathrm{op},\mathrm{cop}} \ar[r]^-{j_n} \ar[d]_-{=} & \cB \ar@{.>}[d]^-{S} \\
B_{n+1} \ar[r]_-{j_{n+1}} & \cB
}
\]
Now, define a morphism \((S*\id)_0 \colon \cB \to \cB\) as the one arising from \(B_0 \xrightarrow{\varepsilon} \I \xrightarrow{u_\cB} \cB\) and \(B_n \xrightarrow{j_n} \cB \xrightarrow{S * \id} \cB\) for all \(n \geq 1\) {[\paolo CAVEAT: are these morphisms of bialgebras in \(\cM\)? If this is not the case, we cannot invoke the universal property]} via the universal property \ldots

\zorro

{\paolo [In case we need it, we shall find a way of completing this proof]}

\begin{example}
    Let \(B \coloneqq  \C\langle g,x \mid g^2=g,x^2=0,gx+xg=0\rangle\) with
    \[\Delta(g) = g \ot g, \qquad \Delta(x) = x \ot 1 + g \ot x,\]
    in \(\cM = \Vect_\C\) with the flip \(\tau\). In this case, \(B^{\mathrm{op,cop}} = \C\langle h,y \mid h^2 = h, y^2 = 0, hy+yh = 0\rangle\) with
    \[\Delta(h) = h \ot h, \qquad \Delta(y) = y \ot h + 1 \ot y.\]
    In this case, the coproduct \(\bigsqcup_n B_n\) is isomorphic to
    \[\C\langle g_n,h_n,x_n,y_n \mid g_n^2 = g_n, h_n^2 = h_n, x_n^2 = 0 = y_n^2, x_ng_n + g_nx_n = 0 = y_nh_n + h_ny_n\rangle,\]
    with
    \[\Delta(g_n) = g_n \ot g_n, \ \Delta(x_n) = x_n \ot 1 + g_n \ot x_n, \ \Delta(h_n) = h_n \ot h_n, \ \Delta(y_n) = y_n \ot h_n + 1 \ot y_n.\]
    Suppose that we quotient it by the ideal 
    \[I\coloneqq  \langle b_1S(b_2) - \varepsilon(b)1 \mid b \in B_n, n\geq 0\rangle.\]
    Then, in the quotient,
    \[1 = g_nS(g_n) = g_nh_n \qquad \text{and} \qquad 1 = h_nS(h_n) = h_ng_{n+1}\]
    from which it follows that \(g_n = g_{n+1}\) and \(h_n = h_{n+1}\) for all \(n \in \N\). Moreover,
    \[g_n = g_n^2h_n = g_nh_n = 1 = h_ng_{n+1} = h_n^2g_{n+1} = h_n\]
    for all \(n \in \N\). Concerning the skew-primitive generators, instead,
    \[0 = x_nS(1) + g_nS(x_n) = x_n + y_n \qquad \text{and} \qquad 0 = y_nS(h_n) + 1S(y_n) = y_n + x_{n+1},\]
    for all \(n \in \N\), but, in any case, \(x_ng_n + g_nx_n = 0\) already entails that \(x_n = 0\) and similarly for \(y_n = 0\). From these observations, we conclude that
    \[\bigsqcup_{n \in \N}B_n/I \cong \C\]
    and hence that
    \[\bigsqcup_{n \in \N}B_n\big/\big(I + \langle S(I)\rangle\big) \cong \C.\]
\end{example}

\begin{example}
    Let \(B \coloneqq  \C\langle g,x \mid g^3=g,x^2=0,gx+xg=0\rangle\) with
    \[\Delta(g) = g \ot g, \qquad \Delta(x) = x \ot 1 + g \ot x,\]
    in \(\cM = \Vect_\C\) with the flip \(\tau\). In this case, the coproduct \(\bigsqcup_n B_n\) is isomorphic to
    \[\C\langle g_n,x_n \mid g_n^3 = g_n, x_n^2 = 0, x_ng_n + g_nx_n = 0\rangle,\]
    with
    \begin{gather*}
        \Delta(g_n) = g_n \ot g_n, \qquad \Delta(x_{2n}) = x_{2n} \ot 1 + g_{2n} \ot x_{2n}, \\ 
        \Delta(x_{2n+1}) = x_{2n+1} \ot g_{2n+1} + 1 \ot x_{2n+1}.
    \end{gather*}
    Suppose that we quotient it by the ideal 
    \[I\coloneqq  \langle b_1S(b_2) - \varepsilon(b)1 \mid b \in B_n, n\geq 0\rangle.\]
    Then, in the quotient,
    \[1 = g_nS(g_n) = g_ng_{n+1}\]
    from which it follows that \(g_n = g_{n+2}\) and \(g_{n+1} = g_n^{-1}\) for all \(n \in \N\). Moreover,
    \[1 = g_ng_{n+1} = g_n^3g_{n+1} = g_n^2\]
    for all \(n \in \N\) entails that, in fact, \(g_n = g_{n+1}\) for all \(n \in \N\). Concerning the skew-primitive generators, instead,
    \[0 = x_{2n}S(1) + gS(x_{2n}) = x_{2n} + gx_{2n+1} \qquad \text{and} \qquad 0 = x_{2n+1}g + x_{2n+2},\]
    for all \(n \in \N\), imply that
    \[-gx_{2n+3} = x_{2n+2} = gx_{2n+1} = -x_{2n},\]
    while from
    \[0 = gx_{2n}g + S(x_{2n}) = -x_{2n} + x_{2n+1} \qquad \text{and} \qquad 0 = gx_{2n+1} + gx_{2n+2}g = gx_{2n+1} - x_{2n+2}\]
    we deduce that \(x_{2n} = x_{2n+1}\) for all \(n \in \N\).
    Summing up,
    \[\bigsqcup_{n \in \N}B_n/I \cong H_4 = \C\langle g,x \mid g^2=1,x^2 = 0, gx+xg = 0\rangle\]
    and hence
    \[\bigsqcup_{n \in \N}B_n\big/\big(I + \langle S(I)\rangle\big) \cong H_4.\]
\end{example}

\appendix

\section*{Appendix}

{\paolo
\textbf{Questions/ideas:} 
\begin{enumerate}[leftmargin=*,itemsep=10pt,topsep=10pt]
\item What are the Hopf modules over the Green-Nichols-Taft one-sided Hopf algebra? Can we describe them in some "explicit" way that may make them attractive for the community?

\item Are there nice examples of objects in \( \left({}_B\mathfrak{M}\right)_{H}^{H}\) that we may use to "sell" our result?

\item Up to some verifications, the Frobenius property for \(- \ot H\) shall tell us that any Hopf module decomposes "canonically" as \(M \cong \coinv{M}{H} \oplus MH^+\) (for \(H\) itself, it is always true that \(H \cong \Bbbk 1_H \oplus H^+ = \coinv{H}{H} \oplus H^+\)). Does this carry some interesting information?

\item \(E_\infty = \K\langle g,x_n \mid g^2 = 1, gx_i + x_{i+1} = 0, x_i^2=0\rangle\) with \(S(x_i) = x_{i+1}\), \(S(g) = g\),
\[x_1 \mapsto x_1 \ot 1 + g \ot x_1 \mapsto S(x_1) + gx_1 = x_2 - x_2 = 0\]
\[x_1 \mapsto x_1 \ot 1 + g \ot x_1 \mapsto x_1 + gS(x_1) = x_1 + gx_2 = 0\]









\end{enumerate}
}

\end{invisible}

{\small
\subsection*{Acknowledgements} This paper was written while the authors were members of the ``National Group for Algebraic and Geometric Structures and their Applications'' (GNSAGA-INdAM). This work was partially supported by the project funded by the European Union - NextGenerationEU under NRRP, Mission 4 Component 2 CUP D53D23005960006 - Call PRIN 2022 No.\@ 104 of February 2, 2022 of Italian Ministry of University and Research; Project 2022S97PMY \textit{Structures for Quivers, Algebras and Representations (SQUARE).} 
L.B.\@ was partially supported by a postdoctoral fellowship at the Department of Mathematics of the Universit\'e Libre de Bruxelles, within the framework of the ARC project ``From algebra to combinatorics, and back''.
D.F.\@  was funded by the University of Torino though a PNRR DM 118 scholarship; by the Vrije Universiteit Brussel through the bench fee OZR3762; and by Leandro Vendramin through his FWO Senior Research Project G004124N.  
P.S.\@ was supported by the project ``HADRA - Hopf Algebroids: Duality, Representations and Applications'' funded by the Italian Ministero dell'Universit\`a e della Ricerca, grant number PGR21S8QB2, and was partially supported by the project PID2024-157173NB-I00 funded by MCIN/AEI/10.13039/501100011033 and by FEDER, UE. 
Esta publicaci\'on ha sido parcialmente financiada con fondos propios de la Junta de Andaluc\'ia, en el marco de la ayuda DGP\_EMEC\_2023\_00216.
}

\bibliography{references}
\bibliographystyle{acm}

\vspace{8pt}

\end{document}